\title{Curved $\infty$-Local Systems And Projectively Flat Riemann-Hilbert Correspondence}
\author{Patrick Antweiler}
\date{November 29, 2024}
\begin{document}
	
	\maketitle
	\thispagestyle{empty}
	
	\begin{abstract}
		We generalize the higher Riemann-Hilbert correspondence in the presence of scalar curvature for a (possibly non-compact) smooth manifold $M$. We show that the dg-category of curved $\infty$-local systems, the dg-category of graded vector bundles with projectively flat $\mathbb Z$-graded connections and the dg-category of curved representations of the singular simplicial set of the based loop space of $M$ are all $A_\infty$-quasi equivalent. They provide dg-enhancements of the subcategory of the bounded derived category of twisted sheaves whose cohomology sheaves are locally constant and have finite-dimensional fibers. In the ungraded case, we reduce to an equivalence between projectively flat vector bundles and a subcategory of projective representations of $\pi_1(M; x_0)$. As an application of our general framework, we also prove that the category of cohesive modules over the curved Dolbeault algebra of a complex manifold $X$ is equivalent to a subcategory of the bounded derived category of twisted sheaves of $\mathcal O_X$-modules which generalizes a theorem due to Block to possibly non-compact complex manifolds.
	\end{abstract}

	\newcommand{\quot}[2]{{\raisebox{.2em}{$#1$}\left/\raisebox{-.2em}{$#2$}\right.}}
	\newcommand{\cats}[2]{\mathscr{#1}\mathrm{#2}}
	\newcommand{\cat}{\cats{C}{at}}
	\newcommand{\vect}{\cats{V}{ect}_\mathbb{K}}
	\newcommand{\vectr}{\cats{V}{ect}_\mathbb{R}}
	\newcommand{\vectc}{\cats{V}{ect}_\mathbb{C}}
	\newcommand{\vectcat}{\vect \cat}
	\newcommand{\set}{\cats{S}{et}}
	\newcount\colveccount
	\newcommand*\colvec[1]{
		\global\colveccount#1
		\begin{pmatrix}
			\colvecnext
		}
		\def\colvecnext#1{
			#1
			\global\advance\colveccount-1
			\ifnum\colveccount>0
			\\
			\expandafter\colvecnext
			\else
		\end{pmatrix}
		\fi
	}
	
	\newenvironment{nalign}{
		\begin{equation}
			\begin{aligned}
			}{
			\end{aligned}
		\end{equation}
		\ignorespacesafterend
	}
	
	\theoremstyle{definition}
	
	\newtheorem{def1}{Definition}[subsection]
	\newtheorem{notation}[def1]{Notation}
	\newtheorem{observation}[def1]{Observation}
	\newtheorem{remark}[def1]{Remark}
	\newtheorem{example}[def1]{Example}
	\theoremstyle{theorem}
	\newtheorem{lemma}[def1]{Lemma}
	\newtheorem{corollary}[def1]{Corollary}
	\newtheorem{theorem}[def1]{Theorem}
	\newtheorem{proposition}[def1]{Proposition}
	\newtheorem*{thm*}{Theorem}
	\newtheorem*{prop*}{Proposition}
	
	\tableofcontents

	\section{Introduction and outline}
	For a real smooth manifold $M$ the classical Riemann-Hilbert correspondence asserts that there are (categorical) equivalences between
	\begin{enumerate}
		\item vector bundles on $M$ together with flat connections,
		\item representations of the fundamental group of $M$, and
		\item local systems on $M$; either realized as locally constant sheaves of $\mathbb K$-modules or in a combinatorial manner.
	\end{enumerate}
	These notions have been upgraded to a differential $\mathbb Z$-graded setting in \cite{BlockRiemann}, \cite{Mor1}, \cite{Mor2}, \cite{RiemannHilbert} and \cite{AbadSchaetz}. Vector bundles with flat connections are replaced by complexes of vector bundles together with a flat superconnection. A superconnection on a graded vector bundle $E^*$ is a finite sum
	\begin{align*} \mathbb E  = \nabla + \mathbb E^0 + \mathbb E^2 + \mathbb E^3 + \mathbb E^4 + ...\end{align*}
	where $\nabla$ is an ordinary connection on each $E^i$ and
	\begin{align*} \mathbb E^n \in \bigoplus_{k} \Omega^n(M, \text{Hom}(E^{k}, E^{k-n+1})).\end{align*}
	The superconnection is called flat if $\mathbb E^2 = 0$, in particular $E^*$ together with $\mathbb E^0$ is a complex of vector bundles. These objects can be arranged into a dg-category which can equivalently be described as the dg-category of cohesive modules over the de Rham Algebra (see $\cite{BlockRiemann}$). Representations of the fundamental group are replaced by representations of the dg-algebra of chains on the based loop space of $M$ (\cite{AbadSchaetz}, Section 4.2). The appropriate dg-version of local systems are called $\infty$-local systems (\cite{BlockRiemann}, Section 2.1). All these dg-categories provide models for the bounded derived category of sheaves of $\mathbb K$-modules such that their cohomology sheaves are locally constant and locally finitely generated. 
	Cohesive modules over the (uncurved) de Rham algebra have been shown to be equivalent to $\infty$-local systems in \cite{BlockRiemann}. The equivalence between representations and $\infty$-local systems has been shown in $\cite{Mor1}$. Moreover, in $\cite{Mor2}$ it is proven that $\infty$-local systems are equivalent to perfect complexes of sheaves (see Definition \ref{PerfectDefinition}). In $\cite{RiemannHilbert}$ cohesive modules are directly identified with perfect complexes of sheaves. Independently, in the paper $\cite{AbadSchaetz}$ the authors show that cohesive modules over the de Rham algebra are equivalent to representations. 
	\\ \\
	We will generalise these $A_\infty$-quasi equivalences, allowing for a curvature $h \in \Omega_{cl}^2(M)$, and show that these dg-categories provide dg-enhancements of the bounded derived category of twisted sheaves of $\mathbb K$-modules such that their cohomology sheaves are locally constant and have finitely generated fibers. \\ \\
	We will proceed as follows. In Section \ref{sectionDefs}, we fix the necessary notation to work with connections on vector bundles. We recall some basic results and define projectively flat vector bundles. \\ \\
	In the following section \ref{sectionCDG} we will develop a rather general framework to deal with dg-categories whose objects are complexes of objects belonging to a cdg-category. Such categories of twisted complexes were defined in \cite{Kapranov} (with the restriction that the initial category is a dg-category). There is also a version of twisted complexes for curved $A_\infty$-categories, e.g., in \cite{Wendy}. We follow this approach since the proofs of equivalence between (curved) cohesive modules and (curved) $\infty$-local systems on the one hand, and (curved) cohesive modules and (curved) representations of $\text{Sing}(\Omega_{x_0}^M M)$ on the other, are formally very similar. This approach both simplifies the definition of both $A_\infty$-functors and also reduces the proof of equivalence to a much simpler situation. To be precise, it suffices to show that the restriction of such an $A_\infty$-functor to complexes concentrated in degree $0$ is an $A_\infty$-quasi equivalence (under certain assumptions) which is made precise in the first main theorem of our paper:
	\begin{thm*}[\ref{EquivalenceTheorem}]
		Let $\mathcal F : \mathcal C\to \mathcal D$ be a proper $A_\infty$-functor between (non-negatively graded) cdg-categories such that both $\mathcal C$ and $\mathcal D$  are sufficiently Maurer-Cartan and split.  Assume that the induced $A_\infty$-functor $\mathcal F^{0} : \mathcal C^{0} \to \mathcal D^{0}$ is a quasi-equivalence. Then $\text{Tw}(\mathcal F): \text{Tw}(\mathcal C) \to \text{Tw}(\mathcal C)$ is an $A_\infty$-quasi equivalence of dg-categories.
	\end{thm*}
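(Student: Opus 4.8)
The plan is to establish the two properties that characterise an $A_\infty$-quasi equivalence for $\text{Tw}(\mathcal{F})$: that its linear component is a quasi-isomorphism on every morphism complex (homological full faithfulness), and that the induced functor $H^0(\text{Tw}(\mathcal{F}))$ is essentially surjective. The common mechanism behind both reductions is structural: under the \emph{split} hypothesis every object of $\text{Tw}(\mathcal{C})$ is a finite, one-sided iterated extension (a tower of mapping cones) of its graded constituents, and under the \emph{sufficiently Maurer-Cartan} hypothesis each such constituent is, after a shift, an object of $\mathcal{C}^0$; that is, an object of $\mathcal{C}$ equipped with a Maurer-Cartan element trivialising its curvature. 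The same holds for $\mathcal{D}$. This is precisely what allows everything to be bootstrapped from the hypothesis on $\mathcal{F}^0$.

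For full faithfulness, I would fix twisted complexes $(E, q_E)$ and $(F, q_F)$ and analyse the morphism complex $\text{Hom}_{\text{Tw}(\mathcal{C})}((E,q_E),(F,q_F))$, whose underlying graded module is $\bigoplus_{i,j}\text{Hom}_{\mathcal{C}}(E_i, F_j)$ (with the appropriate shifts) and whose differential is the internal differential twisted by $[q,-]$. I would filter this complex by the external index labelling the graded constituents, chosen so that the off-diagonal part of $[q,-]$ strictly increases the filtration; the associated graded is then the direct sum over $i,j$ of the morphism complexes of $\mathcal{C}^0$ between the constituents $(E_i, \alpha_i)$ and $(F_j, \alpha_j)$. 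Because $\text{Tw}(\mathcal{F})$ is assembled from the components $\mathcal{F}_n$ and, for $n \ge 2$, these components necessarily absorb at least one factor of $q$, the functor strictly preserves this filtration and its induced map on associated graded is exactly $\bigoplus_{i,j}(\mathcal{F}^0)_1$. By hypothesis each of these summands is a quasi-isomorphism; since the filtrations are finite (the twisted complexes are bounded), the comparison theorem for the spectral sequences of filtered complexes promotes this to a quasi-isomorphism of total complexes. The \emph{proper} hypothesis on $\mathcal{F}$ is what guarantees that $\text{Tw}(\mathcal{F})$ is well defined and strictly filtration-preserving here.

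For essential surjectivity, I would induct on the length of a twisted complex in $\text{Tw}(\mathcal{D})$, using that $H^0$ of both twisted categories is triangulated and that $H^0(\text{Tw}(\mathcal{F}))$ is a triangulated functor commuting with shifts and preserving mapping cones. The base case is a single shifted constituent, i.e.\ a shift of an object of $\mathcal{D}^0$, which lies in the essential image because $\mathcal{F}^0$ is essentially surjective. In the inductive step a length-$n$ one-sided twisted complex fits into a distinguished triangle whose remaining vertices are a shifted object of $\mathcal{D}^0$ and a length-$(n-1)$ twisted complex; by the inductive hypothesis these are isomorphic in $H^0$ to images under $\text{Tw}(\mathcal{F})$, the connecting morphism lifts along the full faithfulness already proved, and the cone of this lift is sent to an object isomorphic to the given one. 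This closes the induction and yields the quasi-equivalence.

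The step I expect to be the genuine obstacle is the bookkeeping underlying the full-faithfulness argument: pinning down the filtration on the twisted morphism complexes for which both the off-diagonal twisting $[q,-]$ and all higher components $\mathcal{F}_n$ with $n \ge 2$ strictly raise the filtration degree, so that the associated graded collapses cleanly onto $\bigoplus (\mathcal{F}^0)_1$ with the $\mathcal{C}^0$- and $\mathcal{D}^0$-differentials. This is exactly the place where the hypotheses \emph{split}, \emph{sufficiently Maurer-Cartan} and \emph{proper} must all be invoked, and where one has to verify that the reduction really lands in the degree-$0$ categories rather than in larger, curved morphism spaces.
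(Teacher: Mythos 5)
Your overall architecture (reduce everything to $\mathcal F^0$, prove quasi-full-faithfulness by a filtered/spectral-sequence comparison, prove quasi-essential surjectivity by building objects up from their constituents) matches the paper's, and your filtration argument for full faithfulness would go through essentially as in the paper \emph{once the objects have been normalised}. But there are two genuine gaps. First, your structural premise is false as stated: an object $(E^*,\mathbb E)$ of $\text{Tw}(\mathcal C)$ is \emph{not} a one-sided iterated extension of shifted objects of $\mathcal C^0$, and its constituents $E^i$ are \emph{not} objects of $\mathcal C$ equipped with Maurer-Cartan elements trivialising their curvature. The twisting element has a component $\mathbb E_0$ of bidegree $(1,0)$ that raises the external index and components $\mathbb E_i$, $i\ge 2$, that lower it, and the curvature $R_{E^i}$ is cancelled by the cross-term $\mathbb E_0\mathbb E_2+\mathbb E_2\mathbb E_0$ (see \eqref{EqDecomposition}), not by an endomorphism of a single constituent. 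Consequently no filtration by the external index alone is preserved by $\nabla+[\mathbb E,-]$ (the two kinds of components move that index in opposite directions), and your associated graded does not collapse onto $\mathcal C^0$-morphism complexes. What is true --- and is the technical heart of the paper (Propositions \ref{trivialDiff} and \ref{ExtendLemma}) --- is that every object is \emph{homotopy equivalent} to one with $\mathbb E^0=0$: one forms the cohomology objects of the complex $(E^*,\mathbb E^0)$ (this is where \emph{split} enters, to choose non-closed splittings), twists their connections by $a=g^0\circ\nabla(f^0)$ (this is where \emph{sufficiently Maurer-Cartan} enters), and transfers the higher components of $\mathbb E$ by an inductive obstruction argument. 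Your proof needs this reduction stated and proved before either half of the argument can start; after it, the paper filters by the auxiliary degree rather than the external one, but your external filtration would then also work.

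Second, the essential-surjectivity induction on mapping cones does not obviously take place inside $\text{Tw}(\mathcal D)$. Objects of $\text{Tw}(\mathcal D)$ are required to have no twisting component of bidegree $(0,1)$ (the condition $a\neq 0$ in Definition \ref{defAssociatedDG}; cf.\ Remark \ref{remarkMaurer}). A closed degree-zero morphism between twisted complexes can have a component of auxiliary degree $1$, and in the mapping cone that component acquires bidegree $(0,1)$, so the cone of an arbitrary lift of your connecting morphism need not be an object of $\text{Tw}(\mathcal C)$; moreover the pretriangulated structure you invoke is never established in this framework. The paper sidesteps this entirely: it lifts the graded constituents using essential surjectivity of $\mathcal F^0$, checks that $\text{Tw}(\mathcal F)$ is a filtration-compatible quasi-isomorphism on the endomorphism algebra of $(F^*,0)\to(E^*,0)$, and then applies a Maurer-Cartan transfer result (Proposition A.9 of \cite{AbadSchaetz}) to produce $\mathbb F$ and a gauge transformation $g$ with $g\,\mathcal F_*(\mathbb F)\,g^{-1}-\nabla(g)g^{-1}=\mathbb E$. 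If you wish to keep the cone induction you must either show that the offending $(0,1)$-components can always be gauged away or enlarge the category; as written this step fails.
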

	The Maurer-Cartan condition refers to a property of the categories that one can twist connections by endomorphisms of degree $1$. The assumption that both categories are split mean that certain short exact sequences split, where crucially, such splittings are not required to be closed. We believe that this Theorem is of seperate interest and may be applied to other problems. \\ \\
	For a fixed curvature $h \in \Omega^2(M)$ we define the cdg-categories of curved $\infty$-local systems $\text{Loc}(M)^{pre}[H]$ and of curved cohesive modules $\mathcal P(M)^{pre}[h]$ in section \ref{sectionInfinityLocalSystems}. Their associated twisted dg-categories (see Definition \ref{defAssociatedDG}) $\text{Loc}(M)^{\infty}[H] := \text{Tw}(\text{Loc}(M)^{pre}[H])$ of $h$-curved $\infty$-local systems and $\mathcal P(M)^{\infty}[h] := \text{Tw}(\mathcal P(M)^{pre}[h])$ of $h$-curved cohesive modules will be the dg-categories of interest.
	Let us briefly recall why these categories are dg-categories rather than cdg-categories. As stated before, we consider a fixed curvature $h \in \Omega_{cl}^2(M)$. Consider two vector bundles $(E, \nabla)$ and $(F, \nabla')$. The main observation is that even though these vector bundles are curved, the vector bundle of homomorphisms $\text{Hom}(E,F)$ is again flat which allows us to do homological algebra; in particular we can consider the complex $\Omega^*(M, \text{Hom}(E,F))$ with its exterior differential squaring to $0$. Thus, we can arrange vector bundles with curvature $h$ again into a dg-category. \\ \\
	We define an $A_\infty$-functor of cdg-categories which induces an $A_\infty$-functor of dg-categories
	\begin{align*} \mathcal{RH}(M)^{\infty}[h] : \mathcal P(M)^{\infty}[h] \to \text{Loc}(M)^{pre}[H].\end{align*} Once this functor is defined, it is relatively simple to apply the results from the preceeding section to show that this functor is indeed a quasi-equivalence (Theorem \ref{hCurvedDGEquivalence}). However, to define this functor there are significant problems, some of which already arise in much simpler cases. Consider the de Rham integration map:
	\begin{align*} DR : \Omega^*(M, \mathbb K) \to C^*(M, \mathbb K), DR(\omega)(\sigma) := \int_{\Delta^*} \sigma^* \omega.\end{align*}
	This map is a map of complexes and known to be a quasi-isomorphism but it is not a map of dg-algebras because it is not compatible with the products. However, this issue can be fixed by defining an $A_\infty$-map of dg-algebras whose first component is equal to $DR$ (see \cite{Gugenheim}). Therefore, we may only expect to obtain an $A_\infty$-functor rather than a dg-functor. The other problem lies in the fact that we have to define the functor for all vector bundles, not just the flat ones. Given a vector bundle $(E, \nabla)$ there is a priori no canonical way to integrate over sections of $E$. In case the bundle is flat and we integrate over simplices there is no problem since the parallel transport canonically trivializes $E$. If the bundle is not flat some choice has to be made. To do this in a homotopy coherent manner, we define a family of smooth maps $\smiley^n : I \times I^{n} \to \Delta^{n+1}$ satisfying certain axioms (Definition \ref{AdmissibleAxioms}).\\ \\ For composable morphisms in $\mathcal P(M)^{pre}[h]$ we introduce forms $\text{hol}_{f_1,...,f_n}$ in Proposition \ref{holProp} which we call holonomy forms. These are vector bundle-valued forms on the path-space of $M$. Typically these forms are defined in terms of iterated integrals. We believe it is more straightforward to define them as solutions to differential equations. This is analogous to how parallel transport is defined: Either locally as an iterated integral or as a solution to a differential equation. A very simple case of holonomy forms appear in the standard proof that de Rham cohomology is homotopy invariant in the following way: Consider the inclusions $\iota_0 : M \to M \times I$ and $\iota_1 : M \to M \times I$. $\Omega(\iota_0)$ and $\Omega(\iota_1)$ are homotopic maps of complexes witnessed by the homotopy
	\begin{align*} H(\omega) := \int_I \iota_{\partial_t}(\omega) dt.\end{align*}
	This $H(\omega)$ is, in our formulation, precisely the pullback of the holonomy form $\text{hol}_\omega$ (up to sign) to $M \times I$ which can be seen as follows. Define more generally for $s,t \in I$
	\begin{align*} H(\omega)(s,t) := \int_{[s,t]} \iota_{\partial_t'}(\omega) dt'.\end{align*}
	Then $\partial_t H(\omega)(s,t) = \iota_{\partial_t}(\omega)$ and $H(\omega)(t,t) = 0$ which agrees with the definition in \ref{holProp} up to sign. Then we prove:
	\begin{thm*}[\ref{hCurvedDGEquivalence}]
		The induced $A_\infty$-functor 
		\begin{align*}\mathcal{RH}(M)^{\infty}[h] : \mathcal P(M)^{\infty}[h] \to \text{Loc}(M)^{\infty}[H]\end{align*}
		is a quasi-equivalence of dg-categories. 
	\end{thm*}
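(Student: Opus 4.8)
The plan is to deduce the statement from the abstract comparison result, Theorem~\ref{EquivalenceTheorem}. By construction the functor in question is the twisted-complex extension
\begin{align*}
\mathcal{RH}(M)^{\infty}[h] = \text{Tw}\bigl(\mathcal{RH}(M)^{pre}[h]\bigr)
\end{align*}
of an $A_\infty$-functor $\mathcal{RH}(M)^{pre}[h] : \mathcal P(M)^{pre}[h] \to \text{Loc}(M)^{pre}[H]$ between cdg-categories, whose first component is assembled from the de Rham integration map and the holonomy forms of Proposition~\ref{holProp}. It therefore suffices to verify the hypotheses of Theorem~\ref{EquivalenceTheorem} for this cdg-functor: that source and target are non-negatively graded, sufficiently Maurer-Cartan and split, that $\mathcal{RH}(M)^{pre}[h]$ is proper, and that its degree-$0$ restriction is a quasi-equivalence.

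First I would dispatch the structural hypotheses, which are essentially formal. Non-negativity of the grading is immediate, since the Hom-complexes are built from differential forms, respectively singular cochains, both concentrated in non-negative degrees. The sufficiently-Maurer-Cartan property is precisely the statement that a connection may be twisted by a degree-$1$ endomorphism into a superconnection (resp. that the higher structure maps of an $\infty$-local system are recorded by Maurer-Cartan elements), which holds by the definition of these cdg-categories. The split condition follows because any short exact sequence of smooth vector bundles admits a smooth, not necessarily flat and hence not necessarily closed, splitting via a partition of unity, and analogously on the local-systems side. Properness of $\mathcal{RH}(M)^{pre}[h]$ amounts to checking that it respects this grading data, which is built into its construction from the admissible maps $\smiley^n$ of Definition~\ref{AdmissibleAxioms}.

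The heart of the argument is the degree-$0$ restriction $\mathcal{RH}^{0} : \mathcal P(M)^{0}[h] \to \text{Loc}(M)^{0}[H]$, whose objects are projectively flat vector bundles of curvature $h$ on one side and curved local systems concentrated in degree $0$ on the other. Quasi-full-faithfulness rests on the observation recalled in the introduction: for projectively flat bundles $E, F$ with curvatures $h\cdot\text{id}_E$ and $h\cdot\text{id}_F$, the induced connection on $\text{Hom}(E,F)$ has curvature $h\cdot\text{id} - h\cdot\text{id} = 0$ and is genuinely flat. Hence the Hom-complex is the ordinary flat-coefficient de Rham complex $\Omega^*(M, \text{Hom}(E,F))$, and the first component of $\mathcal{RH}^{0}$ is the generalized de Rham integration map, which is a quasi-isomorphism by the de Rham theorem with local coefficients in the form of Gugenheim's $A_\infty$ de Rham theorem. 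For quasi-essential surjectivity I would invoke the ungraded projectively flat Riemann-Hilbert correspondence: every curved local system in degree $0$ with the prescribed cocycle $H$ is isomorphic to one coming from a projectively flat bundle, obtained by the associated-bundle construction from the corresponding projective representation of $\pi_1(M; x_0)$.

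The main obstacle I anticipate is twofold, and both parts live at degree $0$. The first is showing that the first component of $\mathcal{RH}^{0}$ really is a quasi-isomorphism of complexes and not merely a chain map: this is the Gugenheim-type argument, where the holonomy forms and the admissible maps $\smiley^n$ must be shown to furnish a homotopy inverse compatible with the non-flat parallel transport used to trivialize $E$ along simplices. The second is the essential-surjectivity step, where one must reconstruct a projectively flat connection realizing a given projective holonomy while keeping its curvature equal to the fixed form $h$; here the interplay between the chosen cocycle $H$ on the combinatorial side and the form $h$ on the analytic side is the delicate point, and one must check that these are matched under the correspondence so that Theorem~\ref{EquivalenceTheorem} applies verbatim.
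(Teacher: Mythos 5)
Your overall strategy is the paper's: reduce to Theorem \ref{EquivalenceTheorem} by checking that both cdg-categories are sufficiently Maurer-Cartan and split (Lemmas \ref{lemmaP}, \ref{lemmaLoc}, \ref{isSpectral}), that the functor is proper (which follows since $\text{hol}_{f_1,\dots,f_m}$ vanishes whenever some $|f_i|=0$, cf.\ Theorem \ref{Atheorem}), and that the degree-$0$ restriction is a quasi-equivalence. Your treatment of quasi-full-faithfulness also matches the paper: flatness of $\text{Hom}(E,F)$ reduces the claim to the de Rham quasi-isomorphism with local coefficients, via the explicit comparison isomorphism of Lemma \ref{relateLemma}.

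The gap is in essential surjectivity of $\mathcal{RH}^{0}$. You propose to ``invoke the ungraded projectively flat Riemann-Hilbert correspondence'' and build the bundle as an associated bundle from the induced projective representation of $\pi_1(M;x_0)$. This is not an available input: in this paper that correspondence (Theorem \ref{TheoremUngraded}) is a downstream consequence of the present theorem, and it is moreover \emph{false} without a restriction on curvature classes --- Example \ref{counterexampleProj} exhibits a projective representation of $\pi_1$ that comes from no projectively flat bundle, so the ``associated-bundle construction'' you appeal to does not exist in general and only works here because $H = e^{DR(h)}$ is matched to an honest $2$-form; that matching is exactly what must be exploited, not assumed. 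Concretely, two things have to be proved that your sketch omits. First, an object $(E,P)$ of $\text{Loc}(M)^{0}[H]$ is a priori only a set-theoretic assignment of vector spaces and linear maps; one must show that $P$ is smooth as a map of diffeological spaces $PM(x,y)\to\text{Hom}(E_x,E_y)$ (this is the unlabelled lemma preceding Lemma \ref{lemmaQes}, proved by solving $d(P)=\text{hol}_h P$), so that the transition functions of $F:=\sqcup_x E_x$ are smooth. Second, one must produce a connection whose curvature is \emph{equal} to the fixed form $h$ (not merely cohomologous to it) and whose parallel transport reproduces $P$ exactly; the paper does this chart by chart via the radial Poincar\'e primitive $\omega_p(v)=\int_0^1 t\,h_{tp}(p,v)\,dt$, whose key property $\omega_x(x)=0$ forces the parallel transport of $d+\omega$ along radial paths to be the identity and hence to agree with the chosen trivialization. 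Without these two steps the essential-surjectivity half of the hypothesis of Theorem \ref{EquivalenceTheorem} is not established.
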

	The following section \ref{sectionProjRepMonoids} deals with representations of the singular simplicial set of the Moore loop space $\Omega_{x_0}^M(M)$ which is a simplicial monoid by concatentation. The Moore loop space is more convenient than the actual loop space because the composition of paths is associative. The notion of representations will be defined for arbitrary simplicial monoids. In particular, we will have a notion of curved representations of a simplicial monoid. Again using the holonomy forms of the previous section, we define an $A_\infty$-functor and obtain:
	\begin{thm*}[\ref{hTheorem2}]
		The induced $A_\infty$-functor
		\begin{align*}\text{Rep}(M)[h] : \mathcal P(M)^{\infty}[h] \to \text{Rep}(\Omega_{x_0}^M(M))^{\infty}[e^{-H}] \end{align*}
		of dg-categories is a quasi-equivalence.
	\end{thm*}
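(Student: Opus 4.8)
The plan is to deduce the statement from the general framework of Theorem~\ref{EquivalenceTheorem}, in exactly the same way that Theorem~\ref{hCurvedDGEquivalence} is obtained for $\mathcal{RH}(M)^\infty[h]$. By construction $\text{Rep}(M)[h]$ is induced by a proper $A_\infty$-functor of cdg-categories $\mathcal P(M)^{pre}[h] \to \text{Rep}(\Omega_{x_0}^M(M))^{pre}[e^{-H}]$ assembled from the holonomy forms of Proposition~\ref{holProp}, so it suffices to check that source and target satisfy the hypotheses of that theorem and that the induced degree-$0$ functor $\mathcal F^0$ is a quasi-equivalence. Since the source $\mathcal P(M)^{pre}[h]$ is identical to the one appearing in Theorem~\ref{hCurvedDGEquivalence}, the only genuinely new work concerns the target category and the behaviour of the functor in degree $0$.

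First I would verify that $\text{Rep}(\Omega_{x_0}^M(M))^{pre}[e^{-H}]$ is non-negatively graded, sufficiently Maurer-Cartan and split. The Maurer-Cartan property --- that a representation may be twisted by a degree-$1$ endomorphism without leaving the category --- holds because the morphism complexes of curved representations are closed under the convolution differential, so the twisted object is again a curved representation. For the split condition it is enough to produce, for each relevant short exact sequence of representations, a splitting at the level of the underlying graded $\mathbb K$-modules; such a splitting always exists and, crucially, need not be compatible with the differentials, which is precisely why the cdg-formalism is used here. Properness of the $A_\infty$-functor follows, as for $\mathcal{RH}(M)^\infty[h]$, from the local finiteness of the holonomy forms.

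The crux is then the degree-$0$ quasi-equivalence $\mathcal F^0 : \mathcal P(M)^0[h] \to \text{Rep}(\Omega_{x_0}^M(M))^0[e^{-H}]$. On objects $\mathcal F^0$ sends a projectively flat bundle $(E,\nabla)$ of curvature $h\cdot\text{Id}$ to its parallel transport; because the curvature is the scalar $h$, transport around loops is only projectively multiplicative, so $\mathcal F^0$ takes values in projective representations of $\pi_1(M;x_0)$ with $2$-cocycle $e^{-H}$, after identifying degree-$0$ representations of $\text{Sing}(\Omega_{x_0}^M(M))$ with representations of $\pi_0 = \pi_1(M;x_0)$. This is exactly the ungraded projective Riemann-Hilbert correspondence. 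Essential surjectivity I would obtain by reconstructing a projectively flat bundle from a projective representation through the associated-bundle construction over the universal cover, twisted by $e^{-H}$. For full faithfulness at the level of cohomology, I would identify the morphism complex between two such representations with a bar complex of $C_*(\Omega_{x_0}^M(M))$ with coefficients in $\text{Hom}(E,F)$, and show that the holonomy-form map out of $\Omega^*(M,\text{Hom}(E,F))$, equipped with the flat twisted-de-Rham differential coming from the genuinely flat bundle $\text{Hom}(E,F)$, is a quasi-isomorphism onto it.

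The hard part will be this last quasi-isomorphism, which is a curved analogue of Chen's iterated-integral theorem (cf.~\cite{Gugenheim}): it asserts that the holonomy-form map computes the cohomology of the based loop space with the relevant twisted coefficients and identifies it with the twisted de Rham cohomology of $M$. I expect to establish it by a filtration argument on the bar degree, reducing to the uncurved comparison of \cite{AbadSchaetz} together with the observation that the $e^{-H}$-twist only affects the scalar part of the holonomy and hence does not alter the cohomology computed with coefficients in the flat bundle $\text{Hom}(E,F)$. A secondary obstacle is the verification of the split hypothesis for $\text{Rep}(\Omega_{x_0}^M(M))^{pre}[e^{-H}]$ with splittings that are allowed to be non-closed, which is the technical point where passing through Theorem~\ref{EquivalenceTheorem} rather than arguing directly becomes essential.
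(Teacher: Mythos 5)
Your proposal follows essentially the same route as the paper: Theorem \ref{EquivalenceTheorem} is applied after checking that $\text{Rep}(\Omega_{x_0}^M(M))^{pre}[e^{-H}]$ is sufficiently Maurer--Cartan and split (Lemmas \ref{RepIsCdg} and \ref{repSpectral}), essential surjectivity in degree $0$ is proved by rebuilding a projectively flat bundle from its parallel transport (Lemma \ref{qEss}), and quasi-fully-faithfulness is obtained by identifying the curved morphism complex with the flat one with coefficients in the genuinely flat bundle $\text{Hom}(E,F)$ (Lemma \ref{relateLemma2}) and then citing the uncurved comparison of Abad--Sch\"atz. The only cosmetic differences are that the paper reconstructs the bundle from local trivializations via Poincar\'e-lemma primitives of $h$ rather than an associated-bundle construction over the universal cover, and that on Moore loops parallel transport is strictly (not merely projectively) multiplicative, the cocycle $e^{-H}$ entering only through homotopies of loops.
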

	We will then consider a category $\text{PF}_{\infty}(M)$ consisting of graded vector bundles with arbitrary (scalar) curvatures and the category $\text{LPRep}(\Omega^M_{x_0}(M))$ of representations with arbitrary logarithmic curvatures. The curvatures of representations of a simplicial monoid $S$ are parametrised by $H^2(B^*(C_*(S), \mathbb C^*))$ where $B^*$ denotes the dual of the bar construction. We say that a curvature element is logarithmic if it is in the image of the exponential map $H^2(B^*(C_*(S), \mathbb C)) \to H^2(B^*(C_*(S), \mathbb C^*))$. In this category, there can be morphisms between vector bundles having different curvatures. Furthermore, for any line bundle $L$ we have that any vector bundle $E$ is isomorphic to $E \otimes L$ in this category. In the latter category, roughly speaking, representations are isomorphic if they are projectively the same. We conlude the section with our main theorem:
	\begin{thm*}[\ref{PFPrep}]
		\begin{align*}\text{PRep}(M) : \text{PF}_{\infty}(M) \to \text{LPRep}(\Omega^M_{x_0}(M))\end{align*}
		is an equivalence of categories.
	\end{thm*}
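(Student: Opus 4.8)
The plan is to assemble the functor $\text{PRep}(M)$ from the per-curvature quasi-equivalences already established and then to verify that the extra structure --- variable curvatures together with the identifications $E \cong E \otimes L$ on the bundle side and projective equivalence on the representation side --- is respected and matched. First I would pin down the correspondence on curvatures. A scalar curvature is a closed form $h \in \Omega_{cl}^2(M)$, and under the de Rham-type isomorphism implicit in the construction of $\text{Rep}(M)[h]$ its class $[h] \in H^2(M,\mathbb{C})$ corresponds to a class $H \in H^2(B^*(C_*(S),\mathbb{C}))$, where $S$ denotes the singular simplicial set of the Moore loop space; the associated logarithmic curvature is then $e^{-H}$. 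Since logarithmic curvatures are by definition exactly the image of $\exp\colon H^2(B^*(C_*(S),\mathbb{C})) \to H^2(B^*(C_*(S),\mathbb{C}^*))$ and de Rham is an isomorphism, the assignment $h \mapsto e^{-H}$ surjects onto the curvatures indexing the objects of $\text{LPRep}(\Omega^M_{x_0}(M))$.

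On objects I would define $\text{PRep}(M)$ to send a graded vector bundle $E$ with projectively flat connection of curvature $h$ to $\text{Rep}(M)[h](E)$, which carries the logarithmic curvature $e^{-H}$; here $\text{Rep}(M)[h]$ is the functor of Theorem \ref{hTheorem2} (equivalently the composite with Theorem \ref{hCurvedDGEquivalence}). On morphisms between objects of equal curvature the functor is inherited from $H^0$ of this quasi-equivalence. For morphisms between objects of different curvatures $h,h'$ I would use the relation $E \cong E \otimes L$ to first twist by a line bundle $(L,\nabla_L)$ whose curvature equalizes $h$ and $h'$ --- possible precisely when $[h]-[h']$ lies in the image of the first Chern class, i.e.\ is $2\pi i$ times an integral class --- and then apply the equal-curvature functor. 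One must check this is independent of the chosen twist and compatible with composition, which uses functoriality of the line-bundle action.

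Essential surjectivity then follows by combining the surjectivity of $h \mapsto e^{-H}$ onto logarithmic curvatures with the essential surjectivity of each per-curvature functor $\text{Rep}(M)[h]$: any object of $\text{LPRep}(\Omega^M_{x_0}(M))$ has curvature $e^{-H}$ for some $H$ arising from a closed form $h$, and is hit up to isomorphism by an $E$ of curvature $h$. For full faithfulness I would argue that $\text{Hom}_{\text{PF}_{\infty}(M)}(E,F)$ reduces, after equalizing curvatures by the twisting relation, to $H^0$ of the morphism complex in $\mathcal P(M)^{\infty}[h]$, on which $\text{Rep}(M)[h]$ is fully faithful; the twist is matched on the representation side by tensoring with the rank-$1$ representation attached to $L$, which is invisible projectively.

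The main obstacle is showing that the two distinct identifications correspond exactly --- that twisting by line bundles on the bundle side induces precisely the projective equivalence on the representation side, with no identifications lost or spuriously created. Concretely, I expect the crux to be verifying that the Riemann-Hilbert image of a line bundle $(L,\nabla_L)$ with curvature $\eta$ is a $1$-dimensional representation with logarithmic curvature $e^{-[\eta]}$, that tensoring $E$ by $L$ corresponds to tensoring the representation by this character, and that such twists exhaust the projective scaling: flat line bundles account for the characters in $H^1(M,\mathbb{C}^*)$, while non-flat ones realize the integral curvature shifts. Establishing this dictionary --- in effect, that $\exp$ intertwines the Picard action on the curvatures $h$ with the projective action on the logarithmic curvatures $e^{-H}$ --- is the delicate point on which the equivalence of the two quotient categories ultimately rests.
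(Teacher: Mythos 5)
Your proposal follows essentially the same route as the paper's proof: essential surjectivity is obtained by showing every logarithmic curvature element is cohomologous to some $e^{-H_h}$ (via the de Rham quasi-isomorphism, i.e.\ the quasi-fully-faithfulness of $\text{Rep}^0(M)[0]$) and then invoking Theorem \ref{hTheorem2}, while fullness and faithfulness reduce to matching line-bundle twists with representation-side twists by applying the rank-one, degree-zero case of the per-curvature equivalence. The ``crux'' you single out is exactly how the paper closes the argument, so the approaches coincide.
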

	On the right side, we restrict to logarithmic projective representations, this is a restriction only on the curvature and will be discussed shortly. \\ \\
	In the next section, we want to reduce our results to the 1-categorical case, i.e., we restrict to (ungraded) projectively flat vector bundles. In this case, projectively flat vector bundles correspond to projective representations of the fundamental group. Similarly to the graded case, we have to restrict to certain curvature elements, see below for more details. The discussion will amount to the following theorem:
	\begin{thm*}[\ref{TheoremUngraded}]
		The category of projectively flat vector bundles $\text{PF}_{0}(M)$ is equivalent to the category $PRep^0_{H^2_B(\pi_1(M), \mathbb C^*)}(\pi_1(M))$ of projective representations of $\pi_1(M)$ whose curvatures belong to $H^2_B(\pi_1(M), \mathbb C^*)$. These are precisely the classes in $H^2(\pi_1(M), \mathbb C^*)$ that get mapped to $0$ under the composition $H^2(\pi_1(M), \mathbb C^*) \to H^2(M, \mathbb C^*) \to H^3(M, \mathbb Z)$.
	\end{thm*}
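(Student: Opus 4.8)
The plan is to obtain the equivalence as the degree-$0$ shadow of the already-established quasi-equivalence of Theorem \ref{PFPrep} (equivalently of $\text{Rep}(M)[h]$ from Theorem \ref{hTheorem2}), and then to supply the genuinely $1$-categorical content, namely the cohomological characterization of the admissible curvatures. Concretely, a degree-$0$ object of $\text{PF}_\infty(M)$ is an ordinary vector bundle $E$ with a projectively flat connection $\nabla$ satisfying $\nabla^2 = h\cdot \mathrm{id}_E$ for some $h \in \Omega^2_{cl}(M)$, so these are precisely the objects of $\text{PF}_0(M)$; on the other side, a representation of $\text{Sing}(\Omega_{x_0}^M(M))$ concentrated in degree $0$ descends, after passing to $\pi_0(\Omega_{x_0}^M(M)) = \pi_1(M; x_0)$ and to $H_0$ of the relevant bar complex, to a projective representation of $\pi_1(M)$. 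The functor is the restriction of $\text{Rep}(M)[h]$, which on objects is projective parallel transport: fixing $V := E_{x_0}$, parallel transport along loops based at $x_0$ defines the holonomy $\text{hol}\colon \Omega_{x_0}^M(M) \to \mathrm{GL}(V)$.

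First I would verify that this descends to a projective representation. Because the curvature is the scalar $h$, the non-abelian Stokes argument collapses to an abelian one: the holonomy around a null-homotopic loop $\gamma = \partial D$ is the central element $\exp\!\big(\int_D h\big)\cdot \mathrm{id}_V$. Hence the composite with $\mathrm{GL}(V) \to \mathrm{PGL}(V)$ is constant on homotopy classes and yields $\rho\colon \pi_1(M) \to \mathrm{PGL}(V)$. Choosing a representative loop in each class and lifting to $\mathrm{GL}(V)$ produces a cocycle $c\colon \pi_1(M)\times\pi_1(M) \to \mathbb{C}^*$, and by the same argument each value $c([\gamma_1],[\gamma_2])$ equals $\exp\!\big(\int_\sigma h\big)$ for a $2$-simplex $\sigma$ witnessing the two bracketings; this is exactly the component $\text{hol}_{f_1,f_2}$ of the holonomy forms of Proposition \ref{holProp} evaluated on the relevant generators. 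Thus, after pulling $h$ back along the classifying map, $[c] \in H^2(\pi_1(M), \mathbb{C}^*)$ is the image under $\exp$ of a de Rham class, i.e. logarithmic.

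The identification of admissible curvatures is the cohomological core. Let $f\colon M \to B\pi_1(M)$ be the classifying map of the universal cover $\widetilde M$; since $\widetilde M$ is simply connected, the five-term exact sequence shows $f^*$ is injective on $H^2(-, \mathbb{C}^*)$, so the curvature class is faithfully recorded on $M$. Comparing the exponential long exact sequences of $\pi_1(M)$ and of $M$ via $f$ and using naturality of the Bockstein $\beta$, the previous paragraph gives $f^*[c] = \exp_*[h]_{dR}$ and hence $\beta_M(f^*[c]) = f^*\beta_{\pi_1}[c] = 0$, which proves one inclusion. Conversely, if $\beta_M(f^*[c]) = 0$ then $f^*[c] = \exp_*[\omega]$ for some $[\omega] \in H^2_{dR}(M, \mathbb{C})$, and the realization problem becomes: lift the flat $\mathrm{PGL}(V)$-bundle determined by $\rho$ to a smooth $\mathrm{GL}(V)$-bundle $E$. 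The obstruction along the central extension $1 \to \mathbb{C}^* \to \mathrm{GL}(V) \to \mathrm{PGL}(V) \to 1$ is exactly the Dixmier–Douady-type class $\beta_M(f^*[c]) \in H^3(M, \mathbb{Z})$; when it vanishes one lifts the flat projective connection to a connection on $E$ whose curvature is central, hence a scalar $2$-form, and whose de Rham class may be arranged to represent $[\omega]$ while keeping the projective holonomy equal to $\rho$. This produces an object of $\text{PF}_0(M)$ realizing $[c]$, establishing essential surjectivity and simultaneously identifying $H^2_B(\pi_1(M), \mathbb{C}^*)$ with $\ker(\beta_M \circ f^*)$, the logarithmic classes.

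Full faithfulness then reduces to the classical, now projective, Riemann–Hilbert bijection: a morphism $E \to F$ in $\text{PF}_0(M)$ is a $\nabla$-parallel bundle map, i.e. a degree-$0$ cohomology class of the flat complex $\Omega^*(M, \text{Hom}(E,F))$ (using that $\text{Hom}(E,F)$ is genuinely flat when $E$ and $F$ carry the same curvature, together with the adjoined twists $E \simeq E\otimes L$ mediating between different curvatures), and such parallel sections correspond exactly to the $\pi_1(M)$-equivariant, hence projectively intertwining, maps $V \to W$ of the holonomy data. I expect the main obstacle to be the realization step of the third paragraph: controlling the lift of the flat $\mathrm{PGL}(V)$-bundle and, after lifting, prescribing the curvature $2$-form within its de Rham class while preserving the projective holonomy $\rho$. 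This is precisely where the restriction to logarithmic curvatures is forced, and where the interaction between the group-cohomological class $[c]$ and the de Rham class of $h$ must be carefully reconciled.
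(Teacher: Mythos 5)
Your proposal reaches the right statement but by a genuinely different route from the paper. The paper's own proof is a two-line assembly: Theorem \ref{PFPrep} restricted to objects concentrated in degree $0$ gives $\text{PF}_0(M)\simeq \text{LPRep}^0(\Omega^M_{x_0}(M))$, and Proposition \ref{PropPiSame} (resting on Propositions \ref{propRep0} and \ref{propLongExact}) gives $\text{LPRep}^0(\Omega^M_{x_0}(M))\simeq PRep^0_{H^2_B(\pi_1(M),\mathbb C^*)}(\pi_1(M))$; the identification of $H^2_B$ with $\ker(H^2(\pi_1(M),\mathbb C^*)\to H^2(M,\mathbb C^*)\to H^3(M,\mathbb Z))$ is read off from the long exact sequence of Proposition \ref{propLongExact}. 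You use the first of these two equivalences but replace the second by a direct differential-geometric argument: centrality of the curvature makes the projectivized holonomy homotopy-invariant, the lifting obstruction for the associated flat $\mathrm{PGL}(V)$-bundle along $1\to\mathbb C^*\to \mathrm{GL}(V)\to \mathrm{PGL}(V)\to 1$ is the Dixmier--Douady class $\beta_M(f^*[c])$, and a lift of the connection automatically has central, hence scalar, curvature. This buys a concrete geometric picture of why exactly the classes killed by $\beta_M\circ f^*$ occur, which the paper obtains only formally; what it costs is that you must redo essential surjectivity and full faithfulness by hand rather than inheriting them from the simplicial-monoid machinery. Note also that for essential surjectivity you do not actually need to prescribe the de Rham class of the lifted curvature: isomorphism in $PRep^0$ only requires the curvature cocycles to be cohomologous via a twist, and any two lifts of $\rho$ to $\mathrm{GL}(V)$ yield cohomologous cocycles, so the ``reconciliation'' you worry about in your last paragraph is not needed.

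There are two points where your argument is genuinely incomplete. First, the realization step: the claim that the obstruction class of the flat $\mathrm{PGL}(V)$-bundle in $H^2(M,\underline{\mathbb C^*}_{sm})\cong H^3(M,\mathbb Z)$ coincides with $\beta_M(f^*[c])$ requires comparing the connecting map of the smooth exponential sequence with the Bockstein of the constant one; this is standard but must be stated, since it is exactly the content of the factorization through $H^2(M,\mathbb C^*)$ asserted in the theorem. Second, and more substantively, your full-faithfulness paragraph only treats the fixed-twist component. Morphisms in $\text{PF}_0(M)$ are indexed by isomorphism classes of line bundles with connection of curvature $h'-h$, while morphisms in $PRep^0$ are indexed by twists from $c$ to $c'$; faithfulness requires that two such line bundles induce the same twist if and only if they are isomorphic, and fullness requires every twist to arise from a line bundle. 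Both are the rank-one case of the correspondence you are proving and are supplied in the paper by the equivalence $H^0(\text{Rep}(M)^0[h'-h])$ (see the proof of Theorem \ref{PFPrep}); without this your bijection on $\text{Hom}$-sets is only established componentwise over a matching of index sets that has not been constructed.
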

	In Example \ref{counterexampleProj} we will see that this restriction to certain curvatures is necessary. In particular, this shows that not every projectively flat representation of $\pi_1(M; x_0)$ arises from a projectively flat vector bundle. We will also see that projectively flat vector bundles with coefficients in $\mathbb R$ are no more interesting than flat vector bundles:
	\begin{prop*}[\ref{realBoring}]
		Let $\mathbb K = \mathbb R$. Let $(E, \nabla)$ be a non-zero projectively flat vector bundle. Then there is a 1-form $\omega \in \Omega^1(M)$ such that
		\begin{align*} \nabla' := \nabla - \omega\end{align*}
		is a flat connection on $E$. In particular, the curvature $h$ of $\nabla$ is equal to $d \omega$, i.e., $h$ is an exact 2-form.
	\end{prop*}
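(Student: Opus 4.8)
The plan is to reduce the statement to the vanishing of the first Chern class of a real vector bundle. Recall that projective flatness of $\nabla$ means its curvature has the form $F_\nabla = h \cdot \text{Id}_E$ for the scalar two-form $h \in \Omega^2(M)$; taking traces in the Bianchi identity then forces $dh = 0$. Since twisting $\nabla$ by a scalar one-form shifts its curvature additively, everything comes down to showing that the closed form $h$ is \emph{exact}, which is precisely where the hypothesis $\mathbb K = \mathbb R$ must enter.

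First I would record the elementary curvature computation. For any scalar one-form $\omega$, writing $\nabla' := \nabla - \omega \cdot \text{Id}_E$ as $\nabla + A$ with $A = -\omega \cdot \text{Id}_E$, the standard formula $F_{\nabla'} = F_\nabla + d^\nabla A + A \wedge A$ simplifies: we have $A \wedge A = (\omega \wedge \omega)\cdot \text{Id}_E = 0$ because $\omega$ is a one-form, and $d^\nabla A = -d\omega \cdot \text{Id}_E$ because $\text{Id}_E$ is $\nabla$-parallel. Hence $F_{\nabla'} = F_\nabla - d\omega \cdot \text{Id}_E = (h - d\omega)\cdot \text{Id}_E$, so $\nabla'$ is flat if and only if $d\omega = h$.

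The core of the proof is the exactness of $h$. Taking the trace of $F_\nabla = h \cdot \text{Id}_E$ gives $\text{tr}(F_\nabla) = (\text{rk}\,E)\cdot h$, and $\text{rk}\,E \geq 1$ since $E \neq 0$. By Chern--Weil theory the de Rham class of the closed form $\text{tr}(F_\nabla)$ is independent of the connection: for two connections differing by an $\text{End}(E)$-valued one-form $A$, the difference of traces equals $d(\text{tr}\,A)$, using that the trace commutes with the covariant derivative. Now I would equip the real bundle $E$ with a Riemannian metric and a compatible connection $\nabla_0$; its curvature takes values in skew-symmetric endomorphisms, so $\text{tr}(F_{\nabla_0}) = 0$ identically. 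Therefore $[\text{tr}(F_\nabla)] = [\text{tr}(F_{\nabla_0})] = 0$ in $H^2_{dR}(M)$, so $\text{tr}(F_\nabla)$ is exact, and hence so is $h = \tfrac{1}{\text{rk}\,E}\,\text{tr}(F_\nabla)$.

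To finish, write $h = d\omega$ for a global $\omega \in \Omega^1(M)$ and set $\nabla' = \nabla - \omega \cdot \text{Id}_E$; the first step shows $\nabla'$ is flat, and the relation $h = d\omega$ is exactly the claimed exactness. I expect the only genuine obstacle to be the vanishing of $[\text{tr}(F_\nabla)]$: over $\mathbb C$ this class is a nonzero multiple of the first Chern class and need not vanish, which is precisely why the real case turns out to be no more interesting than the flat one. The metric-connection argument is the most economical way to obtain the vanishing while staying inside the real category, avoiding any appeal to complexification and the identity $c_1(\overline V) = -c_1(V)$.
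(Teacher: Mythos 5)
Your proposal is correct, and it takes a genuinely different route from the paper. The paper argues locally: it picks a good cover with trivializations $\nabla = d_\alpha + \omega_\alpha$, observes from $-T_{\alpha\beta}\, d(T_{\alpha\beta}^{-1}) = \omega_\beta - \omega_\alpha$ that each transition function is a constant matrix times a scalar function, extracts that scalar as $f_{\alpha\beta} = |\det T_{\alpha\beta}|^{1/n} > 0$ (this is where $\mathbb K = \mathbb R$ enters: the real logarithm $\log f_{\alpha\beta}$ is globally single-valued), and then kills the resulting \v Cech $1$-cocycle in the fine sheaf $\underline\Omega^0$ to assemble the global $\omega$. You instead reduce everything to the exactness of $h$ via Chern--Weil: $\operatorname{tr}(F_\nabla) = (\operatorname{rk}E)\,h$, the class $[\operatorname{tr}(F_\nabla)]$ is connection-independent because $\operatorname{tr}(A\wedge A) = 0$ and $\operatorname{tr}(d^{\nabla}A) = d\operatorname{tr}A$, and a metric connection on the real bundle $E$ has skew-symmetric curvature with vanishing trace. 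For you, $\mathbb K = \mathbb R$ enters through skew-symmetry (over $\mathbb C$ a Hermitian connection only gives purely imaginary trace, i.e.\ $c_1$, which need not vanish --- consistent with the paper's torus example). Your argument is more economical and even produces $\omega = \tfrac{1}{\operatorname{rk}E}\operatorname{tr}(\nabla - \nabla_0)$ explicitly as a global form with no \v Cech patching; the paper's argument yields slightly more structure along the way, namely that the bundle admits local trivializations with transition functions that are constant matrices up to positive scalar factors. Both are complete proofs.
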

	Interestingly, even though this chapter concerns ungraded objects, its results have consequences for cohesive modules too. Namely from the previous Proposition we obtain:
	\begin{thm*}[\ref{ThmRealBoring}]
		Let $\mathbb K = \mathbb R$. Let $h$ be a closed real 2-form on $M$, then we have:
		\begin{align*}
			\mathcal P(M)^{\infty}[h] \simeq \begin{cases}
				\mathcal P(M)^{\infty}[0] \text{ if } [h] = 0 \text{ in } H^2_{DR}(M, \mathbb R)\\
				0 \text{ else }
			\end{cases}
		\end{align*}
	\end{thm*}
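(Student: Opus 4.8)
The plan is to combine Proposition \ref{realBoring} with the curved Riemann--Hilbert equivalence \ref{hCurvedDGEquivalence} to reduce the statement to a known computation in the flat case. The key input is that the curvature $h$ enters the construction of $\mathcal P(M)^{\infty}[h]$ only up to a controlled change of variables, and that over $\mathbb R$ projective flatness collapses to flatness after twisting by a global $1$-form.

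First I would treat the case $[h] \neq 0$ in $H^2_{DR}(M, \mathbb R)$. I claim that in this case $\mathcal P(M)^{\infty}[h]$ contains no nonzero objects up to quasi-isomorphism, so that it is quasi-equivalent to the zero dg-category. The idea is as follows. Any object of $\mathcal P(M)^{\infty}[h]$ is a twisted complex built from graded vector bundles carrying connections whose scalar curvature is the fixed form $h$. If such a nonzero bundle $(E, \nabla)$ exists, then by the argument underlying Proposition \ref{realBoring} (applied componentwise) each summand $E^i$ that is nonzero would force $h$ to be exact: there would be a $1$-form $\omega$ with $h = d\omega$. Thus the mere existence of a nonzero object with real scalar curvature $h$ implies $[h] = 0$. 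Contrapositively, if $[h] \neq 0$ then every object has all components equal to zero, hence is the zero object, and $\mathcal P(M)^{\infty}[h] \simeq 0$. The main point to verify carefully is that Proposition \ref{realBoring} is stated for a single vector bundle, so I must check that its conclusion propagates to the graded, twisted setting: each nonzero graded component of a curved cohesive module is itself a projectively flat bundle with the same scalar curvature $h$, and so the obstruction $[h]=0$ is forced by any single nonzero component.

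Next I would handle the case $[h] = 0$, say $h = d\omega$. Here the plan is to exhibit an explicit quasi-equivalence $\mathcal P(M)^{\infty}[h] \simeq \mathcal P(M)^{\infty}[0]$. The natural candidate is twisting: given a curved cohesive module with scalar curvature $h$, I subtract $\omega$ (tensored with the identity) from each connection, exactly as in Proposition \ref{realBoring}, to obtain a connection whose scalar curvature is $h - d\omega = 0$. Since the morphism complexes in $\mathcal P(M)^{\infty}[\cdot]$ are built from $\Omega^*(M, \mathrm{Hom}(E,F))$, whose differential is insensitive to an overall scalar twist of both $E$ and $F$ (the $\omega$-contributions on source and target cancel in $\mathrm{Hom}$), this assignment should extend to a dg-functor that is the identity on morphisms, and hence an isomorphism of dg-categories, not merely a quasi-equivalence. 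I would check compatibility with the full superconnection, not just the ordinary connection $\nabla$: subtracting the scalar $1$-form $\omega \cdot \mathrm{id}$ alters only the degree-one part and leaves the higher components $\mathbb E^n$ untouched, and the flatness condition $\mathbb E^2 = 0$ transforms correctly because $\omega$ is scalar and thus central.

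The main obstacle I anticipate is the bookkeeping in the $[h]=0$ case: ensuring that twisting by $\omega \cdot \mathrm{id}$ is genuinely a functor of twisted dg-categories, i.e.\ that it respects the twisted differentials and the $A_\infty$-composition inherited from the $\mathrm{Tw}$ construction of Definition \ref{defAssociatedDG}, and that it is inverse to twisting by $-\omega$. A secondary subtlety is whether $\omega$ can be chosen globally and uniformly across all objects; since $h$ is a fixed closed form and $[h]=0$ means $h$ is \emph{globally} exact, a single primitive $\omega$ works for every object simultaneously, which is what makes the construction functorial. Once these points are settled, both cases together yield the stated dichotomy.
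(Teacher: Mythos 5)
Your treatment of the case $[h]=0$ is fine and agrees with the paper: twisting every connection by a global primitive $\omega$ of $h$ is exactly the functor $\mathcal P^{pre}[\omega]$ of the paper, which is the identity on morphisms and strictly inverse to $\mathcal P^{pre}[-\omega]$, hence an isomorphism of cdg-categories and of the associated twisted dg-categories.

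The case $[h]\neq 0$, however, rests on a false premise. You assert that each nonzero graded component of an object of $\mathcal P(M)^{\infty}[h]$ is a projectively flat bundle with scalar curvature $h$, so that Proposition \ref{realBoring} applies componentwise. This is not what the objects of $\text{Tw}(\mathcal P(M)^{pre}[h])$ are: the underlying objects $(E^i,\nabla^i)$ of the cdg-category $\mathcal P(M)^{pre}[h]$ are \emph{arbitrary} vector bundles with \emph{arbitrary} connections, and the condition imposed on a twisted complex $(E^*,\mathbb E)$ is only the Maurer--Cartan equation $\nabla(\mathbb E)+\mathbb E^2+R_E=0$ with $R_{E^i}=(\nabla^i)^2-h$. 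Its degree $(0,2)$ component reads $\mathbb E_0\mathbb E_2+\mathbb E_2\mathbb E_0+R_E=0$, so the curvature defect $(\nabla^i)^2-h$ need not vanish; it can be absorbed by $\mathbb E_2$. Consequently there are plenty of nonzero objects whose components are not projectively flat, and your contrapositive does not go through. The missing ingredient is Proposition \ref{trivialDiff}: since $\mathcal P(M)^{pre}[h]$ is sufficiently Maurer--Cartan and split (Lemmas \ref{lemmaP} and \ref{isSpectral}), every object is \emph{homotopy equivalent} to one $(H^*,\mathbb H)$ with $\mathbb H^0=0$, and only for such an object does the degree $(0,2)$ equation force $R_{H^i}=0$, i.e.\ $(\nabla^i)^2=h\otimes\mathbbm 1$. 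At that point Proposition \ref{realBoring} applies and shows each nonzero $H^i$ would make $h$ exact, so $[h]\neq 0$ forces $H^i=0$ and the original object is homotopy equivalent to $0$ (not literally zero). This is exactly the paper's argument; without the strictification step your proof of the vanishing half of the dichotomy does not close.
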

	
	In the last section we show that twisted sheaves over $\mathbb K$ correspond to cohesive modules over the cdga $(\Omega^*(M),h)$. The method of proof is largely motivated by the paper \cite{RiemannHilbert} (which however does not consider curvature). Our approach will be slightly different as we do not use model category theory; instead we will work with the notion and existence of $K$-projectives, $K$-injectives and $K$-flat resolutions. The obtained correspondence is given by an equivalence between the homotopy category of cohesive modules and a subcategory of the derived category for sheaves. Roughly speaking, the general situation is as follows. We consider a sheaf $\mathcal R$ of commutative algebras and a soft resolution of this sheaf $\mathcal R \to \mathcal A^*$ where $\mathcal A^*$ is a graded commutative algebra. Letting $h \in A^2_{cl}$ we can twist the algebra $A^*$ by $h$ to obtain a curved differentially graded algebra $(A^*, h)$. We show that twisting by $h$ corresponds to a gerbe in $\mathcal R$ and thus consider the derived category of sheaves, twisted by this gerbe. This section can be read independently of the previous ones and is of seperate interest. We show: 
	\begin{thm*}[\ref{MainEq}]
		Let $\mathscr R$ be a sheaf of $k$-algebras and let $\mathscr R \to \mathscr A^*$ be a quasi-isomorphism of sheaves of dg-algebras on a topological space $X$. Set $A^* := \mathscr A^*(X)$ and let $h \in A^2(X)$ be closed. Under the assumptions \ref{ListAssumptions} we obtain an equivalence of categories
		\begin{align*}H^0((A^*, h)\text{-Mod}^{coh}) \to D^B_{perf}(\mathscr R)^{\check h}\end{align*}
		induced by the functor $J$ as in Definition \ref{DefJFunctor}.
	\end{thm*}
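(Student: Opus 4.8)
The plan is to show that the functor $J$ is fully faithful and essentially surjective on the homotopy categories, following the strategy of the uncurved correspondence in \cite{RiemannHilbert} while carefully tracking the gerbe twist $\check h$ determined by the curvature $h$. Throughout, the central technical input is the softness of $\mathscr A^*$ together with the finiteness hypotheses collected in \ref{ListAssumptions}, which allow us to replace the model-categorical machinery of \cite{RiemannHilbert} by the explicit $K$-projective, $K$-injective and $K$-flat resolutions announced in the introduction.

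First I would confirm that $J$ is well-defined on $H^0$ and lands in $D^B_{perf}(\mathscr R)^{\check h}$. A cohesive module over $(A^*, h)$ is a finitely generated projective graded $A^*$-module equipped with an $h$-flat $\mathbb Z$-graded connection; sheafifying via $-\otimes_{A^*}\mathscr A^*$ produces a complex of sheaves of $\mathscr A^*$-modules whose terms are direct summands of soft sheaves, hence themselves soft. Since $\mathscr R \to \mathscr A^*$ is a quasi-isomorphism, this complex represents an object of the twisted derived category $D(\mathscr R)^{\check h}$, and the finite rank and boundedness built into cohesiveness place it in $D^B_{perf}(\mathscr R)^{\check h}$. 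Chain homotopies of cohesive modules map to homotopies of complexes, and hence to equalities in the derived category, so $J$ descends to the homotopy category.

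The key step in full faithfulness is that the morphism complex of cohesive modules computes the derived Hom in the twisted derived category. Here I would use that the global-sections functor is exact on complexes of soft sheaves and carries no higher cohomology, so that $\text{RHom}$ computed in sheaves of $\mathscr A^*$-modules agrees with the global Hom-complex over $A^*$; the sheafified cohesive modules, being soft, serve as objects adapted to this computation. The quasi-isomorphism $\mathscr R \to \mathscr A^*$ then identifies $\text{RHom}_{\mathscr A^*}$ with $\text{RHom}_{\mathscr R}$ after passing to the derived category, and taking $H^0$ yields the desired bijection on morphism sets.

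I expect essential surjectivity to be the main obstacle. Given an object $\mathscr E^{\bullet} \in D^B_{perf}(\mathscr R)^{\check h}$, I would represent it by a bounded complex of finitely generated projective $\check h$-twisted $\mathscr R$-modules, tensor up to $\mathscr A^*$, and take global sections to obtain a graded $A^*$-module. Two delicate points arise: first, that global sections preserve finite generation and projectivity, which rests on softness together with the finiteness assumptions in \ref{ListAssumptions}; and second, that the assembled twisted differential is precisely an $h$-flat $\mathbb Z$-graded connection whose curvature reproduces $h$. This last compatibility---that the \v{C}ech cocycle encoding the gerbe $\check h$ pairs against the resolution $\mathscr R \to \mathscr A^*$ to recover exactly the curvature datum $h \in A^2(X)$---is where the translation between the algebraic twist and the geometric gerbe must be invoked in full, and it is the point requiring the most care.
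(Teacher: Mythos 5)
Your outline of full faithfulness is essentially the paper's argument (sheaf-Hom complexes of soft sheaves, acyclicity of global sections on bounded-below complexes of acyclic sheaves, and a local comparison via the quasi-isomorphism $\mathscr R \to \mathscr A^*$), but your essential surjectivity step starts from a false premise. You propose to ``represent $\mathscr E^\bullet$ by a bounded complex of finitely generated projective $\check h$-twisted $\mathscr R$-modules.'' Perfectness in $D^B_{\mathrm{perf}}(\mathscr R)^{\check h}$ is only a \emph{local} condition, and the sheaf $\mathscr R$ (the constant sheaf $\underline{\mathbb K}$, or $\mathcal O_X$) has essentially no global finitely generated projectives beyond free ones: on a connected space a direct summand of $\underline{\mathbb K}^{\oplus n}$ is cut out by an idempotent in $M_n(\mathbb K)$ and is therefore again constant, so already a nontrivial rank-one local system admits no such global resolution. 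The paper circumvents this by first applying $\mathscr A^0 \otimes_{\mathscr R} (-)$, landing in complexes over the \emph{soft} sheaf $\mathscr A^0$, where the partition-of-unity and covering-dimension arguments (Lemmas \ref{p8}--\ref{SoftStrict}) do produce a global strictly perfect, hence $K$-projective, replacement $S^*$; only over the soft resolution can the local data be glued.

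A second omission: even once you have a degree-zero quasi-isomorphism $f^0 : S^* \to \mathscr A^0 \otimes_{\mathscr R} M^*$ of complexes of $\mathscr A^0$-modules, you still must build the higher components of an $h$-flat $\mathbb Z$-graded connection on $\mathscr A^* \otimes_{\mathscr A^0} S^*$ and extend $f^0$ to a closed morphism of cohesive modules. This is exactly the homotopy-transfer statement, Proposition \ref{ExtendLemma}, applied with the $K$-projectivity of $S^*$, and it is a genuinely inductive construction, not a formality. By contrast, the point you single out as most delicate --- matching the \v{C}ech cocycle $\check h$ against the resolution to recover the curvature $h$ --- is dispatched by a short explicit untwisting of the gluing data by $\exp(-h^1_{\alpha\beta})$ and of the differentials by $h^0_\alpha$ (Remark \ref{remarkUntwist}); it is not where the difficulty lies. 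You should also note that landing in $D^B_{\mathrm{perf}}(\mathscr R)^{\check h}$ at all requires assumption 6 of \ref{ListAssumptions} (local homotopy equivalence to $\mathscr A^* \otimes_{\mathscr R} W^*$ with $W^*$ strictly perfect over $\mathscr R$); finite generation and projectivity over $\mathscr A^0$ alone do not yield perfectness over $\mathscr R$.
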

	The first category is the category of cohesive modules over the cdg-algebra $(A^*, h)$ and the latter is a full subcategory of the derived category of twisted sheaves of $\mathcal R$-modules whose objects are globally bounded perfect (see Definition \ref{PerfectDefinition}).
	Applying this to our main case of interest, we obtain:
	\begin{thm*}[\ref{MainEqM}]
		Let $M$ be a real, connected smooth manifold. Let $\mathbb K = \mathbb R$ or $\mathbb K = \mathbb C$ and let $h \in \Omega_{cl}^2(M, \mathbb K)$ be a closed 2-form. Then there are equivalences
		\begin{align*}H^0(\mathcal P(M)^{\infty}[h])) \to H^0((\underline \Omega^*(M, \mathbb K), h)\text{-Mod}^{\text{coh}}) \to D_{\text{fclc}}(\underline{\mathbb K})^{\check h} \end{align*}
	\end{thm*}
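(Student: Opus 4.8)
The plan is to obtain \ref{MainEqM} as a specialization of the general correspondence \ref{MainEq}, after first identifying the two left-hand categories, and I would carry this out in three stages.

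First, I would establish the leftmost equivalence $H^0(\mathcal P(M)^{\infty}[h]) \to H^0((\underline\Omega^*(M,\mathbb K),h)\text{-Mod}^{\text{coh}})$ by unwinding definitions. By construction $\mathcal P(M)^{\infty}[h] = \text{Tw}(\mathcal P(M)^{pre}[h])$, so an object is a graded vector bundle with a superconnection $\mathbb E = \nabla + \mathbb E^0 + \mathbb E^2 + \cdots$ whose curvature is the scalar $h\cdot\mathrm{id}$. Reading this datum through global sections $E \mapsto \Omega^*(M,E)$ turns it precisely into a finitely generated graded module over $A^* = \Omega^*(M,\mathbb K)$ together with a $\mathbb Z$-connection $\mathbb E$ satisfying the curved-flatness condition $\mathbb E\circ\mathbb E = h\cdot\mathrm{id}$, i.e.\ a cohesive module over $(\underline\Omega^*(M,\mathbb K),h)$. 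The matching of morphism complexes is immediate because the $\text{Hom}$-bundle between two curvature-$h$ bundles is honestly flat (as emphasized in the introduction), so passing to $H^0$ identifies the homotopy category of curved cohesive modules with $H^0$ of Block's category. I expect this to be essentially a bookkeeping identification.

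Second, I would set up the hypotheses of \ref{MainEq}. Take $X = M$, let $\mathscr R = \underline{\mathbb K}$ be the constant sheaf of $\mathbb K$-algebras, and let $\mathscr A^* = \underline\Omega^*(M,\mathbb K)$ be the sheaf of de Rham forms. The inclusion $\mathscr R \hookrightarrow \mathscr A^0 \subset \mathscr A^*$ is a quasi-isomorphism of sheaves of dg-algebras by the Poincar\'e lemma, so the de Rham complex resolves $\underline{\mathbb K}$. With $A^* = \mathscr A^*(M) = \Omega^*(M,\mathbb K)$ and the given closed $h\in A^2(M)$, Theorem \ref{MainEq} yields an equivalence $H^0((A^*,h)\text{-Mod}^{\text{coh}}) \to D^B_{perf}(\underline{\mathbb K})^{\check h}$, with $\check h$ the gerbe determined by $h$. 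The real content is checking the list of assumptions \ref{ListAssumptions}: softness (indeed fineness) of $\mathscr A^*$ follows from smooth partitions of unity on $M$, which also supply the soft and $K$-flat resolutions the general argument relies on, while graded-commutativity and the dg-algebra structure are standard. Third, I would identify $D^B_{perf}(\underline{\mathbb K})^{\check h}$ with $D_{\text{fclc}}(\underline{\mathbb K})^{\check h}$: over the constant sheaf on the locally contractible space $M$ a globally bounded perfect complex is locally quasi-isomorphic to a bounded complex of finite free $\underline{\mathbb K}$-modules, hence its cohomology sheaves are locally constant with finite-dimensional fibers, and conversely such complexes are perfect since $M$ has contractible neighborhoods; untwisting by $\check h$ is a local operation and so respects this picture.

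The step I expect to be the main obstacle is the second one: verifying in detail that the de Rham datum $(\underline{\mathbb K}\to\underline\Omega^*,h)$ meets every condition in \ref{ListAssumptions}, and in particular confirming that the gerbe $\check h$ produced by the general construction is the expected twist, matching for $\mathbb K=\mathbb C$ the image of $[h]$ under $H^2_{DR}(M,\mathbb C)\to H^2(M,\mathbb C^*)$. The remaining steps are largely a matter of transporting definitions through the global-sections functor.
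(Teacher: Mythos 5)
Your proposal follows essentially the same route as the paper: identify $\mathcal P(M)^{\infty}[h]$ with (sheaves of) cohesive modules over $(\underline\Omega^*(M,\mathbb K),h)$ via Serre--Swan (Remark \ref{remarkCohesive} and Lemma \ref{simpleDGEquivalence}), specialize Theorem \ref{MainEq} to $\mathscr R=\underline{\mathbb K}$ and $\mathscr A^*=\underline\Omega^*(M,\mathbb K)$, and identify $D^B_{\mathrm{perf}}(\underline{\mathbb K})^{\check h}$ with $D_{\mathrm{fclc}}(\underline{\mathbb K})^{\check h}$, which the paper does by a truncation induction valid over any PID. The only item you pass over is the final assumption in \ref{ListAssumptions} (the local homotopy equivalence of a cohesive module to a twisted tensor product $\underline\Omega^*\otimes_{\underline{\mathbb K}}W^*$), which the paper derives from homotopy invariance of the curved cohesive-module categories on contractible charts.
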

 	The second category is the homotopy category of the dg-category of sheaves of cohesive modules over the sheaf of cdg-algebras $(\underline \Omega^{0,*}(X), h)$. The latter category is a subcategory of the derived category of twisted sheaves of $\mathbb K$-vector spaces such that the cohomology is concentrated in finitely many degrees and each cohomology sheaf is locally constant and is locally finitely generated.
	We also apply this general method to a complex manifold $X$ to recover the following theorem, which has been claimed by Block in \cite{Block} in the case of a compact complex manifold $X$:
	\begin{thm*}[\ref{MainEqX}]
		Let $X$ be a complex, connected smooth manifold. Let $h \in \Omega_{cl}^{0,2}(M)$ be a $\bar \partial$-+closed $(0,2)$-form. Then there are equivalences
		\begin{align*}H^0((\Omega^{0,*}(X), h)\text{-Mod}^{\text{coh}}) \to H^0((\underline \Omega^{0,*}(X), h)\text{-Mod}^{\text{coh}}) \to D^B_{\text{perf}}(X)^{\check h}.\end{align*}
		The latter category embeds fully faithfully into the bounded derived category $D^B_{\text{coh}}(X)^{\check h}$ of objects whose cohomology sheaves are coherent. This embedding is an equivalence if $X$ is compact.
	\end{thm*}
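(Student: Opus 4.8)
The plan is to prove Theorem \ref{MainEqX} by reducing it to the general framework of Theorem \ref{MainEq}, which I would apply twice (once for the global Dolbeault algebra and once sheaf-theoretically) and then splice the two resulting equivalences together. Concretely, for a complex manifold $X$ the sheaf $\mathscr R := \mathscr O_X$ of holomorphic functions admits the Dolbeault resolution $\mathscr O_X \to \mathscr A^{0,*}_X$ by the soft sheaves of smooth $(0,q)$-forms, with differential $\bar\partial$; this is a quasi-isomorphism of sheaves of dg-algebras by the $\bar\partial$-Poincaré lemma (Dolbeault-Grothendieck). Setting $A^* := \Omega^{0,*}(X)$ and choosing the closed twist $h \in \Omega^{0,2}_{cl}(X)$, I would verify that the data $(\mathscr O_X, \mathscr A^{0,*}_X, h)$ satisfy the Assumptions \ref{ListAssumptions} required to invoke Theorem \ref{MainEq}. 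The middle arrow in the statement — from global $h$-twisted cohesive modules over $(\Omega^{0,*}(X), h)$ to sheaf-level cohesive modules over $(\underline\Omega^{0,*}(X), h)$ — is exactly the comparison between working with global sections and working sheaf-theoretically, and should follow from softness (so that taking global sections is exact on this resolution) together with the observation that a cohesive module is determined by finitely much data that globalizes.

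Granting \ref{MainEq}, the outer equivalence $H^0((\underline\Omega^{0,*}(X), h)\text{-Mod}^{\text{coh}}) \to D^B_{\text{perf}}(X)^{\check h}$ is immediate once one identifies the abstract target $D^B_{perf}(\mathscr R)^{\check h}$ of Theorem \ref{MainEq} with $D^B_{\text{perf}}(X)^{\check h}$ in the case $\mathscr R = \mathscr O_X$; here the gerbe $\check h$ is the one classified by the class of $h$ under the correspondence between closed twists and gerbes in $\mathscr O_X$ that is set up in the general theorem. The main content beyond the general machine is therefore the final two assertions. First, that $D^B_{\text{perf}}(X)^{\check h}$ embeds fully faithfully into $D^B_{\text{coh}}(X)^{\check h}$: I would argue that a globally bounded perfect complex of twisted $\mathscr O_X$-modules has coherent cohomology sheaves, since perfect complexes are locally quasi-isomorphic to bounded complexes of finite-rank locally free modules and coherence of cohomology is a local condition preserved under the twist by a gerbe (the twisting is by a line-bundle-valued transition datum, hence locally trivial). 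Full faithfulness then reduces to the untwisted statement, which is standard, by locally untwisting the gerbe.

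The last claim — that this embedding is an \emph{equivalence} when $X$ is compact — is the step I expect to be the main obstacle, since it is precisely here that the non-compact generality of the paper is lost and a genuine geometric input is needed. The plan is to show that on a compact complex manifold every object of $D^B_{\text{coh}}(X)^{\check h}$ is in fact perfect. Locally untwisting the gerbe, this is the classical fact that on a compact complex manifold a complex with bounded coherent cohomology is globally a perfect complex: coherent sheaves admit, locally, finite locally free resolutions (since $\mathscr O_{X,x}$ is regular, or by Oka's coherence theorem and the syzygy theorem), and compactness upgrades this to a global bounded resolution by finite-rank locally free sheaves via a finiteness/boundedness argument on the lengths of the resolutions. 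I would then check that these resolutions can be chosen compatibly with the gerbe data, so that the untwisted perfection globalizes to the twisted setting. The remaining bookkeeping — that the gerbe twist does not obstruct the passage from local to global perfection, and that one stays within the bounded perfect subcategory throughout — is routine but must be done carefully; everything else follows formally from Theorem \ref{MainEq}.
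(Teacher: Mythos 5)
Your overall strategy matches the paper's: invoke Theorem \ref{MainEq} for the Dolbeault resolution $\mathcal O_X \to \underline\Omega^{0,*}(X)$ (Assumption 6 of \ref{ListAssumptions} being the only non-immediate one, supplied by Block's Lemma 4.1.5), identify the first arrow with the global-versus-sheaf comparison of Lemma \ref{simpleDGEquivalence}, and then compare $D^B_{\text{perf}}(X)^{\check h}$ with $D^B_{\text{coh}}(X)^{\check h}$. The direction ``perfect $\Rightarrow$ bounded coherent cohomology'' is handled the same way in both (locality of coherence, coherence of $\mathcal O_X$, stability under direct summands), and since both categories are full subcategories of $D(X)^{\check h}$ the full faithfulness of the embedding is automatic rather than something to be ``reduced to the untwisted statement.''

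Two points where your plan drifts from what is actually needed. First, you locate the role of compactness incorrectly: the implication ``bounded coherent cohomology $\Rightarrow$ perfect'' holds on \emph{any} complex manifold (the paper cites the fact that a coherent sheaf is a perfect complex and then inducts on the number of nonvanishing cohomology sheaves via the truncation triangles $\tau_{\leq a}M^* \to M^* \to \tau_{\geq a+1}M^*$ and closure of perfect complexes under extensions). Compactness enters only to pass from ``perfect'' to ``\emph{globally bounded} perfect,'' i.e.\ to extract uniform bounding constants $a<b$ and $N$ from a finite subcover. Second, and more seriously, your phrase ``compactness upgrades this to a global bounded resolution by finite-rank locally free sheaves'' asks for strictly more than the definitions require and is not available in this generality: a general compact complex manifold need not satisfy the resolution property, so a complex with coherent cohomology need not admit a \emph{global} strictly perfect model. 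The paper's notions of perfect and globally bounded perfect are purely local (each $M^*_\alpha$ is locally quasi-isomorphic to a strictly perfect complex, with uniform constants), so no global resolution and no compatibility with the gerbe data is ever needed; if you pursue the global resolution as literally stated, that step will fail. With the local reading, your argument goes through and coincides with the paper's.
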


	Here is a summary of different equivalent notions of curved local systems and the corresponding $\infty$-local systems together with a description of how the curvatures are naturally parametrized in each case.
	\begin{center}
		\footnotesize
		\begin{tabular}{ |c|c|c|c|c| } 
			\hline
			1-categorical & higher categorical & \makecell{(1-categorical)\\curvature\\ parametrized by} & \makecell{(higher categorical)\\curvature\\parametrized by}   \\
			\hline
			\makecell{(subclass of) projective \\representations\\ of $\pi_1(M)$} & \makecell{logarithmic projective \\ representations of \\$\text{Sing}(\Omega_{x_0}^M(M))$} & \makecell{$H^2_{B}(\pi_1(M), \mathbb C) \subset$\\ $H^2_{grp}(\pi_1(M), \mathbb C^*)$} & $H^2(B^*(C_*(\text{Sing}(\Omega_{x_0}^M(M)), \mathbb C))$\\ 
			\hline
			\makecell{projectively flat \\ vector bundles} & \makecell{cohesive modules over \\the cdga $(\Omega^* (M), h)$/\\supervectorbundles \\together with \\ projectively flat \\superconnections} & $H^2_{DR}(M, \mathbb C)$ & $H^2_{DR}(M, \mathbb C)$\\ 
			\hline
			\makecell{curved local systems \\(combinatorially)} & curved $\infty$-local systems & $H^2_{sing}(M, \mathbb C)$ &  $H^2_{sing}(M, \mathbb C)$\\
			\hline
			\makecell{twisted locally\\ constant and\\ finitely generated\\ sheaves} & \makecell{bounded complexes \\of twisted sheaves\\ with locally constant and\\ locally finitely generated\\ cohomology sheaves} & $\check H^2(M; \mathbb C)$ & $\check H^2(M; \mathbb C)$\\ 
			\hline
		\end{tabular}
	\end{center}
	All the higher categorical groups of curvature elements are isomorphic.
	We further note that in all cases, except for vector bundles, one can consider more general curvature elements by replacing $\mathbb C$ with $\mathbb C^*$. For example, a projective representation of $\text{Sing}(\Omega_{x_0}^M(M))$ has curvature contained in $H^2(B^*(C_*(\text{Sing}(\Omega_{x_0}^M(M)), \mathbb C^*))$. We say that a projective representation is logarithmic projective, if its curvature is in the image of the exponential map $H^2(B^*(C_*(\text{Sing}(\Omega_{x_0}^M(M)), \mathbb C)) \to H^2(B^*(C_*(\text{Sing}(\Omega_{x_0}^M(M)), \mathbb C^*))$. Equivalently, this means that the curvature element is mapped to $0$ under the connecting morphism $H^2(B^*(C_*(\text{Sing}(\Omega_{x_0}^M(M)), \mathbb C)) \to H^3(B^*(C_*(\text{Sing}(\Omega_{x_0}^M(M)), \mathbb Z))$. In the 1-categorical case, there is an exact sequence
	\[ H^2(M, \mathbb Z) \to H_B^2(M, \mathbb C) \to H^2(\pi_1(M), \mathbb C^*) \to H^3(M, \mathbb Z)\]
	where $H_B^2(M, \mathbb C)$ can be described as the subgroup of $H_{sing}^2(M, \mathbb C)$ consisting of those classes which can be represented by a 2-cochain $f$ such that $f(\sigma) = f(\sigma') \text{ Mod } 2\pi i$ whenever $\partial \sigma = \partial \sigma'$. The image of $H_B^2(M, \mathbb C) \to H^2(\pi_1(M), \mathbb C^*)$ is what we denoted by $H^2_B(\pi_1(M), \mathbb C)$. In particular, we obtain that a projective representation of $\pi_1(M)$ corresponds to a projectively flat vector bundle if and only if its curvature is contained in the subgroup $H^2_B(\pi_1(M), \mathbb C)$ which is equivalently characterised by an obstruction map $H^2(\pi_1(M), \mathbb C^*) \to H^3(M, \mathbb Z)$ (cf. Proposition \ref{propLongExact}).
	\subsection{Acknowledgements}
	I would like to sincerely thank my advisor, Julian Holstein, for his continuous guidance and support throughout the development of this thesis. In many instances, his encouragement has pushed me in the right direction, and his numerous fruitful suggestions shaped the quality of my work. The author has also benefited from suggestions by Vicente Cortés. \\ \\
	The results of the present paper have been obtained in the author's master's thesis and have been extended, and revised during his PhD studies funded by the Deutsche Forschungsgemeinschaft (DFG, German Research Foundation) – SFB-Geschäftszeichen 1624 – Projektnummer 506632645.

	\section{Preliminaries}
	\label{sectionDefs}
	The aim of this section is to fix some notations and conventions. Throughout the paper, we will consider a real smooth connected manifold $M$ without boundary. $\mathbb K$ will either denote the field $\mathbb R$ of real numbers or the field $\mathbb C$ of complex numbers. The set of sections of differential $n$-forms with coefficients in $\mathbb K$ is denoted by 
	\begin{align*} \Omega^n(M) := \Omega^n(M; \mathbb K) := \Gamma( \bigwedge^n T^*M  \otimes_{\mathbb R} \mathbb K).\end{align*}
	If $E \to M$ is a vector bundle over $\mathbb K$ we denote the sections of differential $n$-forms with values in $E$ by
	\begin{align*} \Omega^n(M; E) := \Gamma( \bigwedge^n T^*M \otimes_{\mathbb R} E) \cong \Omega^n(M; \mathbb K) \otimes_{C^{\infty}_{\mathbb K}(M) } \Gamma(E). \end{align*}
	Note that in our case, the vector bundle $E$ is on the right. In particular, $\Omega^*(M; E)$ is a left $\mathbb Z$-graded module over $\Omega^*(M)$. With this notation a connection on $E$ is a $\mathbb K$-linear map
	\begin{align*} \nabla : \Omega^0(M; E) \to \Omega^1(M; E)\end{align*}
	satisfying the Leibniz rule
	\begin{align*} \nabla(f \otimes e) = df \otimes e + f \otimes \nabla e\end{align*}
	where $f \in \Omega^0(M)$ and $e \in \Gamma(E)$. We extend $\nabla$ to a $\mathbb K$-linear map
	\begin{align*} \nabla : \Omega^i(M; E) \to \Omega^{i+1}(M; E)\end{align*}
	for all $i > 0$ by the graded Leibniz-rule
	\begin{align*} \nabla(\omega \otimes e) = d\omega \otimes e + (-1)^{|\omega|} \omega \wedge \nabla e.\end{align*}
	In equations as the one above, we will often omit wedge and tensor symbols.
	\begin{def1}
		The curvature of a connection $\nabla$ on a vector bundle $E$ is given by 
		\begin{align*}\nabla^2 := \nabla \circ \nabla : \Omega^0(M; E) \to \Omega^2(M; E).\end{align*}
		Noting that $\nabla^2$ is $C^{\infty}$-linear, we will view $\nabla^2$ as an element of $\Omega^2(M, \text{End}(E))$.
	\end{def1}
	
	\begin{def1}
		\begin{enumerate}
			\item A connection $\nabla$ is called \emph{flat} if $\nabla^2 = 0$.
			\item A connection $\nabla$ is called \emph{projectively flat} if $\nabla^2$ has the form
			\begin{align*} \nabla^2 = h \otimes \mathbbm 1_{E}\end{align*}
			where $h \in \Omega^2(M)$.
			We will refer to $h$ as the curvature of the projectively flat connection $\nabla$.
		\end{enumerate}
	\end{def1}
	\begin{remark}
		\label{ZeroConvention}
		If $\nabla$ is a projectively flat connection, then $h$ is uniquely determined by the equation \begin{align*} \nabla^2 = h \otimes \mathbbm 1_{E}\end{align*} unless the vector bundle is the zero bundle. In the case of the zero bundle, we allow any closed 2-form $h$ to be referred to as the curvature of the connection.
	\end{remark}
	\begin{def1}
		Let $(E,\nabla)$ be a vector bundle with connection. We use the symbol $PT$ to denote the parallel transport of $(E, \nabla)$, i.e., it assigns to each path $\gamma : [a,b] \to M$ the parallel transport along the path $\gamma$: 
		\begin{align*}PT(\gamma) : E_{\gamma(a)} \to E_{\gamma(b)}.\end{align*}
		In the last chapter we will consider Moore loops $(\gamma, l)$. In particular, $\gamma$ is a path $\gamma : [0, \infty) \to M$ and $l > 0$ is a number such that $\gamma(t) = x_0$ for all $t \geq l$ and $\gamma(0) = x_0$. Then the parallel transport along $\gamma$ is defined to be the parallel transport along $\gamma|_{[0, l]}$.
	\end{def1}
	\begin{def1}
		As usual, given a vector field $X \in \mathfrak{X}(M)$ and a section $e \in \Gamma(E)$ of a vector bundle $E$ with connection $\nabla$, we can consider the section $\nabla_X e := (\nabla e)(X)$ of $E$. We also define how $\nabla_X$ acts on elements $A \in \Omega^n(M, E)$. We set
		\begin{align*} \nabla_X A := \iota_X(\nabla A) + \nabla(\iota_X A).\end{align*}
	\end{def1}
	
	\label{sectionProjFlat}
	In our case, it turns out to be useful to introduce so called time-ordered simplices.
	\begin{def1}
		\label{DefSimplices}
		The \emph{(time-ordered) standard $n$-simplex} is the smooth $n$-dimensional manifold with corners
		\begin{align*} \Delta^n := \{(t_1,...,t_n) | 1 \geq t_1 \geq t_2 \geq ... \geq t_n \geq 0 \} \subset \mathbb R^n\end{align*}
		with its induced orientation from $\mathbb R^n$. For $n = 0$, we set $\Delta^0$ equal to the point $\{0\}$. An $n$-simplex $\sigma$ in $M$ is a smooth map $\sigma : \Delta^n \to M$. The $i$-th face for $i = 0,...,n$ of $\Delta^n$ is given by 
		\begin{align*}\partial_i \Delta^n  = \{(t_1,...,t_n) \in \Delta^n | t_i = t_{i+1}\}\end{align*}
		where we have set $t_0 = 1$ and $t_{n+1} = 0$. For each such face there is a diffeomorphism
		$q_i : \Delta^{n-1} \to \partial_i \Delta^n$ given by $(t_1,...,t_{n-1}) \mapsto (t_1,...,t_i,t_i,...,t_{n-1})$ for $i = 1,...,n-1$ and $(t_1,...,t_{n-1}) \mapsto (1,t_1,...,t_{n-1})$ for $i = 0$ and $(t_1,...,t_{n-1}) \mapsto (t_1,...,t_{n-1},0)$ for $i = n$.\\
		For an $n$-simplex $\sigma$ in $M$ we define the $i$-th face of $\sigma$ to be the restriction \begin{align*}\partial_i \sigma = \sigma \circ q_i : \Delta^{n-1} \to M. \end{align*}
		We also define maps $p_i : \Delta^{n+1} \to \Delta^{n}$ for $i = 0,...,n$ by 
		\begin{align*} p_i(t_1,...,t_{n+1}) = (t_1,...,\hat{t_{i+1}},...,t_{n+1}).\end{align*} Using the $p_i$, we may now define the degeneracy maps $s_i$ by
		\begin{align*}s_i \sigma = \sigma \circ p_i : \Delta^{n+1} \to M.\end{align*}
		We define $\text{Sing}(M)_n$ as the set of $n$-simplices in $M$.
	\end{def1}
	\begin{remark}
		Typically, the standard $n$-simplex is defined as the space
		\begin{align*} \Delta^n_{std} := \{(x_0,...,x_n) | \sum x_i = 1, x_i \geq 0\} \subset \mathbb R^{n+1}\end{align*}
		or equivalently as the convex hull of the vectors $e_0,....,e_n$. Now set 
		\begin{align*} v_0 := (0,...,0) \in \mathbb R^n, v_1 := (1,0,...,0), v_{n-1} := (1,...,1,0), v_n := (1,...,1).\end{align*}
		Then $\Delta^n$ is the convex hull of $v_0,...,v_n$ and there is an orientation-preserving diffeomorphism
		\begin{align*} \Delta^n \to \Delta^{n}_{std}, \sum t_i v_i \mapsto \sum t_i e_i.\end{align*}
		It is straightforward to check that the degeneracy and face maps are compatible with these diffeomorphisms. We will freely use this observation and use results proved for the usual simplicial set $(\text{Sing}(M))_{std})_*$. This also shows that $\text{Sing}(M)_*$ is a simplicial set.
	\end{remark}

	\section{Curved and uncurved dg-categories}
	\subsection{Curved dg-categories and the dg-category of twisted complexes}
	\label{sectionCDG}
	In this section, $\mathbb K$ may also be a commutative unital ring. We will first recall the definition of a dg-category. For more details, see \cite{Toen}.
	\begin{def1}
		A \emph{dg-category} (= differential graded category) over $\mathbb K$ is a category $\mathcal C$ carrying the following additional data:
		\begin{enumerate}
			\item The morphism sets $\text{Hom}(E,F)$ are endowed with the structure of a graded vector space over $\mathbb K$ and a $\mathbb K$-linear map $d$ which will be referred to as the differential \begin{align*}d : \text{Hom}^*(E,F) \to \text{Hom}^{*+1}(E,F). \end{align*}
		\end{enumerate}
		Furthermore it satisfies the following properties:
		\begin{enumerate}
			\item The composition of morphisms is bilinear, respects the degrees, and fulfills the graded Leibniz rule
			\begin{align*} d(f \circ g) = d(f) \circ g + (-1)^{|f|} f \circ d(g).\end{align*}
			\item For any $f \in \text{Hom}^{*}(E,F)$ the following identity holds
			\begin{align*} d^2(f) = 0.\end{align*}
		\end{enumerate}
	\end{def1}
	Equivalently a dg-category is just a category enriched over the category of chain complexes over $\mathbb K$.
	\begin{def1}
		Let $\mathcal C$ be a dg-category. To $\mathcal C$ we can associate its \emph{homotopy category} $H^0(\mathcal C)$. It has the same objects as $\mathcal C$. The morphisms are
		\begin{align*} \text{Hom}_{H^0(\mathcal C)}(E,F) := H^0(\text{Hom}^*_{\mathcal C}(E,F), d).\end{align*}
		By the Leibniz rule, the composition in $\mathcal C$ induces a composition in $H^0(\mathcal C)$.
	\end{def1}
	
	\begin{def1}
		A \emph{(non-negatively graded) cdg-category} (= curved differential graded category) over $\mathbb K$ is a category $\mathcal C$ carrying the following additional data:
		\begin{enumerate}
			\item The morphism sets $\text{Hom}(E,F)$ are endowed with the structure of a non-negatively graded vector space over $\mathbb K$ and a $\mathbb K$-linear map $\nabla$ which we will be referred to as the \emph{connection} \begin{align*}\nabla : \text{Hom}^*(E,F) \to \text{Hom}^{*+1}(E,F) \text{ and }\end{align*}
			\item Each object $E$ is equipped with a $\nabla$-closed endomorphism $R_E \in \text{Hom}^2(E,E)$, called the \emph{curvature} of $E$, of degree 2.
		\end{enumerate}
		Furthermore, it satisfies the following properties:
		\begin{enumerate}
			\item The composition of morphisms is bilinear, respects the degrees, and fulfills the graded Leibniz rule
			\begin{align*} \nabla(f \circ g) = \nabla(f) \circ g + (-1)^{|f|} f \circ \nabla(g).\end{align*}
			\item For any $f \in \text{Hom}^{*}(E,F)$, the following identity holds
			\begin{align*} \nabla^2(f) = R_F \circ f - f \circ R_E.\end{align*}
			\item $\mathcal C$ is pointed. We write this object as $0$.
		\end{enumerate}
	\end{def1}
	\begin{remark}
		In the following we will always assume that the occuring cdg-categories are non-negatively graded without explicitly spelling this out.
	\end{remark}
	
	\begin{remark}
		There is a notion of curved $A_\infty$-algebras and curved $A_\infty$-categories (e.g., in \cite{Armstrong}) which Keller calls weak $A_\infty$-categories, see \cite{Keller}. In \cite{Wendy}, such categories are called $A_{[0,\infty)}$-categories. A cdg-category is then just a curved $A_\infty$-category such that all higher compositions are trivial. For simplicity we have chosen not to work in this generality; however, we will need $A_\infty$-functors between cdg-categories. At the same time, this generality is not irrelevant for the matter at hand: In the last section, we are interested in the loop space. If we had chosen to work with the actual loop space, we would have needed the notion of curved $A_\infty$-categories as the concatenation of paths is not associative. Instead, we decided to work with the space of Moore loops in which the concatenation is associative, and we are thus again only dealing with cdg-categories.
	\end{remark}
	
	\begin{example}
		Let $G$ be a group and set $\mathbb K := \mathbb Z$. We define the cdg-category of representations $\text{Rep}_{\infty}^{pre}(G)$ of $G$. An object of $\text{Rep}_{\infty}^{pre}(G)$ is an abelian group $M$ together with a map 
		\begin{align*}\mu_M : G \to \text{Hom}_{\mathbb Z}(M, M) \end{align*}
		satisfying $\mu_M(e) = \mathbbm 1$. For two objects $M$ and $N$, the morphism spaces are given by 
		\begin{align*} \text{Hom}^n(M, N) := \text{Hom}_{\text{Set}}(G^n, \text{Hom}_{\mathbb Z}(M, N)).\end{align*}
		The connection is defined as
		\begin{align*} \nabla(f)(g_1,...,g_n) &:= \mu_N(g_1) \circ f(g_2,...,f_n) + \sum_{i = 1}^{n-1} (-1)^i f(g_1,...,g_{i}g_{i+1},...,g_n) \\
			&\quad + (-1)^n f(g_1,...,g_{n-1}) \circ \mu_M(g_n).\end{align*}
		One computes (a similar but more involved computation can be found in the proof of Lemma \ref{RepIsCdg})
		\begin{align*}
			\nabla^2(f)(g_1,...,g_n) &= (\mu_N(g_1)\mu_N(g_2) - \mu_N(g_1 g_2)) \circ f(g_3,...,f_n) \\
			&\quad- f(g_1,...,f_{n-2})\circ  (\mu_M(g_{n-1})\mu_M(g_{n}) - \mu_M(g_{n-1} g_{n})).
		\end{align*}
		We therefore define the curvature of $M$ to be
		\begin{align*} R_M(g,h) := \mu_M(g)\mu_M(h) - \mu_M(gh).\end{align*}
		The curvature thus measures the failure of $\mu_M$ to define a left action of $G$ on $M$.
	\end{example}
	
	\begin{def1}
		Let $\mathcal C$ be a cdg-category.
		\begin{enumerate}
			\item We say that $\mathcal C$ is Maurer-Cartan if for each object $E$ and endomorphism $a \in \text{Hom}^1(E,E)$ there is an object $E^a$, and morphisms $f \in \text{Hom}^0(E,E^a)$,\\ $g \in \text{Hom}^0(E^a,E)$ which are mutually inverse and satisfy
			\begin{align*}\nabla(f) = f \circ a.\end{align*}
			\item We say that  $\mathcal C$ is sufficiently Maurer-Cartan if for each object $E$ and endomorphism $a \in \text{Hom}^1(E,E)$ of the form $a = b \circ \nabla(d)$ where $b$ and $d$ of degree $0$, there is an object $E^a$ and morphisms $f \in \text{Hom}^0(E,E^a), g \in \text{Hom}^0(E^a,E)$ which are mutually inverse and satisfy
			\begin{align*}\nabla(f) = f \circ a.\end{align*}
		\end{enumerate}
	\end{def1}
	
	\begin{example}
		$\text{Rep}_{\infty}(G)$ is sufficiently Maurer-Cartan but not Maurer-Cartan. One could consider a reduced version of $\text{Rep}_{\infty}(G)$ similar to the normalized bar complex of a group. This reduced cdg-category is Maurer-Cartan.
	\end{example}
	
	\begin{remark}
		\label{remarkIdentify}
		\begin{enumerate}
			\item We chose the above terminology because of the similarities to the case that for a given dg-algebra, a Maurer-Cartan element determines a twisted differential. However, note that in our case, the elements $a \in \text{Hom}^1(E,E)$ are not actually required to be Maurer-Cartan elements meaning that they do not have to satisfy $\nabla(a) + a^2 = 0$.
			\item
			Let $\mathcal C$ be a cdg-category and let $E$ be an object and $a \in \text{Hom}^1(E,E)$. Assume either that $\mathcal C$ is Maurer-Cartan or sufficiently Maurer-Cartan with $a$ of the above form. Using the notation from the definition, we have that
			\begin{align*} \nabla(g) = -a \circ g\end{align*}
			since $f$ is invertible and
			\begin{align*} 0 = \nabla(\text{id}) = \nabla(f \circ g) = \nabla(f) \circ g + f \circ \nabla(g) = f \circ a \circ g + f \circ \nabla(g). \end{align*}
			Furthermore the curvature of $R_{E^a}$ is given by
			\begin{align*} R_{E^a} = f \circ (R_E + \nabla(a) + a^2) \circ g\end{align*}
			because
			\begin{align*}R_{E^a}\circ f - f \circ R_E = \nabla^2(f) = \nabla(f \circ a) = f \circ (a^2 + \nabla(a)).\end{align*}
			Using $f$ and $g$, we will often identify $\text{Hom}(F,E^a) \cong \text{Hom}(F,E)$ and $\text{Hom}(E^a,F) \cong \text{Hom}(E,F)$ as graded spaces without explicitly writing out $f$ and $g$. With this notation, the complexes only differ by a twisting of $a$ in the connection $\nabla$.
		\end{enumerate}
		
	\end{remark}
	
	\begin{def1}
		We define the dg-category $\mathcal C^0$ to be the full subcategory of $\mathcal C$ consisting of those objects which have $0$ curvature, i.e., $R_E = 0$.
	\end{def1}
	\begin{example}
		For a group $G$, the objects of the dg-category of $\text{Rep}_{\infty}^{0}(G)$ are precisely the $\mathbb Z[G]$-modules. $H^0(\text{Rep}_{\infty}^{0}(G))$ is then just the category of $\mathbb Z[G]$-modules. 
	\end{example}
	
	\begin{def1}
		\label{defAssociatedDG}
		Let $\mathcal C$ be a cdg-category. We define its associated \emph{dg-category of twisted complexes} $\text{Tw}(\mathcal C)$.
		An object of $\text{Tw}(\mathcal C)$ is a pair $(E^*, \mathbb E)$ where 
		\begin{enumerate}
			\item $E^* := (E^n)_{n \in \mathbb Z}$ is a sequence such that each $E^n$ is an object of $\mathcal C^{pre}$ and $E^n = 0$ for almost all $n \in \mathbb Z$, and
			\item $\mathbb E$ is an element of $\bigoplus_{a+b = 1, a \neq 0} \text{Hom}^{a,b}(E^*, F^*) \subset \text{Hom}^1(E^*, F^*)$ (defined below) such that 
			\begin{nalign} \label{EquationTrivialCurvature} \nabla(\mathbb E) + \mathbb E^2 + R_E = 0\end{nalign}
			where $R_E := (...,{R_{E_{-1}}}, {R_{E_{0}}},...)$.
		\end{enumerate}
		For objects $(E^*, \mathbb E)$ and $(F^*, \mathbb F)$, we define the morphism complexes to be 
		\begin{align*} \text{Hom}^n(E^*, F^*) := \bigoplus_{a+b = n} \text{Hom}^{a,b}(E^*, F^*) := \bigoplus_{a+b = n, p \in \mathbb Z} \text{Hom}^b(E^p, F^{p+a}).\end{align*} For $f \in \bigoplus_{p \in \mathbb Z}\text{Hom}^b(E^p, F^{p+a})$ and $g \in \bigoplus_{p \in \mathbb Z}\text{Hom}^{b'}(F^{p}, G^{p+a'})$, we define the composition using the composition $\circ'$ in $\mathcal C$ by
		\begin{align*} g \circ f := (-1)^{a' b} g \circ' f \in \bigoplus_{p \in \mathbb Z}\text{Hom}^{b+b'}(E^p, F^{p+{a+a'}}),\end{align*}
		and then we extend bilinearly.
		We endow $\text{Hom}^n(E^*, F^*)$
		with the differential \begin{align*}d(f) = \nabla(f) + \mathbb F \circ f - (-1)^{|f|} f \circ \mathbb E\end{align*} which squares to $0$ by Equation \eqref{EquationTrivialCurvature}. The composition satisfies the Leibniz rule with respect to $d$. The degree $n$ as in the above formula will be called the total degree, and we write $|f| = n$ for a morphism $f$ of total degree $n$. The degree $b$ will be referred to as the auxiliary degree, and $a$ is the internal degree.
	\end{def1}
	\begin{remark}
		The above construction appears in a similar way in \cite{Kapranov}. The difference between their and our definition is that we allow the category to start with to be a cdg-category, rather than a dg-category. In particular, in our formulation, the components $\mathbb E_0$ and $\mathbb E_2$ of $\mathbb E$ are used to get rid of the curvature $R_E$. There is also a more general approach using curved $A_\infty$-categories in \cite{Wendy}. In all the cases of interest in this paper, we could have instead considered as a starting point the full subcategory of uncurved objects $\mathcal C^0 \subset \mathcal C$. The associated category of twisted complexes $\text{Tw}(\mathcal C^0)$ is (in our cases) always dg-quasi equivalent (more precisely: the inclusion is fully faithful and quasi-essentially surjective) to $\text{Tw}(\mathcal C)$ by Proposition \ref{trivialDiff}. However, we still decided to work with $\text{Tw}(\mathcal C)$ instead because the mentioned Proposition is a non-trivial result.
	\end{remark}
	\begin{remark}
		We defined $\mathcal C^0$ as a full subcategory of $\mathcal C$. Equivalently, $\mathcal C^0$ can be viewed as the full subcategory of $\text{Tw}(\mathcal C)$ consisting of those objects that are concentrated in degree $0$.\\ \\	
		For an object $(E^*, \mathbb E)$ in $\text{Tw}(\mathcal C)$, we will often decompose 
		\begin{align*} \mathbb E =: \mathbb E_0 + \mathbb E_2 + \mathbb E_3 + ... \end{align*}
		where $\mathbb E_i \in \text{Hom}^{1-i,i}(E^*, F^*) = \bigoplus_{p \in \mathbb Z} \text{Hom}^{i}(E^p, F^{p+1-i})$. Recall that, per definition of the objects in $\mathcal C$, we have $\mathbb E_1 = 0$ and furthermore $\mathbb E_k = 0$ for all $k < 0$ as we always assume that the cdg-category $\mathcal C$ is non-negatively graded. 
		Equation \ref{EquationTrivialCurvature} decomposes into the following equations:
		\begin{nalign}
			\label{EqDecomposition}
			&\text{degree } (2,0): &&\mathbb E_0^2 = 0\\
			&\text{degree } (1,1): &&\nabla(\mathbb E_0) = 0\\
			&\text{degree } (0,2): &&\mathbb E_0 \mathbb E_2 + \mathbb E_2 \mathbb E_0 +  R_E  = 0\\
			&\text{degree } (-1,3): &&\nabla(\mathbb E_2) + \mathbb E_0 \mathbb E_3 + \mathbb E_3 \mathbb E_0 = 0\\
			&...\\
			&\text{degree } (2-i,i): &&\nabla(\mathbb E_{i-1}) + \sum_{k = 0}^i \mathbb E_k \mathbb E_{i-k} = 0 \text{ } (\text{for } i \neq 2).
		\end{nalign}
	\end{remark}
	\begin{example}
		We again consider the cdg-category $\text{Rep}_{\infty}(G)$ and its associated dg-category $\text{Tw}(\text{Rep}_{\infty}(G))$. 
		Let $(E^*, \mathbb E)$ and $(F^*, \mathbb F)$ be objects of $\text{Tw}(\text{Rep}_{\infty}(G))$.
		The components of $d$ on $\text{Hom}(E,F)$ act as depicted in the following diagram
		\[\begin{tikzcd}[row sep=large, column sep=large]
			{\text{Hom}_{\text{Set}}(G^b, \text{Hom}_{\mathbb Z}(E^*, F^{*+a}))} \arrow[d, "\nabla"] \arrow[rr, "\mathbb F_0 \circ (-) \pm (-)\circ \mathbb E_0"]   &  & {\text{Hom}_{\text{Set}}(G^b, \text{Hom}_{\mathbb Z}(E^*, F^{*+a+1}))} \arrow[d, "\nabla"] \arrow[to=lldd, near end, "\mathbb F_2 \circ (-) \pm (-)\circ \mathbb E_2"]  \\
			{\text{Hom}_{\text{Set}}(G^{b+1}, \text{Hom}_{\mathbb Z}(E^*, F^{*+a}))} \arrow[d, "\nabla"] \arrow[rr, crossing over, "\mathbb F_0 \circ (-) \pm (-)\circ \mathbb E_0"] &  & {\text{Hom}_{\text{Set}}(G^{b+1}, \text{Hom}_{\mathbb Z}(E^*, F^{*+a+1}))} \arrow[d, "\nabla"]                                                          \\
			{\text{Hom}_{\text{Set}}(G^{b+2}, \text{Hom}_{\mathbb Z}(E^*, F^{*+a}))} 
			\arrow[rr, "\mathbb F_0 \circ (-) \pm (-)\circ \mathbb E_0"]                     &  & {\text{Hom}_{\text{Set}}(G^{b+2}, \text{Hom}_{\mathbb Z}(E^*, F^{*+a+1}))}          .                                                                   
		\end{tikzcd}
		\]
		In general the expression $\mathbb F_i \circ (-) \pm (-) \circ \mathbb E_i$ moves $1-i$ steps to the right and $i$ steps down.
		
	\end{example}

	\subsection{$A_\infty$-functors and natural transformations}
	\begin{def1}
		\label{Ainfty}
		Let $\mathcal C$ and $\mathcal D$ be cdg-categories. An \emph{$A_\infty$-functor} $\mathcal F : \mathcal C  \to \mathcal D$ assigns to every object $E$ of $\mathcal C$ an object $\mathcal F(E)$ of $\mathcal D$ and to every sequence $f_1 \in \text{Hom}_{\mathcal C}^{|f_1|}(E_2,E_1),...,f_n \in \text{Hom}_{\mathcal C}^{|f_n|}(E_{n+1},E_n), n \geq 1$ of composable morphisms in $\mathcal C$ a morphism
		\begin{align*} \mathcal F(f_1,...,f_n) \in \text{Hom}_{\mathcal C}^{|f_1|+...+|f_n|-n+1}(E_{n+1},E_1)\end{align*}
		such that \begin{enumerate}
			\item $\mathcal F$ is multilinear,
			\item $\mathcal F$ is compatible with the identity, i.e., $\mathcal F(\mathbbm 1_{E_k})= \mathbbm 1_{\mathcal F(E_k)}$, 
			\item $\mathcal F$ is compatible with the curvatures, i.e., $\mathcal F(R_{E_k}) = R_{\mathcal F(E_k)}$, and
			\item the equations \begin{nalign}
				&(-1)^{|f_1|+...+|f_n|+1-n}\nabla(\mathcal F(f_1,...,f_n)) \\
				&\quad+ \sum_{k = 1}^{n-1} (-1)^{|f_{k+1}| + ... + |f_n|-n+k+1}\mathcal F(f_1,...,f_k)\mathcal F(f_{k+1},...,f_n) =\\
				&= \sum_{k = 1}^n (-1)^{|f_k|+...+|f_n| - n+k}\mathcal F(f_1,...,\nabla f_k,...,f_n)\\
				&\quad+ \sum_{k = 1}^n (-1)^{|f_{k+1}|+...+|f_n| - n+k-1}\mathcal F(f_1,...,f_k \circ f_{k+1},...,f_n)\\
				&\quad+ \sum_{k = 1}^{n+1} (-1)^{|f_k|+...+|f_n| - n+k}\mathcal F(f_1,...,f_{k-1},R_{E_k},f_k,...,f_n)
			\end{nalign}
			hold.
		\end{enumerate}
		We say that $\mathcal F$ is \emph{proper} if for each $n \geq 2$ we have
		\begin{align*} \mathcal F(f_1,...,f_n) = 0\end{align*}
		if there is some $i = 1,...,n$ such that $|f_i| = 0$. \\ \\
		We call $\mathcal F$ a \emph{cdg-functor} if 
		\begin{align*} \mathcal F(f_1,...,f_n) = 0\end{align*} for all $n \geq 2$.
	\end{def1}
	We will only consider proper $A_\infty$-functors. The reason for that is that we need this condition to ensure that certain sums are finite. If one were to deal with more general $A_\infty$-functors (possibly also allowing the cdg-categories to have negative degrees), we would need to put a topology on the morphism spaces to be able to talk about convergence. 
	\begin{def1}
		\label{Ainftynatural}
		Let $\mathcal F, \mathcal G : \mathcal C \to \mathcal D$ be $A_\infty$-functors of cdg-categories. An \emph{$A_\infty$-natural transformation}
		\begin{align*} \eta : \mathcal F \Rightarrow \mathcal G\end{align*}
		from $\mathcal F$ to $\mathcal G$ assigns to each object $E$ of $\mathcal C$ a morphism $\eta_E \in \text{Hom}_{\mathcal D}^{0}(\mathcal F(E),\mathcal G(E))$ and to every sequence $f_1 \in \text{Hom}_{\mathcal C}^{|f_1|}(E_2,E_1),...,f_n \in \text{Hom}_{\mathcal C}^{|f_n|}(E_{n+1},E_n), n \geq 1$ of composable morphisms in $\mathcal C$ a morphism
		\begin{align*} \eta(f_1,...,f_n) \in \text{Hom}_{\mathcal D}^{|f_1|+...+|f_n|-n}(\mathcal F(E_{n+1}),\mathcal G(E_1))\end{align*} satisfying the properties that
		\begin{enumerate}
			\item $\eta$ is multilinear, and
			\item the following identity holds
			\begin{nalign}
				&(-1)^{|f_1|+...+|f_n|-n+1} \nabla(\eta(f_1,...,f_n))\\
				&\quad+ \sum_{k = 0}^{n-1} \eta(f_1,...,f_k) \mathcal F(f_{k+1},...,f_n)\\
				&\quad+ \sum_{k = 1}^{n} (-1)^{|f_{k+1}|+...+|f_n|-n+k+1}\mathcal G(f_1,...,f_k) \eta(f_{k+1},...,f_n)\\
				&= \sum_{k = 1}^n (-1)^{|f_k|+...+|f_n|-n+k} \eta(f_1,...,\nabla f_k,...,f_n)\\
				&\quad+ \sum_{k = 1}^n (-1)^{|f_{k+1}|+...+|f_n|-n+k+1} \eta(f_1,...,f_k \circ f_{k+1},...,f_n)\\
				&\quad+ \sum_{k = 1}^n (-1)^{|f_k|+...+|f_n|-n+k} \eta(f_1,...,R_{E_k},f_k,...,f_n)\\
			\end{nalign}
			for all $n \geq 0$.
		\end{enumerate}
		We say that $\eta$ is \emph{proper} if for each $n \geq 2$ we have
		\begin{align*} \eta(f_1,...,f_n) = 0 \end{align*}
		if there is some $i = 1,...,n$ such that $|f_i| = 0$.
		In the above expression, we set $\eta(f_1,...,f_n) := \eta_E$ when $n = 0$. The equation for $n = 0$ reads
		\begin{align*} \nabla(\eta_E) =  \eta(R_{E}).\end{align*}
	\end{def1}
	
	\begin{def1}
		Let $\mathcal F, \mathcal G : \mathcal C \to \mathcal D$ be $A_\infty$-functors of cdg-categories and let 
		\begin{align*} \eta : \mathcal F \Rightarrow \mathcal G, \zeta : \mathcal G \Rightarrow \mathcal H\end{align*}
		be $A_\infty$-natural transformations of $A_\infty$-functors. We define their composite $\zeta \circ \eta : \mathcal F \Rightarrow \mathcal H$. For objects $E$ of $\mathcal C$, we set
		\begin{align*} (\zeta \circ \eta)_{E} :=  \zeta_E \circ \eta_E.\end{align*}
		For composable morphisms $f_1,...,f_n$ of $\mathcal C$, we define
		\begin{align*} (\zeta \circ \eta)(f_1,...,f_n) := \sum_{k = 0}^n \zeta(f_1,...,f_k)\eta(f_{k+1},...,f_n)\end{align*}
		One may check that this indeed defines an $A_\infty$-natural transformation. 
	\end{def1}
	
	\begin{def1}
		\label{DefAssociatedFunctor}
		Let $\mathcal F : \mathcal C \to \mathcal D$ be a proper $A_\infty$-functor of cdg-categories. We define how $\mathcal F$ induces an ordinary $A_\infty$-functor of dg-categories $\text{Tw}(\mathcal F) : \text{Tw}(\mathcal C) \to \text{Tw}(\mathcal D)$. To do this, we first extend $\mathcal F$ linearly to composable morphisms in $\text{Tw}(\mathcal C)$. We then see that we obtain the $A_\infty$-relations as in \ref{Ainfty}, where the composition is given by $\circ'$ (i.e., the linear extension of the composition in $\mathcal C$ and $\mathcal D$) and the degrees in the signs are the internal degrees instead of the total degrees. To obtain an equation involving the total degrees and the composition in $\text{Tw}(\mathcal C)$ and $\text{Tw}(\mathcal D)$ we define
		\begin{align*} \mathcal F'(f_1,...,f_n) := (-1)^{p_1(q_2+...+q_n+n) + p_2 (q_3+...+q_n+n-1)  +...+p_{n}}   \mathcal F(f_1,...,f_n)\end{align*}
		for $f_1 \in \text{Hom}^{p_1,q_1}((E_2)^*,(E_1)^*),...,f_n \in \text{Hom}^{p_n,q_n}((E_{n+1})^*,(E_n)^*)$.
		Then $\mathcal F'$ fulfills the relations as in \ref{Ainfty} with the composition $\circ$ of $\text{Tw}(\mathcal C)$ and $\text{Tw}(\mathcal D)$ and the degrees are the total degrees. Now let $(E^*, \mathbb E)$ be an object of $\mathcal C$. Then we define $\text{Tw}(\mathcal F)(E^*) := (\mathcal F(E^n))_{n \in \mathbb Z}$ and the corresponding endomorphism $\mathcal F_*(\mathbb E)$ is given by
		\begin{align*} \text{Tw}(\mathcal F)_*(\mathbb E) := \sum_{k \geq 1}(\mathcal F)'(\mathbb E^{\otimes k}).\end{align*} On morphisms, we now set
		\begin{align*} \text{Tw}(\mathcal F)(f_1,...,f_n) := \sum_{k_1,...,k_{n+1} \geq 0} (\mathcal F)'(\mathbb E_1^{\otimes k_1},f_1,\mathbb E_2^{\otimes k_2},f_2,...,\mathbb E_{n}^{\otimes k_n},f_n,\mathbb E_{n+1}^{\otimes k_{n+1}}).\end{align*}
		Note that, by the requirements that the elements $\mathbb E_k$ have no parts that have internal degree $0$, that $\mathcal F$ is proper, and our boundedness assumptions on the sequences $E^*$, both of these sums are in fact finite.
	\end{def1}
	\begin{lemma}
		\label{inducedFunctorLemma}
		In the setting of the previous definition $\text{Tw}(\mathcal F)$ indeed defines an $A_{\infty}$-functor of dg-categories, in particular for $f_1 \in \text{Hom}_{\mathcal C}^{|f_1|}(E^*_2,E^*_1),...,f_n \in \text{Hom}_{\mathcal C}^{|f_n|}(E^*_{n+1},E^*_n)$, $n \geq 1$ the identity
		\begin{nalign}
			&(-1)^{|f_1|+...+|f_n|+1-n}d(\text{Tw}(\mathcal F)(f_1,...,f_n)) \\
			&\quad+ \sum_{k = 1}^{n-1} (-1)^{|f_{k+1}| + ... + |f_n|-n+k+1}\text{Tw}(\mathcal F)(f_1,...,f_k)\text{Tw}(\mathcal F)(f_{k+1},...,f_n) \\
			&= \sum_{k = 1}^n (-1)^{|f_k|+...+|f_n| - n+k}\text{Tw}(\mathcal F)(f_1,...,df_k,...,f_n)\\
			&\quad+ \sum_{k = 1}^n (-1)^{|f_{k+1}|+...+|f_n| - n+k-1}\text{Tw}(\mathcal F)(f_1,...,f_k \circ f_{k+1},...,f_n)
		\end{nalign}
		holds.
		\begin{proof}
			We first note that, by a very similar computation as the following, one can check that $(\text{Tw}(\mathcal F)(E^*), \text{Tw}(\mathcal F)_*(\mathbb E))$ indeed defines an object of $\mathcal D$. Now let $f_1 \in \text{Hom}_{\text{Tw}(\mathcal F)}^{|f_1|}(E^*_2,E^*_1),...,f_n \in \text{Hom}_{\text{Tw}(\mathcal F)}^{|f_n|}(E^*_{n+1},E^*_n)$. We calculate:
			\begin{align*}
				&(-1)^{|f_1|+...+|f_n|+1-n}\nabla(\text{Tw}(\mathcal F)(f_1,...,f_n)) \\
				&= (-1)^{|f_1|+...+|f_n|+1-n} \sum_{k_1,...,k_{n+1} \geq 0} \nabla \mathcal F'(\mathbb E_1^{\otimes k_1},f_1,\mathbb E_2^{\otimes k_2},f_2,...,\mathbb E_{n}^{\otimes k_n},f_n,\mathbb E_{n+1}^{\otimes k_{n+1}})\\
				&= -\sum_{i = 1}^{n+1} (-1)^{|f_i|+...+|f_n|+n-i} \mathcal F'(\mathbb E_1^{\otimes k_1},f_1,\mathbb E_2^{\otimes k_2},f_2,...,\mathbb E_{i}^{\otimes k_i}) \mathcal F'(\mathbb E_{i}^{\otimes k_i'},...,\mathbb E_{n}^{\otimes k_n},f_n,\mathbb E_{n+1}^{\otimes k_{n+1}})\\
				&\quad+ \sum_{i = 1}^n (-1)^{|f_i|+...+|f_n| - n+i} \mathcal F'(\mathbb E_1^{\otimes k_1},f_1,\mathbb E_2^{\otimes k_2},f_2,...,\nabla f_i,...,\mathbb E_{n}^{\otimes k_n},f_n,\mathbb E_{n+1}^{\otimes k_{n+1}})\\
				&\quad+ \sum_{i = 1}^{n+1} (-1)^{|f_i|+...+|f_n| - n+i} \mathcal F'(\mathbb E_1^{\otimes k_1},f_1,\mathbb E_2^{\otimes k_2},f_2,...,\mathbb E_i^{\otimes k_i},\mathbb \nabla \mathbb E_i,\mathbb E_i^{\otimes k_i'},...,\mathbb E_{n}^{\otimes k_n},f_n,\mathbb E_{n+1}^{\otimes k_{n+1}})\\
				&\quad+ \sum_{i = 1}^{n-1} (-1)^{|f_{i+1}|+...+|f_n| - n+i+1} \mathcal F'(\mathbb E_1^{\otimes k_1},f_1,\mathbb E_2^{\otimes k_2},f_2,...,f_i \circ f_{i+1},...,\mathbb E_{n}^{\otimes k_n},f_n,\mathbb E_{n+1}^{\otimes k_{n+1}})\\
				&\quad+ \sum_{i = 1}^{n+1} (-1)^{|f_{i}|+...+|f_n| - n+i} \mathcal F'(\mathbb E_1^{\otimes k_1},f_1,\mathbb E_2^{\otimes k_2},f_2,...,\mathbb E_{i}^{\otimes k_i},\mathbb E_{i}^{2},\mathbb E_{i}^{\otimes k_i'},...,\mathbb E_{n}^{\otimes k_n},f_n,\mathbb E_{n+1}^{\otimes k_{n+1}})\\
				&\quad+ \sum_{i = 1}^{n} (-1)^{|f_{i+1}|+...+|f_n| - n+i+1} \mathcal F'(\mathbb E_1^{\otimes k_1},f_1,\mathbb E_2^{\otimes k_2},f_2,...,f_i \circ \mathbb E_{{i+1}},...,\mathbb E_{n}^{\otimes k_n},f_n,\mathbb E_{n+1}^{\otimes k_{n+1}})\\
				&\quad+ \sum_{i = 1}^{n} (-1)^{|f_{i}|+...+|f_n| - n+i} \mathcal F'(\mathbb E_1^{\otimes k_1},f_1,\mathbb E_2^{\otimes k_2},f_2,...,\mathbb E_{{i}} \circ f_i,...,\mathbb E_{n}^{\otimes k_n},f_n,\mathbb E_{n+1}^{\otimes k_{n+1}})\\
				&\quad+ \sum_{i = 1}^{n+1} (-1)^{|f_i|+...+|f_n| - n+i} \mathcal F'(\mathbb E_1^{\otimes k_1},f_1,\mathbb E_2^{\otimes k_2},f_2,...,\mathbb E_i^{\otimes k_i},R_{E_i},\mathbb E_i^{\otimes k_i'},...,\mathbb E_{n}^{\otimes k_n},f_n,\mathbb E_{n+1}^{\otimes k_{n+1}})\\
				&= -\sum_{i = 1}^{n+1} (-1)^{|f_i|+...+|f_n|+n-i} \mathcal F'(\mathbb E_1^{\otimes k_1},f_1,\mathbb E_2^{\otimes k_2},f_2,...,\mathbb E_{i}^{\otimes k_i}) \mathcal F'(\mathbb E_{i}^{\otimes k_i'},...,\mathbb E_{n}^{\otimes k_n},f_n,\mathbb E_{n+1}^{\otimes k_{n+1}})\\
				&\quad+ \sum_{i = 1}^n (-1)^{|f_i|+...+|f_n| - n+i} \mathcal F'(\mathbb E_1^{\otimes k_1},f_1,...,\underbrace{\nabla f_i + \mathbb E_i \circ f_i - (-1)^{|f_i|} f_i \circ \mathbb E_{i+1}}_{df_i},...,f_n,\mathbb E_{n+1}^{\otimes k_{n+1}})\\
				&\quad+ \sum_{i = 1}^{n+1} (-1)^{|f_i|+...+|f_n| - n+i} \mathcal F'(\mathbb E_1^{\otimes k_1},f_1,\mathbb E_2^{\otimes k_2},f_2,...,\underbrace{\nabla \mathbb E_i + \mathbb E_i^2 + R_{E_i}}_{0},...,f_n,\mathbb E_{n+1}^{\otimes k_{n+1}})\\
				&\quad+ \sum_{i = 1}^{n-1} (-1)^{|f_{i+1}|+...+|f_n| - n+i+1} \mathcal F'(\mathbb E_1^{\otimes k_1},f_1,\mathbb E_2^{\otimes k_2},f_2,...,f_i \circ f_{i+1},...,\mathbb E_{n}^{\otimes k_n},f_n,\mathbb E_{n+1}^{\otimes k_{n+1}})\\
				&= - (-1)^{|f_1|+...+|f_n|+n-1} \mathcal F_*(\mathbb E_1) \text{Tw}(\mathcal F)(f_1,...,f_n) + \text{Tw}(\mathcal F)(f_1,...,f_n) \mathcal F_*(\mathbb E_{n+1})\\
				&\quad- \sum_{i = 1}^{n-1} (-1)^{|f_{k+1}| + ... + |f_n|-n+k+1}\text{Tw}(\mathcal F)(f_1,...,f_i)\text{Tw}(\mathcal F)(f_{i+1},...,f_n) \\
				&\quad+ \sum_{i = 1}^n (-1)^{|f_i|+...+|f_n| - n+i} \text{Tw}(\mathcal F)(f_1,...,{df_i},...,f_n)\\
				&\quad+ \sum_{i = 1}^{n-1} (-1)^{|f_{i+1}|+...+|f_n| - n+i+1} \text{Tw}(\mathcal F)(f_1,...,f_i \circ f_{i+1},...,f_n)\\
			\end{align*}
			Combining this with 
			\begin{align*}
				d(\text{Tw}(\mathcal F)(f_1,...,f_n)) = &\nabla(\text{Tw}(\mathcal F)(f_1,...,f_n)) 
				+ \text{Tw}(\mathcal F)_*(\mathbb E_1) \circ \text{Tw}(\mathcal F)(f_1,...,f_n)\\
				& - (-1)^{|f_1|+...+|f_n|+1-n}\text{Tw}(\mathcal F)(f_1,...,f_n) \circ \text{Tw}(\mathcal F)_*(\mathbb E_{n+1})
			\end{align*}
			precisely leads to the desired equation. Furthermore, the properness assumption also implies that $\text{Tw}(\mathcal F)$ is compatible with identity morphisms.
		\end{proof}
	\end{lemma}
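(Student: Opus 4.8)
The plan is to deduce the $A_\infty$-relations for $\text{Tw}(\mathcal F)$ directly from the $A_\infty$-relations for $\mathcal F$ by feeding the twisting data of the objects into the defining identity of an $A_\infty$-functor. First I would record that, extended bilinearly, $\mathcal F$ continues to satisfy the relations of Definition \ref{Ainfty} with respect to the internal composition $\circ'$ and the internal-degree signs, and that the sign renormalisation $\mathcal F'$ of Definition \ref{DefAssociatedFunctor} is engineered precisely so that these relations hold verbatim for the \emph{total} degree and the total composition $\circ$ of $\text{Tw}(\mathcal C)$ and $\text{Tw}(\mathcal D)$. Before treating morphisms I would verify that $(\text{Tw}(\mathcal F)(E^*),\text{Tw}(\mathcal F)_*(\mathbb E))$ is a legitimate object, i.e. that it satisfies Equation \ref{EquationTrivialCurvature}; this is essentially the $n=0$ instance of the computation below and serves as a warm-up for the sign bookkeeping.

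For the morphism relations the key move is to apply the $A_\infty$-relation for $\mathcal F'$ to the single augmented sequence $(\mathbb E_1^{\otimes k_1}, f_1, \mathbb E_2^{\otimes k_2}, f_2, \ldots, f_n, \mathbb E_{n+1}^{\otimes k_{n+1}})$ and then sum over all $k_1, \ldots, k_{n+1} \geq 0$. Properness of $\mathcal F$, non-negativity of the grading (so that no $\mathbb E_k$ has an internal-degree-$0$ component) and the boundedness of the sequences $E^*$ guarantee that every sum occurring is finite. On the left-hand side the connection term produces, via the graded Leibniz rule, either $\nabla f_i$ in a morphism slot or $\nabla \mathbb E_i$ in a twisting slot, while the composition terms produce $f_i \circ f_{i+1}$, the mixed terms $\mathbb E_i \circ f_i$ and $f_i \circ \mathbb E_{i+1}$, the pure terms $\mathbb E_i^2$, and the curvature insertions $R_{E_i}$.

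The crucial simplification is then to group these contributions slot by slot. At each twisting slot the combination $\nabla \mathbb E_i + \mathbb E_i^2 + R_{E_i}$ appears, which vanishes by Equation \ref{EquationTrivialCurvature}, so all purely internal insertions cancel. At each morphism slot the combination $\nabla f_i + \mathbb E_i \circ f_i - (-1)^{|f_i|} f_i \circ \mathbb E_{i+1}$ assembles into exactly the twisted differential $d f_i$ of $\text{Tw}(\mathcal C)$. Finally the product terms of the $A_\infty$-relation separate into two types: splittings occurring in the gap between two consecutive morphisms $f_i$ and $f_{i+1}$ reassemble, after summing over the $k_j$, into the products $\text{Tw}(\mathcal F)(f_1,\ldots,f_i)\,\text{Tw}(\mathcal F)(f_{i+1},\ldots,f_n)$; and the two extreme splittings, where all factors on one side are twisting elements, recombine with the $\nabla$ term into the outer contributions $\text{Tw}(\mathcal F)_*(\mathbb E_1)\circ(-)$ and $(-)\circ \text{Tw}(\mathcal F)_*(\mathbb E_{n+1})$, that is, into $d$ acting on $\text{Tw}(\mathcal F)(f_1,\ldots,f_n)$. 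Collecting these pieces yields the stated identity.

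I expect the main obstacle to be purely one of sign management: the relation stacks three sign conventions on top of one another — the internal-degree signs of the original $A_\infty$-relation, the Koszul renormalisation defining $\mathcal F'$, and the $(-1)^{|f|}$ entering $d$ — and one must check that the signs generated by the Leibniz rule for $\nabla$ match those required to read off $d f_i$ and the outer differential exactly. Compatibility with identities follows formally from properness, since inserting an identity (of internal degree $0$) into any $\mathcal F(\ldots)$ with $n \geq 2$ arguments forces that term to vanish.
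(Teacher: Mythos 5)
Your proposal follows essentially the same route as the paper: apply the $A_\infty$-relation for $\mathcal F'$ to the augmented sequence $(\mathbb E_1^{\otimes k_1}, f_1, \ldots, f_n, \mathbb E_{n+1}^{\otimes k_{n+1}})$, sum over all insertions, and regroup so that the twisting slots vanish via $\nabla \mathbb E_i + \mathbb E_i^2 + R_{E_i} = 0$, the morphism slots assemble into $df_i$, and the interior and extreme splittings recombine into the products and the twisted differential respectively. This matches the paper's proof, including the remarks on well-definedness of the image object, finiteness via properness, and compatibility with identities.
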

	
	\begin{def1}
		Let $\eta : \mathcal F \to \mathcal G$ be a proper $A_\infty$-natural transformation of proper $A_\infty$-functors between cdg-categories $C$ and $D$. We want to define an $A_\infty$-natural transformation $\text{Tw}(\eta) : \text{Tw}(\mathcal F) \Rightarrow \text{Tw}(\mathcal G)$ of induced $A_\infty$-functors between dg-categories. Similar to before, we first extend $\eta$ to composable morphisms in $\mathcal C$ by linear extension. Then we define
		\begin{align*} \eta'(f_1,...,f_n) := (-1)^{p_1(q_2+...+q_n+n-1) + p_2 (q_3+...+q_n+n-1)  +...+p_{n-1}} \eta(f_1,...,f_n)\end{align*}
		for $f_1 \in \text{Hom}^{p_1,q_1}((E_2)^*,(E_1)^*),...,f_n \in \text{Hom}^{p_n,q_n}((E_{n+1})^*,(E_n)^*)$. One can check that the $A_{\infty}$-equation as in Definition \ref{Ainftynatural} holds with the composition $\circ$ of $\text{Tw}(\mathcal C)$ and $\text{Tw}(\mathcal D)$, and the degrees are the total degrees. On objects $(E^*, \mathbb E)$, we now set
		\begin{align*} \text{Tw}(\eta)_{(E^*, \mathbb E)} := \eta_{E^*} + \sum_{k \geq 1} \eta'(\mathbb E^{\otimes k}) \end{align*}
		and for composable morphisms, we set
		\begin{align*} \text{Tw}(\eta)(f_1,...,f_n) := \sum_{k_1,...,k_{n+1} \geq 0} \eta'(\mathbb E_1^{\otimes k_1},f_1,\mathbb E_2^{\otimes k_2},f_2,...,\mathbb E_{n}^{\otimes k_n},f_n,\mathbb E_{n+1}^{\otimes k_{n+1}}). \end{align*}
		As before, it follows from the properness assumption that all the occurring sums are finite.
	\end{def1}
	
	\begin{lemma}
		In the setting of the previous definition, we have that $\text{Tw}(\eta) : \text{Tw}(\mathcal F) \Rightarrow \text{Tw}(\mathcal G)$ indeed defines an $A_\infty$-natural transformation, i.e., the identities 
		\begin{nalign}
			&(-1)^{|f_1|+...+|f_n|-n+1} d(\text{Tw}(\eta)(f_1,...,f_n))\\
			&\quad+ \sum_{k = 0}^{n-1} \text{Tw}(\eta)(f_1,...,f_k) \text{Tw}(\mathcal F)(f_{k+1},...,f_n)\\
			&\quad+ \sum_{k = 1}^{n} (-1)^{|f_{k+1}|+...+|f_n|-n+k+1}\text{Tw}(\mathcal G)(f_1,...,f_k) \text{Tw}(\eta)(f_{k+1},...,f_n)\\
			&= \sum_{k = 1}^n (-1)^{|f_k|+...+|f_n|-n+k} \text{Tw}(\eta)(f_1,...,d f_k,...,f_n)\\
			&\quad+ \sum_{k = 1}^n (-1)^{|f_{k+1}|+...+|f_n|-n+k+1} \text{Tw}(\eta)(f_1,...,f_k \circ f_{k+1},...,f_n)\\
		\end{nalign}
		hold for composable morphisms $f_1,...,f_n$.
		\begin{proof}
			The proof is omitted as it is almost identical to that of Lemma \ref{inducedFunctorLemma}.
		\end{proof}
	\end{lemma}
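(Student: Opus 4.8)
The plan is to follow the proof of Lemma~\ref{inducedFunctorLemma} in its overall structure, replacing the $A_\infty$-functor relation for $\mathcal F'$ by the $A_\infty$-natural transformation relation for $\eta'$ from Definition~\ref{Ainftynatural}. Concretely, I would begin by expanding $\nabla(\text{Tw}(\eta)(f_1,\dots,f_n))$ using the definition $\text{Tw}(\eta)(f_1,\dots,f_n) = \sum \eta'(\mathbb E_1^{\otimes k_1}, f_1, \dots, \mathbb E_{n+1}^{\otimes k_{n+1}})$, and then apply to each summand the total-degree form of the $\eta'$-equation, which (by the remark in the preceding definition) holds with the composition $\circ$ of $\text{Tw}(\mathcal D)$. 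This rewrites each $\nabla(\eta'(\text{seq}))$ as the same families of terms appearing in Lemma~\ref{inducedFunctorLemma}, except that the product family now splits into the two kinds dictated by a natural transformation: terms $\eta'(\cdots)\,\mathcal F'(\cdots)$ with $\mathcal F$ on the source side and terms $\mathcal G'(\cdots)\,\eta'(\cdots)$ with $\mathcal G$ on the target side.

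The key is then to recognize and collect the inserted blocks. First, whenever $\nabla$ lands on one of the external morphisms $f_i$, I combine it with the adjacent insertion terms $\mathbb E_i \circ f_i$ and $\pm f_i \circ \mathbb E_{i+1}$ produced by the composition part of the $\eta'$-relation; by the definition of the differential on $\text{Tw}(\mathcal C)$ these assemble into $\text{Tw}(\eta)(f_1,\dots,d f_i,\dots,f_n)$, giving the right-hand side of the claimed identity. Second, whenever $\nabla$ lands on an internal block $\mathbb E_i^{\otimes k_i}$, it combines with the $\mathbb E_i^2$ and the curvature $R_{E_i}$ terms into the factor $\nabla(\mathbb E_i) + \mathbb E_i^2 + R_{E_i}$, which vanishes by the twisted-complex equation~\eqref{EquationTrivialCurvature}; here the $n=0$ case of the $\eta'$-relation, namely $\nabla(\eta_E) = \eta(R_E)$, is exactly what makes the extremal boundary blocks cancel. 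Third, the split products reorganize, after re-summing over where the cut falls relative to the external morphisms and over all block distributions, into $\sum_k \text{Tw}(\eta)(f_1,\dots,f_k)\,\text{Tw}(\mathcal F)(f_{k+1},\dots,f_n)$ and $\sum_k \pm\,\text{Tw}(\mathcal G)(f_1,\dots,f_k)\,\text{Tw}(\eta)(f_{k+1},\dots,f_n)$, together with the two pure-block extremal terms $\text{Tw}(\mathcal G)_*(\mathbb E_1)\circ \text{Tw}(\eta)(f_1,\dots,f_n)$ and $\pm\,\text{Tw}(\eta)(f_1,\dots,f_n)\circ \text{Tw}(\mathcal F)_*(\mathbb E_{n+1})$. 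These last two are precisely the corrections that upgrade $\nabla$ to the differential $d$ on the morphism complex of natural transformations (the target twist being $\text{Tw}(\mathcal G)_*(\mathbb E_1)$ and the source twist $\text{Tw}(\mathcal F)_*(\mathbb E_{n+1})$), so moving them to the left converts $\nabla(\text{Tw}(\eta)(\dots))$ into $d(\text{Tw}(\eta)(\dots))$ and yields the stated equation.

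The main obstacle is sign bookkeeping, exactly as in Lemma~\ref{inducedFunctorLemma}. Three sources of signs must be reconciled: the Koszul reindexing factor defining $\eta'$, whose exponents differ from those defining $\mathcal F'$ and $\mathcal G'$ (reflecting that a natural transformation component sits one total degree lower than a functor component); the signs converting the internal-degree composition $\circ'$ into the total-degree composition $\circ$ of $\text{Tw}$; and the explicit signs already present in the $A_\infty$-relation of Definition~\ref{Ainftynatural}. Because the degree shift of $\eta$ differs from that of $\mathcal F$ by one, the exponents are not literally identical to those in the functor case, but the cancellations occur in the same pattern, and verifying that each rearranged term carries the sign demanded by the target equation is the only substantive computation; it is routine once the blockwise derivation above is in place. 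Finiteness of all the sums, hence well-definedness of every manipulation, follows from properness of $\eta$, $\mathcal F$, and $\mathcal G$ together with the non-negative grading and the boundedness of the sequences $E^*$, just as in Lemma~\ref{inducedFunctorLemma}. This is why the authors are justified in omitting the computation as \emph{almost identical}.
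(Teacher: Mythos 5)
Your proposal is correct and follows exactly the route the paper intends: it is the computation of Lemma \ref{inducedFunctorLemma} with the $A_\infty$-functor relation for $\mathcal F'$ replaced by the natural-transformation relation for $\eta'$, the internal blocks killed by $\nabla(\mathbb E_i)+\mathbb E_i^2+R_{E_i}=0$, and the extremal pure-block terms absorbed into the passage from $\nabla$ to $d$. The paper omits this computation as ``almost identical,'' and your write-up is a faithful account of what that omitted argument is.
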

	
	\begin{remark}
		$A_\infty$-functors and $A_\infty$-natural transformations of cdg-categories (or more generally of curved $A_\infty$-categories) can be composed (see \cite{Armstrong}). We remark that this composition is compatible with taking the induced functor and induced natural transformations of dg-categories.
	\end{remark}
	
	\subsection{$A_\infty$-quasi equivalences}
	\label{quasiEqSection}
	Suppose we are given a proper $A_\infty$-functor $\mathcal F$ of cdg-categories. The goal of this subsection is deduce that the induced $A_\infty$-functor $\text{Tw}(\mathcal F)$ is a quasi-equivalence if $\mathcal F^0$ is a quasi-equivalence, under certain conditions.
	\begin{def1}
		Let $\mathcal C$ be a cdg-category. Note that by definition $\mathcal C$ is an additive category. 
		\begin{enumerate}
			\item Let $Z^0(\mathcal C)$ be the category consisting of the same objects as $\mathcal C$ and the morphisms are closed degree $0$ morphisms of $\mathcal C$.
			\item We say that $\mathcal C$ is \emph{abelian} if $Z^0(\mathcal C)$ is an abelian category.
			\item We say that $\mathcal C$ is \emph{quasi-abelian} if $Z^0(\mathcal C)$ satisfies the axioms of an abelian category without requiring the existence of binary products.
			\item Now assume that $\mathcal C$ is quasi-abelian. Then we can consider short exact sequences 
			\begin{align*} 0 \to E \to F \to G \to 0\end{align*}
			in $\mathcal C$. We always understand that the occuring morphisms are closed and have degree $0$, i.e., the short exact sequence is a short exact sequence in $Z^0(\mathcal C)$. We say that $\mathcal C$ is \emph{split} if every short exact sequence in $\mathcal C$ splits. Here, the splittings are required to have degree $0$ but they are \textbf{not} assumed to be closed. We stress that this is \textbf{not} saying that $Z^0(\mathcal C)$ is split (which is not the case for our applications).
			\item Note that an $A_\infty$-functor of cdg-categories $\mathcal F : \mathcal C  \to \mathcal D$ naturally induces an ordinary $A_\infty$-functor $\mathcal F^0 : \mathcal C^0 \to \mathcal D^0$ of dg-categories.
		\end{enumerate}
	\end{def1}

	\begin{def1}
		\label{filtration}
		Let $\mathcal C$ be a cdg-category. We define a decreasing filtration on the morphism complexes of $\text{Tw}(\mathcal C)$ by
		\begin{align*} F^b \text{Hom}^n(E,F) := \bigoplus_{a+b' = n, b' \geq b} \text{Hom}^{a,b'}(E,F).\end{align*}
		Since the differential does not decrease the auxiliary degree, this is indeed a filtration of complexes. We obtain an associated spectral sequence whose first three pages are given by 
		\begin{nalign}
			\label{SpectralSequenceEq}
			&E_0^{p,q}(E,F) = \text{Hom}^{p,q}(E,F)\\
			&E_1^{p,q}(E,F) = H^p(\text{Hom}^{*,q}(E,F), d^0)\\
			&E_2^{p,q}(E,F) = H^q(H^p(\text{Hom}^{*,*}(E,F), d^0), \nabla)
		\end{nalign}
		where $d^0(f) = \mathbb F^0 \circ f - (-1)^{|f|} f \circ \mathbb E^0$.
		Moreover, note that it is exhaustive since ${F^0 \text{Hom}^n(E,F) = \text{Hom}^n(E,F)}$ and it is bounded above since for any $n$,
		${\text{Hom}^{n-b,b}(E,F) = 0}$ for $b \gg 0$. Therefore, the spectral sequence converges
		\begin{nalign}
			E_0^{p,q}(E,F) \Rightarrow H^n(\text{Hom}^*(E,F), d),
		\end{nalign}
		by Theorem 5.5.1 in \cite{Weibel}.
		Now let $\mathcal F : \mathcal C \to \mathcal D$ be a proper $A_\infty$-functor of cdg-categories. By the properness assumption it follows that 
		\begin{align*} \text{Tw}(\mathcal F)(E^*, F^*) : \text{Hom}^*(E^*, F^*) \to \text{Hom}^*( \mathcal F E^*,  \mathcal F F^*) \end{align*}
		is compatible with the filtrations.
	\end{def1}
	The following Proposition is the key ingredient for the proof of Theorem \ref{EquivalenceTheorem}.
	\begin{proposition}
		\label{trivialDiff}
		Let $\mathcal C$ be a sufficiently Maurer-Cartan cdg-category which is split. Then any object $(E^*, \mathbb E)$ of $\text{Tw}(\mathcal C)$ is homotopy equivalent to an object $(H^*, \mathbb H)$ such that $\mathbb H^0 = 0$, i.e., the auxiliary degree of every part of $\mathbb H$ is positive. In particular, $R_{H^i} = 0$ for all $i$, that is $H^i \in \mathcal C^0$.
		\begin{proof}
			Let $(E^*, \mathbb E)$ be an object of $\text{Tw}(\mathcal C)$. We decompose $\mathbb E = \mathbb E^0 + \mathbb E^2 + \mathbb E^3 +...$ where each $\mathbb E^i$ has auxiliary degree $i$. As in equation \eqref{EqDecomposition}, we obtain
			\begin{align*}  (\mathbb E^0)^2 = 0 \text{ and } \nabla(\mathbb E^0) = 0\end{align*}
			which means that we can think of $E^*$ as a complex with differential $\mathbb E^0$ and $\mathbb E^0$ is $\nabla$-closed. As we have kernels and cokernels, we obtain a diagram
			\[
			\begin{tikzcd}[column sep=small]
				... \arrow[r]              & E^{i-1} \arrow[rdd, "p^i", two heads] \arrow[rrrr, "\mathbb E^0"] &                               &  &                                                           & E^{i} \arrow[rrrr, "\mathbb E^0"] \arrow[rdd, "p^{i+1}", two heads] &                                     &  &                                                                       & E^{i+1} \arrow[r] \arrow[rdd] & ... \\
				&                                                                   &                               &  &                                                           &                                                                     &                                     &  &                                                                       &                               &     \\
				... \arrow[rd] \arrow[ruu] &                                                                   & I^{i} \arrow[rr, "i^i", hook] &  & K^i \arrow[ruu, "k^i", hook] \arrow[rd, "h^i", two heads] &                                                                     & I^{i+1} \arrow[rr, "i^{i+1}", hook] &  & K^{i+1} \arrow[ruu, "k^{i+1}", hook] \arrow[rd, "h^{i+1}", two heads] &                               & ... \\
				& H^{i-1}                                                           &                               &  &                                                           & H^i                                                                 &                                     &  &                                                                       & H^{i+1}                       &    
			\end{tikzcd}\]
			for each $i$ where
			\begin{enumerate}
				\item $I^i$ is the image of $\mathbb E^0 : E^{i-1} \to E^i$,
				\item $K^i$ is the kernel of $\mathbb E^0 : E^{i} \to E^{i+1}$ and
				\item $H^i$ is the cokernel of $i^i : I^i \to K^i$.
			\end{enumerate}
			In this diagram, we identify the short exact sequences
			\[
			\begin{tikzcd}
				0 \arrow[r] & I^i \arrow[r, "i^i"'] & K^i \arrow[r, "h^i"'] \arrow[l, "r^i"', dashed, bend right] & H^i \arrow[r] \arrow[l, "s^i"', dashed, bend right] & 0
			\end{tikzcd}
			\]
			and 
			\[
			\begin{tikzcd}
				0 \arrow[r] & K^i \arrow[r, "k^i"'] & E^i \arrow[r, "p^{i+1}"'] \arrow[l, "e^i"', dashed, bend right] & {I^{i+1}} \arrow[r] \arrow[l, "q^{i+1}"', dashed, bend right] & 0
			\end{tikzcd}
			\]
			of which we fix (possibly non-closed, degree $0$) splittings as indicated which is possible since $\mathcal C$ was assumed to be split. Next we define for each $i$ 
			\begin{align*} f^0 := k^i \circ s^i : H^i \to E^i \text{ and } g^0 := h^i \circ e^i : E^i \to H^i.\end{align*}
			These have degree $0$ but might not be $\nabla$-closed. Next, we see that 
			\begin{align*} g^0 \circ f^0 = h^i \circ p^i \circ k^i \circ s^i = h^i \circ s^i = \text{id}_{H^i}. \end{align*}
			We want to show that the other composition $f^0 \circ g^0$ is $\mathbb E^0$-homotopic to the identity on $E^*$. To show this, we define
			\begin{align*} j^i := - q^i \circ r^i \circ e^i : E^i \to E^{i-1}.\end{align*}
			Using that $\text{id}_{K^i} = i^i \circ r^i + s^i \circ h^i$ and $\text{id}_{E^i} = k^i \circ e^i + q^{i+1} \circ p^{i+1}$, a straightforward computation shows
			\begin{nalign}
				\mathbb E^0 \circ j^i + j^{i+1} \circ \mathbb E^0& = f^0 \circ g^0 - \text{id}_{E_i}.
			\end{nalign}
			Now we can apply the first part of the following Proposition to obtain the desired result.	
			The remaining statement follows by equation \ref{EqDecomposition}.
		\end{proof}
	\end{proposition}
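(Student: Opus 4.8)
The content that still has to be proven is the homological perturbation statement invoked at the very end of the proof above: given the degree-$0$ contraction $(f^0,g^0,j)$ of the complex $(E^*,\mathbb E^0)$ onto $(H^*,0)$ that was just constructed, i.e. $g^0\circ f^0=\text{id}_{H^*}$ and $f^0\circ g^0-\text{id}_{E^*}=\mathbb E^0\circ j+j\circ\mathbb E^0$, one must transfer the full twisted differential $\mathbb E=\mathbb E^0+\delta$, with $\delta:=\mathbb E^2+\mathbb E^3+\cdots$, to a twisted differential $\mathbb H$ on $H^*$ with $\mathbb H^0=0$, and exhibit a homotopy equivalence $(E^*,\mathbb E)\simeq(H^*,\mathbb H)$ in $\text{Tw}(\mathcal C)$. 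The plan is to regard $\mathbb E$ as a Maurer-Cartan element twisting $\nabla$, since it satisfies $\nabla(\mathbb E)+\mathbb E^2+R_E=0$, that is \eqref{EquationTrivialCurvature}, and to transfer it along the contraction by the classical perturbation-lemma formulas.

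First I would normalize $(f^0,g^0,j)$ to a special deformation retract, arranging in addition $g^0\circ j=0$, $j\circ f^0=0$ and $j\circ j=0$ by the usual replacement of $j$; this preserves the two identities above and only streamlines the bookkeeping. Then I would set $A:=\sum_{k\ge 0}(\delta\circ j)^k\circ\delta$ and define
\[\mathbb H:=g^0\circ A\circ f^0,\qquad F:=\sum_{k\ge 0}(j\circ\delta)^k\circ f^0,\qquad G:=g^0\circ\sum_{k\ge 0}(\delta\circ j)^k,\]
together with the homotopy $J:=j+j\circ A\circ j$. The key observation making these geometric series finite is a grading one: each component $\mathbb E^m$ of $\delta$ with $m\ge 2$ lowers the complex degree by $m-1\ge 1$, while $j$ lowers it by $1$, so $\delta\circ j$ lowers the complex degree by at least $2$; since $E^*$ is bounded, $(\delta\circ j)^k=0$ for $k\gg 0$. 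Moreover every correction term carries at least one factor of $\delta$ and hence has auxiliary degree $\ge 2$, so $F$ and $G$ agree with $f^0$ and $g^0$ in auxiliary degree $0$ and $\mathbb H$ has strictly positive auxiliary degree, giving $\mathbb H^0=0$ as required.

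Finally I would verify the four perturbation-lemma identities: that $\mathbb H$ is again a Maurer-Cartan element, so that $(H^*,\mathbb H)$ is a genuine object of $\text{Tw}(\mathcal C)$; that $F$ and $G$ are $d$-closed of degree $0$; that $G\circ F=\text{id}$; and that $F\circ G-\text{id}=d(J)$, where $d$ is the twisted differential of the morphism complex. The cleanest route is to run every identity with this single differential $d$, which genuinely squares to zero by \eqref{EquationTrivialCurvature}, so that the curvature $R_E$ is absorbed automatically and never tracked separately; reading off the auxiliary-degree-$0$ part then yields $\mathbb H^0=0$, and the degree-$(0,2)$ line of \eqref{EqDecomposition} forces $R_{H^i}=0$, i.e. $H^i\in\mathcal C^0$. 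I expect the convergence and the explicit formulas to be straightforward, and the genuine obstacle to be the sign bookkeeping: the internal/auxiliary bigrading and the Koszul signs built into the composition $\circ$ of $\text{Tw}(\mathcal C)$ mean the naive transfer formulas must be decorated with signs, and checking the sign-correct Maurer-Cartan equation for $\mathbb H$ (absorbing $R_E$ into the $(0,2)$ slot) is where the care is needed.
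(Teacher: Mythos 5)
You have correctly identified that the remaining content is a homotopy transfer statement, and your perturbation-lemma formulas are in the right spirit, but there is a genuine gap: your transfer only perturbs by $\delta=\mathbb E_2+\mathbb E_3+\cdots$ and treats the contraction $(f^0,g^0,j)$ as if it were compatible with the connection $\nabla$. It is not: as the construction itself notes, $f^0$, $g^0$ and $j$ are merely degree-$0$ morphisms in $\mathcal C$ and need not be $\nabla$-closed. Consequently the transferred structure on $H^*$ necessarily acquires an auxiliary-degree-$1$, internal-degree-$0$ component, with leading term $g^0\circ\nabla(f^0)$. This component cannot be a part of $\mathbb H$, since by definition an object of $\text{Tw}(\mathcal C)$ has no $(0,1)$-component; it must instead be absorbed by replacing each $H^i$ with a twisted object $(H^i)^{a_i}$, and this is exactly where the \emph{sufficiently Maurer-Cartan} hypothesis enters (note that $a=g^0\circ\nabla(f^0)$ is of the required form $b\circ\nabla(d)$). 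Your argument never invokes that hypothesis, which is a reliable sign something is missing. Concretely: with your $\mathbb H:=g^0\circ A\circ f^0$ the Maurer-Cartan equation $\nabla(\mathbb H)+\mathbb H^2+R_H=0$ already fails in auxiliary degree $3$, where it reduces to $\nabla(g^0\,\mathbb E_2\,f^0)=0$, and the terms $\nabla(g^0)\,\mathbb E_2\,f^0$ and $g^0\,\mathbb E_2\,\nabla(f^0)$ have no reason to vanish; likewise the auxiliary-degree-$1$ part of $d(F)$ is $\nabla(f^0)\neq 0$, so your $F$ is not closed. (One point you did not need to worry about: $R_{H^i}=0$ does hold automatically for the untwisted subquotients, since $R_{E^i}=-(\mathbb E_0\mathbb E_2+\mathbb E_2\mathbb E_0)$ forces $R_{K^i}$ to factor through $I^i$, which $h^i$ kills; but this does not repair the degree-$3$ failure above.)

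For comparison, the paper finishes by invoking Proposition \ref{ExtendLemma}: an inductive, obstruction-theoretic transfer along the filtration by auxiliary degree, in which the failure of the equations at stage $i+1$ is a cocycle in the (acyclic) cone of $(f^0_*)^{i+1}$ and is killed by choosing the next components $f^{i+1}$, $\mathbb F^{i+1}$; the first stage produces exactly the twist $a=g^0\circ\nabla(f^0)$ and uses the sufficiently Maurer-Cartan property to realize it as a modification of the connections on the $H^i$. Your explicit geometric-series formulas could plausibly be made to work if the perturbation were enlarged to include the connection, but then the side conditions, the finiteness of the series, and above all the reinterpretation of the $(0,1)$-component as a twist of objects all have to be addressed — none of which is automatic, and the last of which is the actual mathematical content of the hypothesis on $\mathcal C$.
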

	The next Proposition can be understood as a Homotopy Transfer Theorem, i.e, it allows us to transport the differential on $(E^*, \mathbb E)$ to a differential $\mathbb F$ on $F^*$ under mild assumptions.
	\begin{proposition}
		\label{ExtendLemma}
		Let $\mathcal C$ be a cdg-category. Let $(E^*, \mathbb E)$ be some object in $\text{Tw}(\mathcal C)$ and let $F^*$ be a bounded sequence of objects in $\mathcal C$ with a $\nabla$-closed morphism $\mathbb F^0 \in \text{Hom}^0(F^i,F^{i+1})$ for each $i$ such that $ (\mathbb F^0)^2 = 0$. Moreover, let $f^0 : (F^*, \mathbb F^0) \to (E^*, \mathbb E^0) $ be a (not necessarily $\nabla$-closed) morphism of complexes of bidegree $(0,0)$. Assume either that 
		\begin{enumerate}
			\item $\mathcal C$ is sufficiently Maurer-Cartan and $f^0$ has a homotopy inverse $g^0 : (E^*, \mathbb E^0) \to (F^*, \mathbb F^0)$, or
			\item $\mathcal C$ is Maurer-Cartan and the map \begin{align*}(f_*^0)^i : \text{Hom}^{*, i}((F^*, \mathbb F^0), (F^*, \mathbb F^0)) \to \text{Hom}^{*, i}((F^*, \mathbb F^0), (E^*, \mathbb E^0))\end{align*} is a quasi-isomorphism of complexes for every $i \in \mathbb Z$.
		\end{enumerate}
		Then there are elements $a_i \in \text{Hom}^1(F^i,F^i)$, an object $(((F^i)^{a_i})_{i \in \mathbb Z}, \mathbb F)$ of $\mathcal C$ extending $\mathbb F^0$, and a closed morphism $f \in \text{Hom}^0((F^a)^*, E^*)$ in $\text{Tw}(\mathcal C)$ extending $f^0$. In the first case $f$ is a homotopy equivalence.
		\begin{proof}
			In both cases we have that each $(f_*^0)^i$ is a quasi-isomorphism. The only difference in the proof between 1 and 2 is the fact that we can choose the elements $a_i$ to be of the required form in the case that $\mathcal C$ is only sufficiently Maurer-Cartan. We begin by constructing $a = (a_i)_{i \in \mathbb Z}$. By assumption, we have $\mathbb E^0 f^0 - f^0 \mathbb F^0 = 0$. Furthermore, since $\nabla_F(\mathbb F^0) = 0 = \nabla_E(\mathbb E^0)$ we have $\mathbb E^0 \nabla(f^0) + \nabla(f^0) \mathbb F^0 = 0$. As $(f_*^0)^1$ is a quasi-isomorphism, we obtain a closed $a \in  \text{Hom}^{0, 1}((F^*, \mathbb F^0), (F^*, \mathbb F^0))$ and a map $f^1 \in \text{Hom}^{-1, 1}((F^*, \mathbb F^0), (E^*, \mathbb E^0))$ such that $\nabla(f^0) - f^0 \circ a + \mathbb E^0 f^1 - f^1 \mathbb F^0 = 0$. We remark that in the first case we can choose $a = g^0 \circ \nabla(f^0)$, thus being of the required form for the sufficiently Maurer-Cartan property. Let $\nabla'$ denote the by-$a_i$-twisted connection (compare Remark \ref{remarkIdentify}) for each $i$. Note that since $a$ is closed, we still have $\nabla'(\mathbb F^0) = 0$. Together with the fact that $f^0$ is $\mathbb E^0$, $\mathbb F^0$ closed, we can summarize
			\begin{align*} \mathbb E  f^{\leq 1} - f^{\leq 1} \mathbb F^{\leq 1} + \nabla'(f^{\leq 1}) = 0 \text{ in } F^1.\end{align*}
			Here, $f^{\leq i} := f^0 + ... + f^i$ and $F^i$ is the filtration as in Definition \ref{filtration}. We also obtain
			\begin{align*} \nabla'(\mathbb F^{\leq 1}) + \mathbb F^{\leq 1} \circ \mathbb F^{\leq 1} + R_{F^a} = 0 \text{ in } F^1\end{align*}
			(recall that $\mathbb F^1 = \mathbb E^1 = 0$). We now proceed inductively. Assume for a given $i \geq 1$ that the equations
			\begin{enumerate}
				\item \begin{align*} \nabla'(\mathbb F^{\leq i}) + \mathbb F^{\leq i} \mathbb F^{\leq i} + R_{F^a} = 0 \text{ in } F^i\end{align*}
				\item \begin{align*} \mathbb E  f^{\leq i} - f^{\leq i} \mathbb F^{\leq i} + \nabla'(f^{\leq i}) = 0 \text{ in } F^i\end{align*}
			\end{enumerate}
			hold. We will then construct $\mathbb F^{\leq i+1} = \mathbb F^{\leq i} + \mathbb F^{i+1}$ and $f^{\leq i+1} = f^{\leq i} + f^{i+1}$ such that the above equations hold for $i+1$ instead of $i$.  We consider the failure of the above equations to be true in $F^{i+1}$. Thus, we set
			\begin{align*} P := \nabla'(\mathbb F^{\leq i}) + \mathbb F^{\leq i} \mathbb F^{\leq i} + R_{F^a}  \text{ in } F^{i+1}\end{align*}
			and 
			\begin{align*} Q := \mathbb E  f^{\leq i} - f^{\leq i} \mathbb F^{\leq i} + \nabla'(f^{\leq i}) \text{ in } F^{i+1}\end{align*}
			We first show that $P$ is closed in $F^{i+1}$.
			\begin{nalign}
				\mathbb F^0 P - P \mathbb F^0 &= \mathbb F^0 ( \nabla'(\mathbb F^{\leq i}) + \mathbb F^{\leq i} \mathbb F^{\leq i} + R_{F^a}) - ( \nabla'(\mathbb F^{\leq i}) + \mathbb F^{\leq i} \mathbb F^{\leq i} + R_{F^a}) \mathbb F^0 \\
				&=  -\nabla'( \mathbb F^0 \mathbb F^{\leq i} + \mathbb F^{\leq i} \mathbb F^0) + \mathbb F^0 \mathbb F^{\leq i} \mathbb F^{\leq i} - \mathbb F^{\leq i} \mathbb F^{\leq i} \mathbb F^0
			\end{nalign}
			Since we are computing in $F^{i+1}$ and $\nabla'$ increases the degree of the filtration by $1$, we use the induction hypothesis to calculate:
			\begin{nalign}
				&-\nabla'( \mathbb F^0 \mathbb F^{\leq i} + \mathbb F^{\leq i} \mathbb F^0) + \mathbb F^0 \mathbb F^{\leq i} \mathbb F^{\leq i} -\mathbb F^{\leq i} \mathbb F^{\leq i} \mathbb F^0 \\
				&= \nabla'( \mathbb F^{ \geq 1, \leq i} \mathbb F^{ \geq 1, \leq i} + R_{F_a} + \nabla'(\mathbb F^{\leq i} )) + \mathbb F^0 \mathbb F^{\leq i} \mathbb F^{\leq i} -\mathbb F^{\leq i} \mathbb F^{\leq i} \mathbb F^0\\
				&= (-\mathbb F^{\leq 1} \mathbb F^{\leq 1} - R_{F^a}) \mathbb F^{ \geq 1, \leq i}- \mathbb F^{ \geq 1, \leq i} (-\mathbb F^{\leq 1} \mathbb F^{\leq 1} - R_{F^a}) + (\nabla')^2 \mathbb F^{\leq i}\\
				&\quad+  \mathbb F^0 \mathbb F^{\leq i} \mathbb F^{\leq i} -\mathbb F^{\leq i} \mathbb F^{\leq i} \mathbb F^0 \\
				&= 0.
			\end{nalign}
			Thus, $\mathbb F^0 P - P \mathbb F^0 = 0$. Next, we want to compute the differential of $Q$ in $F^{i+1}$. We begin by the following computation in $F^{i+1}$ (using similar arguments as before)
			\begin{nalign}
				&\nabla'(- \mathbb E ^0 f^{\leq i} + f^{\leq i} \mathbb F^0) \\
				&= \nabla'(\mathbb E^{\geq 1} f^{\leq i} - f^{\leq i} \mathbb F^{\geq 1, \leq i} + \nabla'(f^{\leq i})) \\
				&= (- \mathbb E^2 - R_E) f^{\leq i} - \mathbb E^{\geq 1}(-\mathbb E f^{\leq i} + f^{\leq i} \mathbb F^{\leq i}) - (-\mathbb E f^{\leq i} + f^{\leq i} \mathbb F^{\leq i})\mathbb F^{\geq 1, \leq i} \\
				&\quad- f^{\leq i, \geq 1} (- \mathbb F^{\leq i} \mathbb F^{\leq i} - R_{F^a}) - f^0 \nabla'(\mathbb F^{\leq i})  + (\nabla')^{2} f^{\leq i}\\
				&= -\mathbb E^0(\mathbb E f^{\leq i} - f^{\leq i} \mathbb F^{\leq i}) - (\mathbb E f^{\leq i} - f^{\leq i} \mathbb F^{\leq i}) \mathbb F^0 - f^0 P
			\end{nalign}
			which we can rewrite as
			\begin{align*} \mathbb E^0 Q + Q \mathbb F^0 + f^0 P = 0 \text{ in } F^{i+1}.\end{align*}
			Now consider the (i+1)-th components $P^{i+1}$ and $Q^{i+1}$ so that $\mathbb E^0 Q^{i+1} + Q^{i+1}\mathbb F^0 + f^0P = 0$ and $\mathbb F^0 P^{i+1} - P^{i+1} \mathbb F^{0} = 0$. We consider the pair $(P,Q)$ as an element of the cone of $(f_*^0)^{i+1}$ which is acyclic since $(f_*^0)^{i+1}$ is a quasi-isomorphism. The above equations precisely mean that $(P,Q)$ is a cycle. Thus, there is a pair $(f^{i+1}, \mathbb F^{i+1})$ such that 
			\begin{align*} \mathbb F^0 \mathbb F^{i+1} + \mathbb F^{i+1} \mathbb F^0 = -P^{i+1}\end{align*}
			and 
			\begin{align*} \mathbb E^0 f^{i+1} - f^{i+1} \mathbb F^{0} - f^0 \mathbb F^{i+1} = -Q^{i+1}.\end{align*}
			Now letting $f^{\leq i+1} := f^{\leq i} + f^{i+1}$ and $\mathbb F^{\leq i+1} := \mathbb F^{\leq i+1} + \mathbb F^{i+1}$ we conclude the induction.
			Since the filtration is bounded above, it follows that the above process terminates for some $i \gg 0$, and we obtain 
			\begin{enumerate}
				\item \begin{align*} \nabla'(\mathbb F) + \mathbb F \mathbb F + R_{F^a} = 0 \end{align*}
				\item \begin{align*} \mathbb E f - f \mathbb F + \nabla'(f) = 0\end{align*}
			\end{enumerate}
			where $\mathbb F = \mathbb F^0 + ...$ and $f = f^0 + ...$.
			The first equation means that we have constructed an object $(F^a, \mathbb F)$ of $\text{Tw}(\mathcal C)$, and the second equation implies that $f$ is a closed morphism $f \in \text{Hom}^0((F^a)^*, E^*)$. By construction $\mathbb F$ extends $\mathbb F^0$ and $f$ extends $f^0$, and by assumption $f^0 : (F^a)^* \to E^*$ is a homotopy equivalence. To show the remaining statement in the case that $f^0$ has a homotopy inverse, we can directly apply the next Lemma.
			
		\end{proof}
	\end{proposition}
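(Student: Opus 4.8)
The plan is to build the twisted differential $\mathbb F$ and the closed morphism $f$ by an obstruction-theoretic induction along the auxiliary-degree filtration $F^\bullet$ of Definition \ref{filtration}, solving the two structural equations
\begin{align*}
\nabla'(\mathbb F) + \mathbb F\mathbb F + R_{F^a} = 0 \quad\text{and}\quad \mathbb E f - f\mathbb F + \nabla'(f) = 0
\end{align*}
one filtration degree at a time. Here $\nabla'$ denotes the connection twisted by the endomorphisms $a_i$ still to be constructed, and I write $\mathbb F = \mathbb F^0 + \mathbb F^2 + \mathbb F^3 + \cdots$ and $f = f^0 + f^2 + f^3 + \cdots$ with $\mathbb F^i, f^i$ of auxiliary degree $i$. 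Recall that the auxiliary-degree-$1$ slot $\mathbb F^1$ is forced to vanish for objects of $\text{Tw}(\mathcal C)$, so the degree-$1$ obstruction cannot be absorbed into a component of $\mathbb F$ and must instead be fed into the twist $a_i$.

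First I would treat the base obstruction in auxiliary degree $1$. Since $\mathbb E^0$ and $\mathbb F^0$ are $\nabla$-closed and $f^0$ is a chain map, a short computation gives $\mathbb E^0\nabla(f^0) + \nabla(f^0)\mathbb F^0 = 0$, so $\nabla(f^0)$ is a $d^0$-cycle in auxiliary degree $1$. Using that $(f_*^0)^1$ is a quasi-isomorphism, I write $\nabla(f^0) = f^0\circ a - (\mathbb E^0 f^1 - f^1\mathbb F^0)$ with a closed $a \in \text{Hom}^{0,1}$ and an $f^1 \in \text{Hom}^{-1,1}$. This $a$ is the required twist $a_i$, and in case 1 I may take $a = g^0\circ\nabla(f^0)$, which has the special form $b\circ\nabla(d)$; this is exactly where the sufficiently-Maurer-Cartan hypothesis enters, guaranteeing that the twisted objects $(F^i)^{a_i}$ exist. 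Twisting the connection by $a$ keeps $\mathbb F^0$ closed (as $a$ is closed), so both structural equations now hold modulo $F^2$, with the curvature $R_{F^a}$ supplied by the formula in Remark \ref{remarkIdentify}.

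For the inductive step, assuming both equations hold modulo $F^{i+1}$, I set
\begin{align*}
P := \nabla'(\mathbb F^{\le i}) + \mathbb F^{\le i}\mathbb F^{\le i} + R_{F^a}, \qquad Q := \mathbb E f^{\le i} - f^{\le i}\mathbb F^{\le i} + \nabla'(f^{\le i})
\end{align*}
read in $F^{i+1}$. The heart of the argument is to verify that $(P,Q)$ is a cycle in the mapping cone of $(f_*^0)^{i+1}$, i.e. that $\mathbb F^0 P - P\mathbb F^0 = 0$ and $\mathbb E^0 Q + Q\mathbb F^0 + f^0 P = 0$. Both follow by direct computation: one expands $\nabla'$ and the quadratic terms, uses that $\nabla'$ raises filtration degree by $1$, and repeatedly substitutes the order-$i$ equations together with the identity $(\nabla')^2(\cdot) = R_{F^a}(\cdot) - (\cdot)R_{F^a}$, after which the curvature and cross terms cancel. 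Since $(f_*^0)^{i+1}$ is a quasi-isomorphism its mapping cone is acyclic, so the cycle $(P^{i+1}, Q^{i+1})$ is a boundary, yielding $\mathbb F^{i+1}$ and $f^{i+1}$ with $\mathbb F^0\mathbb F^{i+1} + \mathbb F^{i+1}\mathbb F^0 = -P^{i+1}$ and $\mathbb E^0 f^{i+1} - f^{i+1}\mathbb F^0 - f^0\mathbb F^{i+1} = -Q^{i+1}$, pushing the obstruction one filtration degree higher. Because $F^*$ is bounded and the filtration is bounded above, the process terminates, and the limiting $\mathbb F, f$ satisfy the full equations: the first exhibits $(F^a, \mathbb F)$ as an object of $\text{Tw}(\mathcal C)$ extending $\mathbb F^0$, and the second says $f$ is a closed degree-$0$ morphism extending $f^0$. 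Finally, in case 1, $f^0$ is a homotopy equivalence of complexes, so the subsequent Lemma upgrades $f$ (whose degree-$0$ part is a homotopy equivalence) to a homotopy equivalence in $\text{Tw}(\mathcal C)$.

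The main obstacle I expect is the cocycle check in the inductive step: one must carefully track the filtration-raising connection $\nabla'$, the curvature $R_{F^a}$, and the fact that the underlying splittings are not closed, so that the cancellations $\mathbb F^0 P = P\mathbb F^0$ and $\mathbb E^0 Q + Q\mathbb F^0 = -f^0 P$ genuinely go through. A secondary subtlety is ensuring, in case 1, that each twist $a_i$ retains the form $b\circ\nabla(d)$ so that the sufficiently-Maurer-Cartan property continues to produce the twisted objects at every stage; the explicit choice $a = g^0\circ\nabla(f^0)$ is what secures this.
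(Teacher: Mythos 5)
Your proposal is correct and follows essentially the same route as the paper: construct the twist $a$ from the auxiliary-degree-$1$ obstruction $\nabla(f^0)$ (taking $a = g^0\circ\nabla(f^0)$ in case 1 to satisfy the sufficiently-Maurer-Cartan hypothesis), then inductively kill the higher obstructions $(P,Q)$ as cycles in the acyclic mapping cone of $(f_*^0)^{i+1}$, and finish with the homotopy-equivalence criterion of the subsequent Lemma. The only slips are cosmetic: your opening decomposition of $f$ omits the component $f^1$ that you later construct, and the twist $a$ is fixed once in the base step rather than being re-chosen "at every stage" as your closing remark suggests.
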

	\begin{lemma}
		\label{HomotopyCriterion}
		Let $\mathcal C$ be a cdg-category. Let $f \in \text{Hom}^0(F^*, E^*)$ be a closed morphism in $\text{Tw}(\mathcal C)$ of total degree $0$. Then following are equivalent
		\begin{enumerate}
			\item $f^0 : (F^*, \mathbb F^0) \to (E^*, \mathbb E^0)$ is a homotopy equivalence of complexes.
			\item $f$ is a homotopy equivalence in $\text{Tw}(\mathcal C)$.
		\end{enumerate}
		\begin{proof}
			First, we note that $f$ being $d$-closed implies that $f^0$ is $\mathbb F^0$, $\mathbb E^0$-closed by comparing degrees. Suppose that $f$ is a homotopy equivalence with homotopy inverse $g$ and homotopies $h, h'$, i.e.,
			\begin{align*} f \circ g - \text{id} = d(h), g \circ f - \text{id} = d(h').\end{align*}
			We let $d^0$ denote the differential induced by $\mathbb F^0$ and $\mathbb E^0$.
			We post-compose these equations with the projection onto the $(0,0)$-th component and obtain
			\begin{align*} f^0 \circ g^0 - \text{id} = d^0(h^0), g^0 \circ f^0 - \text{id} = d^0(h'^0) \end{align*}
			where $h^0$ is the $(-1,0)$-th component of $h$. So we have proven 1.\\ \\
			Now let $f$ be $d$-closed such that $f^0$ is a homotopy equivalence. We first show that for any other object $G^*$ of $\mathcal C$ the pullback
			\begin{align*} f^* : \text{Hom}^*_{\text{Tw}(\mathcal C)}(E^*,G^*) \to \text{Hom}^*_{\text{Tw}(\mathcal C)}(F^*,G^*)\end{align*}
			is a quasi-isomorphism. On both complexes, we consider the filtrations from \ref{filtration} and note that $f^*$ is compatible with these filtrations as the auxiliary degree of any morphism (in particular of $f$) is non-negative. But since $f^0$ is a homotopy equivalence, the induced map on the $E_1$ page (cf. equation \eqref{SpectralSequenceEq}) is an isomorphism. We have already seen that the spectral sequences converge, so we obtain that $f^*$ is a quasi-isomorphism. We now apply this result to $G^* = F^*$ and obtain that there is some closed $g \in \text{Hom}^0_{\text{Tw}(\mathcal C)}(E^*,F^*)$ such that $f^* g = g \circ f$ is $d$-cohomologous to $\text{id}_{F^*}$. Then, $g^0$ is again a $d^0$-homotopy equivalence, so may now apply the same result to 
			\begin{align*} g^* : \text{Hom}^*_{\text{Tw}(\mathcal C)}(F^*,E^*) \to \text{Hom}^*_{\text{Tw}(\mathcal C)}(E^*,E^*)\end{align*}
			in order to obtain a d-closed $\tilde f \in \text{Hom}^0_{\text{Tw}(\mathcal C)}(F^*,E^*)$ such that $\tilde f \circ g$ is cohomologous to the identity on $E^*$. But then $\tilde f \circ g \circ f$ is both cohomologous to $f$ and $\tilde f$, so $\tilde f$ is in fact cohomologous to $f$, and therefore $f \circ g$ is cohomologous to the identity which means that $g$ is the desired homotopy inverse of $f$.
		\end{proof}
	\end{lemma}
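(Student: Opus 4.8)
The plan is to prove the two implications separately; the direction $(2)\Rightarrow(1)$ is formal, while $(1)\Rightarrow(2)$ is where the spectral sequence of Definition \ref{filtration} does the real work. For $(2)\Rightarrow(1)$, I would take a homotopy inverse $g$ of $f$ together with homotopies $h,h'$ satisfying $f\circ g - \text{id} = d(h)$ and $g\circ f - \text{id} = d(h')$, and then project each relation onto its $(0,0)$-component. Since the differential $d$ does not decrease the auxiliary degree, the lowest-filtration part of $f\circ g-\text{id}=d(h)$ reads $f^0\circ g^0-\text{id}=d^0(h^0)$, and likewise for the other relation, so $f^0$ is a homotopy equivalence of complexes with inverse $g^0$.

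For $(1)\Rightarrow(2)$, I would first note that $f$ being $d$-closed forces $f^0$ to be $\mathbb F^0,\mathbb E^0$-closed by comparing bidegrees, so the hypothesis on $f^0$ is meaningful. The central step is to show that for every object $G^*$ of $\text{Tw}(\mathcal C)$ the precomposition map
\begin{align*} f^* : \text{Hom}^*_{\text{Tw}(\mathcal C)}(E^*,G^*) \to \text{Hom}^*_{\text{Tw}(\mathcal C)}(F^*,G^*), \quad \phi \mapsto \phi\circ f,\end{align*}
is a quasi-isomorphism. I would observe that $f^*$ respects the decreasing filtration of Definition \ref{filtration}, since $f$ has non-negative auxiliary degree. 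Passing to the associated spectral sequences and invoking the description in \eqref{SpectralSequenceEq}, the induced map on $E_1$-pages is precomposition with the $d^0$-homotopy class of $f^0$; as $f^0$ is a homotopy equivalence of complexes, this is an isomorphism on $E_1$. Both spectral sequences are exhaustive and bounded above, hence convergent, so an $E_1$-isomorphism forces $f^*$ to be a quasi-isomorphism.

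With this quasi-isomorphism in hand I would run a zig-zag to promote a one-sided inverse into a two-sided one. Taking $G^*=F^*$ yields a closed $g\in\text{Hom}^0(E^*,F^*)$ with $g\circ f$ cohomologous to $\text{id}_{F^*}$; projecting to the $(0,0)$-component shows $g^0$ is again a homotopy equivalence. Applying the same quasi-isomorphism result to $g^*$ produces a closed $\tilde f$ with $\tilde f\circ g$ cohomologous to $\text{id}_{E^*}$. The sandwich $\tilde f\circ g\circ f$ is then cohomologous both to $\tilde f$ (using $g\circ f\sim\text{id}_{F^*}$) and to $f$ (using $\tilde f\circ g\sim\text{id}_{E^*}$), whence $\tilde f\sim f$ and therefore $f\circ g\sim\text{id}_{E^*}$; together with $g\circ f\sim\text{id}_{F^*}$ this exhibits $g$ as a two-sided homotopy inverse of $f$. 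I expect the main obstacle to be the identification of the $E_1$-page map with precomposition by the class of $f^0$ together with the verification that an $E_1$-isomorphism of convergent filtered complexes yields a quasi-isomorphism; the zig-zag is needed precisely because, in a general dg-category, a left homotopy inverse need not automatically be a two-sided one.
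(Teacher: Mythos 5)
Your proposal is correct and follows essentially the same route as the paper's own proof: the $(0,0)$-component projection for the easy direction, the filtration spectral sequence of Definition \ref{filtration} with an $E_1$-isomorphism plus convergence to show $f^*$ is a quasi-isomorphism, and then the identical zig-zag with $g$ and $\tilde f$ to upgrade the one-sided inverse to a two-sided homotopy inverse. No gaps worth flagging.
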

	\begin{proposition}
		\label{PropQFF}
		Let $\mathcal F : \mathcal C \to \mathcal D$ be a proper $A_\infty$-functor between cdg-categories such that $\mathcal C$ is sufficiently Maurer-Cartan and split. Assume that the induced $A_\infty$-functor $\mathcal F^{0} : \mathcal C^{0} \to \mathcal D^{0}$ is quasi-fully faithful. Then $\text{Tw}(\mathcal F) : \text{Tw}(\mathcal C) \to \text{Tw}(\mathcal D)$ is quasi-fully faithful.
		\begin{proof}
			By our assumptions, we can apply \ref{trivialDiff}. Therefore, it is enough to show that for objects $(E^*, \mathbb E)$ and $(F^*, \mathbb F)$ with $\mathbb E^0 = 0$ and $\mathbb F^0 = 0$, we have that the induced map
			\begin{align*} \mathcal F : \text{Hom}^*(E^*, F^*) \to \text{Hom}^*(\mathcal F E^*, \mathcal F F^*)  \end{align*}
			is a quasi-isomorphism. Furthermore, note that by the properness assumption of $\mathcal F$, we also obtain that $(\mathcal F_*(\mathbb E))^0 = 0 = (\mathcal F_*(\mathbb F))^0$. We again use the spectral sequence from \ref{filtration} and immediately obtain from our assumption that $\mathcal F^0$ is quasi-fully faithful, that the induced map on the $E_2$ page is an isomorphism. Since the spectral sequences converge, we are done.
		\end{proof}
	\end{proposition}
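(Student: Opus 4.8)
The plan is to show that for any two objects $(E^*,\mathbb E)$ and $(F^*,\mathbb F)$ of $\text{Tw}(\mathcal C)$ the induced chain map
\begin{align*} \text{Tw}(\mathcal F) : \text{Hom}^*((E^*,\mathbb E),(F^*,\mathbb F)) \to \text{Hom}^*(\text{Tw}(\mathcal F)(E^*,\mathbb E),\text{Tw}(\mathcal F)(F^*,\mathbb F)) \end{align*}
is a quasi-isomorphism. The first move is to reduce to the case in which both twisting data have vanishing auxiliary-degree-zero part. Since $\mathcal C$ is sufficiently Maurer-Cartan and split, Proposition \ref{trivialDiff} replaces each object by a homotopy equivalent one $(H^*,\mathbb H)$ with $\mathbb H^0 = 0$; the degree $(0,2)$ relation in \eqref{EqDecomposition} then forces $R_{H^i}=0$, so that all the components of the new object genuinely lie in $\mathcal C^0$.

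To justify that this reduction is harmless, I would use that $\text{Tw}(\mathcal F)$, being an $A_\infty$-functor, sends homotopy equivalences to homotopy equivalences (alternatively one invokes Lemma \ref{HomotopyCriterion}). Consequently the square relating the Hom-complexes of the original and the reduced objects commutes up to homotopy and has quasi-isomorphisms for vertical maps, so the top horizontal map is a quasi-isomorphism iff the bottom one is. Hence it suffices to treat the case $\mathbb E^0 = \mathbb F^0 = 0$. By properness of $\mathcal F$ one simultaneously obtains $(\mathcal F_*\mathbb E)^0 = (\mathcal F_*\mathbb F)^0 = 0$, so the differential $d^0 = \mathbb F^0\circ(-)\pm(-)\circ\mathbb E^0$ vanishes on both the source and target complexes.

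The core of the argument is then the filtration spectral sequence of Definition \ref{filtration}, which $\text{Tw}(\mathcal F)$ respects precisely because $\mathcal F$ is proper (this is recorded at the end of \ref{filtration}). With $d^0 = 0$ one reads off from \eqref{SpectralSequenceEq} that $E_1^{p,q} = \text{Hom}^{p,q}$ with first differential induced by $\nabla$, so $E_2^{p,q}$ is the $\nabla$-cohomology of $\bigoplus_{p'}\text{Hom}^q(E^{p'},F^{p'+p})$. On associated graded the only contribution of $\text{Tw}(\mathcal F)$ that preserves the filtration is the linear term $f\mapsto \pm\mathcal F(f)$, because every inserted $\mathbb E_k$ or $\mathbb F_k$ has auxiliary degree $\geq 2$ and therefore strictly raises the filtration. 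Since each $E^{p'}$ and $F^{p'+p}$ now lies in $\mathcal C^0$, the hypothesis that $\mathcal F^0$ is quasi-fully faithful says exactly that this induced map is a $\nabla$-quasi-isomorphism, so $\text{Tw}(\mathcal F)$ is an isomorphism on the $E_2$ page. As both spectral sequences are exhaustive and bounded above they converge, and the comparison theorem (Theorem 5.5.1 in \cite{Weibel}) yields that $\text{Tw}(\mathcal F)$ is a quasi-isomorphism on Hom-complexes.

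I expect the main obstacle to be the bookkeeping at the associated-graded level: one must verify carefully that all higher $A_\infty$-terms of $\text{Tw}(\mathcal F)$ really do increase the auxiliary filtration, so that the map induced on $E_1$ is exactly the componentwise linear part of $\mathcal F$, and check that the resulting identification of the $E_2$ page with $\nabla$-cohomology of Hom-complexes between objects of $\mathcal C^0$ is compatible with $\mathcal F^0$ on the nose. Once the reduction to uncurved components and the convergence of the filtration are secured, the remainder is formal.
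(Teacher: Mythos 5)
Your proposal is correct and follows essentially the same route as the paper: reduce via Proposition \ref{trivialDiff} to objects with $\mathbb E^0 = \mathbb F^0 = 0$, use properness to see that the images also have vanishing auxiliary-degree-zero twisting, and then compare the convergent filtration spectral sequences of Definition \ref{filtration}, where quasi-fully-faithfulness of $\mathcal F^0$ gives an isomorphism on the $E_2$ page. Your additional remarks (invoking Lemma \ref{HomotopyCriterion} to justify the reduction, and noting that all higher $A_\infty$-terms strictly raise the auxiliary filtration so that the associated-graded map is just the linear part of $\mathcal F$) simply make explicit what the paper leaves implicit.
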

	\begin{remark}
		\label{remarkMaurer}
		Let $(E^*, \mathbb E)$ be an object of a dg-category $\text{Tw}(\mathcal C)$ associated to a cdg-category $\mathcal C$. Clearly, $\text{End}_{\text{Tw}(\mathcal C)}((E^*, \mathbb E))$ is a dg-algebra. Let $a \in \text{End}_{\text{Tw}(\mathcal C)}^1((E^*, \mathbb E))$ such that \begin{align*} d(a) + a \circ a = 0,\end{align*} i.e., $a$ is a Maurer-Cartan element of the dg-algebra $\text{End}_{\text{Tw}(\mathcal C)}((E^*, \mathbb E))$. Then we see 
		\begin{nalign}\nabla(\mathbb E + a) + (\mathbb E + a)^2 + R_E &= \nabla(\mathbb E) + \mathbb E^2 + R_E + \nabla(a) + \mathbb E \circ a + a \circ \mathbb E + a^2\\
			&= \nabla(a) + \mathbb E \circ a + a \circ \mathbb E + a^2\\
			&= d(a) + a^2 = 0.
		\end{nalign}
		If the degree $(0,1)$ part of $a$ is trivial, we obtain that $(E^*, \mathbb E + a)$ defines an object of $\text{Tw}(\mathcal C)$. The dg-algebra $\text{End}_{\text{Tw}(\mathcal C)}((E^*, \mathbb E+a))$ is then the by-$a$-twisted dg-algebra of $\text{End}_{\text{Tw}(\mathcal C)}((E^*, \mathbb E))$ (compare Definition 2.2 in \cite{RiemannHilbert}).\\ \\
		Furthermore, consider an invertible element $g \in \text{End}_{\text{Tw}(\mathcal C)}^0((E^*, \mathbb E))$. Then $g$ acts on $a$ by $g.a := g \circ a \circ g^{-1} - d_{\mathbb E}(g) \circ g^{-1}$. One can easily check that $g.a$ is again a Maurer-Cartan element. If we assume that the degree $(0,1)$ part of $g.a$ is zero, then we can consider the object $(E^*, \mathbb E + g.a)$. Now regard $g$ as an element $g \in \text{Hom}^0((E^*, \mathbb E+a), (E^*, \mathbb E + g.a))$. Then $g$ is closed, since
		\begin{nalign} d(g) &= \nabla(g) + (\mathbb E + g.a) \circ g - g \circ (\mathbb E + a)\\
			&= \nabla(g) + \mathbb E \circ g + g \circ a \circ g^{-1} \circ g - \nabla(g) \circ g^{-1} \circ g - g \circ (\mathbb E + a) = 0,
		\end{nalign}
		using that $d_{\mathbb E}(g) = \nabla(g) + \mathbb E \circ g - g \circ \mathbb E$. Thus, $(E^*, \mathbb E+a)$ and $(E^*, \mathbb E + g.a)$ are isomorphic objects in $\text{Tw}(\mathcal C)$.
	\end{remark}
	\begin{proposition}
		\label{PropQES}
		Let $\mathcal F : \mathcal C \to \mathcal D$ be a proper $A_\infty$-functor between cdg-categories such that $\mathcal D$ is sufficiently Maurer-Cartan and split. Assume that the induced $A_\infty$-functor $\mathcal F^{0} : \mathcal C^{0} \to \mathcal D^{0}$ is a quasi-equivalence. Then $\text{Tw}(\mathcal F) : \text{Tw}(\mathcal C) \to \text{Tw}(\mathcal D)$ is quasi-essentially surjective.
		\begin{proof}
			Let $(E^*, \mathbb E)$ be an object in $\text{Tw}(\mathcal D)$. By Lemma \ref{trivialDiff}, we may assume that $\mathbb E^0 = 0$. This implies that the curvature $R_{E^i}$ for each $E^i$ is zero, i.e., the $E^i$ are contained in $\mathcal D^0$. By assumption, there are objects $F^i$ of $\mathcal C^0$ such that $\mathcal F^0 (F^i )$ is isomorphic to $E^i$ for each $i$ (a homotopy equivalence in $\mathcal D^0$ is always an isomorphism since the degrees of the morphism complexes are non-negative). Now, since $R_{F_i} = 0$ for all $i$, we see that the pair $(F^*, 0)$ defines an object of $\text{Tw}(\mathcal C)$. Similarly we obtain the object $(E^*, 0)$ of $\text{Tw}(\mathcal D)$. Furthermore, it follows from the definition of the induced functor $\mathcal F$ that $\mathcal F((F^*, 0)) \cong (E^*, 0)$. Using that $\mathcal F^0$ is quasi-fully faithful, we can reason in the same way as in the proof of the previous proposition that \begin{align*}\text{Tw}(\mathcal F) : \text{Hom}^*((F^*,0), (F^*,0)) \to \text{Hom}^*((E^*,0), (E^*,0))\end{align*} is a quasi-isomorphism. Moreover, by the definition of $\mathcal F$, this map is actually compatible with the auxiliary degrees (this is not in general true). Note that the differentials of the above morphism complexes always strictly increase the auxiliary degree and composition is also compatible with the auxiliary degree. We are now precisely in the setting of Proposition A.9 in \cite{AbadSchaetz} (compare Remark \ref{remarkMaurer}) which allows us to obtain an element \begin{align*} \mathbb F \in \text{Hom}^1((F^*,0), (F^*,0))\end{align*} with $\mathbb F^0 = \mathbb F^1 = 0$ and an invertible $g \in \text{Hom}^0((E^*,0), (E^*,0))$ such that 
			\begin{align*} g\mathcal F_*(\mathbb F)g^{-1} - \nabla(g)g^{-1} = \mathbb E.\end{align*}
			Regarding $g$ as an element of $\text{Hom}^0((E^*,\text{Tw}(\mathcal F)_*(\mathbb F)), (E^*,\mathbb E))$, the above equation implies that $g$ is closed. Thus, $(E^*,\text{Tw}(\mathcal F)_*(\mathbb F))$ and $(E^*,\mathbb E)$ are isomorphic objects in $\text{Tw}(\mathcal D)$. But  since $(E^*,\text{Tw}(\mathcal F)_*(\mathbb F)) \cong \text{Tw}(\mathcal F)((F^*,\mathbb F))$, we are done.
		\end{proof}
	\end{proposition}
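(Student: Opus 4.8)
The plan is to show that every object of $\text{Tw}(\mathcal D)$ is isomorphic in $H^0(\text{Tw}(\mathcal D))$ to one lying in the essential image of $\text{Tw}(\mathcal F)$. First I would take an arbitrary $(E^*, \mathbb E) \in \text{Tw}(\mathcal D)$ and, since $\mathcal D$ is sufficiently Maurer-Cartan and split, invoke Proposition \ref{trivialDiff} to replace it up to homotopy equivalence by an object with $\mathbb E^0 = 0$. Reading off the degree $(0,2)$ part of the decomposition \eqref{EqDecomposition} then forces $R_{E^i} = 0$, so every $E^i$ lies in $\mathcal D^0$. Because $\mathcal F^0$ is quasi-essentially surjective, for each $i$ there is an object $F^i$ of $\mathcal C^0$ with $\mathcal F^0(F^i) \cong E^i$; as the Hom-complexes are non-negatively graded, a degree-$0$ homotopy equivalence is an honest isomorphism, so this is an isomorphism of objects. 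Since each $R_{F^i} = 0$, the pair $(F^*, 0)$ is a legitimate object of $\text{Tw}(\mathcal C)$ and $\text{Tw}(\mathcal F)(F^*, 0) \cong (E^*, 0)$.

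The remaining genuine difficulty is that $(E^*, 0)$ and $(E^*, \mathbb E)$ need not be isomorphic: one must realize the twisting differential $\mathbb E$ through the functor. I would regard $\mathbb E$ as a Maurer-Cartan element of the dg-algebra $\text{End}_{\text{Tw}(\mathcal D)}((E^*, 0))$ in the sense of Remark \ref{remarkMaurer}, and compare it with the endomorphism dg-algebra $\text{End}_{\text{Tw}(\mathcal C)}((F^*, 0))$ via the map induced by $\text{Tw}(\mathcal F)$. Arguing exactly as in the proof of Proposition \ref{PropQFF} --- running the filtered spectral sequence of Definition \ref{filtration} and using that $\mathcal F^0$ is quasi-fully faithful --- this map is a quasi-isomorphism. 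The crucial extra point is that, because $\mathbb E^0 = 0$ and $\mathcal F$ is proper, it is in fact \emph{compatible with the auxiliary grading}: the differential strictly raises auxiliary degree and composition preserves it.

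With this filtered quasi-isomorphism of dg-algebras in hand, I would transport $\mathbb E$ back along it, which is precisely the gauge-transfer statement of Proposition A.9 in \cite{AbadSchaetz}. It yields a Maurer-Cartan element $\mathbb F \in \text{End}^1((F^*,0))$ with $\mathbb F^0 = \mathbb F^1 = 0$ together with an invertible $g \in \text{End}^0((E^*,0))$ satisfying $g\,\text{Tw}(\mathcal F)_*(\mathbb F)\,g^{-1} - \nabla(g)g^{-1} = \mathbb E$. Vanishing of the low auxiliary-degree parts guarantees that $(F^*, \mathbb F)$ is an object of $\text{Tw}(\mathcal C)$, and the gauge relation says exactly that $g$ is a closed degree-$0$ isomorphism from $(E^*, \text{Tw}(\mathcal F)_*(\mathbb F))$ to $(E^*, \mathbb E)$. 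Since $(E^*, \text{Tw}(\mathcal F)_*(\mathbb F)) \cong \text{Tw}(\mathcal F)(F^*, \mathbb F)$, the original object is isomorphic to one in the image and we are done.

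I expect the main obstacle to be the third step: verifying that the induced map on endomorphism dg-algebras is not merely a quasi-isomorphism but a \emph{filtered} one for the auxiliary degree, and that this filtration is exhaustive and bounded above so that the order-by-order solution of the gauge equation converges. Concretely, one must check that properness of $\mathcal F$ together with $\mathbb E^0 = \mathbb F^0 = 0$ makes $\text{Tw}(\mathcal F)$ strictly preserve auxiliary degree on these particular complexes (which fails in general), and then confirm that the hypotheses of the Abad--Schätz transfer result are met; the underlying inductive obstruction theory is then routine, since each obstruction lives in a cohomology group identified by the quasi-isomorphism.
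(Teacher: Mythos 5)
Your proposal is correct and follows essentially the same route as the paper: reduce to $\mathbb E^0 = 0$ via Proposition \ref{trivialDiff}, lift the components using quasi-essential surjectivity of $\mathcal F^0$, observe that the induced map on endomorphism complexes is a quasi-isomorphism compatible with the auxiliary degree, and transfer the Maurer--Cartan element $\mathbb E$ back along it via Proposition A.9 of \cite{AbadSchaetz} to obtain $\mathbb F$ and the gauge $g$. You correctly single out the auxiliary-degree compatibility as the key point that makes the Abad--Sch\"atz transfer applicable, which is exactly the subtlety the paper flags as well.
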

	\begin{theorem}
		\label{EquivalenceTheorem}
		Let $\mathcal F : \mathcal C\to \mathcal D$ be a proper $A_\infty$-functor between (non-negatively graded) cdg-categories such that both $\mathcal C$ and $\mathcal D$  are sufficiently Maurer-Cartan and split.  Assume that the induced $A_\infty$-functor $\mathcal F^{0} : \mathcal C^{0} \to \mathcal D^{0}$ is a quasi-equivalence. Then $\text{Tw}(\mathcal F): \text{Tw}(\mathcal C) \to \text{Tw}(\mathcal C)$ is an $A_\infty$-quasi equivalence of dg-categories.
		\begin{proof}
			The assumptions on $\mathcal C$ together with the quasi-fully faithulness of $\mathcal F^0$ imply that $\text{Tw}(\mathcal F)$ is quasi-fully faithful by Proposition \ref{PropQFF}. On the other hand we have that the assumptions on $\mathcal D$ and the requirement that $\mathcal F^0$ is a quasi-equivalence mean that $\text{Tw}(\mathcal F)$ is quasi-essentially surjective by Proposition \ref{PropQES}.
		\end{proof}
	\end{theorem}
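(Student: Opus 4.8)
The plan is to unwind the definition of an $A_\infty$-quasi equivalence of dg-categories into its two constituent properties—quasi-fully faithfulness and quasi-essential surjectivity—and to verify each separately, matching the two sets of hypotheses on $\mathcal C$ and on $\mathcal D$ to the appropriate half of the argument. Since the two preparatory propositions already isolate these halves, the task reduces to checking that the stated hypotheses feed into them correctly.

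For quasi-fully faithfulness I would use only the hypotheses on the source $\mathcal C$, namely that it is sufficiently Maurer-Cartan and split. By Proposition \ref{trivialDiff} every object of $\text{Tw}(\mathcal C)$ is homotopy equivalent to one whose twisting datum has trivial auxiliary-degree-zero part; since homotopy equivalences are preserved, it is enough to test the map on hom-complexes between such objects. For these the decreasing filtration of Definition \ref{filtration} and its convergent spectral sequence \eqref{SpectralSequenceEq} localize the problem at the $E_2$-page, where the induced map is governed entirely by $\mathcal F^0$. As $\mathcal F^0$ is a quasi-equivalence it is in particular quasi-fully faithful, so the map on $E_2$ is an isomorphism and convergence yields that $\text{Tw}(\mathcal F)$ is a quasi-isomorphism on hom-complexes; this is precisely Proposition \ref{PropQFF}.

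For quasi-essential surjectivity the relevant hypotheses are those on the target $\mathcal D$. Given an object of $\text{Tw}(\mathcal D)$, Proposition \ref{trivialDiff} lets me assume its underlying graded pieces lie in $\mathcal D^0$, and essential surjectivity of $\mathcal F^0$ then supplies objects $F^i \in \mathcal C^0$ whose images are isomorphic to the given ones. The delicate remaining step is to transport the full twisting differential back through $\mathcal F$: one forms $(F^*,0)$, uses that $\text{Tw}(\mathcal F)$ is a quasi-isomorphism on the relevant endomorphism complexes—now compatibly with the auxiliary grading—and invokes the Maurer-Cartan transfer of Proposition A.9 in \cite{AbadSchaetz} to produce a twisting $\mathbb F$ on $F^*$ together with a gauge identifying $\text{Tw}(\mathcal F)(F^*,\mathbb F)$ with the original object. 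I expect this to be the main obstacle, since essential surjectivity on $\mathcal C^0$ recovers only the underlying objects, and lifting the curved Maurer-Cartan datum requires exactly the obstruction-theoretic control that the Maurer-Cartan and split hypotheses on $\mathcal D$ are designed to supply; this is Proposition \ref{PropQES}.

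Combining quasi-fully faithfulness with quasi-essential surjectivity shows that $\text{Tw}(\mathcal F)$ is an $A_\infty$-quasi equivalence of dg-categories, completing the argument. The conceptual content of the proof is therefore entirely contained in the two propositions; the theorem itself is the clean statement obtained by noting that the source-hypotheses drive faithfulness while the target-hypotheses drive surjectivity, and that $\mathcal F^0$ being a quasi-equivalence delivers both the faithfulness and the essential surjectivity needed on the degree-zero level.
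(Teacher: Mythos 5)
Your proposal is correct and follows exactly the paper's route: quasi-fully faithfulness from Proposition \ref{PropQFF} using the hypotheses on $\mathcal C$ together with quasi-fully faithfulness of $\mathcal F^0$, and quasi-essential surjectivity from Proposition \ref{PropQES} using the hypotheses on $\mathcal D$ together with $\mathcal F^0$ being a quasi-equivalence. The additional sketches of the two propositions' internals match the paper's arguments as well.
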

	\section{$h$-curved infinity local systems and $h$-curved graded vector bundles}
	
	\label{sectionInfinityLocalSystems}
	\begin{notation}
		In the following sections we will often consider (non-negatively graded) cdg-categories. We use the following notations:
		\begin{enumerate}
			\item $S^{pre}$ denotes the cdg-category. 
			\item $S^0$ denotes the full subcategory of $S^{pre}$ consisting of objects which have $0$ curvature.
			\item $S^{\infty}$ is the associated dg-category of twisted complex $S^\infty := \text{Tw}(S^{pre})$ as defined in Definition \ref{defAssociatedDG}. This is the dg-category we are interested in. As remarked before, we can also view $S^0$ is a full subcategory of $S^\infty$.
		\end{enumerate}
		If we are given a proper $A_\infty$-functor $F^{pre} : S^{pre} \to T^{pre}$ of cdg-categories we denote the induced functors by $F^0 := F^{pre}|_{S^0} : S^{0} \to T^{0}$ and $F^{\infty} := \text{Tw}(F^{pre}) : S^{\infty} \to T^{\infty}$.
	\end{notation}
	\begin{def1}
		Let $h$ be a closed 2-form on $M$.
		We define the cdg-category $\mathcal P(M)^{pre}[h]$. Its objects are pairs $(E, \nabla)$ where
		\begin{enumerate}
			\item $E$ is a smooth finite-dimensional vector bundle over $\mathbb K$ on $M$
			\item $\nabla$ is a connection on $E$.
		\end{enumerate}
		The morphism spaces are given by
		\begin{align*} \text{Hom}_{\mathcal P(M)^{pre}[h]}^n((E, \nabla^E),(F, \nabla^F)) := \Omega^n(M, \text{Hom}(E,F) ).\end{align*}
		The connection is given by
		\begin{align*} \nabla(f) := \nabla^F \circ f - (-1)^{|f|}f \circ \nabla^E.\end{align*}
		The composition is induced by the wedge product.
		Let $(R_E)' := (\nabla^E)^2$ be the usual curvature of $E$. We define the curvature of $E$ in the category $\mathcal P(M)^{pre}[h]$ to be 
		\begin{nalign} R_E := (R_E)' - h = (\nabla^E)^2 - h.\end{nalign}
	\end{def1}
	\begin{lemma}
		\label{lemmaP}
		$\mathcal P(M)^{pre}[h]$ is a Maurer-Cartan cdg-category.
		\begin{proof}
			The Leibniz rule is straightforward to verify.
			Next, we show that $R_E$ can indeed be regarded as the curvature of $E$. For $f \in \text{Hom}(E,F)$ we have the following:
			\begin{align*}
				\nabla^2(f)(s) 
				= (R_F)' \circ f - f \circ (R_E)'
			\end{align*}
			But now note that $h$ has even degree and therefore commutes with $f$, so we conclude
			\begin{align*}
				\nabla^2(f)
				&= (R_F)' \circ f - f \circ (R_E)' - h \circ f + f \circ h\\
				&= R_F \circ f - f \circ R_E.
			\end{align*}
			It remains to show that it is Maurer-Cartan. Let $(E, \nabla)$ be an object of $\mathcal P(M)^{pre}[h]$ and let $a \in \Omega^1(M, \text{Hom}(E,E))$. Then the pair $(E, \nabla + a)$ is also an object of $\mathcal P(M)^{pre}[h]$ and can be related to $(E, \nabla)$ via the identity on $E$ in both directions which clearly satisfies the required equation.
		\end{proof}
	\end{lemma}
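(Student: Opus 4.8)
The plan is to verify in turn the two defining requirements: first that $\mathcal P(M)^{pre}[h]$ satisfies the axioms of a cdg-category, and second that it is Maurer-Cartan. For the cdg-structure I would begin with the graded Leibniz rule for the connection $\nabla(f) = \nabla^F \circ f - (-1)^{|f|} f \circ \nabla^E$. This is the standard computation for the induced connection on a Hom-bundle and follows immediately by expanding $\nabla(f \circ g)$ and collecting terms, using that $\nabla^E, \nabla^F, \nabla^G$ are honest connections satisfying their own Leibniz rules. Composition is the wedge product, which manifestly respects degrees and restricts to a finite-dimensional morphism space in each degree, and the pointed object is the zero bundle.

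The heart of the cdg-verification is the curvature identity $\nabla^2(f) = R_F \circ f - f \circ R_E$ with $R_E := (\nabla^E)^2 - h$. I would first record the classical fact that the induced connection on $\text{Hom}(E,F)$ satisfies $\nabla^2(f) = (R_F)' \circ f - f \circ (R_E)'$, where $(R_E)' = (\nabla^E)^2$ is the ordinary curvature viewed as an element of $\Omega^2(M, \text{End}(E))$. The key observation is that $h \in \Omega^2(M)$ is scalar-valued and of even degree, hence commutes with any morphism $f$; therefore in $R_F \circ f - f \circ R_E = (R_F)' \circ f - f \circ (R_E)' - h \circ f + f \circ h$ the two extra terms cancel, reproducing exactly $\nabla^2(f)$. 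I would also confirm that $R_E$ is $\nabla$-closed, as the definition of a cdg-category demands: by the Bianchi identity $\nabla((\nabla^E)^2) = 0$, while $\nabla(h \otimes \mathbbm 1_E) = dh \otimes \mathbbm 1_E = 0$ since $h$ is closed and $\mathbbm 1_E$ is parallel.

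For the Maurer-Cartan property, given an object $(E, \nabla^E)$ and any endomorphism $a \in \Omega^1(M, \text{Hom}(E,E)) = \text{Hom}^1(E,E)$, I would set $E^a := (E, \nabla^E + a)$; this is again a vector bundle with connection, since adding a $\text{Hom}(E,E)$-valued $1$-form to a connection still satisfies the Leibniz rule. I then take $f = g = \mathbbm 1_E$ as the mutually inverse degree-$0$ morphisms. The required equation reduces to a one-line check: regarding $f = \mathbbm 1_E \in \text{Hom}^0((E, \nabla^E), (E, \nabla^E + a))$, the connection on this Hom-space yields $\nabla(f) = (\nabla^E + a) \circ \mathbbm 1_E - \mathbbm 1_E \circ \nabla^E = a = f \circ a$, as needed. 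Note that this establishes the full Maurer-Cartan condition (for arbitrary $a$), not merely the sufficient version.

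I do not anticipate a genuine obstacle. The only point requiring care is the curvature computation, where one must check that the shift by $-h$ is globally consistent; this is precisely where the evenness of $h$, which makes it central among morphisms, is used, and it is also the conceptual reason the category is cdg rather than dg, since the term $-h$ does not vanish on its own but cancels in every commutator $R_F \circ f - f \circ R_E$. Everything else is a direct unwinding of the definitions.
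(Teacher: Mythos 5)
Your proof is correct and follows essentially the same route as the paper: the classical curvature identity for the Hom-connection, cancellation of the $h$-terms by centrality of scalar forms, and the identity morphisms between $(E,\nabla)$ and $(E,\nabla+a)$ for the Maurer--Cartan property. Your additional verification that $R_E$ is $\nabla$-closed (via the Bianchi identity and $dh=0$) is a point the paper's proof leaves implicit, and is a welcome completion of the cdg-axioms.
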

	\begin{remark}
		\label{remarkCohesive}
		The above category can also be defined as follows. Let $A^*$ be a non-negatively graded, dg-algebra over $\mathbb K$ which is graded commutative. Let $h \in A^2$ be closed. Then we can define the cdg-category $(A^*,h)
		\text{-Mod}^{coh}_{pre}$ of cohesive modules over $(A^*,h)$. Its objects are pairs $(E, \nabla^E)$ where 
		\begin{enumerate}
			\item $E$ is a finitely generated projective $A^0$-module.
			\item $\nabla$ is a $\mathbb K$-linear map $\nabla : E \to A^1 \otimes_{A^0} E$ satisfying the Leibniz rule.
		\end{enumerate}
		The morphism spaces are given by
		\begin{align*} \text{Hom}^n((E, \nabla^E),(F, \nabla^F)) := A^n \otimes_{A^0} \text{Hom}_{A^0}(E,F).\end{align*}
		The connection is given by
		\begin{align*} \nabla(f) := \nabla^F \circ f - (-1)^{|f|}f \circ \nabla^E.\end{align*}
		Here, $\nabla^E$ is extended to $A^* \otimes E$ by the Leibniz rule.
		The curvature of $E$ is given by
		\begin{align*} R_E := (\nabla^E)^2 - h.\end{align*}
		(Note that $E$ being projective and finitely generated implies $\text{Hom}_{A^0}(E, A^i \otimes E) = A^i \otimes \text{Hom}_{A^0}(E, E)$). The associated dg-category $(A^*,h)\text{-Mod}^{coh} := \text{Tw}((A^*,h)\text{-Mod}^{coh}_{pre})$ is precisely the \emph{dg-category of cohesive modules over the cdga $(A^*, h)$}.
		By the Serre-Swan correspondence we obtain that $\mathcal P(M)^{pre}[h]$ is cdg-equivalent to $(A^*,h)
		\text{-Mod}^{coh}_{pre}$ for $A^* = \Omega^*(M, \mathbb K)$. Thus, the associated dg-categories of twisted complexes are also dg-equivalent.
	\end{remark}
	\begin{def1}
		Let $S$ be a simplicial set. 
		Let $H$ be a 2-cocycle $H \in C^2(S; \mathbb K^*)$. We define the cdg-category $\text{Loc}(S)^{pre}[H]$. An object of $\text{Loc}(S)^{pre}[H]$ is a pair $((E_x)_{x \in S_0}, P)$, where 
		\begin{enumerate}
			\item $E_x$ is a finite-dimensional $\mathbb K$-vector space for each $x \in S_0$ and
			\item $P$ assigns to each edge $\gamma \in S_1$ a $\mathbb K$-linear map $P(\gamma) : E_{\gamma(0)} \to E_{\gamma(1)}$
			such that $P(c_{x}) = id_{E_x}$ for every $x \in S_0$ (here $c_x$ is the degenerate path based at $x$).
		\end{enumerate}
		The degree $n$ elements of the morphism spaces $\text{Hom}^*(((E_x)_{x \in S_0}, P^E),((F_x)_{x \in S_0}, P^F))$ are assignments $f$ that map every smooth $n$-simplex $\sigma \in S_n$ to a $\mathbb K$-linear map 
		\begin{align*}f(\sigma) : E_{\sigma_{(0)}} \to F_{\sigma_{(n)}}. \end{align*}
		The connection is given by 
		\begin{nalign} 
			\nabla(f)(\sigma) :&= (-1)^{n+1} H(\sigma_{(0,1,n+1)}) f(\partial^0(\sigma)) P(\sigma_{(0,1)})\\
			&\quad+H(\sigma_{(0,n,n+1)})P(\sigma_{(n, n+1)}) f(\partial^{n+1}(\sigma))\\
			&\quad+ \sum_{i = 1}^{n} (-1)^{i+n+1}  f(\partial^i(\sigma))
		\end{nalign}
		where $\sigma \in S_{n+1}$ is an $(n+1)$-simplex and $f$ has degree $n$.
		We now define the composition $\cup$ in $\text{Loc}(S)^{pre}[H]$. Let $f \in \text{Hom}^{i}(((E_x)_{x \in S_0}, P^E),((F_x)_{x \in S_0}, P^F))$ and $g \in \text{Hom}^{n-i}(((F_x)_{x \in S_0}, P^F),((G_x)_{x \in M}, P^G))$. We set 
		\begin{align*} (g \cup f)(\sigma) := H(\sigma_{(0,i,n)}) g(\sigma_{(i,...,n)})f(\sigma_{(0,...,i)}).\end{align*}
		The curvature of $((E_x)_{x \in S_0}, P)$ is given by 
		\begin{align*} R_E(\sigma) := - P(\sigma_{(0,2)}) + H(\sigma)P(\sigma_{(1,2)})P(\sigma_{(0,1)})\end{align*}
		for $\sigma \in S_2$.
	\end{def1}
	\begin{lemma}
		\label{lemmaLoc}
		$\text{Loc}(S)^{pre}[H]$ is indeed a sufficiently Maurer-Cartan cdg-category. 
		\begin{proof}
			We start by showing that the composition is associative. Therefore, let $f \in \text{Hom}^{|f|}(((E_x)_{x \in S_0}, P^E),((F_x)_{x \in S_0}, P^F))$, $g \in \text{Hom}^{|g|}(((F_x)_{x \in S_0}, P^F),((G_x)_{x \in S_0}, P^G))$ and $h \in \text{Hom}^{h}(((G_x)_{x \in S_0}, P^G),((H_x)_{x \in S_0}, P^H))$. Then 
			\begin{align*}
				&((h \cup g) \cup f)(\sigma) \\&= H(\sigma_{(0,|f|,|f|+|g|+|h|)} + \sigma_{(|f|,|f|+|g|,|f|+|g|+|h|)})h(\sigma_{(|f|+|g|,...,|f|+|g|+|h|)}) g(\sigma_{(|f|,...,|f|+|g|)}) f(\sigma_{(0,...,|f|)}) \\
				&= H(\sigma_{(0,|f|,|f|+|g|)} + \sigma_{(0,|f|+|g|,|f|+|g|+|h|)}) h(\sigma_{(|f|+|g|,...,|f|+|g|+|h|)}) g(\sigma_{(|f|,...,|f|+|g|)}) f(\sigma_{(0,...,|f|)}) \\
				&= (h \cup (g \cup f))(\sigma) 
			\end{align*}
			where we used that 
			\begin{align*}
				1 = H(d(\sigma_{(0,|f|,|f|+|g|,|f|+|g|+|h|)}))
			\end{align*}
			as $H$ is closed.
			The verification of the Leibniz rule is largely analogous to the verification of the Leibniz rule for trivial coefficients, except that one has to use that $H$ is closed, thus we omit it. We compute the curvature: 
			\begin{align*}
				d^2(f)(\sigma) &= (-1)^{n+1} H(\sigma_{(0,1,n+1)})(df)(\partial^0(\sigma)) P(\sigma_{(0,1)})\\ &\quad+H(\sigma_{(0,n,n+1)}) P(\sigma_{(n, n+1)}) (df)(\partial^{n+1}(\sigma))\\
				&\quad+ \sum_{i = 1}^{n} (-1)^{i+n+1}  (df)(\partial^i(\sigma))\\
				&= (-1)^{n+1} H(\sigma_{(0,1,n+1)}^* h) [(-1)^{n} H(\sigma_{(1,2,n+1)})f(\sigma_{(2,...,n+1)}) P(\sigma_{(1,2)})\\
				&\quad+H(\sigma_{(1,n,n+1)}) P(\sigma_{(n, n+1)}) f(\sigma_{(1,...,n)})\\
				&\quad+ \sum_{i = 1}^{n-1} (-1)^{i+n}  f(\partial^i(\sigma_{(1,...,n+1)}))] P(\sigma_{(0,1)})\\
				&\quad+H(\sigma_{(0,n,n+1)})P(\sigma_{(n, n+1)}) [(-1)^{n} e^{\int_{\Delta^2} \sigma_{(0,1,n)}^* h}f(\sigma_{(1,...,n)}) P(\sigma_{(0,1)})\\
				& \quad+H(\sigma_{(0,n-1,n)}) P(\sigma_{(n-1, n)}) f(\sigma_{(0,...,n-1)})\\
				&\quad+ \sum_{i = 1}^{n-1} (-1)^{i+n}  f(\partial^i(\sigma_{(0,...,n)}))]\\
				&\quad+ \sum_{i = 1}^{n} (-1)^{i+n+1} [(-1)^{n} H((\partial^i\sigma)_{(0,1,n)})f(\sigma_{1,...,\hat i,...,n+1}) P((\partial^i\sigma)_{(0,1)}) \\
				&\quad+H((\partial^i \sigma)_{(0,n,n+1)}^* h) P((\partial^i \sigma)_{(n-1, n)}) f(\sigma_{0,...,\hat i,...,n})\\
				&\quad+ \sum_{j = 1}^{n-1} (-1)^{j+n}  f(\partial^j \partial^i \sigma))]\\
				&= -H(\sigma_{(1,2,n+1)} + \sigma_{(0,1,n+1)}) f(\sigma_{(2,...,n+1)}) P(\sigma_{(1,2)})P(\sigma_{(0,1)}) \\
				&\quad+ H(\sigma_{(0,n,n+1)}+\sigma_{(0,n-1,n)})P(\sigma_{(n, n+1)}) P(\sigma_{(n-1, n)}) f(\sigma_{(0,...,n-1)})\\
				&\quad+ H(\sigma_{(0,2,n+1)}) f(\sigma_{(2,...,n+1)}) P(\sigma_{(0,2)})\\
				&\quad- 	H(\sigma_{(0,n-1,n+1)}) P(\sigma_{(n-1,n+1)}) f(\sigma_{(0,...,n-1)})\\
				&= (R_F \cup f)(\sigma) - (f \cup R_E)(\sigma)
			\end{align*}
			where we again used that $H$ is closed in the last step. 
			It remains to show that it is sufficiently Maurer-Cartan. If $a$ is of the form $a = b \circ \nabla(c)$ with $b$ and $c$ of degree $0$, we immediatly see that $a$ is trivial on degenerate paths. Therefore, we can consider the object $(E, P + a)$ and see that it fulfills the stated conditions.
		\end{proof}
	\end{lemma}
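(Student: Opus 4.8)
The plan is to verify first that $\text{Loc}(S)^{pre}[H]$ satisfies the axioms of a cdg-category, and then separately establish the sufficiently Maurer-Cartan property. Bilinearity of $\cup$ and its compatibility with the grading are immediate from the defining formula, and pointedness is witnessed by the zero object with all $E_x = 0$. The three substantial checks are associativity of $\cup$, the graded Leibniz rule for $\nabla$, and the curvature identity $\nabla^2(f) = R_F \cup f - f \cup R_E$.

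For associativity I would expand $((h \cup g)\cup f)(\sigma)$ and $(h\cup(g\cup f))(\sigma)$ directly from the definition. The underlying linear maps $h(\sigma_{(\cdots)})\,g(\sigma_{(\cdots)})\,f(\sigma_{(\cdots)})$ agree on the nose, so the two expressions differ only by the scalar $H$-prefactors: the product indexed by the triples $(0,|f|,|f|+|g|+|h|)$ and $(|f|,|f|+|g|,|f|+|g|+|h|)$ on one side, versus the product indexed by $(0,|f|,|f|+|g|)$ and $(0,|f|+|g|,|f|+|g|+|h|)$ on the other. These four triples are exactly the four faces of the $3$-simplex on the vertices $(0,|f|,|f|+|g|,|f|+|g|+|h|)$, and the two products coincide because $H$ is closed, so that $H$ evaluated on this boundary equals $1$. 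The only care needed is the bookkeeping of which three-element subsets of vertices index each $H$-factor.

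For the Leibniz rule and the curvature identity the combinatorial skeleton is the same as in the case of trivial coefficients $H \equiv 1$, so I would first recall that uncurved computation and then track how the extra $H$-factors recombine. For the Leibniz rule all the spurious $H$-factors cancel via the cocycle relation, so I would only indicate this rather than grind it out. The curvature identity is the main obstacle: expanding $\nabla^2(f)(\sigma)$ by applying the connection formula twice produces three kinds of terms — interior face-of-face terms, boundary terms involving a single $P$, and terms involving a product of two $P$'s. The interior face-of-face terms cancel in pairs exactly as in the uncurved setting via the simplicial identities, and after repeatedly invoking that $H$ is closed on the relevant $3$-simplices the remaining terms organise into the combination $-P(\sigma_{(0,2)}) + H(\sigma)P(\sigma_{(1,2)})P(\sigma_{(0,1)})$ acting on $f$ from the source and from the target. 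Recognising this combination as the definition of $R_E$ (respectively $R_F$) then yields $\nabla^2(f) = R_F\cup f - f\cup R_E$.

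Finally, for the sufficiently Maurer-Cartan property, let $a = b\circ\nabla(d)$ with $b,d$ of degree $0$. The key observation is that $\nabla(d)$, and hence $a$, vanishes on every degenerate edge $c_x$: evaluating the degree-$0$ connection formula on $c_x$ leaves only two terms, which cancel because $P(c_x)=\text{id}$, $\partial^0 c_x = \partial^1 c_x = x$, and $H$ is normalised to $1$ on degenerate simplices. Consequently $P + a$ still sends each $c_x$ to the identity, so $E^a := ((E_x)_{x\in S_0}, P + a)$ is again an object of $\text{Loc}(S)^{pre}[H]$. Taking $f = g = \text{id}$ as the mutually inverse degree-$0$ morphisms between $E$ and $E^a$, a direct evaluation of $\nabla(\text{id})$ on an edge $\sigma$ (using the target parallel transport $P+a$ against the source $P$) gives $\nabla(\text{id})(\sigma) = (P+a)(\sigma) - P(\sigma) = a(\sigma)$, so $\nabla(f) = f\circ a$, which is exactly the required condition.
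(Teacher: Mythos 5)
Your proposal is correct and follows essentially the same route as the paper's proof: associativity via closedness of $H$ on the boundary of the relevant $3$-simplex, omission of the routine Leibniz check, direct expansion of $\nabla^2(f)$ with cancellation of interior terms and recombination of the boundary terms into $R_F\cup f - f\cup R_E$, and the object $(E,P+a)$ with the identity morphisms for the sufficiently Maurer--Cartan property. Your added detail on why $a=b\circ\nabla(d)$ vanishes on degenerate edges and why $\nabla(\mathrm{id})=a$ only makes explicit what the paper leaves implicit.
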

	\begin{def1}
	The associated dg-category of \emph{$H$-curved $\infty$-local systems on $S$} will be denoted by $\text{Loc}(S)^{\infty}[H] := \text{Tw}(\text{Loc}(S)^{pre}[H])$.
	We will consider the (smooth) singular simplicial set $S = \text{Sing}(M)$ and use the notation $\text{Loc}(M)^{pre}[H] := \text{Loc}(\text{Sing}(M))^{pre}[H]$. 
	\end{def1}
Let $h$ be a closed 2-form on $M$ and let $H:= H_h := e^{DR(h)}$ where $DR$ is the de Rham quasi-isomorphism
	\begin{align*} DR : \Omega^*(M, \mathbb K) \to C^*(M, \mathbb K).\end{align*}
	We want to define an $A_\infty$-functor from $\mathcal P(M)^{pre}[h]$ to $\text{Loc}(M)^{pre}[H]$. This will take some preparatory steps.
	\subsection{Diffeological spaces}
	We will be using the notion of diffeological spaces, which allow us to view the path space $PM$ of $M$ as an object with a smooth structure. Chen first introduced the notion of differentiable spaces which are modelled on convex subsets of $\mathbb R^n$ (for example in \cite{Chen}). Diffeological spaces, on the other hand, are modelled on open subsets of $\mathbb R^n$ which tend to be more convenient for our purposes.
	\begin{def1}
		A \emph{diffeological space} $X$ is a set $X$ endowed with a family of maps  $(\phi_\alpha)$, called \emph{plots}, where each $\phi_\alpha$ is a map
		\begin{align*} \phi_\alpha : U_\alpha \to X\end{align*}
		and $U_\alpha$ is some open subset of some $\mathbb R^n$ such that 
		\begin{enumerate}
			\item each constant map $\phi : U \to X$ is a plot,
			\item for any plot $\phi_\alpha$ and smooth map $f : U \to U_\alpha$, where $U$ is an open subset of some $\mathbb R^n$, we have that $\phi_\alpha \circ f : U \to X$ is a plot, and
			\item for every open $U \subset \mathbb R^n$, map $\phi : U \to X$, and open covering $\bigcup_{i} U_i = U$ such that each $\phi|_{U_i} : U_i \to X$ is a plot, we have that $\phi$ is a plot. 
		\end{enumerate}
	\end{def1}
	We stress that the dimension $n$ of $\mathbb R^n$ in the above definition is not fixed. 
	\begin{def1}
		A map between diffeological spaces $f: X \to Y$ is called \emph{smooth} if for every plot $\phi : U \to X$ of $X$, the composition $f \circ \phi$ is a plot of $Y$.
	\end{def1}
	\begin{remark}
		Any smooth manifold $M$ (possibly with corners) is a diffeological space. The plots are precisely the smooth maps $U \to M$ where $U \subset \mathbb R^n$ is open.\\ \\
		For smooth manifolds $M$ and $N$, a map $M \to N$ is smooth as a map of smooth manifolds if and only if it is smooth as a map of diffeological spaces.\\ \\
		For diffeological spaces $X$ and $Y$, the set of all smooth maps $X \to Y$ is a diffeological space by defining $\phi : U \to Map(X,Y)$ to be a plot if the adjoint $U \times X \to Y$ is smooth. In particular, for $I = X$ and $M = Y$, we obtain the diffeological space of smooth paths $Map(I,M) =: PM$. \\ \\
		If $X$ is a diffeological space, any subset $Y \subset X$ is a diffeological space by declaring $\phi : U \to Y$ be a plot if $\iota \circ \phi$ is a plot of $X$ where $\iota$ is the inclusion $Y \to X$.
		
	\end{remark}
	\begin{def1}
		A \emph{smooth vector bundle on a diffeological space $X$} is a family of smooth vector bundles $(E_\alpha)$ such that
		\begin{enumerate}
			\item each $E_\alpha$ is a smooth vector bundle on $U_\alpha$, and
			\item for each $\alpha$ and $\beta$, and smooth map $f : U_\alpha \to U_\beta$ with $\phi_\beta \circ f = \phi_\alpha$ we have 
			\begin{align*} E_\alpha = f^*(E_\beta).\end{align*}
		\end{enumerate}
		Similarly, we can define what a connection on $E$ is.
	\end{def1}
	\begin{def1}
		Let $X$ be a diffeological space and $E$ a vector bundle on $M$.
		An \emph{$E$-valued $k$-form on $X$} $\omega \in \Omega^k(X, E)$ is a family $\omega = (\omega_\alpha)$ such that 
		\begin{enumerate}
			\item for each $\alpha$, $\omega_\alpha \in \Omega^k(U_\alpha, E_\alpha)$ and
			\item for each $\alpha$ and $\beta$ and smooth map $f : U_\alpha \to U_\beta$ with $\phi_\beta \circ f = \phi_\alpha$ we have that 
			\begin{align*} \omega_\alpha = f^*(\omega_\beta).\end{align*} 
		\end{enumerate}
		
	\end{def1}
	
	\subsection{Holonomy forms}
	In the following, we will frequently consider the diffeological space $PM \times I \times I$. Its elements will be written as $(\gamma, s, t)$ and we define the smooth evaluation maps
	\begin{nalign}
		&\text{ev}^s : PM \times I \times I \to M, \text{ev}^s(\gamma, s, t) = \gamma(s)\\ &\text{ev}^t : PM \times I \times I \to M, \text{ev}^t(\gamma, s, t) = \gamma(t)
	\end{nalign}
	Given a vector bundle $(E, \nabla)$ on $M$ we denote by $E^{s/t}$ the pullbacks along $\text{ev}^{s/t}$. Given a form $f \in \Omega^i(M, \text{Hom}(E,F))$, we use the notation \[f^s := (\text{ev}^s)^*f \in \Omega^i(PM \times I \times I, \text{Hom}(E^s,F^s))\] and \[f^t := (\text{ev}^t)^*f \in \Omega^i(PM \times I \times I, \text{Hom}(E^t,F^t)).\] We will often consider $\iota_{\partial_s}(f^s)$ and $\iota_{\partial_t}(f^s)$ which we will abbreviate by $\iota_s f$ and $\iota_t f$.
	In this formulation, we view the parallel transport of $(E, \nabla)$ as a smooth section \[{PT \in \Gamma(PM \times I \times I, \text{GL}( E^s, E^t))},\]i.e., $PT(\gamma, s, t)$ is the parallel transport along the path $\gamma|_{[s,t]}$ and therefore satisfies
	\begin{align*} \nabla_t PT(s, t) = 0 = \nabla_s PT(s, t)\end{align*}
	and $PT(\gamma, t,t) = \mathbbm 1$.
	\begin{remark}
		In the following proposition we will define holonomy forms. They will be of crucial importance for the rest of the thesis. We already mentioned some aspects of these in the outline. Here, we would like to give a practical motivation. Consider some (ungraded) vector bundle $(E, \nabla)$. We can easily define an object $(\{E_x\}_{x \in M}, PT(0,1))$ of $\text{Loc}(M)^{pre}$ where $E_x$ are the fibers of $E$ and $PT$ is the parallel transport of $E$, defined by $\nabla_t PT(s, t) = 0$ and $PT(s,s) = \mathbbm 1$ for all $s \in I$. Now suppose $(E^*, \mathbb E)$ is an object of $\mathcal P(M)^{\infty}$ and we would like to obtain an object of $\text{Loc}(M)^{\infty}$. This object should have the form $(\{E_x^*\}_{x \in M}, P)$ but it is not clear how to define $P$. In the first paper \cite{BlockRiemann}, Smith and Block define the "holonomy of a $\mathbb Z$-graded connection" which are then integrated over to obtain $P$. This holonomy is defined locally via an infinite series, involving iterated integrals, which were first introduced by Chen \cite{Chen}. More elegantly, Igusa \cite{Igusa} defined the holonomy $\psi$ of $(E^*, \mathbb E)$ by an equation of the form
		\begin{align*}A_{\partial_t} \psi = 0\end{align*}	
		together with $\psi(t,t) = \mathbbm 1$ where $A = \nabla + \mathbb E$.
		Locally, after writing $A =  d + A^0 + A^1 + A^2+...$, this may be expanded as 
		\begin{align*} \partial_t \psi_p(s,t) = \sum_{i = 0}^p (-1)^{i+1} \iota_{\partial_t}(A^{i+1}) \psi_{p-i}(s,t),\end{align*}
		compare Definition 2.8 in \cite{Igusa}.
		In particular, for $p = 0$ we have \[\partial_t \psi_0(s,t) = -\iota_{\partial_t}(A^1) \psi_0(s,t)\] which is the local form of $\nabla_{\partial_t} \psi_0(s,t) = 0$ where $\nabla = d + A^1$. Thus, $\psi_0$ is the just ordinary parallel transport of the connection $\nabla$. The other components $\psi_p$ for $p > 0$ may therefore be thought of as correction terms to the parallel transport, accounting for the higher components of $\mathbb E$. They are small in the sense that $\psi_p(s,s) = 0$ for all $s \in I$ and $p > 0$.
		To define the higher Riemann-Hilbert functor on morphisms, Block and Smith consider iterated cones. In our approach, on the other hand, we only need to define an $A_\infty$-functor $\mathcal P(M)^{pre} \to \text{Loc}(M)^{pre}$ by Definition \ref{DefAssociatedFunctor} and 
		Lemma \ref{inducedFunctorLemma}, which formally take care of the iteration processes. The definition of this functor is straightforward on objects, here we only need the usual parallel transport. Thus, it remains to define this functor on morphisms. To do this, we first define certain holonomy forms as solutions to differential equations and then integrate over them in a suitable manner to obtain our functor. The holonomy forms are rather simple to define. However, in order for them to be useful after integration, we will need quite a few observations, in particular involving symmetries and different kind reparametrization invariances. Our holonomy forms $\text{hol}_{-}$ are then related to Igusa's holonomy $\psi$ by the equation
		\begin{align*} \psi = PT + \sum_{k \geq 1} \text{hol}_{\mathbb E^{\otimes k}},\end{align*}
		possibly up to signs.
	\end{remark}
	\begin{proposition}
		\label{holProp}
		Let $E_1,E_2,...$ be a finite or countable family of vector bundles together with connections and let $f_k \in \Omega^*(M, \text{Hom}(E_{k+1}, E_k))$.
		There is a unique family of forms \begin{align*}(\text{hol}_{f_a,f_{a+1},...,f_b})_{a \leq b} \in \Omega^{|f_a|+...+|f_b| -b+a-1}(PM \times I^2, \text{Hom}((E_b)^s, (E_a)^t))\end{align*}
		such that 
		\begin{enumerate}
			\item \label{holPropA0} $\text{hol}_{f_a,...,f_b}(t,t) = 0$ for all $t \in I$.
			\item \label{holPropA1} $\text{hol}_{f_a,...,f_b}$ satisfies the ordinary differential equation
			\begin{nalign} \nabla_{\partial_t} (\text{hol}_{f_a,...,f_b}(s,t)) = (-1)^{|f_a|} \iota_{t}(f_a) \text{hol}_{f_{a+1},...,f_b}(s,t) \end{nalign}
			for $a < b$ and
			\begin{nalign} \nabla_{\partial_t} (\text{hol}_{f_a}(s,t)) = (-1)^{|f_a|} \iota_{t}(f_a) PT(s,t) \end{nalign}
			for $a = b$.
		\end{enumerate}
		This form furthermore satisfies the following properties
		\begin{enumerate}
			\item \label{holPropP1}  \begin{nalign}\iota_{\partial t}(\text{hol}_{f_a,...,f_b}) = \iota_{\partial s}(\text{hol}_{f_a,...,f_b}) = 0.
				\end{nalign}
			\item  \label{holPropP2} $\text{hol}_{f_a,...,f_b}$ satisfies the ordinary differential equation
			\begin{nalign} \nabla_{\partial_s} (\text{hol}_{f_a,...,f_b}(s,t)) = (-1)^{|f_b|} \text{hol}_{f_{a},...,f_{b-1}}(s,t) \iota_{s}(f_b)  \end{nalign}
			for $a < b$
			and
			\begin{nalign} \nabla_{\partial_s} (\text{hol}_{f_a}(s,t)) = (-1)^{|f_a|+1} PT(s,t) \iota_{s}(f_a)
			 \end{nalign}
			for $a = b$.
			\item  \label{holPropP3} For every $0 < c < d < 1$ and smooth diffeomorphism $\phi : [c,d] \to I$ fixing the endpoints, we have that the restriction of $\text{hol}_{f_a,...,f_b}$ to $PM \times [c,d]^2$ is equal to the pullback of $\text{hol}_{f_a,...,f_b}$ along the map
			\begin{nalign} p : PM \times [c,d]^2 \to PM \times I^2, (\gamma, s,t) \mapsto (\gamma \circ \phi^{-1}, \phi(s), \phi(t))\end{nalign}
			\item \label{holPropP4}For all $s,t,r \in I$ the following identity holds
			\begin{nalign} \text{hol}_{f_a,...,f_b}(s,t) =
				& \text{hol}_{f_a,...,f_b}(r,t)\text{PT}(s,r) + \text{PT}(r,t)\text{hol}_{f_a,...,f_b}(s,r) \\&+ \sum_{i = a}^{b-1} \text{hol}_{f_a,...,f_i}(r,t)\text{hol}_{f_{i+1},...,f_b}(s,r)\end{nalign} 
			\item  \label{holPropP5}Let $A$ be a diffeological space and let $q : A \to PM$ be smooth. Furthermore, let $\phi :A \times I \to I$ be a map such that there exists a partition $0 = a_0 < a_1 < ... < a_n = 1$ with the property that $\phi|_{A \times [a_i, a_{i+1}]}$ is smooth and such that $A \times I \to M, (a,t) \mapsto q(a)(\phi(a,t))$ is smooth. Consider the maps
			\begin{nalign}
				&f : A \times I^2 \to PM \times I^2, f(a, s,t) := (q(a) \circ \phi(-,a), s,t)\\
				&g : A \times I^2 \to PM \times I^2, g(a, s,t) := (q(a), \phi(s,a),\phi(t,a)).
			\end{nalign}
			(The first one is smooth by assumption and the second is piecewise smooth.) Then we have $f^* \text{hol}_{f_a,...,f_b} = g^* \text{hol}_{f_a,...,f_b}$.
		\end{enumerate}
		
		\begin{proof}
			Using global existence, uniqueness statements, and smooth dependence on parameters for solutions of linear, first order differential equations, we obtain a unique form 
			$ \text{hol}_{f_a,...,f_b}(s,t)$ on $\Omega^{|f_a|+...+|f_b| -b+a-1}(PM \times I \times I, \text{Hom}((E_b)^0, (E_a)^1))$ satisfying the stated conditions by induction over $b - a$. \\ \\
			It remains to check the other properties. Property \ref{holPropP1} follows inductively from the definition since $ \iota_{\partial_t}\iota_{\partial_t}((\text{ev}^t)^*f_a) =\iota_{\partial_s}\iota_{\partial_t}((\text{ev}^t)^*f_a)  = 0$.\\ \\
			We continue with the second property. We show this for $a = b$, the general argument can then be done inductively. Since $\iota_t \text{hol}_{f_a} = \iota_s \text{hol}_{f_a} = 0$ and using $d(ev^t)(\partial_s) = 0 = d(ev^s)(\partial_t)$, we obtain that 
			\begin{align*} \nabla_{\partial_t} \nabla_{\partial_s} \text{hol}_{f_a} = \nabla_{\partial_s} \nabla_{\partial_t} \text{hol}_{f_a}.\end{align*}
			Therefore, $\nabla_{\partial_t} \nabla_{\partial_s}  \text{hol}_{f_a} = 0$. On the other hand, we also have that
			\begin{align*} \nabla_{\partial_t} (-1)^{|f_a|+1} PT(s,t) \iota_{s}(f_a) = 0. \end{align*}
			But for $t = s$, we have that both expressions are the same so in fact 
			\begin{align*} \nabla_{\partial_s} \text{hol}_{f_a} = (-1)^{|f_a|+1} PT(s,t) \iota_{s}(f_a).\end{align*} 
			\\ \\
			We have to show property \ref{holPropP3}. First note that the map $p$ is compatible with the evaluations (since $\phi$ perserves endpoints). This means, in particular, that the pullback along this map induces the correct vector bundles and connections. We will again restrict to the case $a = b$, as once again the general case follows inductively. We show that the pullback of $\text{hol}_{f_a}$ along $p$ satisfies the defining properties. The first one is immediate. For the second one we calculate for $s,t \in [c,d]$:
			\begin{nalign}
				\nabla_{\partial_t} p^* \text{hol}_{f_a}(s,t) &= p^* (\nabla_{d\phi(\partial_t) }\text{hol}_{f_a}(\phi(s),\phi(t)))\\
				&= p^* ((-1)^{|f_a|} \iota_{d(\phi)\partial_t}((\text{ev}^t)^* f_a) PT(\phi(s),\phi(t)) )\\
				&= (-1)^{|f_a|} \iota_{t}(f_a) PT(s,t) 
			\end{nalign}
			where we used that $dp(\partial_t) = d\phi(\partial_t)$ (and property 1) so $dp(\partial_t)$, in particular, is a multiple of $\partial_t$ and that $p$ is compatible with the map $\text{ev}^t$.
			By uniqueness, we obtain the claim.\\ \\
			We now proceed with the fourth claim \ref{holPropP4}. We see that the equation holds for $r = t$. Then we proceed inductively over $b-a$. 
			Using the induction hypothesis, we see that both sides agree after applying $\nabla_t$ to them so they must coincide for all $t$.
			\\ \\
			We are left with the fifth claim \ref{holPropP5}. We again restrict to the case $a = b$, as the general case follows inductively. Per assumption there are numbers $0 < z_1 < ... < z_n < 1$ such that $\phi$ is smooth on every interval $[z_i, z_{i+1}]$. We consider the restrictions $f'$ and $g'$ of $f$ and $g$ to $PM \times [z_i, z_{i+1}]^2$ for every $i$. Then we have
			\begin{nalign}
				\nabla_{\partial_t} (f')^* \text{hol}_{f_a} &= (f')^* (\nabla_{\partial_t} \text{hol}_{f_a}) \\
				&= (f')^* ((-1)^{|f_a|} \iota_{t}(f_a)PT\\
				&= (-1)^{|f_a|} \iota_{\partial_t}((\text{ev}^t \circ f')^* f_a) PT \circ f'.
			\end{nalign}
			And similarly
			\begin{nalign}
				\nabla_{\partial_t} (g')^* \text{hol}_{f_a} = (-1)^{|f_a|} \iota_{\partial_t}((\text{ev}^t \circ g')^* f_a) PT \circ g'.
			\end{nalign}
			But $PT \circ f'(a, s,t) = PT(q(a) \circ \phi(-,a), s,t) = PT(q(a), \phi(s,a),\phi(t,a))  = PT \circ g'(a,s,t)$  by reparametrization invariance of the parallel transport, and
			\[ev^t \circ g'(a,s,t) = q(a)(\phi(s,a)) = ev^t \circ f'(a,s,t),\] so both terms agree. Since they are also both zero for $t = s$, we obtain that they must agree. Using property \ref{holPropP4}, we see that this equality extends to all of $A \times I^2$.
		\end{proof}
	\end{proposition}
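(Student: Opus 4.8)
The plan is to construct the forms $\text{hol}_{f_a,\dots,f_b}$ by induction on $b-a$ and then to verify the listed properties one at a time, in each case reducing to the existence–uniqueness theorem for the defining ODE. In the base case $a=b$ the right-hand side $(-1)^{|f_a|}\iota_t(f_a)PT(s,t)$ of \ref{holPropA1} is already a well-defined $\text{Hom}((E_a)^s,(E_a)^t)$-valued form on $PM\times I^2$, since $PT$ has been constructed. Reading \ref{holPropA1} as a first-order linear ODE in the variable $t$ along the lines $\{s=\mathrm{const}\}$, with initial value $0$ prescribed at $t=s$ by \ref{holPropA0}, global existence, uniqueness and smooth dependence on parameters for linear ODEs produce a unique solution; here $s$ and the path $\gamma$ (together with any additional plot coordinates) play the role of smooth parameters. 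To make this precise on the diffeological space $PM\times I^2$ I would test against an arbitrary plot, where the assertion becomes ordinary smooth dependence on parameters. The inductive step is identical: for $a<b$ the right-hand side now involves $\text{hol}_{f_{a+1},\dots,f_b}$, already built at the previous stage, so the same ODE argument applies.

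For the two ``trivial-component'' properties I would argue as follows. Property \ref{holPropP1} follows by induction once one observes that the only $s$- and $t$-dependence entering the defining equation comes through $f^t=(\text{ev}^t)^*f_a$, and that $\iota_{\partial_t}\iota_{\partial_t}f^t=\iota_{\partial_s}\iota_{\partial_t}f^t=0$, so that no $ds$ or $dt$ component can be generated. Property \ref{holPropP3} I would settle by the uniqueness principle: since $\phi$ fixes the endpoints, the map $p$ is compatible with $\text{ev}^s,\text{ev}^t$, so $p^*\text{hol}$ lives in the correct bundle, and a direct computation using $dp(\partial_t)=d\phi(\partial_t)$, which is a multiple of $\partial_t$ by \ref{holPropP1}, shows that $p^*\text{hol}$ satisfies the same defining ODE and initial condition; hence $p^*\text{hol}=\text{hol}$.

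The substantive properties \ref{holPropP2}, \ref{holPropP4} and \ref{holPropP5} all follow the template ``both sides solve the same first-order ODE in $t$ and agree at $t=s$ (respectively $t=r$), so they coincide by uniqueness,'' but each needs a genuine input. For \ref{holPropP2} the subtlety is that $\nabla_{\partial_s}$ and $\nabla_{\partial_t}$ must be shown to commute on these forms; this rests on the vanishing of the mixed curvature, which holds because $\text{ev}^t$ is independent of $s$ and $\text{ev}^s$ of $t$, so $d(\text{ev}^t)(\partial_s)=d(\text{ev}^s)(\partial_t)=0$, combined with \ref{holPropP1}. Given this, in the base case $a=b$ one checks that $\nabla_{\partial_s}\text{hol}_{f_a}$ is $\nabla_{\partial_t}$-closed and agrees with the claimed right-hand side at $t=s$, and the general case follows by the same matching-ODE argument and induction. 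For \ref{holPropP4} I would fix $s,r$ and differentiate both sides in $t$: at $t=r$ the identity reduces to $\text{hol}(s,r)=\text{hol}(s,r)$ using $\text{hol}(r,r)=0$ and $PT(r,r)=\mathbbm 1$, while the inductive hypothesis together with the defining equations shows both sides have the same $\nabla_{\partial_t}$-derivative. For \ref{holPropP5}, reparametrization invariance of $PT$ and compatibility with $\text{ev}^t$ give that $f^*\text{hol}$ and $g^*\text{hol}$ satisfy the same ODE on each interval where $\phi$ is smooth; since $\phi$ is only piecewise smooth, I would glue across the partition points using the concatenation identity \ref{holPropP4}.

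I expect the main obstacle to be the commutation argument underlying \ref{holPropP2}, together with the interior-product and sign bookkeeping it forces, because it is the one place where one genuinely manipulates bundle-valued forms rather than merely solving a linear ODE. Once \ref{holPropP1} and this commutation are in hand, both \ref{holPropP4} and \ref{holPropP5} reduce to clean uniqueness arguments, with \ref{holPropP4} doing the load-bearing gluing work needed for \ref{holPropP5}.
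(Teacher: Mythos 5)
Your proposal matches the paper's proof in both structure and substance: the same inductive ODE construction with uniqueness and smooth dependence on parameters, the same commutation-of-$\nabla_{\partial_s}$-and-$\nabla_{\partial_t}$ argument for property \ref{holPropP2}, the same ``both sides solve the same ODE and agree at $t=s$ (or $t=r$)'' template for \ref{holPropP3}, \ref{holPropP4} and \ref{holPropP5}, and the same use of \ref{holPropP4} to glue across the non-smooth points of $\phi$ in \ref{holPropP5}. The only addition is your explicit remark about testing against plots to make the parameter-dependence rigorous on the diffeological space, which the paper leaves implicit.
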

	
	Let $R_E'$ be the curvatures of $(E, \nabla)$, i.e., $\nabla^2 = R_E'$ (not the curvature in $\mathcal P(M)^{pre}[h]$). 
	\begin{lemma}
		\label{holLemma}
		$\text{hol}$ satisfies the following equations 
		\begin{nalign}
			&(-1)^{|f_1|+...+|f_n|+1-n}(\nabla \text{hol}_{f_1,...,f_n}) \\
			&\quad+ (-1)^{|f_{2}| + ... + |f_n|-n} f_1^t \text{hol}_{f_{2},...,f_n} +   \text{hol}_{f_{1},...,f_{n-1}} f_n^s \\
			&= \sum_{k = 1}^n (-1)^{|f_k|+...+|f_n| - n+k}\text{hol}_{f_1,...,\nabla f_k,...,f_n}\\
			&\quad+ \sum_{k = 1}^n (-1)^{|f_{k+1}|+...+|f_n| - n+k-1}\text{hol}_{f_1,...,f_k \circ f_{k+1},...,f_n}\\
			&\quad+ \sum_{k = 1}^{n+1} (-1)^{|f_k|+...+|f_n| - n+k}\text{hol}_{f_1,...,f_{k-1},(R_{E_k}'),f_k,...,f_n}
		\end{nalign}
		where $\text{hol}_{f_{2},...,f_n}$ is understood to be $PT$ when $n = 1$.
		\begin{proof}
			First, we compute using $\iota_{\partial_t}(\text{hol}_{f_1,...,f_n}) = 0$ and $\iota_{\partial_t} \circ (\text{ev}^s)^* = 0$:
			\begin{nalign}
				\iota_{t}(R'_1) \text{hol}_{f_1,...,f_n} &= \iota_{\partial_t} (R'_1 \text{hol}_{f_1,...,f_n} - \text{hol}_{f_1,...,f_n} R'_{n+1}) = \iota_{\partial_t}( \nabla^2(\text{hol}_{f_1,...,f_n})) \\
				&= \nabla_{\partial_t} \nabla(\text{hol}_{f_1,...,f_n}) - \nabla (\iota_{\partial_t}(\nabla(\text{hol}_{f_1,...,f_n}))\\
				&= \nabla_{\partial_t} \nabla(\text{hol}_{f_1,...,f_n}) - \nabla \nabla_{\partial_t}(\text{hol}_{f_1,...,f_n}) 
			\end{nalign}
			and similarly, we obtain
			\begin{nalign}
				\iota_{t}(R')  \text{PT} = \nabla_{\partial_t} \nabla(PT)
			\end{nalign}
			so $\nabla(PT) = \text{hol}_{R'}$ by definition of $\text{hol}_{R'}$.
			We continue to calculate
			\begin{nalign}
				\nabla_{\partial_t} \nabla(\text{hol}_{f_1,...,f_n}) &= \iota_{t}(R'_1) \text{hol}_{f_1,...,f_n} + \nabla \nabla_{\partial_t}(\text{hol}_{f_1,...,f_n})\\
				&= \nabla_{\partial_t} \text{hol}_{R'_1,f_1,...,f_n} + \nabla((-1)^{|f_a|} \iota_{t} (f_a) \text{hol}_{f_{2},...,f_n}).
			\end{nalign}
			This leads us to showing the identity inductively. We start with $n = 1$ and obtain
			\begin{align*}
				\nabla_{\partial_t} (\nabla(\text{hol}_{f_1})) &=
				\nabla_{\partial_t} \text{hol}_{R'_1,f_1} + \nabla((-1)^{|f_1|} \iota_{t} (f_1) \text{PT}) \\
				&= \nabla_{\partial_t} \text{hol}_{R'_1,f_1} + (-1)^{|f_1|} (\nabla_{\partial_t} f_1^t - \iota_{t}(\nabla f_1^t)) PT -\iota_{t} (f_1) \text{hol}_{R_2'}\\
				&= \nabla_{\partial_t} \text{hol}_{R'_1,f_1} + (-1)^{|f_1|} \nabla_{\partial_t} (f_1^t PT) + \nabla_{\partial_t}  \text{hol}_{\nabla f_1}- (-1)^{|f_1|} \nabla_{\partial_t} \text{hol}_{f_1,R_2'}\\
				&= \nabla_{\partial_t} (\text{hol}_{R'_1,f_1} + (-1)^{|f_1|} (f_1^t PT) +  \text{hol}_{\nabla f_1} +(-1)^{|f_1|+1} \text{hol}_{f_1,R_2'}).
			\end{align*}
			The term $\nabla(\text{hol}_{f_1})$ is equal to $(-1)^{|f_1|} dt \iota_{t} (f_1) PT + (-1)^{|f_1|+1} ds  PT\iota_{s} (f_1)$ for $s = t$. However, the other term $(\text{hol}_{R'_1,f_1} + (-1)^{|f_1|} (f_1^t PT) +  \text{hol}_{\nabla f_1} +(-1)^{|f_1|+1} \text{hol}_{f_1,R_2'})$ is not. Specifically, the only non-zero term in this sum is $(-1)^{|f_1|} (f_1^t PT)$ (for s = t). We fix this by adding $(-1)^{|f_1|+1} PT f_1^s$ which fulfills 
			$\nabla_{\partial_t} (PT f_1^s) = 0$ and clearly $(-1)^{|f_1|+1} PT f_1^s + (-1)^{|f_1|} f_1^t PT = (-1)^{|f_1|} dt \iota_{t} (f_1) PT + (-1)^{|f_1|+1} ds  PT\iota_{s} (f_1)$ for $s = t$. So we obtain
			\begin{align*}
				\nabla(\text{hol}_{f_1}) = \text{hol}_{R'_1,f_1} + (-1)^{|f_1|} f_1^t PT + (-1)^{|f_1|+1} PT f_1^s +  \text{hol}_{\nabla f_1} +(-1)^{|f_1|+1} \text{hol}_{f_1,R_2'}
			\end{align*}
			which is the desired equation for $n = 1$. Let us now assume that the claim has been proven for $n-1$. Then we calculate
			\begin{align*}
				\nabla_{\partial_t}(\nabla \text{hol}_{f_1,...,f_n}) &= \nabla_{\partial_t} \text{hol}_{R'_1,f_1,...,f_n} + \nabla((-1)^{|f_1|} \iota_{t} (f_1) \text{hol}_{f_{2},...,f_n})\\
				&= \nabla_{\partial_t} \text{hol}_{R'_1,f_1,...,f_n} + (-1)^{|f_1|}(\nabla_{\partial_t} f_1^t - \iota_{t}(\nabla f_1^t)) \text{hol}_{f_2,...,f_n} \\
				&\quad- \iota_{t}(f_1) \nabla(\text{hol}_{f_2,...,f_n})\\
				&= \nabla_{\partial_t} \text{hol}_{R'_1,f_1,...,f_n} + (-1)^{|f_1|} \nabla_{\partial_t}(f_1^t \text{hol}_{f_2,...,f_n})  \\
				&\quad- (-1)^{|f_1|+ |f_2|} f_1^t \iota_{t}(f_2) \text{hol}_{f_3,...,f_n} + \text{hol}_{\nabla(f_1),...,f_n}\\
				&\quad- \iota_t(f_1)[(-1)^{|f_2|} f_2^t \text{hol}_{f_3,...,f_n} + (-1)^{|f_2|+...+|f_n|+1-n} \text{hol}_{f_2,...,f_{n-1}} f_n^s \\
				&\quad+ \sum_{k = 2}^n (-1)^{|f_2|+...+|f_{k-1}|+k} \text{hol}_{f_2,...,\nabla(f_k),f_{n}}\\
				&\quad+ \sum_{k = 2}^n (-1)^{|f_2|+...+|f_{k}|+k-1} \text{hol}_{f_2,...,f_k \circ f_{k+1},f_{n}}\\
				&\quad+ \sum_{k = 2}^n (-1)^{|f_2|+...+|f_{k-1}|+k} \text{hol}_{f_2,...,f_{k-1},R'_k,...,f_{n}}]\\
				&= \nabla_{\partial_t}[(-1)^{|f_1|} f_1^t \text{hol}_{f_2,...,f_n} + (-1)^{|f_1|+...+|f_n|-n} \text{hol}_{f_1,...,f_{n-1}} f_n^s \\
				&\quad+ \sum_{k = 1}^n (-1)^{|f_1|+...+|f_{k-1}|+k+1} \text{hol}_{f_1,...,\nabla(f_k),f_{n}}\\
				&\quad+ \sum_{k = 1}^n (-1)^{|f_1|+...+|f_{k}|+k} \text{hol}_{f_1,...,f_k \circ f_{k+1},f_{n}}\\
				&\quad+ \sum_{k = 1}^n (-1)^{|f_1|+...+|f_{k-1}|+k+1} \text{hol}_{f_1,...,f_{k-1},R'_k,...,f_{n}} ].
			\end{align*}
			This equation is the same as the desired equation after applying $\nabla_t$. Therefore, it suffices to show that they are equal for $t = s$. Both sides are equal to $0$ for $t = s$ and $n \geq 2$, and thus the proof is concluded.
		\end{proof}
	\end{lemma}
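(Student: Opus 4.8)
The plan is to exploit that each $\text{hol}_{f_1,\dots,f_n}$ is, by Proposition \ref{holProp}, uniquely pinned down by a first-order linear ODE in the variable $t$ (namely $\nabla_{\partial_t}\text{hol}_{f_1,\dots,f_n} = (-1)^{|f_1|}\iota_t(f_1)\,\text{hol}_{f_2,\dots,f_n}$) together with the initial condition $\text{hol}_{f_1,\dots,f_n}(t,t)=0$. Consequently, to prove the asserted $A_\infty$-relation it suffices to verify that both sides have the same image under $\nabla_{\partial_t}$ and that they agree on the diagonal $s=t$; uniqueness of solutions then forces equality everywhere. I would therefore reduce the whole statement to these two checks and run an induction on $n$.

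Before the induction I would record two auxiliary facts. First, writing $\nabla_{\partial_t}=\iota_{\partial_t}\nabla+\nabla\iota_{\partial_t}$ (the Cartan-type formula of the preliminaries) and using $\iota_{\partial_t}(\text{hol}_{f_1,\dots,f_n})=0$ from property \ref{holPropP1}, the graded commutator of $\nabla_{\partial_t}$ and $\nabla$ collapses to the curvature term: $\nabla_{\partial_t}\nabla(\text{hol}) - \nabla\nabla_{\partial_t}(\text{hol}) = \iota_{\partial_t}\nabla^2(\text{hol}) = \iota_t(R'_1)\,\text{hol}$, the last equality because $\iota_{\partial_t}$ annihilates every $\text{ev}^s$-pullback and hence kills the right-hand curvature $R'_{n+1}$. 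Second, the same computation applied to $PT$ shows $\nabla_{\partial_t}\nabla(PT) = \iota_t(R'_1)PT = \nabla_{\partial_t}\text{hol}_{R'_1}$, and since both vanish on the diagonal this yields the clean identity $\nabla(PT) = \text{hol}_{R'_1}$. These two facts convert $\nabla_{\partial_t}\nabla(\text{hol}_{f_1,\dots,f_n})$ into $\nabla_{\partial_t}\text{hol}_{R'_1,f_1,\dots,f_n} + \nabla\big((-1)^{|f_1|}\iota_t(f_1)\,\text{hol}_{f_2,\dots,f_n}\big)$, which is precisely the shape needed to feed the induction.

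For the base case $n=1$ I would differentiate $\nabla(\text{hol}_{f_1})$ directly, using $\nabla(PT)=\text{hol}_{R'_1}$ to absorb the curvature contributions and the Leibniz rule to split off $\nabla_{\partial_t}(f_1^t\,PT)$. Matching $\nabla_{\partial_t}$ this way produces every term of the claimed identity except the boundary contribution; crucially, the expression $(-1)^{|f_1|+1}PT\,f_1^s$ is $\nabla_{\partial_t}$-flat (because $\nabla_{\partial_t}PT=0$ and $f_1^s$ is an $\text{ev}^s$-pullback, hence annihilated by $\nabla_{\partial_t}$), so adding it changes nothing under $\nabla_{\partial_t}$ while correcting the diagonal value. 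The inductive step is formally identical: I would substitute the induction hypothesis for $\nabla(\text{hol}_{f_2,\dots,f_n})$ into the term $\nabla\big(\iota_t(f_1)\,\text{hol}_{f_2,\dots,f_n}\big)$, collect the resulting $\nabla f_k$-, composition-, and curvature-insertion terms with their signs, and observe that for $n\ge 2$ both sides of the desired identity vanish identically on the diagonal (every $\text{hol}$ and every boundary factor $f_1^t\,\text{hol}_{f_2,\dots,f_n}$, $\text{hol}_{f_1,\dots,f_{n-1}}f_n^s$ contains a $\text{hol}$ evaluated at $s=t$, which is zero). Equality of the two $\nabla_{\partial_t}$-images plus agreement on the diagonal then closes the induction.

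The genuinely delicate part will be the sign bookkeeping: aligning the internally generated signs $(-1)^{|f_1|}$ coming from the defining ODE with the prescribed coefficients $(-1)^{|f_k|+\dots+|f_n|-n+k}$ on the right-hand side, and confirming that the flat correction terms enter with exactly the right signs to both preserve the $\nabla_{\partial_t}$-matching and fix the diagonal value. I expect no conceptual obstruction beyond this, since the structure mirrors the standard proof that parallel transport solves its transport equation, only iterated $n$ times.
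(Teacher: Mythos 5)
Your proposal follows essentially the same route as the paper's proof: the same reduction via uniqueness of solutions to the defining ODE (matching $\nabla_{\partial_t}$-images plus agreement on the diagonal), the same two auxiliary identities ($\nabla_{\partial_t}\nabla - \nabla\nabla_{\partial_t} = \iota_t(R'_1)$ on holonomy forms and $\nabla(PT) = \text{hol}_{R'_1}$), the same base case with the $\nabla_{\partial_t}$-flat correction term $(-1)^{|f_1|+1}PT\,f_1^s$, and the same inductive step using that both sides vanish on the diagonal for $n \geq 2$. The approach is correct; only the sign bookkeeping, which you flag, remains to be carried out as in the paper.
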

	\begin{def1}
		Let $x_0, x_1 \in M$. We denote by $PM(x_0,x_1)$ the diffeological subspace of paths that start at $x_0$ and end at $x_1$. We consider the pullback along the map
		\begin{align*} L : PM(x_0,x_1) \to PM \times I^2, \gamma \mapsto (\gamma, 0, 1).\end{align*}
		When pulling back the bundles $E^s$ and $E^t$ along this map, they become trivial, i.e., $E^s \cong E_{x_0}$ and $E^t \cong E_{x_1}$. The pullback of the connections along this map become the trivial connection $d$. We denote the pullback of $\text{hol}$ to $PM(x_0,x_1)$ by $\text{hol}^{x_0,x_1}$.
	\end{def1}
	
	\begin{corollary}
		\label{holCor}
		The pullback $ \text{hol}^{x_0,x_1}_{f_1,...,f_n}$ satisfies
		\begin{nalign}
			&(-1)^{|f_1|+...+|f_n|+1-n}(d \text{hol}^{x_0,x_1}_{f_1,...,f_n}) \\
			&\quad+ (-1)^{|f_{2}| + ... + |f_n|-n}  f_1^0(x_1) \text{hol}^{x_0,x_1}_{f_{2},...,f_n} +   (-1)^{|f_n|} \text{hol}_{f_{1},...,f_{n-1}} f_n^0(x_0) \\
			&= \sum_{k = 1}^n (-1)^{|f_k|+...+|f_n| - n+k}\text{hol}^{x_0,x_1}_{f_1,...,\nabla f_k,...,f_n}\\
			&\quad+ \sum_{k = 1}^n (-1)^{|f_{k+1}|+...+|f_n| - n+k-1}\text{hol}^{x_0,x_1}_{f_1,...,f_k \circ f_{k+1},...,f_n}\\
			&\quad+ \sum_{k = 1}^{n+1} (-1)^{|f_k|+...+|f_n| - n+k}\text{hol}^{x_0,x_1}_{f_1,...,f_{k-1},(R_{E_k}'),f_k,...,f_n}.
		\end{nalign}
		\begin{proof}
			This follows from pulling back the equation of the previous lemma together with the fact that when we pullback $f_1^t$ to $PM(x_0,x_1)$, we obtain $f_1(x_1)$ as the map $\text{ev}^t \circ L$ is constant. We also added the sign
			$(-1)^{|f_n|}$ to $\text{hol}_{f_{1},...,f_{n-1}} f_n^0(x_0)$ which is possible since the only way that this term is non-zero is when $f_n$ has degree $0$.
		\end{proof}
	\end{corollary}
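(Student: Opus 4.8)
The plan is to obtain the stated identity by pulling back the equation of Lemma \ref{holLemma} along the map $L : PM(x_0,x_1) \to PM \times I^2$, $\gamma \mapsto (\gamma, 0, 1)$, and tracking how each individual term transforms. Since every term in Lemma \ref{holLemma} is a bundle-valued form on $PM \times I^2$ and the claimed identity lives on $PM(x_0,x_1)$, the whole statement should reduce to a term-by-term analysis of $L^*$ applied to that equation, with the only subtleties being the behaviour of the connection and of the two boundary terms under this pullback.

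The key structural observation I would record first is that $\text{ev}^s \circ L \equiv x_0$ and $\text{ev}^t \circ L \equiv x_1$ are \emph{constant} maps. Hence $L^* E^s$ and $L^* E^t$ are the trivial bundles with fibres $E_{x_0}$ and $E_{x_1}$, and the pulled-back connections are the trivial differential $d$ (as already noted in the definition of $\text{hol}^{x_0,x_1}$). This is precisely what turns the covariant derivative $\nabla$ on the left-hand side of Lemma \ref{holLemma} into the ordinary exterior derivative $d$, yielding the term $d\,\text{hol}^{x_0,x_1}_{f_1,\dots,f_n}$. All of the remaining holonomy terms — those containing $\nabla f_k$, the compositions $f_k \circ f_{k+1}$, and the curvature insertions $R_{E_k}'$ — are themselves holonomy forms on $PM \times I^2$, so by definition their pullbacks are the corresponding forms $\text{hol}^{x_0,x_1}$; these transport with their signs unchanged.

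The one place demanding care is the pair of boundary terms $f_1^t\,\text{hol}_{f_2,\dots,f_n}$ and $\text{hol}_{f_1,\dots,f_{n-1}}\,f_n^s$. Because $f_1^t = (\text{ev}^t)^* f_1$ and $\text{ev}^t \circ L$ is constant, the pullback $L^* f_1^t = (\text{ev}^t \circ L)^* f_1$ is a pullback along a constant map; this annihilates every component of $f_1$ of positive form-degree and retains only the degree-zero part evaluated at $x_1$, which is exactly $f_1^0(x_1)$. The identical argument gives $f_n^0(x_0)$ for the second boundary term. Finally, since $f_n^0(x_0)$ is nonzero only when $f_n$ has form-degree $0$, the factor $(-1)^{|f_n|}$ equals $1$ whenever that term contributes, so inserting it is harmless and produces the symmetric sign normalization in the statement. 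Assembling these observations yields the asserted equation; I expect no genuine obstacle, only the bookkeeping that pullback along the constant evaluation maps simultaneously trivializes the connection and collapses the two boundary jets to their degree-zero evaluations.
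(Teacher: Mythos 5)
Your proposal is correct and follows essentially the same route as the paper: pull back Lemma \ref{holLemma} along $L$, use that $\mathrm{ev}^t\circ L$ and $\mathrm{ev}^s\circ L$ are constant to trivialize the connections and collapse $f_1^t$, $f_n^s$ to the degree-zero evaluations $f_1^0(x_1)$, $f_n^0(x_0)$, and observe that the extra sign $(-1)^{|f_n|}$ is harmless because the corresponding term vanishes unless $|f_n|=0$. No gaps.
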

	In the above equation the curvatures $R_k'$ appear rather than $R_k$. We need the following Lemma.
	\begin{lemma}
		\label{holHLemma}
		The following identity holds
		\begin{nalign}
			\sum_{k = 1}^{n+1} (-1)^{|f_1|+...+|f_{k-1}| + k-1} \text{hol}_{f_1,...,f_{k-1}, h \mathbbm 1_{E_k},f_k, ...,f_n}(s,t) = \text{hol}_h(s,t) \text{hol}_{f_1,...,f_n}(s,t)
		\end{nalign}
		\begin{proof}
			We differentiate the left side with respect to $\nabla_{\partial_t}$ and obtain
			\begin{nalign}
				&\nabla_{\partial_t} (\sum_{k = 1}^{n+1} (-1)^{|f_1|+...+|f_{k-1}| + k-1} \text{hol}_{f_1,...,f_{k-1}, h \mathbbm 1_{E_k},f_k, ...,f_n}(s,t)) \\
				&= \sum_{k = 2}^{n+1} (-1)^{|f_2|+...+|f_{k-1}| + k-1} \iota_{\partial_t}(f_1) \text{hol}_{f_2,...,f_{k-1}, h \mathbbm 1_{E_k},f_k, ...,f_n}(s,t) + \iota_{\partial_t}(h) \text{hol}_{f_1,...,f_n}(s,t)
			\end{nalign}
			Differentiating the right side yields
			\begin{nalign}
				&\nabla_{\partial_t} (\text{hol}_h(s,t) \text{hol}_{f_1,...,f_n}(s,t))\\
				&= \iota_{t}(h) PT_{\mathbb K}(s,t) \text{hol}_{f_1,...,f_n}(s,t) + (-1)^{|f_1|} \text{hol}_{h}(s,t) \iota_{t} (f_1) \text{hol}_{f_2,...,f_n}(s,t).
			\end{nalign}
			Since the parallel transport of $\mathbb K$ is the identity and $\text{hol}_h$ has degree $1$ (and $\text{hol}_h$ is central), we obtain
			\begin{nalign}
				&\nabla_{\partial_t} (\text{hol}_h(s,t) \text{hol}_{f_1,...,f_n}(s,t)) \\
				&= \iota_{t}(h) \text{hol}_{f_1,...,f_n}(s,t) - \iota_{t}(f_1) \text{hol}_{h }(s,t) \text{hol}_{f_2,...,f_n}(s,t).
			\end{nalign}
			We now proceed inductively. For $n = 1$, we see that both terms are equal after applying $\nabla_{\partial_t}$. Moreover, both sides are equal to $0$ for $s = t$. Now assume that the claim has been proven for $n-1$. Then we calculate
			\begin{nalign}
				&\nabla_{\partial_t} (\text{hol}_h(s,t) \text{hol}_{f_1,...,f_n}(s,t)) \\
				&= \nabla_{\partial_t} \text{hol}_{h \mathbbm 1,f_1,...,f_n}(s,t) \\
				&\quad- \iota_{\partial_t}(f_1)[\sum_{k = 2}^{n+1} (-1)^{|f_2|+...+|f_{k-1}| + k} \text{hol}_{f_2,...,f_{k-1}, h \mathbbm 1_{E_k},f_k, ...,f_n}(s,t)]\\
				&= \nabla_{\partial_t} [ \text{hol}_{h \mathbbm 1,f_1,...,f_n}(s,t) \\
				&\quad- \sum_{k = 2}^{n+1} (-1)^{|f_1|+|f_2|+...+|f_{k-1}| + k} \text{hol}_{f_1,f_2,...,f_{k-1}, h \mathbbm 1_{E_k},f_k, ...,f_n}(s,t)\\
				&= \nabla_{\partial_t} \sum_{k = 1}^{n+1} (-1)^{|f_1|+...+|f_{k-1}| + k-1} \text{hol}_{f_1,...,f_{k-1}, h \mathbbm 1_{E_k},f_k, ...,f_n}(s,t).
			\end{nalign}
			Again both terms also coincide $s = t$, so we obtain the claim.
		\end{proof}
	\end{lemma}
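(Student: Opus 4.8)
The plan is to prove the identity by the same uniqueness principle that underlies Proposition \ref{holProp}: both sides are $\text{Hom}((E_{n+1})^s,(E_1)^t)$-valued forms depending on $(s,t)$, so it suffices to show that they satisfy the same first-order linear ODE in the $\nabla_{\partial_t}$-direction and that they agree on the diagonal $s=t$. Since such an ODE has a unique solution with prescribed value at $s=t$, this forces equality. The argument runs by induction on $n$, exactly in the spirit of the proofs of Proposition \ref{holProp} and Lemma \ref{holLemma}.

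First I would differentiate the left-hand side. Applying $\nabla_{\partial_t}$ term by term and using the defining equation $\nabla_{\partial_t}\text{hol}_{g_1,\dots,g_m}=(-1)^{|g_1|}\iota_t(g_1)\,\text{hol}_{g_2,\dots,g_m}$, the single $k=1$ summand, whose leading entry $h\mathbbm 1_{E_1}$ has even degree, contributes $\iota_t(h)\,\text{hol}_{f_1,\dots,f_n}$, while each $k\ge 2$ summand has leading entry $f_1$ and so contributes a multiple of $\iota_t(f_1)$ times a holonomy form in which $h\mathbbm 1$ has been inserted somewhere among $f_2,\dots,f_n$. Collecting the latter, they reassemble (after the appropriate reindexing) into $-(-1)^{|f_1|}\iota_t(f_1)$ applied to the full left-hand side of the lemma for the shorter sequence $f_2,\dots,f_n$.

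For the right-hand side I would use the graded Leibniz rule for $\nabla_{\partial_t}$ together with two facts already visible in Lemma \ref{holLemma}: the parallel transport of the trivial $\mathbb K$-line bundle is the identity, so $\nabla_{\partial_t}\text{hol}_h=\iota_t(h)\,PT_{\mathbb K}=\iota_t(h)$; and $\text{hol}_h$ is a scalar-valued form of total degree $1$, hence central up to Koszul sign. Leibniz then yields $\iota_t(h)\,\text{hol}_{f_1,\dots,f_n}-(-1)^{|f_1|}\text{hol}_h\,\iota_t(f_1)\,\text{hol}_{f_2,\dots,f_n}$, and after moving $\iota_t(f_1)$ past the central degree-$1$ form $\text{hol}_h$ this matches the $\nabla_{\partial_t}$-derivative of the left-hand side computed above, once $\text{hol}_h\,\text{hol}_{f_2,\dots,f_n}$ is recognized as the right-hand side of the lemma for $f_2,\dots,f_n$. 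Invoking the induction hypothesis closes the inductive step; the base case $n=1$ is immediate because both sides then vanish at $s=t$ and have equal $\nabla_{\partial_t}$-derivatives.

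The part I expect to be the main obstacle is purely the sign bookkeeping, not the conceptual structure: one must track the Koszul signs carefully when commuting the central form $\text{hol}_h$ past $\iota_t(f_1)$ and when reindexing the inserted-$h\mathbbm 1$ sum from length $n-1$ to length $n$. Checking the diagonal initial condition is routine, since every holonomy form with at least one argument vanishes at $s=t$, and each summand on the left carries at least two arguments, so both sides are $0$ there.
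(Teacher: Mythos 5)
Your proposal is correct and follows essentially the same route as the paper's proof: differentiate both sides with $\nabla_{\partial_t}$, use $PT_{\mathbb K}=\mathbbm 1$ and the centrality of the degree-$1$ form $\text{hol}_h$ to match the derivatives, invoke the induction hypothesis on $f_2,\dots,f_n$ to close the inductive step, and conclude by uniqueness from the vanishing of both sides on the diagonal $s=t$. No gaps.
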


	\subsection{Relating cubes with simplices}
	For each simplex $\Delta^n$, we wish to find a map $I^{n-1} \times I \to \Delta^n$ in order to be able to define the higher parallel transport along simplices. In order for the maps to be useful, they need to be homotopy coherent in some sense, up to reparametrization. This is the content of the next definitions. After this, we will explicitly construct maps satisfying these conditions.
	\begin{def1}
		\label{helpDef}
		\begin{enumerate}
			\item Denote by $\text{pr}^{1,i}$ the projection \begin{align*}\text{pr}^{1,i} : I^n \to I^i, \text{pr}^{0,i}(t_1,...,t_n) = (t_1,...,t_i)\end{align*}
			and by $\text{pr}^{i,n}$ the projection
			\begin{align*}\text{pr}^{i,n} : I^n \to I^{n-i+1}, \text{pr}^{i,n}(t_1,...,t_n) = (t_{i},...,t_n).\end{align*}
			Furthermore, we denote by $\text{pr}_i$ the projection
			\begin{align*}\text{pr}_i : I^n \to I^{n-1}, \text{pr}_i(t_1,...,t_n) = (t_1,...,t_{i-1},t_{i+1},...,t_n).\end{align*}
			\item For numbers $a < b \in \mathbb R$ and $c < d \in \mathbb R$ we define 
			$sc : [a,b] \to [c,d]$ to be the unique map satisfying 
			\begin{align*} sc(ta + (1-t)b) = tc + (1-t)d\end{align*}
			for all $t \in I$, i.e., $sc$ reparametrizes the interval $[a,b]$ linearily to $[c,d]$.
			\item A smooth map $\phi : A \to B$ between oriented, compact manifolds with corners $A$ and $B$ is called \emph{integral-compatible} if $\phi$  fulfills the equation
			\begin{align*} \int_A \phi^* \omega = \int_B \omega\end{align*}
			for all smooth forms $\omega$ on $B$. In other words, $\phi$ is a degree $1$ map.
			\item Furthermore we define the cubical frontface retraction $F_i : I^{n} \to I^n$ by $F_i(t_1,...,t_n) = (t_1,...,t_{i-1},0,t_{i+1},...,t_n)$ and similarly $R_i : I^n \to  I^n$ by \\$R_i(t_1,...,t_n) = (t_1,...,t_{i-1},1,t_{i+1},...,t_n)$.
		\end{enumerate}
	\end{def1}
	\begin{def1}
		\label{AdmissibleAxioms}
		A collection of maps $\smiley^n : I \times I^{n} \to \Delta^{n+1}$ is called \emph{admissible} it satisfies the following axioms:
		\begin{enumerate}[label=A\arabic*,ref=A\arabic*]
			\item \label{AXA1}Each $\smiley^n$ is smooth.
			\item \label{AXA2}The maps $\smiley^n$ are integral-compatible.
			\item\label{AXA3}For each $n$, we have that $\smiley^n(\{1\} \times I^n   ) = \{(1,...,1)\}$ and $\smiley^n(\{0\} \times I^n   )  = \{(0,...,0)\}$. 
			\item \label{AXA4}For each $n$ and $i = 1,...,n$, there exists a piecewise smooth map fixing the endpoints $\phi : I \to I$ such that the diagram
			\[
			\begin{tikzcd}
				I \times R_i(I^n) \arrow[r, "\smiley^n"] \arrow[d, "\phi \times \text{pr}_i"'] & \Delta^{n+1}               \\
				I \times I^{n-1} \arrow[r, "\smiley^{n-1}"]                                    & \Delta^n \arrow[u, "q_i"']
			\end{tikzcd}
			\]
			commutes.
			\item \label{AXA5}For each $n$ and $i = 1,...,n$ there are piecewise smooth maps fixing the endpoints $\phi, \phi' : I \to I$ such that the diagrams
			
			\[\begin{tikzcd}
				{[0, \frac{i}{n+1}] \times F_i(I^n)} \arrow[d, "{ (\phi \circ sc) \times \text{pr}^{1,i-1}}"'] \arrow[r, "\smiley^n"] & \Delta^{n+1}               \\
				I \times I^{i-1} \arrow[r, "\smiley^{i-1}"]                                                                        & \Delta^{i} \arrow[u, "\mathcal F^{i}"']
			\end{tikzcd}
			\begin{tikzcd}
				{  [\frac{i}{n+1}, 1] \times F_i(I^n)} \arrow[d, "{ (\phi' \circ sc) \times \text{pr}^{i+1,n}}"'] \arrow[r, "\smiley^n"] & \Delta^{n+1}               \\
				I \times I^{n-i} \arrow[r, "\smiley^{n-i}"]                                                                                          & \Delta^{n-i+1} \arrow[u, "\mathcal R^{n-i+1}"']
			\end{tikzcd}
			\]
			commute. Here, $\mathcal R^{n-i+1}$ denotes the $(n-i+1)$-dimensional rear face map and $\mathcal F^{i}$ is the $i$-dimensional front face.
		\end{enumerate}
	\end{def1}
	
	\begin{remark}
		Igusa \cite{Igusa} defined certain families of paths $I \times I^n \to \Delta^{n+1}$, fulfilling similar properties as our axioms. The maps Igusa defines are only piecewise smooth, making some of the following arguments rather tricky when spelled out in full detail. Therefore, we make an extra effort to find smooth maps.
	\end{remark}
	We need to ensure that such admissible collections exist.
	\begin{def1}
		\begin{enumerate}
			\item We first define certain maps $\frownie^n : I \times I^n \to \Delta^{n+1}$. These will not yet (strictly) satisfy the above axioms. We define $\frownie^n$ on each subset $[\frac{i}{n+1}, \frac{i+1}{n+1}] \times I^n$ inductively over $i$. For $i = 0$, we set
			\begin{align*}\frownie(t,s) := t(n+1)(1,s_1,s_1 s_2,...,s_1... s_n).\end{align*}
			For $i > 0$, we use the map $sc : [\frac{i}{n+1}, \frac{i+1}{n+1}] \to I$ defined in \ref{helpDef} and then set
			\begin{align*} \frownie(t,s) := \frownie(\frac{i}{n+1},s) + sc(t) \cdot (0,...,0,1-s_i,(1-s_i)s_{i+1},...,(1-s_i)s_{i+1}...s_n)\end{align*}
			where the component which contains $1-s_i$ is the $i$-th component.
		\end{enumerate}
	\end{def1}
	\begin{lemma}
		Each $\frownie^n$ is smooth on the subsets $[\frac{i}{n+1}, \frac{i+1}{n+1}] \times I^n$ for $i = 0,...,n$
		and they satisfy Axioms \ref{AXA2}, \ref{AXA3}, \ref{AXA4}, and \ref{AXA5}.  
		\begin{proof}
			The statement about the piecewise smoothness and Axiom \ref{AXA3} immediately follow from the construction. Let us now consider Axiom \ref{AXA2}. Of particular importance is the first piece $I^n \times [0,\frac{1}{n+1}]$. It is straightforward to check that the restriction
			to the interior of $[0,\frac{1}{n+1}] \times I^n$ is an orientation preserving diffeomorphism whose image is the interior of $\Delta^{n+1}$. This implies that for each form $\omega$, we get
			\begin{align*}\int_{[0,\frac{1}{n+1}] \times I^n} (\frownie^n)^* \omega = \int_{\Delta^{n+1}} \omega. \end{align*}
			Furthermore, we see that the map restricted to $[\frac{1}{n+1},n] \times I^n$ factors through $\Delta^n$ since the first component remains unchanged for $t \geq \frac{1}{n+1}$. Therefore $\int_{[\frac{1}{n+1},1] \times I^n} (\frownie^n)^* \omega = 0$ and hence
			\begin{align*} \int_{I \times I^n} (\frownie^n)^* \omega = \int_{\Delta^{n+1}} \omega,\end{align*} as required. We continue with Axiom \ref{AXA4}. For given $n$ and $i = 1,...,n$ we define the map $\phi : I \to I$ by
			\begin{nalign}
				\phi(t) := \begin{cases}
					\frac{n+1}{n} t, \text{ for } t \leq \frac{i}{n+1}\\
					\frac{i}{n} , \text{ for } \frac{i}{n+1} \leq t \leq \frac{i+1}{n+1}\\
					\frac{n+1}{n}t - \frac{1}{n} , \text{ for } \frac{i+1}{n+1} \leq t\\
				\end{cases}
			\end{nalign}
			As before we see that $\phi$ is integral-compatible. A straightforward calculation shows that with this definition of $\phi$ the diagram of Axiom \ref{AXA4} is commutative. The proof of Axiom \ref{AXA5} is simpler. Here, it suffices to choose $\phi, \phi'$ equal to the identity.
		\end{proof}
	\end{lemma}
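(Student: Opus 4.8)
The plan is to record first the value of $\frownie^n$ at the breakpoints and then dispatch the axioms in order of increasing difficulty. A short induction on $i$ (each slab adding exactly the difference of consecutive breakpoint vectors) gives, for $1 \le i \le n+1$,
\begin{align*}
\frownie^n\!\left(\tfrac{i}{n+1},\, s\right) = \bigl(\,\underbrace{1,\ldots,1}_{i},\, s_i,\, s_i s_{i+1},\, \ldots,\, s_i\cdots s_n\,\bigr),
\end{align*}
while $\frownie^n(0,s) = (0,\ldots,0)$ because of the prefactor $t(n+1)$. Piecewise smoothness is then immediate: on each slab $[\frac{i}{n+1},\frac{i+1}{n+1}]\times I^n$ the map is polynomial in $(t,s)$, and the displayed formula shows the slabs agree at the breakpoints, so they glue continuously. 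Axiom \ref{AXA3} drops out at once, since the formula at $i=n+1$ (that is, $t=1$) equals $(1,\ldots,1)$ and $\frownie^n(0,s)=(0,\ldots,0)$.

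For Axiom \ref{AXA2} the key point is to treat the first slab separately. Writing $\tau := t(n+1)$, the restriction to $[0,\frac{1}{n+1}]\times I^n$ is $(t,s)\mapsto(\tau,\tau s_1,\tau s_1 s_2,\ldots,\tau s_1\cdots s_n)$, and I would exhibit its inverse on interiors by recovering $\tau$ from the first coordinate and each $s_k$ as the ratio $u_{k+1}/u_k$ of consecutive coordinates. In the variables $(\tau,s_1,\ldots,s_n)$ the Jacobian is triangular with diagonal $1,\tau,\tau s_1,\ldots,\tau s_1\cdots s_{n-1}$, so its determinant $\tau^n s_1^{\,n-1}\cdots s_{n-1}$ is positive on the interior; hence this restriction is an orientation-preserving diffeomorphism onto the interior of $\Delta^{n+1}$, giving $\int_{[0,\frac{1}{n+1}]\times I^n}(\frownie^n)^*\omega = \int_{\Delta^{n+1}}\omega$. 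On every later slab the first coordinate is frozen at $1$, so $\frownie^n$ factors through the face $\{t_1=1\}$, which has dimension $n<n+1$; thus $(\frownie^n)^*\omega=0$ there and those slabs contribute nothing. Summation yields integral-compatibility.

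Axioms \ref{AXA4} and \ref{AXA5} are verified by restricting to the indicated faces of the cube and matching against the simplicial face maps after a suitable reparametrization of the time variable. For \ref{AXA4} I would set $s_i=1$; then the correction term added on the $i$-th slab vanishes identically, so $\frownie^n$ is constant in $t$ there, while the remaining motion produces a tie $t_i=t_{i+1}$, i.e. lands in $\partial_i\Delta^{n+1}=q_i(\Delta^n)$ and agrees with $\frownie^{n-1}\circ\text{pr}_i$. The wasted $i$-th subinterval is precisely absorbed by the explicit endpoint-fixing $\phi$ that is linear with slope $\frac{n+1}{n}$ off $[\frac{i}{n+1},\frac{i+1}{n+1}]$ and constant $\frac{i}{n}$ on it; one checks $\phi$ is integral-compatible and that the square commutes slab by slab. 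For \ref{AXA5} I would set $s_i=0$: the trajectory on $[0,\frac{i}{n+1}]$ keeps the last $n+1-i$ coordinates equal to $0$, hence lies in the front face $\mathcal F^i$ and agrees with $\frownie^{i-1}$ on the first $i-1$ cube coordinates, while on $[\frac{i}{n+1},1]$ the first $i$ coordinates are frozen at $1$, so it lies in the rear face $\mathcal R^{n-i+1}$ and agrees with $\frownie^{n-i}$. Since no subinterval is wasted in either range, one may take $\phi=\phi'=\mathrm{id}$ and both squares commute by inspection.

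The main obstacle I anticipate is Axiom \ref{AXA2}: one must argue carefully that the first slab is an orientation-preserving diffeomorphism onto the \emph{open} simplex (watching the boundary behaviour and the induced orientation on $\Delta^{n+1}$) and that all later slabs genuinely degenerate. Axioms \ref{AXA4} and \ref{AXA5} are conceptually routine but bookkeeping-heavy; their only real content is pinning down the reparametrization $\phi$ in \ref{AXA4}, which is forced by the requirement to skip the wasted $i$-th subinterval.
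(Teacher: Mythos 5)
Your proof is correct and follows essentially the same route as the paper: smoothness and Axiom A3 from the construction, A2 via the observation that the first slab is an orientation-preserving diffeomorphism onto the open simplex while all later slabs degenerate into the face $\{t_1=1\}$, the same piecewise-linear $\phi$ (slope $\tfrac{n+1}{n}$ off the wasted subinterval, constant $\tfrac{i}{n}$ on it) for A4, and $\phi=\phi'=\mathrm{id}$ for A5. Your explicit breakpoint formula and triangular-Jacobian computation merely spell out what the paper declares straightforward; note that the paper's definition literally places $1-s_i$ in the $i$-th component, an off-by-one that you have implicitly corrected to the $(i{+}1)$-th, which is the only reading consistent with the image lying in $\Delta^{n+1}$.
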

	\begin{theorem}
		There exists an admissible collection of maps $\smiley^n : I \times I^n \to \Delta^{n+1}$.
		\begin{proof}
			Choose some smooth map $\eta : I \to I$ such that $\eta|_{(0,1)} : (0,1) \to (0,1)$ is a diffeomorphism, $(\partial_t)^k \eta(0) = (\partial_t)^k \eta(1) = 0$ for all $k > 0$ and $\eta(0) = 0, \eta(1) = 1$. Then we define for each $n$ $\eta^n : I \to I$ on each interval $[\frac{i}{n+1}, \frac{i+1}{n+1}]$ using the maps $sc_i : [\frac{i}{n+1}, \frac{i+1}{n+1}] \to I$ by
			\begin{align*} \eta^n(t) = sc_i^{-1} \circ \eta \circ sc_i(t), t \in [\frac{i}{n+1}, \frac{i+1}{n+1}].\end{align*}
			By the properties of $\eta$, this is again smooth and it is clearly integral-compatible. We then define our desired maps $\smiley^n$ by
			\begin{align*} \smiley^n := \frownie^n \circ (\eta^n \times id).\end{align*}
			Per construction, $\smiley^n$ is now smooth, and since it just a reparametrization of the $I_t$ factor, leaving the points $\frac{i}{n+1}$ invariant, Axioms A2 to A5 remain valid.
			
		\end{proof}
	\end{theorem}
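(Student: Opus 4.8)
The plan is to repair the piecewise-smooth maps $\frownie^n$ from the preceding construction into genuinely smooth maps by reparametrizing the $I$-factor. By the foregoing Lemma, each $\frownie^n$ is smooth on every slab $[\frac{i}{n+1}, \frac{i+1}{n+1}] \times I^n$ and already satisfies Axioms \ref{AXA2}--\ref{AXA5}; the only defect is smoothness across the interfaces $t = \frac{i}{n+1}$, where the one-sided $t$-derivatives of $\frownie^n$ need not agree. The guiding idea is that precomposing in the $t$-direction with a reparametrization all of whose positive-order derivatives vanish at each breakpoint $\frac{i}{n+1}$ forces every positive-order $t$-derivative of the composite to vanish there from both sides, thereby annihilating the mismatch.

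Concretely, first I would fix a single smooth $\eta : I \to I$ that restricts to a diffeomorphism on $(0,1)$, fixes the endpoints, and satisfies $(\partial_t)^k \eta(0) = (\partial_t)^k \eta(1) = 0$ for all $k \geq 1$ (a standard flattening reparametrization built from a bump function). Transplanting $\eta$ to each subinterval through the affine scalings $sc_i : [\frac{i}{n+1}, \frac{i+1}{n+1}] \to I$ by $\eta^n := sc_i^{-1} \circ \eta \circ sc_i$ yields a strictly increasing self-homeomorphism of $I$ that fixes every breakpoint $\frac{i}{n+1}$ and is flat to infinite order there from both sides; in particular $\eta^n$ is itself smooth. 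I then set $\smiley^n := \frownie^n \circ (\eta^n \times \text{id})$.

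The main step, and the only delicate one, is verifying Axiom \ref{AXA1}, namely that $\smiley^n$ is globally smooth; this is the crux of the argument. Pure $s$-derivatives are automatically continuous across a breakpoint, since $\frownie^n$ is continuous and the two adjacent slabs share the common boundary slice $\{t=\frac{i}{n+1}\}\times I^n$ on which $\frownie^n$ restricts to a single smooth map. For any derivative involving at least one $\partial_t$, the Fa\`a di Bruno formula expresses it as a sum each of whose terms carries a factor $(\eta^n)^{(m)}$ with $m \geq 1$; because these vanish at $\frac{i}{n+1}$, every such one-sided derivative vanishes there from both sides, so all one-sided derivatives match and $\smiley^n \in C^\infty$. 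This is precisely where the flatness of $\eta$ is indispensable.

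It then remains to observe that the other axioms survive the reparametrization, which I expect to be routine. Since $\eta^n \times \text{id}$ is an orientation-preserving, integral-compatible self-map of $I \times I^n$, Axiom \ref{AXA2} is preserved because $\int_{I \times I^n} (\smiley^n)^* \omega = \int_{I \times I^n} (\frownie^n)^* \omega = \int_{\Delta^{n+1}} \omega$; Axiom \ref{AXA3} holds because $\eta^n$ fixes $0$ and $1$, leaving the endpoint slices unchanged; and Axioms \ref{AXA4} and \ref{AXA5}, which are only required to hold up to a piecewise-smooth reparametrization of the $I$-factor fixing the endpoints, continue to hold after absorbing $\eta^n$ (together with the inverse of the corresponding target reparametrization $\eta^{n-1}$, $\eta^{i-1}$, respectively $\eta^{n-i}$) into the functions $\phi, \phi'$. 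Hence $\smiley^n$ is admissible, completing the construction.
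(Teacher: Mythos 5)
Your proposal is correct and follows essentially the same route as the paper: reparametrize the $t$-factor of $\frownie^n$ by the piecewise transplant $\eta^n = sc_i^{-1}\circ\eta\circ sc_i$ of a boundary-flat diffeomorphism $\eta$, so that all mixed derivatives involving $\partial_t$ vanish at the breakpoints and the composite becomes smooth, while Axioms A2--A5 survive because the reparametrization is integral-compatible, fixes the points $\frac{i}{n+1}$, and can be absorbed into the maps $\phi,\phi'$. Your Fa\`a di Bruno justification of smoothness and the explicit bookkeeping for A4/A5 are in fact more detailed than the paper's one-line assertions.
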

	\subsection{The higher Riemann-Hilbert functor}
	From now on, we fix an admissible collection of maps $\smiley^n : I \times I^n  \to \Delta^{n+1}$.
	\begin{def1}
		In the following, we will consider smooth maps $\sigma : \Delta^{n+1} \to M$. We then obtain the map $\sigma \circ \smiley^n : I \times I^n \to M$ and thus an adjoint map $\tilde \sigma : I^n \to PM(\sigma_{(0)},\sigma_{(n+1)})$. By $\text{hol}'^{\sigma}_{f_1,...,f_m}$, we will then denote the pullback of $\text{hol}^{x_0,x_1}_{f_1,...,f_m}$ along $\tilde \sigma$. Moreover, note that $\text{hol}^{x_0,x_1}_h$ is closed by Corollary $\ref{holCor}$ since $h$ is closed. Therefore, also $\text{hol}_h'^{\sigma}$ is closed and since the cohomology of $I^n$ is trivial, we obtain a unique smooth map $s^{\sigma} : I^n \to \mathbb K$ such that $ds^{\sigma} = -\text{hol}_h'^{\sigma}$ and $s(1,...,1) = 0$. We now set 
		\begin{align*} \text{hol}^{\sigma}_{f_1,...,f_m} := (-1)^{\frac{n(n+1)}{2}} e^{s^{\sigma}} \text{hol}'^{\sigma}_{f_1,...,f_m}.\end{align*}
		In the case $m = 0$, we let $\text{hol}_{f_1,...,f_m}$ be $PT$. Similarly, we define $PT'^{\sigma}$ and $PT^{\sigma}$.
	\end{def1}
	\begin{lemma}
		\label{coherentHol}
		Let $\sigma : \Delta^{n+1} \to M$ be smooth. The following identity holds
		\begin{nalign}
			\label{holIsCoherent}
			\int_{I^n} d(\text{hol}^{\sigma}_{f_1,...,f_m}) &= \sum_{i = 1}^n \int_{I^{n-1}} (-1)^{n-i+1} \text{hol}^{\partial^i \sigma}_{f_1,...,f_m}\\
			&\quad+ \sum_{k = 1}^{m-1} \sum_{i = 1}^n e^{\int_{\Delta^2} \sigma_{(0,i,n+1)}^* h }(-1)^{i+n} \int_{I^{n-i}}  \text{hol}^{\sigma_{(i,...,n+1)}}_{f_1,...,f_k} \int_{I^{i-1}} \text{hol}^{\sigma_{(0,...,i)}}_{f_{k+1},...,f_{m}}\\
			&\quad+ e^{\int_{\Delta^2} \sigma_{(0,n,n+1)}^* h } PT(\sigma_{(n,n+1)}) \int_{I^{n-1}}   \text{hol}^{\sigma_{(0,...,n)}}_{f_1,...,f_m} \\
			&\quad+   e^{\int_{\Delta^2} \sigma_{(0,1,n+1)}^* h }(-1)^{n+1} \int_{I^{n-1}}  \text{hol}^{\sigma_{(1,...,n+1)}}_{f_1,...,f_m} PT(\sigma_{(0,1)}).
		\end{nalign}
		\begin{proof}
			By Stokes Theorem, we obtain
			\begin{nalign}
				\int_{I^n} d(\text{hol}^{\sigma}_{f_1,...,f_m}) &= \int_{\partial I^n} \text{hol}^{\sigma}_{f_1,...,f_m} \\
				&= \sum_{i = 1}^{n}  \int_{R_i(I^n)} \text{hol}^{\sigma}_{f_1,...,f_m}
				+ \sum_{i = 1}^{n}  \int_{F_i(I^n)} \text{hol}^{\sigma}_{f_1,...,f_m}.\\
			\end{nalign}
			We consider the first summand for each $i$. Denote by $\phi$ the map as in Axiom \ref{AXA4} and let $PM_\phi$ be the diffeological space of smooth paths $\gamma$ such that $\gamma \circ \phi$ is also smooth. Then we obtain the commutative diagram 
			\[
			\begin{tikzcd}
				R_i(I^n) \arrow[d, "\text{pr}_i"'] \arrow[r, "\tilde \sigma|_{R_i(I^n)}"] & PM \\
				I^{n-1} \arrow[r, "\widetilde {\partial^i \sigma}"]                            & PM_{\phi} \arrow[u, "\gamma \mapsto \gamma \circ \phi"]     .                                         
			\end{tikzcd}
			\]
			By Proposition  \ref{holProp} part \ref{holPropP5}, we obtain
			\begin{align*} \text{hol}'^{\sigma}_{f_1,...,f_m} = \text{pr}_i^* \text{hol}'^{\partial^i \sigma}_{f_1,...,f_m} \text{ on } R_i(I^n) \end{align*}
			since $\phi : I \to I$ is piecewise smooth and fixes the endpoints. Moreover, we have (on $R_i(I^n)$)
			\begin{align*} d(s^{\partial^i \sigma} \circ \text{pr}_i) = -\text{pr}_i^* \text{hol}'^{\partial^i \sigma}_h = -\text{hol}'^{\sigma}_{h}.\end{align*}
			But we also have $s^{\partial^i \sigma} \circ \text{pr}_i(1,...,1) = s^{\partial^i \sigma}(1,...,1) = 0 = s^{ \sigma}(1,...1)$, so in fact 
			\begin{align*} s^{\partial^i \sigma} \circ \text{pr}_i = s^{ \sigma} \text{ on } R_i(I^n).\end{align*}
			Combining these results, we obtain
			\begin{align*} \text{hol}^{\sigma}_{f_1,...,f_m} = (-1)^{n} \text{pr}_i^* \text{hol}^{\partial^i \sigma}_{f_1,...,f_m} \text{ on } R_i(I^n). \end{align*}
			Now since the map $\text{pr}_i : R(I^n) \to I^{n-1}$ changes the orientation by the sign $(-1)^{i+1}$, we combine
			\begin{nalign}
				\sum_{i = 1}^{n}  \int_{R_i(I^n)} \text{hol}^{\sigma}_{f_1,...,f_m} &= \sum_{i = 1}^{n}  \int_{R_i(I^n)}  (-1)^{n} \text{pr}_i^* \text{hol}^{\partial^i \sigma}_{f_1,...,f_m}\\
				&= \sum_{i = 1}^n \int_{I^{n-1}} (-1)^{n-i+1} \text{hol}^{\partial^i \sigma}_{f_1,...,f_m}
			\end{nalign}
			which is the first summand as in the statement of the lemma. We now consider the other terms
			\begin{align*} \int_{F_i(I^n)} \text{hol}^{\sigma}_{f_1,...,f_m}.\end{align*}
			By Proposition \ref{holProp} part \ref{holPropP4} we have
			\begin{nalign}\text{hol}'^{\sigma}_{f_1,...,f_m} &= \tilde \sigma^*[\text{hol}_{f_1,...,f_m}(\frac{i}{n+1},1)\text{PT}(0,\frac{i}{n+1}) + \text{PT}(\frac{i}{n+1},1)\text{hol}_{f_1,...,f_m}(0,\frac{i}{n+1})\\
				& + \sum_{k = 1}^{m-1} \text{hol}_{f_1,...,f_k}(\frac{i}{n+1},1)\text{hol}_{f_{k+1},...,f_m}(0,\frac{i}{n+1})].\end{nalign}
			By Axiom \ref{AXA5}, we obtain commutative diagrams
			\[\begin{tikzcd}
				{
					{[0,\frac{i}{n+1}]^2 \times F_i(I^n)}} \arrow[d, "{sc^{\times 2} \times \ \text{pr}^{1,i-i}}"] \arrow[rr, "{sc^{\times 2} \times ([\gamma \mapsto \gamma \circ sc^{-1}] \circ \tilde \sigma)}"] &  & I^2 \times PM                                                                                   &  & {[\frac{i}{n+1},1]^2 \times F_i(I^n)} \arrow[d, "{sc^{\times 2} \times \ \text{pr}^{i+1,n}}"] \arrow[rr, "{sc^{\times 2} \times ([\gamma \mapsto \gamma \circ sc^{-1}] \circ \tilde \sigma)}"] &  & I^2 \times PM                                                                                     \\
				I^2 \times I^{i-1} \arrow[rr, "{\text{id} \times \widetilde{\sigma_{(0,...,i)}}}"']                                                                                                                                     &  & I^2 \times PM_{\phi} \arrow[u, "\text{id} \times \mathcal (\gamma \mapsto \gamma \circ \phi)"'] &  & I^2 \times I^{n-i} \arrow[rr, "{\text{id} \times \widetilde{\sigma_{(i,...,n+1)}}}"']                                                                                                          &  & I^2 \times PM_{\phi'} \arrow[u, "\text{id} \times \mathcal (\gamma \mapsto \gamma \circ \phi')"'].
			\end{tikzcd}\]
			Proposition \ref{holProp} part \ref{holPropP3} and part \ref{holPropP5} yield
			\begin{nalign}
				&\tilde \sigma^* \text{hol}_{(-)}(0,\frac{i}{n+1}) = (\text{pr}^{1,i-1})^* \text{hol}_{(-)}'^{\sigma_{(0,...,i)}} ,\\
				&\tilde \sigma^* \text{hol}_{(-)}(\frac{i}{n+1},1) = (\text{pr}^{i+1,n})^* \text{hol}_{(-)}'^{\sigma_{(i,...,n+1)}} \text{ on } F_i(I^n).
			\end{nalign}
			So we see that 
			\begin{nalign}\text{hol}'^{\sigma}_{f_1,...,f_m} &= (\text{pr}^{i+1,n})^* \text{hol}_{(f_1,...,f_m)}'^{\sigma_{(i,...,n+1)}}(\text{pr}^{1,i-1})^*\text{PT}'^{\sigma_{(0,...,i)}}\\
				&\quad+ (\text{pr}^{i+1,n})^*\text{PT}'^{\sigma_{(i,...,n+1)}}(\text{pr}^{1,i-1})^* \text{hol}_{(f_1,...,f_m)}'^{\sigma_{(0,...,i)}}\\
				&\quad+ \sum_{k = 1}^{m-1} (\text{pr}^{i+1,n})^* \text{hol}_{(f_1,...,f_k)}'^{\sigma_{(i,...,n+1)}}(\text{pr}^{1,i-1})^* \text{hol}_{(f_{k+1},...,f_m)}'^{\sigma_{(0,...,i)}} \text{ on } F_i(I^n).
			\end{nalign}
			Next, we obtain for each $i$
			\begin{align*} s^{\sigma_{(i,...,n+1)}} \circ \text{pr}^{i+1,n} + s^{\sigma_{(0,...,i)}} \circ \text{pr}^{1,i-1} + s^{\sigma}(1,...,1,0,1,...,1) = s^{\sigma} \end{align*}
			by comparing both sides after applying the differential, using proposition \ref{holProp} part \ref{holPropP4} and evaluating both sides at $(1,...,1,0,1,...,1)$. Then we combine to 
			\begin{nalign}\text{hol}^{\sigma}_{f_1,...,f_m} &= (-1)^{in} e^{s^{\sigma}(1,...,1,0,1,...,1)}[(\text{pr}^{i+1,n})^* \text{hol}_{(f_1,...,f_m)}^{\sigma_{(i,...,n+1)}}(\text{pr}^{1,i-1})^*\text{PT}^{\sigma_{(0,...,i)}}\\ &\quad+ (\text{pr}^{i+1,n})^*\text{PT}^{\sigma_{(i,...,n+1)}}(\text{pr}^{1,i-1})^* \text{hol}_{(f_1,...,f_m)}^{\sigma_{(0,...,i)}}\\
				&\quad+ \sum_{k = 1}^{m-1} (\text{pr}^{i+1,n})^* \text{hol}_{(f_1,...,f_k)}^{\sigma_{(i,...,n+1)}}(\text{pr}^{1,i-1})^* \text{hol}_{(f_{k+1},...,f_m)}^{\sigma_{(0,...,i)}}] \text{ on } F_i(I^n)
			\end{nalign}
			using the identity
			\begin{align*} n(n+1) = (n+1-i)(n-i) + 2(n+1-i)i + i(i-1).\end{align*}
			Therefore, we conclude by tracking orientations and using Fubinis theorem
			\begin{align*}
				& \sum_{i = 1}^{n}  \int_{F_i(I^n)} \text{hol}^{\sigma}_{f_1,...,f_m} \\
				&= \sum_{k = 1}^{m-1} \sum_{i = 1}^n (-1)^{i+n} e^{s^{\sigma}(1,...,1,0,1,...,1)} \int_{I^{n-i}}  \text{hol}^{\sigma_{(i,...,n+1)}}_{f_1,...,f_k} \int_{I^{i-1}} \text{hol}^{\sigma_{(0,...,i)}}_{f_{k+1},...,f_{m}}\\
				&\quad+ e^{s^{\sigma}(1,...,1,0)} PT(\sigma_{(n,n+1)}) \int_{I^{n-1}}   \text{hol}^{\sigma_{(0,...,n)}}_{f_1,...,f_m} \\
				&\quad+ e^{s^{\sigma}(0,1,...,1)} (-1)^{n+1} \int_{I^{n-1}}  \text{hol}^{\sigma_{(1,...,n+1)}}_{f_1,...,f_m} PT(\sigma_{(0,1)}).
			\end{align*}
			Comparing this with equation with \ref{holIsCoherent}, we see that it remains to show that \begin{align*}s^{\sigma}(1,...,1,0,1,...,1) = \int_{\Delta^2} \sigma_{(0,i,n+1)}^* h.\end{align*}
			By writing $\sigma_{(0,i,n+1)}$ as an iterated face of $\sigma$ and using the reasoning as before repeatedly, we obtain that 
			\begin{align*} s^{\sigma}(1,...,1,0,1,...,1) = s^{\sigma_{(0,i,n+1)}}(0).\end{align*}
			We calculate using Stokes Theorem 
			\begin{nalign}
				s^{\sigma_{(0,i,n+1)}}(0) &= -(s^{\sigma_{(0,i,n+1)}}(1)- s^{\sigma_{(0,i,n+1)}}(0))\\
				&= - \int_I ds^{\sigma_{(0,i,n+1)}} = \int_I \text{hol}_h'^{\sigma_{(0,i,n+1)}}.
			\end{nalign}
			Per definition of $\text{hol}_h$, we have that 
			\begin{align*} \partial_t \text{hol}_h(0,t) = \iota_{\partial_t}(h)\end{align*} and $\text{hol}_h(0,0) = 0$. Explicitly, the solution of this initial value problem is
			\begin{align*} \text{hol}_h(0,t) = \int_{0}^{t} dt' \text{ } \iota_{\partial_{t'}}(h)\end{align*}
			and we obtain using the above equation
			\begin{nalign}
				\label{eqHDef}
				s^{\sigma_{(0,i,n+1)}}(0) &= \int_I (\int_{0}^{1} dt \text{ } \iota_{\partial_{t}}(h))\\
				&= \int_{I^2} (\smiley^1)^* \sigma_{(0,i,n+1)}^* h.
			\end{nalign}
			Now by Axiom $A2$, this integral is equal to $\int_{\Delta^2} \sigma_{(0,i,n+1)}^* h$, hence the proof is concluded.
		\end{proof}
	\end{lemma}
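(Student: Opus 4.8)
The plan is to apply Stokes' theorem, writing $\int_{I^n} d(\text{hol}^\sigma_{f_1,\dots,f_m}) = \int_{\partial I^n} \text{hol}^\sigma_{f_1,\dots,f_m}$, and then to split the boundary of the cube into its two families of codimension-one faces: the rear faces $R_i(I^n)$, where the $i$-th coordinate equals $1$, and the front faces $F_i(I^n)$, where it equals $0$, for $i = 1,\dots,n$. I expect the rear faces to produce the ``face'' terms indexed by $\partial^i\sigma$ (the first sum on the right-hand side), and the front faces to produce the ``splitting'' terms in which $\sigma$ is cut at an intermediate vertex into $\sigma_{(0,\dots,i)}$ and $\sigma_{(i,\dots,n+1)}$.

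For the rear faces I would first invoke Axiom \ref{AXA4}, which relates the restriction of $\smiley^n$ to $R_i(I^n)$ with $\smiley^{n-1}$ via a piecewise-smooth reparametrisation $\phi$ fixing the endpoints. Feeding this into Proposition \ref{holProp} part \ref{holPropP5} (reparametrisation invariance of $\text{hol}$) identifies $\text{hol}'^\sigma_{f_1,\dots,f_m}$ on $R_i(I^n)$ with the pullback $\text{pr}_i^*\text{hol}'^{\partial^i\sigma}_{f_1,\dots,f_m}$. The scalar factor $e^{s^\sigma}$ must be matched separately: since $\text{hol}_h$ is reparametrisation invariant in the same way and both $s^\sigma$ and $s^{\partial^i\sigma}\circ\text{pr}_i$ vanish at $(1,\dots,1)$, they agree on $R_i(I^n)$ by the uniqueness used to define $s^\sigma$. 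Collecting the sign $(-1)^n$ by which the global normalisation $(-1)^{n(n+1)/2}$ changes between dimensions $n$ and $n-1$, together with the orientation sign $(-1)^{i+1}$ by which $\text{pr}_i : R_i(I^n) \to I^{n-1}$ reverses orientation, then yields exactly $\sum_i (-1)^{n-i+1}\int_{I^{n-1}}\text{hol}^{\partial^i\sigma}_{f_1,\dots,f_m}$.

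For the front faces I would split $\text{hol}'^\sigma$ at the intermediate value $\tfrac{i}{n+1}$ using the concatenation formula of Proposition \ref{holProp} part \ref{holPropP4}, writing it as a sum of products $\text{hol}(\tfrac{i}{n+1},1)\,\text{hol}(0,\tfrac{i}{n+1})$ (with $\text{PT}$ supplying the extreme endpoints). Axiom \ref{AXA5} expresses the front and rear halves of $\smiley^n$ on $F_i(I^n)$ in terms of $\smiley^{i-1}$ and $\smiley^{n-i}$ up to reparametrisation; combined with parts \ref{holPropP3} and \ref{holPropP5} this rewrites each half as a pullback of $\text{hol}'^{\sigma_{(0,\dots,i)}}$ respectively $\text{hol}'^{\sigma_{(i,\dots,n+1)}}$ along the coordinate projections $\text{pr}^{1,i-1}$ and $\text{pr}^{i+1,n}$. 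Fubini then factors $\int_{F_i(I^n)}$ into the product $\int_{I^{n-i}}\cdot\int_{I^{i-1}}$, while the additivity $s^{\sigma_{(i,\dots,n+1)}}\circ\text{pr}^{i+1,n} + s^{\sigma_{(0,\dots,i)}}\circ\text{pr}^{1,i-1} + s^\sigma(1,\dots,1,0,1,\dots,1) = s^\sigma$ (again proved by differentiating and comparing at the basepoint) supplies the common scalar $e^{s^\sigma(1,\dots,1,0,1,\dots,1)}$.

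It then remains to identify this constant with the claimed $\exp\!\big(\int_{\Delta^2}\sigma^*_{(0,i,n+1)}h\big)$. For this I would reduce $s^\sigma(1,\dots,1,0,1,\dots,1)$ to $s^{\sigma_{(0,i,n+1)}}(0)$ by iterating the face relations above, compute $s^{\sigma_{(0,i,n+1)}}(0) = \int_I \text{hol}'^{\sigma_{(0,i,n+1)}}_h$ by Stokes on the interval, and solve the defining equation $\partial_t\text{hol}_h(0,t) = \iota_{\partial_t}(h)$ explicitly to rewrite the integral as $\int_{I^2}(\smiley^1)^*\sigma^*_{(0,i,n+1)}h$; Axiom \ref{AXA2} (integral-compatibility of $\smiley^1$) finally turns this into $\int_{\Delta^2}\sigma^*_{(0,i,n+1)}h$. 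The main obstacle throughout is the sign and exponent bookkeeping: matching the global convention $(-1)^{n(n+1)/2}$ against the orientation signs of the cube faces and of the several reparametrisations, and in particular verifying the combinatorial identity $n(n+1) = (n+1-i)(n-i) + 2(n+1-i)i + i(i-1)$ that collapses the accumulated front-face sign to the stated $(-1)^{i+n}$. All the genuine geometric content is carried by the reparametrisation-invariance properties of $\text{hol}$, so once those are in hand the remaining work is careful orientation accounting rather than new ideas.
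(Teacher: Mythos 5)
Your proposal follows the paper's proof essentially step for step: Stokes on the cube, Axiom A4 plus Proposition \ref{holProp}(\ref{holPropP5}) for the rear faces, the concatenation formula \ref{holPropP4} plus Axiom A5 and parts \ref{holPropP3}, \ref{holPropP5} for the front faces, the additivity of $s^\sigma$ for the scalar factors, and the reduction of $s^\sigma(1,\dots,1,0,1,\dots,1)$ to $\int_{\Delta^2}\sigma_{(0,i,n+1)}^*h$ via the explicit ODE solution and Axiom A2. The argument is correct and matches the paper, down to the combinatorial sign identity.
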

	\begin{remark}
		\label{remPT}
		We can do the same calculation as above for $PT^{\sigma}$ instead of $\text{hol}^{\sigma}_{f_1,...,f_m}$ to obtain the equation 
		\begin{nalign}
			\int_I d(PT^{\sigma}) = -PT(\sigma_{(0,2)}) + e^{\int_{\Delta^2} \sigma^* h}PT(\sigma_{(1,2)}) PT(\sigma_{(0,1)})
		\end{nalign}
		for $2$-simplices $\sigma : \Delta^2 \to M$.
	\end{remark}
	
	We can now define our $A_\infty$-functor.
	\begin{def1}
		We define an $A_\infty$-functor $\mathcal{RH}(M)^{pre}[h] : \mathcal P(M)^{pre}[h] \to \text{Loc}(M)^{pre}[H]$, where $H = e^{DR(h)}$. On objects, we set
		\begin{align*}\mathcal{RH}(M)^{pre}[h](E, \nabla) := ((E_x)_{x \in M}, PT)\end{align*}
		where $PT$ denotes the parallel transport of $(E, \nabla)$. On composable morphisms $f_1,...,f_m$, we set
		\begin{align*}\mathcal{RH}(M)^{pre}[h](f_1,...,f_m)(\sigma) := \int_{I^n} \text{hol}^{\sigma}_{f_1,...,f_m} \end{align*}
		for each smooth simplex $\sigma : \Delta^{n+1} \to M$. On 0-simplices $\sigma : \Delta^0 \to M$, we set
		\begin{nalign}
			\mathcal{RH}(M)^{pre}[h](f_1,...,f_m)(\sigma) = \begin{cases}
				f_1(\sigma(0)) \text{ if } m = 1 \text{ and } |f_1| = 0,\\
				0 \text{ else }\\
			\end{cases}
		\end{nalign}
	\end{def1}
	\begin{theorem}
		\label{Atheorem}
		$\mathcal{RH}(M)^{pre}[h]$ is indeed a proper $A_\infty$-functor of cdg-categories.
		\begin{proof}
			It is clear that $((E_x)_{x \in M}, PT)$ indeed defines an object of $\text{Loc}(M)^{pre}[H]$. We next check that the functor respects the curvatures. Let $\sigma : \Delta^2 \to M$ be smooth. By the proof of \ref{holLemma}, we see that
			\begin{align*}d(PT'^{\sigma}) = \text{hol}'^{\sigma}_{R'}.\end{align*}
			We calculate 
			\begin{nalign}
				d(PT^{\sigma}) &= -ds^{\sigma} e^{s^\sigma} PT'^{\sigma} - e^{s^\sigma} d(PT'^{\sigma})\\
				&= -\text{hol}^{\sigma}_h PT'^{\sigma} - e^{s^\sigma}  \text{hol}'^{\sigma}_{R'}\\
				&= -\text{hol}^{\sigma}_{h \mathbbm 1_{E}} + \text{hol}^{\sigma}_{R'}\\
				&= \text{hol}^{\sigma}_{R}.
			\end{nalign}
			By Remark \ref{remPT}, we then obtain
			\begin{align*} -PT(\sigma_{(0,2)}) + e^{\int_{\Delta^2} \sigma^* h}PT(\sigma_{(1,2)})PT(\sigma_{(0,1)})  = \int_I \text{hol}^{\sigma}_{R} = \mathcal{RH}(M)^{pre}[h](R)(\sigma).\end{align*}
			The $A_\infty$-relations follow from Lemma \ref{coherentHol}, Corollary \ref{holCor}, and Lemma \ref{holHLemma}. It is proper since by the definition of $\text{hol}$, we have that $\text{hol}_{f_1,...,f_m} = 0$ if there is an $i$ such that $|f_i| = 0$.
		\end{proof}
	\end{theorem}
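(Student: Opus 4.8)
The plan is to verify, one at a time, the four defining conditions of an $A_\infty$-functor in Definition \ref{Ainfty} together with properness, and to observe that each reduces to an identity about holonomy forms that has already been proved. Write $\mathcal F := \mathcal{RH}(M)^{pre}[h]$. Multilinearity on morphisms is immediate, since $f_1,\dots,f_m \mapsto \int_{I^n}\text{hol}^{\sigma}_{f_1,\dots,f_m}$ is linear in each slot; that $((E_x)_{x\in M},PT)$ is a legitimate object of $\text{Loc}(M)^{pre}[H]$ is immediate from $PT(c_x)=\mathbbm 1$ on constant paths; and compatibility with identities will follow once properness is established, because identity morphisms sit in degree $0$. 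The genuine content is therefore (i) compatibility with curvatures and (ii) the $A_\infty$-relations.

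For curvature compatibility I would compute the curvature of the image object directly from its definition in $\text{Loc}(M)^{pre}[H]$, namely $R_{\mathcal F(E)}(\sigma)=-PT(\sigma_{(0,2)})+H(\sigma)\,PT(\sigma_{(1,2)})PT(\sigma_{(0,1)})$ for a $2$-simplex $\sigma$, where $H(\sigma)=e^{\int_{\Delta^2}\sigma^*h}$ by the normalization $H=e^{DR(h)}$. By Remark \ref{remPT} this expression equals $\int_I d(PT^{\sigma})$, so it suffices to identify $d(PT^{\sigma})$ with $\text{hol}^{\sigma}_{R}$. This is done by differentiating the definition of $PT^{\sigma}$ in terms of $e^{s^{\sigma}}$ and $PT'^{\sigma}$: the factor $ds^{\sigma}=-\text{hol}^{\sigma}_{h}$ contributes a term which by Lemma \ref{holHLemma} equals $-\text{hol}^{\sigma}_{h\mathbbm 1_E}$, while $d(PT'^{\sigma})=\text{hol}'^{\sigma}_{R'}$ follows from the computation inside Lemma \ref{holLemma}; combining them gives $-\text{hol}^{\sigma}_{h\mathbbm 1_E}+\text{hol}^{\sigma}_{R'}=\text{hol}^{\sigma}_{R'-h}=\text{hol}^{\sigma}_{R}$ since $R=R'-h$. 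Integrating over $I$ then yields $\mathcal F(R)(\sigma)$, as required.

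The heart of the proof is the $A_\infty$-relation. For composable $f_1,\dots,f_m$ and a simplex $\sigma:\Delta^{n+1}\to M$ I would evaluate both sides of the equation in Definition \ref{Ainfty} on $\sigma$. Unwinding the connection of $\text{Loc}(M)^{pre}[H]$ shows that $\nabla(\mathcal F(f_1,\dots,f_m))(\sigma)$ is a signed combination of the inner faces $\int_{I^{n-1}}\text{hol}^{\partial^i\sigma}_{f_1,\dots,f_m}$ together with the two $H$-weighted outer-face terms composed with $PT(\sigma_{(0,1)})$ and $PT(\sigma_{(n,n+1)})$; this is exactly $\int_{I^n}d(\text{hol}^{\sigma}_{f_1,\dots,f_m})=\int_{\partial I^n}\text{hol}^{\sigma}_{f_1,\dots,f_m}$ as computed in Lemma \ref{coherentHol}. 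The right-hand side of that lemma splits precisely into the terms the $A_\infty$-equation demands: the inner-face contributions reorganize, via Corollary \ref{holCor}, into $\mathcal F(f_1,\dots,\nabla f_k,\dots,f_m)$, $\mathcal F(f_1,\dots,f_k\circ f_{k+1},\dots,f_m)$ and curvature-insertion terms carrying the geometric curvature $R'_{E_k}$; while the factorization contributions $\int_{I^{n-i}}\text{hol}^{\sigma_{(i,\dots,n+1)}}\int_{I^{i-1}}\text{hol}^{\sigma_{(0,\dots,i)}}$, weighted by $e^{\int_{\Delta^2}\sigma_{(0,i,n+1)}^*h}$, are exactly the $\cup$-compositions $\mathcal F(f_1,\dots,f_k)\cup\mathcal F(f_{k+1},\dots,f_m)$ in $\text{Loc}(M)^{pre}[H]$, the cocycle weight $H(\sigma_{(0,i,n)})$ of $\cup$ matching that exponential. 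Finally I would use Lemma \ref{holHLemma} once more to trade the geometric insertions $R'_{E_k}$ for the categorical ones $R_{E_k}=R'_{E_k}-h$ required by Definition \ref{Ainfty}, the discrepancy being absorbed by the $\text{hol}_h$-terms. Collecting terms with their signs closes the relation.

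Properness is immediate from the construction: if some $|f_i|=0$ then $\text{hol}_{f_1,\dots,f_m}=0$, because the defining ODEs of Proposition \ref{holProp} contract each input against $\partial_t$ (first slot, property \ref{holPropA1}) or $\partial_s$ (last slot, property \ref{holPropP2}), and a degree-$0$ form has vanishing contraction; together with the initial condition $\text{hol}_{f_a,\dots,f_b}(t,t)=0$ this forces the form to vanish, an interior slot being reduced to the boundary slots by induction on $b-a$. Hence $\mathcal F(f_1,\dots,f_m)=0$, which in particular yields compatibility with identities. I expect the real obstacle to be the bookkeeping of the previous paragraph: reconciling all signs and, above all, checking that the scalar weights agree on the nose, i.e.\ that the combinatorial cocycle weights of $\text{Loc}(M)^{pre}[H]$ coincide with the exponential-integral weights emerging from Stokes' theorem. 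This is exactly where the normalization $H=e^{DR(h)}$ and the integral-compatibility Axiom \ref{AXA2} (which gives $\int_{\Delta^2}\sigma^*h=\int_{I^2}(\smiley^1)^*\sigma^*h$) must be invoked, and where the $R'\leftrightarrow R$ substitution has to be threaded consistently through every term.
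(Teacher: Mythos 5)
Your proposal is correct and follows essentially the same route as the paper: the curvature check via $d(PT^{\sigma})=\text{hol}^{\sigma}_{R}$ and Remark \ref{remPT}, the $A_\infty$-relations assembled from Lemma \ref{coherentHol}, Corollary \ref{holCor} and Lemma \ref{holHLemma}, and properness from the vanishing of $\text{hol}_{f_1,\dots,f_m}$ when some $|f_i|=0$. You merely spell out in more detail the bookkeeping that the paper's proof leaves implicit (how the boundary terms of Lemma \ref{coherentHol} match the connection and cup product of $\text{Loc}(M)^{pre}[H]$, and the $R'\leftrightarrow R$ trade via Lemma \ref{holHLemma}), which is accurate.
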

	\subsection{ $\mathcal {RH}(M)^{\infty}[h]$ is an $A_\infty$-quasi equivalence }
	We want to show that the induced $A_\infty$-functor of dg-categories \begin{align*}\mathcal{RH}(M)^{\infty}[h] := \text{Tw}(\mathcal{RH}(M)^{pre}[h]): \mathcal P(M)^{\infty}[h] \to \text{Loc}(M)^{\infty}[H]\end{align*} is a quasi-equivalence. The idea is to apply Theorem  \ref{EquivalenceTheorem}. Thus, we start with the following observation.
	\begin{lemma}
		\label{isSpectral}
		$\mathcal P(M)^{pre}[h]$ and $\text{Loc}(M)^{pre}[H]$ are split.
		\begin{proof}
			We can construct the kernel $k : (K, P^K) \to (E, P^E)$ of a closed degree $0$ morphism $f$ in $\text{Loc}(M)^{pre}[H]$ from $(E, P^E)$ to $(F, P^F)$ by setting $(k_x, K_x)$ to be the kernel of $f_x : E_x \to F_x$ for every $x$. Since $f$ is closed, we see that $P^E$ restricts to a map $P^K$ on $K$, making $k$ the kernel of $f$. It is straightforward to check the universal property. Similarly, we see that $\text{Loc}(M)^{pre}[H]$ has cokernels. Next, we observe that a short exact sequence yields a short exact sequence for every $x \in M$. The choice of a splitting is then the same thing as a splitting for every $x \in M$ (as the splittings are not required to be closed) which exists as we are working over fields. Therefore $\text{Loc}(M)^{pre}[H]$ is split.\\ \\
			Next, we consider $\mathcal P(M)^{pre}[h]$. Let $f : (E, \nabla^E) \to (F, \nabla^F)$ be closed and of degree $0$. Then $f$ is just a map of vector bundles that respects the connections. Therefore, $f$ must have constant rank which implies that $f$ has a kernel and a cokernel on the level of vector bundles. Now as $\nabla(f) = 0$, we see that the connection on $E$ restricts to a connection on $\text{ker}(f)$ and similarly for the cokernel. For any short exact sequence of vector bundles, we obtain splittings by the Serre-Swan correspondence. So $\mathcal P(M)^{pre}[h]$ is also split.
		\end{proof}
	\end{lemma}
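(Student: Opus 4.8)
The plan is to verify, for each of the two cdg-categories, first that its category of closed degree~$0$ morphisms $Z^0(-)$ is quasi-abelian, and then that every short exact sequence admits a (not necessarily closed) degree~$0$ splitting. I would treat $\text{Loc}(M)^{pre}[H]$ first, since there everything is pointwise, and then $\mathcal P(M)^{pre}[h]$, where a constant-rank argument is required.

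For $\text{Loc}(M)^{pre}[H]$ I would begin by unwinding what a closed degree~$0$ morphism $f : ((E_x)_x, P^E) \to ((F_x)_x, P^F)$ is: comparing degrees, such an $f$ is a family of linear maps $f_x : E_x \to F_x$ indexed by vertices $x \in \text{Sing}(M)_0$, and the condition $\nabla(f) = 0$, read off on $1$-simplices, says precisely that $f$ intertwines the transport data $P^E$ and $P^F$ (up to the cocycle $H$). I would then build kernels and cokernels vertexwise: set $K_x := \ker(f_x)$, and observe that closedness forces $P^E$ to restrict to transport data $P^K$ on $(K_x)_x$, so the inclusion is a closed degree~$0$ morphism with the correct universal property; cokernels are dual. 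This makes $Z^0(\text{Loc}(M)^{pre}[H])$ quasi-abelian. Given a short exact sequence, evaluating at each vertex yields a short exact sequence of finite-dimensional $\mathbb K$-vector spaces, which splits because $\mathbb K$ is a field; collecting a linear splitting $s_x$ at every vertex produces a degree~$0$ morphism $s = (s_x)_x$ — with no closedness imposed — that splits the sequence.

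For $\mathcal P(M)^{pre}[h]$ the analogous first step identifies a closed degree~$0$ morphism with a bundle map $f \in \Gamma(\text{Hom}(E,F))$ satisfying $\nabla^F f = f \nabla^E$, i.e.\ a connection-preserving map. The key point, and the main obstacle, is that such an $f$ automatically has \emph{constant rank}: since $f$ intertwines the two parallel transports, which are isomorphisms along every path, the rank of $f_x$ is locally constant, hence constant on the connected manifold $M$. Constant rank guarantees that $\ker f$ and $\text{coker}\, f$ are genuine smooth sub- and quotient bundles, and since $\nabla(f) = 0$ the connection $\nabla^E$ restricts to $\ker f$ while $\nabla^F$ descends to $\text{coker}\, f$; this produces kernels and cokernels in $Z^0(\mathcal P(M)^{pre}[h])$ and establishes quasi-abelianness.

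Finally, for the splitting, a short exact sequence in $Z^0(\mathcal P(M)^{pre}[h])$ is in particular a short exact sequence of smooth vector bundles on $M$, which always splits smoothly — one may invoke Serre--Swan to pass to finitely generated projective $C^\infty(M)$-modules, where short exact sequences of projectives split, or equivalently choose a bundle metric and take orthogonal complements. The resulting smooth bundle splitting is a degree~$0$ morphism, and since the definition of \emph{split} does not demand that it be $\nabla$-closed, we are done. I expect the only genuinely non-formal input to be the constant-rank observation for connection-preserving maps; everything else is either pointwise over a field or a standard bundle-theoretic fact.
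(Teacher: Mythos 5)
Your proposal is correct and follows essentially the same route as the paper: pointwise kernels, cokernels, and field-theoretic splittings for $\text{Loc}(M)^{pre}[H]$, and the constant-rank observation plus Serre--Swan (or a bundle metric) for $\mathcal P(M)^{pre}[h]$. The only difference is that you actually justify the constant-rank claim via intertwining of parallel transports, a step the paper merely asserts.
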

	\begin{lemma}
		Let $(E, P)$ be an object of $\text{Loc}(M)^{0}[H]$ for $H(\sigma) = e^{\int_{\Delta^2} \sigma^*h}$. Let $x$ and $y$ be points of $M$. Then $P$ viewed as a map between diffeological spaces
		\[P : PM(x,y) \to \text{Hom}(E_x,E_y)\]
		is smooth and we have 
		\[ d(P) = \text{hol}_h P.\]
		\begin{proof}
			Let $\phi : U \to PM(x,y)$ be a plot of $PM(x,y)$. We compute the partial derivatives of $P \circ \phi$. Let $p \in U \subset \mathbb R^n$, $1 \leq i \leq n$ and $t \in \mathbb R$ sufficiently small. As $(E, P)$ has curvature $H$ we obtain that $P \circ \phi(p + te_i) = H(\sigma) P \circ \phi(p)$ where $\sigma$ is a 2-simplex with $\partial_0 \sigma = c_{y}$, $\partial_1 \sigma = P \circ \phi(p + te_i)$, and $\partial_2 \sigma = P \circ \phi(p)$. As seen in equation \ref{eqHDef}, we have $\int_{\Delta^2} \sigma^* h = \int_I \gamma^* \text{hol}_h$ where $\gamma : I \to PM(x,y)$, $\gamma(s) = \phi(p + st e_i)$. Equivalently, we can write this as  $\int_0^t \mu^* \text{hol}_h(\partial_s) ds$ where $\mu(s) = \phi(p + s e_i)$. But $t \mapsto \int_0^t \mu^* \text{hol}_h(\partial_s) ds$ is clearly smooth and we thus obtain
			\begin{nalign} 
				\partial_i (P \circ \phi) = \phi^* \text{hol}_h(\partial_i) P \circ \phi
			\end{nalign}
		 	This equation can also be solved as follows. $\text{hol}_{h}$ is closed since $h$ is closed (for example by Corollary \ref{holCor}). Fix some $x_0 \in U$ and a neighbourhood such that there exists a solution $p$ of $dp = \text{hol}_h$ with $p(x_0) = 0$. Then $e^p P \circ \phi(x_0)$ solves the same equation as the one above and by uniqueness we obtain $e^p P \circ \phi(x_0) = P \circ \phi$ (on the chosen neighbourhood). Thus, $P \circ \phi$ is smooth and
			\begin{nalign} 
				d(P) = \text{hol}_h P.
			\end{nalign}
		\end{proof}
	\end{lemma}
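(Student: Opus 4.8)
The plan is to check both claims plot-by-plot in the diffeological sense. Fix a plot $\phi : U \to PM(x,y)$ with $U \subseteq \mathbb R^n$ open; then $P$ is smooth precisely when each composite $P \circ \phi : U \to \text{Hom}(E_x, E_y)$ is smooth, and the asserted identity $d(P) = \text{hol}_h\, P$ unwinds to the pulled-back equation $d(P\circ\phi) = \phi^*\text{hol}_h \cdot (P\circ\phi)$, where $\phi^*\text{hol}_h$ is a scalar $1$-form on $U$. So everything reduces to analysing the single matrix-valued function $P\circ\phi$.

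The first step is to feed the vanishing of the curvature into this function. Since $(E,P)$ lies in $\text{Loc}(M)^0[H]$, we have $R_E = 0$, i.e. $P(\sigma_{(0,2)}) = H(\sigma)\,P(\sigma_{(1,2)})\,P(\sigma_{(0,1)})$ for every smooth $2$-simplex $\sigma$. For fixed $p \in U$ and small $t$ I would build an interpolating $2$-simplex $\sigma$ with $\sigma_{(0,2)} = \phi(p + t e_i)$, $\sigma_{(0,1)} = \phi(p)$, and $\sigma_{(1,2)} = c_y$ the constant loop, so that $P(\sigma_{(1,2)}) = \mathrm{id}$. The curvature relation then collapses to $P\circ\phi(p + t e_i) = H(\sigma)\, P\circ\phi(p)$, converting the combinatorial composition law into a pointwise constraint along each coordinate direction.

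Next I would rewrite the twist $H(\sigma) = e^{\int_{\Delta^2}\sigma^*h}$ in terms of the holonomy form using the identity \ref{eqHDef} from the proof of Lemma \ref{coherentHol}, which gives $\int_{\Delta^2}\sigma^*h = \int_0^t \mu^*\text{hol}_h(\partial_s)\,ds$ for $\mu(s) = \phi(p + s e_i)$. The constraint becomes $P\circ\phi(p+te_i) = \exp\!\big(\int_0^t \mu^*\text{hol}_h(\partial_s)\,ds\big)\, P\circ\phi(p)$, and differentiating in $t$ at $t = 0$ yields the first-order system $\partial_i (P\circ\phi) = \phi^*\text{hol}_h(\partial_i)\cdot(P\circ\phi)$, which is exactly the coordinate form of $d(P\circ\phi) = \phi^*\text{hol}_h\cdot(P\circ\phi)$.

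The main obstacle is that this computation a priori only controls the derivatives of $P\circ\phi$ along coordinate lines and does not by itself deliver joint smoothness, so I cannot yet legitimately read the display as an honest exterior-derivative identity. To overcome this I would use that $h$ closed forces $\text{hol}_h$ closed (Corollary \ref{holCor}), hence $\phi^*\text{hol}_h$ is a closed $1$-form on $U$; after shrinking to a contractible neighbourhood of a basepoint $x_0$ I can pick a genuinely smooth primitive $u$ with $du = \phi^*\text{hol}_h$ and $u(x_0) = 0$. The manifestly smooth function $e^{u}\,P\circ\phi(x_0)$ then satisfies the same linear system and agrees with $P\circ\phi$ at $x_0$, so by uniqueness of solutions of this linear ODE (integrating along paths out of $x_0$) the two coincide. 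This simultaneously proves that $P\circ\phi$ is smooth and confirms the equation $d(P) = \text{hol}_h\, P$.
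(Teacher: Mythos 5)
Your proposal is correct and follows essentially the same route as the paper's own proof: reduce to a plot $\phi$, use the vanishing of $R_E$ on a degenerate $2$-simplex with one face constant to get $P\circ\phi(p+te_i)=H(\sigma)\,P\circ\phi(p)$, convert $H(\sigma)$ via equation \eqref{eqHDef} into $\exp\bigl(\int_0^t \mu^*\text{hol}_h(\partial_s)\,ds\bigr)$, and then upgrade the resulting directional ODE to joint smoothness by comparing with $e^{u}P\circ\phi(x_0)$ for a local primitive $u$ of the closed form $\phi^*\text{hol}_h$. The only difference is cosmetic (you label the simplex faces in the $\sigma_{(i,j)}$ notation rather than via $\partial_i\sigma$), and your explicit remark that the coordinate-line computation alone does not give joint smoothness is a welcome clarification of why the primitive-and-uniqueness step is needed.
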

	\begin{lemma}
		\label{lemmaQes}
		$\mathcal{RH}(M)^{0}[h] : \mathcal P(M)^{0}[h] \to \text{Loc}(M)^{0}[H]$ is essentially surjective.
		\begin{proof}
			Let $(E, P)$ be an object of $\text{Loc}(M)^{0}[H]$ of dimension $m$. We let $F := \sqcup_{x \in M} E_x$ and we need to find trivializations that have smooth transition functions. For each chart $\varphi_\alpha : \mathbb R^n \to M$ of $M$, we define a trivialization $F|_{\mathbb R^n} \to \mathbb K^m$ as follows. To simplify notation, we will identify $\mathbb R^n$ with the image of $\varphi_\alpha$. First we choose any isomorphism $\phi(0) : E_0 \to \mathbb K^m$. Then we set $\phi(x) : E_x \to E_0 \to \mathbb K^m$, $\phi(x) = P(t \mapsto (1-t)x) \circ \phi(0)$. By the proof of the Poincaré-Lemma, there is a 1-form $\omega$ on $\mathbb R^n$ given by 
			\begin{align*} \omega_{p}(v) := \int_0^1 t h_{tp}(p, v) dt\end{align*} 
			which satisfies $d\omega = h$. We note that $\omega_x(x) = 0$ for every $x \in \mathbb R^n \cong T_{x} \mathbb R^n$. This means that the parallel transport of the connection $d + \omega$ is equal to the identity along the paths $t \mapsto t \cdot p$ for every $p \in \mathbb R^n$. Furthermore, the connection $d + \omega$ has curvature $\omega \wedge \omega + d\omega = d\omega = h$. This implies that the parallel transport of the connection $d + \omega$ is precisely equal to $P|_{\mathbb R^n}$. It remains to show that the transitions functions are smooth. It then automatically follows that the connections agree on intersections of charts, as they give rise to the same parallel transport. It is also clear, that in this case $\mathcal{RH}(M)^{0}[h]$ maps this object to $(E, P)$. \\ \\
			To show that the transition functions are smooth, consider charts $\varphi_\alpha$ with image $U_\alpha$ and $\varphi_\beta$ with image $U_\beta$. Denote the corresponding trivializations by $\phi_\alpha$ and $\phi_\beta$. Let $\eta : I \to I$ be a smooth non-decreasing map, fixing the endpoints such that $\eta|_{[1/2 - \epsilon, 1/2 + \epsilon]} = 1/2$ for some $\epsilon > 0$. Furthermore set $x = \phi_\alpha(0), y = \phi_\alpha(1)$. Then we define a smooth map 
			\[ U_\alpha \cap U_\beta \to PM(x,y), p \mapsto (\gamma_\beta^p)^{-1} * \gamma_\alpha^p \circ \eta \]
			where $\gamma_{\alpha / \beta}^p(t) = \phi_{\alpha / \beta}(t \phi_{\alpha / \beta}^{-1}(p))$ for $t \in I$. As $P$ is reparametrisation invariant (since $\int_{\Delta^2} \sigma^* h = 0$ if $\sigma$ factors through $I$), we see that the transition functions are smooth by the previous Lemma using the definition of the trivialisations.
		\end{proof}
	\end{lemma}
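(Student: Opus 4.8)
The plan is to show that $\mathcal{RH}(M)^0[h]$ hits every object up to isomorphism by reconstructing a projectively flat vector bundle directly from the transport datum $P$. So let $(E,P)$ be an object of $\text{Loc}(M)^0[H]$ of rank $m$; here the vanishing of $R_E$ is precisely the twisted cocycle condition $P(\sigma_{(0,2)}) = H(\sigma)\,P(\sigma_{(1,2)})P(\sigma_{(0,1)})$. First I would assemble the underlying set $F := \bigsqcup_{x\in M} E_x$ with tautological fibre $E_x$ over $x$, and endow it with a smooth vector–bundle structure together with a projectively flat connection of curvature $h$ whose parallel transport recovers $P$. Over a chart $\varphi_\alpha \colon \mathbb R^n \to M$ (identified with its image) I fix an isomorphism $\phi(0)\colon E_0 \to \mathbb K^m$ and propagate it radially by $\phi(x) := P(t\mapsto (1-t)x)\circ \phi(0)$, yielding a fibrewise bijection $F|_{\mathbb R^n}\to \mathbb R^n\times\mathbb K^m$.

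To equip this trivialization with the correct connection I would invoke the Poincaré lemma to choose a primitive $\omega\in\Omega^1(\mathbb R^n)$ with $d\omega = h$, via the homotopy formula $\omega_p(v) = \int_0^1 t\,h_{tp}(p,v)\,dt$, which has the crucial feature $\omega_x(x) = 0$. Then $d+\omega$ has curvature $\omega\wedge\omega + d\omega = h$, so it is projectively flat with the prescribed curvature, and since $\omega$ annihilates the radial direction its parallel transport along the rays $t\mapsto t\cdot p$ is the identity. The key claim is that in the trivialization $\phi$ the parallel transport of $d+\omega$ equals $P|_{\mathbb R^n}$: both are the identity along the radial paths out of the origin and both obey the twisted cocycle/projective–flatness relation governed by $h$, and on the star-shaped chart such a transport is determined by its radial values, so they must coincide. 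In particular $\mathcal{RH}(M)^0[h]$ of the resulting bundle–with–connection returns $(E,P)$ on this chart.

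The substantive point, and the step I expect to be the \textbf{main obstacle}, is that $F$ is genuinely a smooth vector bundle, i.e. that the transition functions between two such charts $\varphi_\alpha,\varphi_\beta$ are smooth; once this is known the locally defined connections $d+\omega$ glue automatically, since on overlaps they induce the same parallel transport $P$. The transition function $\phi_\beta(x)\circ\phi_\alpha(x)^{-1}$ is, by construction, $P$ evaluated along the concatenation of the two radial paths associated to $\alpha$ and $\beta$. To prove smoothness I would exhibit it as $P$ composed with a smooth family of paths: fixing a smooth nondecreasing reparametrization $\eta\colon I\to I$ that is constant $=\tfrac12$ near $\tfrac12$ (so the concatenation is truly smooth at the junction), the assignment $p\mapsto (\gamma_\beta^p)^{-1} * \gamma_\alpha^p\circ \eta$, with $\gamma_{\alpha/\beta}^p(t) = \varphi_{\alpha/\beta}(t\,\varphi_{\alpha/\beta}^{-1}(p))$, defines a smooth map $U_\alpha\cap U_\beta \to PM(x,y)$ for $x=\varphi_\alpha(0)$, $y=\varphi_\beta(0)$. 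Since $P$ is reparametrization invariant (a simplex factoring through $I$ is thin, hence $\int_{\Delta^2}\sigma^*h = 0$ and $H=1$ there), and since the preceding Lemma shows $P\colon PM(x,y)\to\text{Hom}(E_x,E_y)$ is smooth with $d(P) = \text{hol}_h\,P$, smoothness of the transition functions follows.

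Granting smoothness, $F$ becomes a smooth rank-$m$ vector bundle $E'$, the glued connection $\nabla'$ is projectively flat of curvature $h$ so that $(E',\nabla')$ is an object of $\mathcal P(M)^0[h]$, and by the key claim $\mathcal{RH}(M)^0[h](E',\nabla') = ((E_x)_{x\in M}, PT) = (E,P)$. Hence every object of $\text{Loc}(M)^0[H]$ lies in the image, which is exactly essential surjectivity (and the identifications are genuine isomorphisms, since in these uncurved dg-categories a degree $0$ homotopy equivalence is an isomorphism). The entire difficulty is concentrated in the diffeological smoothness argument; the fibrewise construction and the local projectively flat connection are formal once the Poincaré primitive with $\omega_x(x)=0$ is in hand.
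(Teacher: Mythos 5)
Your proposal is correct and follows essentially the same route as the paper's proof: the radial trivialization $\phi(x) = P(t\mapsto(1-t)x)\circ\phi(0)$, the Poincaré primitive $\omega$ with $\omega_x(x)=0$ so that $d+\omega$ has curvature $h$ and trivial radial transport, and the smoothness of transition functions via the reparametrized concatenation of radial paths together with the preceding lemma on smoothness of $P$. Your added remark that the transport on a star-shaped chart is determined by its radial values together with the cocycle relation is a useful explicit justification of the step the paper leaves implicit, and your endpoints $x=\varphi_\alpha(0)$, $y=\varphi_\beta(0)$ are the correct ones.
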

	 We have to recall the de Rham isomorphism for flat vector bundles. 
	Let $(Q, \nabla)$ be a flat vector bundle. Then note that for any space with trivial fundamental group $A$ and smooth map $A \to M$ the pullback of $(Q, \nabla)$ to $A$ canonically trivializes to the fiber $Q_x$ for any point $x \in A$. We define the singular cochain complex $C^*(M, (Q, \nabla))$ with values in $(Q, \nabla)$, in degree $n$, to consist of assignments $f$ that map any smooth simplex $\sigma : \Delta^n \to M$ to an element $f(\sigma) \in Q_{\sigma_{(0)}}$. The differential is given by 
	\begin{align*} (df)(\sigma) := \sum_{i = 1}^{n+1} (-1)^{i+n+1} f(\partial^i \sigma) +  (-1)^{n+1}PT(\sigma_{(0,1)}^{-1}) [f(\partial^0 \sigma)]\end{align*}
	which squares to $0$, using that $Q$ is flat. Moreover, there is a map
	\begin{align*}\text{DR} : \Omega^*(M, Q) \to C^*(M, (Q, \nabla))\end{align*}
	given by
	\begin{align*} DR(q)(\sigma) = (-1)^{\frac{(n+1)n}{2}}\int_{\Delta^n} \sigma^* q \in Q_{\sigma_{(0)}}\end{align*}
	where the integral makes sense by the above considerations. This map is compatible with the differentials and is known to be a quasi-isomorphism (the usual sheaf-theoretic proof works; one has to take some extra care when defining barycentric subdivision). Now let $(E, \nabla^E)$ and $(F, \nabla^F)$ be objects of $\mathcal P(M)^{0}[h]$. Then $(Q, \nabla) := (\text{Hom}(E,F), \nabla)$ is a flat vector bundle. The following holds:
	\begin{lemma}
		\label{relateLemma}
		There is an isomorphism of complexes \begin{align*}C^*(M, (Q, \nabla)) \cong \text{Hom}^*_{\text{Loc}(M)^{0}[H]}(\mathcal {RH}(E), \mathcal {RH}(F))\end{align*} making the diagram
		\[
		\begin{tikzcd}
			& {\Omega^*(M, \text{Hom}(E,F))} \arrow[rd, "{\mathcal{RH}(M)^{0}[h]}"] \arrow[ld, "\text{DR}"'] &                                                                                                            \\
			{C^*(M, (Q, \nabla))} &                                                                                                       & {\text{Hom}_{\text{Loc}(M)^{0}[H]}(\mathcal {RH}(E), \mathcal {RH}(F))} \arrow[ll] \arrow[ll, "\cong"']
		\end{tikzcd}\]
		commute.
		\begin{proof}
			We define \begin{align*} P : \text{Hom}^*_{\text{Loc}(M)^{0}[H]}(\mathcal {RH}(E), \mathcal {RH}(F)) \to C^*(M, (Q, \nabla))\end{align*} by
			\begin{align*} P(f)(\sigma) := PT(\sigma_{(0,n)}^{-1}) \circ f(\sigma) \in \text{Hom}(E_{\sigma_{(0)}},F_{\sigma_{(0)}})\end{align*}
			for $f$ of degree $n$ and $\sigma : \Delta^n \to M$. We leave it to the reader to verify that this map indeed fulfills the properties as stated in the Lemma. The main observation one has to use is that $e^{s^\sigma} PT(\sigma_{(0,n+1)}^{-1}) PT'^{\sigma}(0,1) = \mathbbm 1$ by differentiating both sides and comparing both sides at $(1,..,1)$. The integration as in the the definition of $\mathcal {RH}(M)^{0}[h]$ is related to the integration in the definition of $DR$ by means of Axiom \ref{AXA2}. 
		\end{proof}
	\end{lemma}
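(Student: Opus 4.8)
The plan is to realize $P$ as a degree-preserving isomorphism of graded vector spaces and then to check separately that it is a cochain map and that it closes the stated triangle. The map $P$ sends a degree-$n$ morphism $f$ (which assigns to $\sigma:\Delta^n\to M$ a map $f(\sigma):E_{\sigma_{(0)}}\to F_{\sigma_{(n)}}$) to $P(f)(\sigma)=PT(\sigma_{(0,n)}^{-1})\circ f(\sigma)\in Q_{\sigma_{(0)}}$, where $PT$ is the parallel transport of $F$. Since parallel transport is invertible, $P$ is bijective in each degree, with inverse $g\mapsto\big(\sigma\mapsto PT(\sigma_{(0,n)})\circ g(\sigma)\big)$, so it remains to verify (a) $P\circ\nabla=d_C\circ P$, where $d_C$ is the differential of $C^*(M,(Q,\nabla))$, and (b) $P\circ\mathcal{RH}(M)^0[h]=\text{DR}$. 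Throughout I write $PT^E,PT^F,PT^Q$ for the parallel transports of the respective bundles and use that $Q=\text{Hom}(E,F)$ is flat with $PT^Q(\gamma)[\psi]=PT^F(\gamma)\,\psi\,PT^E(\gamma)^{-1}$.

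For (a), fix $f$ of degree $n$ and $\sigma\in\text{Sing}(M)_{n+1}$ and compare $P(\nabla f)(\sigma)=PT^F(\sigma_{(0,n+1)}^{-1})(\nabla f)(\sigma)$ with $(d_C Pf)(\sigma)$ face by face. For the inner faces $\partial^i\sigma$, $1\le i\le n$, one has $(\partial^i\sigma)_{(0,n)}=\sigma_{(0,n+1)}$, so $PT^F(\sigma_{(0,n+1)}^{-1})$ factors out and these terms coincide with the middle sum of $\nabla f$ on inspection. For $\partial^{n+1}$ and $\partial^{0}$ I invoke that $\mathcal{RH}(F)=((F_x),PT^F)$ has zero curvature in $\text{Loc}(M)^0[H]$ (Theorem \ref{Atheorem}), i.e. $PT^F(\sigma'_{(0,2)})=H(\sigma')\,PT^F(\sigma'_{(1,2)})\,PT^F(\sigma'_{(0,1)})$ for every $2$-simplex $\sigma'$. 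Applying this to $\sigma'=\sigma_{(0,n,n+1)}$ and $\sigma'=\sigma_{(0,1,n+1)}$ and using that $H$ is $\mathbb K^*$-valued, hence central, yields $PT^F(\sigma_{(0,n+1)}^{-1})H(\sigma_{(0,n,n+1)})PT^F(\sigma_{(n,n+1)})=PT^F(\sigma_{(0,n)}^{-1})$ and $PT^F(\sigma_{(0,n+1)}^{-1})H(\sigma_{(0,1,n+1)})=PT^F(\sigma_{(0,1)}^{-1})PT^F(\sigma_{(1,n+1)}^{-1})$; together with the conjugation description of $PT^Q$ these match the $\partial^{n+1}$ and $\partial^0$ terms of $d_C Pf$ with those of $P(\nabla f)$, the signs agreeing because both differentials use the same inner-face signs and $(-1)^{(n+1)+n+1}=1$.

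For (b), the analytic core, I first solve the defining ODE of Proposition \ref{holProp} by variation of constants, obtaining on $PM(\sigma_{(0)},\sigma_{(n)})$ the representation $\text{hol}_q(0,1)=(-1)^{|q|}\int_0^1 PT^F(\gamma|_{[\tau,1]})\,\iota_{\partial_\tau}(q)\,PT^E(\gamma|_{[0,\tau]})\,d\tau$. Pulling back along $\tilde\sigma:I^{n-1}\to PM(\sigma_{(0)},\sigma_{(n)})$ (the adjoint of $\sigma\circ\smiley^{n-1}$) and prepending $PT^F(\sigma_{(0,n)}^{-1})e^{s^\sigma}$, I establish the scalar identity $e^{s^\sigma}PT^F(\sigma_{(0,n)}^{-1})PT'^\sigma(0,1)=\mathbbm 1$: differentiating its left-hand side and using $ds^\sigma=-\text{hol}'^\sigma_h$, $d(PT'^\sigma)=\text{hol}'^\sigma_{R'_F}=\text{hol}'^\sigma_h\,PT'^\sigma$ (from $R'_F=h\,\mathbbm 1_F$ and Lemma \ref{holHLemma}), and the centrality of $\text{hol}'^\sigma_h$ shows it is constant in $u$, while at $u=(1,\dots,1)$ the path $\tilde\sigma(1,\dots,1)$ is the diagonal, i.e. the edge $\sigma_{(0,n)}$, and $s^\sigma(1,\dots,1)=0$, forcing the value $\mathbbm 1$. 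Since parallel transport is multiplicative under concatenation, $PT^F(\gamma|_{[\tau,1]})PT^F(\gamma|_{[0,\tau]})=PT'^\sigma(0,1)$, so after inserting the identity the integrand collapses to $PT^F(\gamma|_{[0,\tau]})^{-1}\iota_{\partial_\tau}(q)PT^E(\gamma|_{[0,\tau]})$, which is exactly the flat trivialization $\text{triv}^Q$ of $\sigma^*q$ to $Q_{\sigma_{(0)}}$, path-independent because $Q$ is flat and $\Delta^n$ is simply connected. Thus the integrand equals $\iota_{\partial_\tau}\smiley^*\tilde q$ with $\tilde q=\text{triv}^Q\sigma^*q\in\Omega^n(\Delta^n,Q_{\sigma_{(0)}})$, and Axiom \ref{AXA2} (integral-compatibility of $\smiley^{n-1}$) converts $\int_{I^{n-1}}\!\int_0^1\iota_{\partial_\tau}\smiley^*\tilde q\,d\tau$ into $\int_{\Delta^n}\tilde q=\int_{\Delta^n}\sigma^*q$; the sign $(-1)^{\frac{(n-1)n}{2}}(-1)^{|q|}=(-1)^{\frac{n(n+1)}{2}}$, with $|q|=n$, reproduces the normalization of $\text{DR}$.

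The main obstacle is the bookkeeping in (b): obtaining and manipulating the variation-of-constants representation, proving the $e^{s^\sigma}$ identity, and aligning the Fubini and contraction signs so that the cube integral over $I\times I^{n-1}$ becomes the simplex integral via Axiom \ref{AXA2}. By comparison, part (a) is a direct, if careful, unwinding once the projective multiplicativity of $PT^F$ and the conjugation description of $PT^Q$ are in place.
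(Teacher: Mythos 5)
Your proof is correct and takes essentially the same route as the paper's (much terser) argument: the same map $P$, the same key identity $e^{s^\sigma} PT(\sigma_{(0,n)}^{-1}) PT'^{\sigma}(0,1) = \mathbbm 1$ established by differentiating and evaluating at $(1,\dots,1)$, and the same appeal to Axiom \ref{AXA2} to convert the cube integral into the simplex integral of $DR$. You merely supply the details the paper leaves to the reader — the variation-of-constants formula for $\text{hol}_q$, the flat trivialization of $Q$, and the face-by-face verification of the cochain-map property via the vanishing-curvature identity for $PT^F$ — all of which check out.
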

	\begin{corollary}
		\label{qffCor}
		The functor
		\begin{align*}\mathcal{RH}^0(M)[h] : \mathcal P^0(M)[h] \to \text{Loc}^0(M)[H]\end{align*}
		is quasi-fully faithful.
		\begin{proof}
			Since $DR$ is a quasi-isomorphism we obtain that $\mathcal{RH}(M)^0[h]$ must also be a quasi-isomorphism on morphisms by the previous Lemma.
		\end{proof}
	\end{corollary}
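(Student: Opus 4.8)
The plan is to unwind the definition of quasi-full faithfulness and reduce the entire statement to the classical (twisted) de Rham theorem, for which Lemma \ref{relateLemma} has already carried out the essential bookkeeping. By definition, $\mathcal{RH}^0(M)[h]$ is quasi-fully faithful precisely when, for every pair of objects $(E,\nabla^E)$ and $(F,\nabla^F)$ of $\mathcal P^0(M)[h]$, the induced chain map on morphism complexes
\begin{align*} \mathcal{RH}(M)^0[h] : \Omega^*(M,\text{Hom}(E,F)) \to \text{Hom}^*_{\text{Loc}(M)^0[H]}(\mathcal {RH}(E),\mathcal {RH}(F)) \end{align*}
is a quasi-isomorphism. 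So the first step is simply to fix such a pair and recognize that this is the only thing that needs checking.

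The key structural observation, implicit in the setup, is that although $E$ and $F$ individually carry curvature, the Hom-bundle is genuinely flat. Indeed, objects of $\mathcal P^0(M)[h]$ have vanishing curvature in the sense of the cdg-category, i.e. $(\nabla^E)^2 = h\,\mathbbm 1_E$ and $(\nabla^F)^2 = h\,\mathbbm 1_F$; since $h$ is a scalar $2$-form it commutes with every homomorphism, so the induced connection on $Q := \text{Hom}(E,F)$ has curvature $R_F'\circ f - f\circ R_E' = h\,f - f\,h = 0$. This is exactly the point that makes $(Q,\nabla)$ a flat coefficient bundle, hence $C^*(M,(Q,\nabla))$ a well-defined complex and the twisted de Rham map $DR$ available.

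With flatness in hand, I would invoke Lemma \ref{relateLemma} directly. That lemma supplies an isomorphism of complexes
\begin{align*} P : \text{Hom}^*_{\text{Loc}(M)^0[H]}(\mathcal {RH}(E),\mathcal {RH}(F)) \xrightarrow{\;\cong\;} C^*(M,(Q,\nabla)) \end{align*}
together with a commuting triangle whose third edge is $DR : \Omega^*(M,\text{Hom}(E,F)) \to C^*(M,(Q,\nabla))$. Reading off the triangle gives $P \circ \mathcal{RH}(M)^0[h] = DR$, so $\mathcal{RH}(M)^0[h]$ is the composite of $DR$ with the inverse isomorphism $P^{-1}$. An isomorphism of complexes is in particular a quasi-isomorphism, and $DR$ is a quasi-isomorphism by the (twisted) de Rham theorem recalled just before the lemma; hence their composite $\mathcal{RH}(M)^0[h]$ is a quasi-isomorphism. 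Since the pair $(E,F)$ was arbitrary, this yields quasi-full faithfulness.

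As for difficulty, there is essentially no obstacle remaining once Lemma \ref{relateLemma} is granted: all the genuine work — constructing the comparison isomorphism $P$, matching the two integration conventions via Axiom \ref{AXA2}, and reconciling the sign and parallel-transport normalizations — has been absorbed into that lemma and into the de Rham theorem for flat coefficient bundles. The only subtlety one must keep in mind is that the argument relies crucially on the curvatures of $E$ and $F$ both being equal to $h$: without this, $Q$ would fail to be flat, the complex $C^*(M,(Q,\nabla))$ would not even be defined, and the reduction would break down.
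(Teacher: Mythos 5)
Your proposal is correct and follows exactly the route the paper takes: Lemma \ref{relateLemma} provides the commuting triangle identifying $\mathcal{RH}(M)^0[h]$ with $DR$ up to the isomorphism $P$, and the quasi-isomorphism property of $DR$ for the flat bundle $\text{Hom}(E,F)$ then transfers. The only difference is that you spell out the flatness of the Hom-bundle and the two-out-of-three step explicitly, which the paper leaves implicit.
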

	\begin{theorem}
		\label{hCurvedDGEquivalence}
		The induced $A_\infty$-functor 
		\begin{align*}\mathcal{RH}(M)^{\infty}[h] := \text{Tw}(\mathcal{RH}(M)^{pre}[h]) : \mathcal P(M)^{\infty}[h] \to \text{Loc}(M)^{\infty}[H]\end{align*}
		is a quasi-equivalence of dg-categories. 
	\end{theorem}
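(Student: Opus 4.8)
The plan is to deduce the statement directly from the abstract machinery of Theorem \ref{EquivalenceTheorem} applied to the cdg-functor $\mathcal{RH}(M)^{pre}[h] : \mathcal P(M)^{pre}[h] \to \text{Loc}(M)^{pre}[H]$, whose twist is precisely $\mathcal{RH}(M)^{\infty}[h]$. That theorem reduces the quasi-equivalence of the twisted dg-categories to four hypotheses on the underlying cdg-functor together with a single condition on its degree-$0$ restriction $\mathcal{RH}(M)^{0}[h]$, so the entire task is to check that each hypothesis is already available from the preceding results.

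First I would record the structural hypotheses. Both $\mathcal P(M)^{pre}[h]$ and $\text{Loc}(M)^{pre}[H]$ are non-negatively graded cdg-categories by construction, and $\mathcal{RH}(M)^{pre}[h]$ is a proper $A_\infty$-functor between them by Theorem \ref{Atheorem}. The source is Maurer-Cartan (Lemma \ref{lemmaP}), hence a fortiori sufficiently Maurer-Cartan, and the target is sufficiently Maurer-Cartan (Lemma \ref{lemmaLoc}); both are split (Lemma \ref{isSpectral}). This exhausts the hypotheses of Theorem \ref{EquivalenceTheorem} concerning $\mathcal C$ and $\mathcal D$.

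It remains to verify that the induced functor of dg-categories $\mathcal{RH}(M)^{0}[h] : \mathcal P(M)^{0}[h] \to \text{Loc}(M)^{0}[H]$ is a quasi-equivalence, which I would split into its two constituent properties. Quasi-full-faithfulness follows from Corollary \ref{qffCor}; concretely, the de Rham comparison of Lemma \ref{relateLemma} identifies the morphism complex in $\text{Loc}(M)^{0}[H]$ with the twisted singular cochain complex $C^*(M,(\text{Hom}(E,F),\nabla))$ in such a way that $\mathcal{RH}(M)^{0}[h]$ becomes the de Rham map, which is a quasi-isomorphism. Quasi-essential-surjectivity is supplied by Lemma \ref{lemmaQes}; since the morphism complexes of these dg-categories are concentrated in non-negative degrees, a homotopy equivalence in degree $0$ is an isomorphism, so essential surjectivity of the functor coincides with essential surjectivity of $H^0$. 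Feeding these two facts into Theorem \ref{EquivalenceTheorem} yields that $\text{Tw}(\mathcal{RH}(M)^{pre}[h]) = \mathcal{RH}(M)^{\infty}[h]$ is an $A_\infty$-quasi equivalence, as claimed.

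I expect no new obstacle inside this theorem itself: the genuine difficulty has been front-loaded into the lemmas. The most delicate ingredient is the essential surjectivity (Lemma \ref{lemmaQes}), where one must manufacture an honest smooth vector bundle carrying a projectively flat connection whose parallel transport reproduces a prescribed object of $\text{Loc}(M)^{0}[H]$; this relies on the Poincar\'e-lemma primitive $\omega$ with $d\omega = h$ to fix the local connection and on checking that the resulting transition functions are smooth. The second subtlety, already absorbed into Corollary \ref{qffCor} and Lemma \ref{relateLemma}, is that the de Rham map is only a map of complexes and not of algebras, which is exactly why one works with an $A_\infty$-functor rather than a strict dg-functor.
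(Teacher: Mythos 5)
Your proposal is correct and follows exactly the paper's own proof: invoke Theorem \ref{EquivalenceTheorem} using Lemmas \ref{lemmaP}, \ref{lemmaLoc} and \ref{isSpectral} for the structural hypotheses, and Lemma \ref{lemmaQes} together with Corollary \ref{qffCor} for the quasi-equivalence of $\mathcal{RH}(M)^{0}[h]$. The only slip is calling $\mathcal{RH}(M)^{pre}[h]$ a ``cdg-functor'' in your opening sentence; it is a proper $A_\infty$-functor with nontrivial higher components (Theorem \ref{Atheorem}), which is all that Theorem \ref{EquivalenceTheorem} requires, as you correctly state later.
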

	\begin{proof}
		We have seen that $\mathcal{RH}^0(M)[h]$ is a quasi-equivalence in Lemma \ref{lemmaQes} and Corollary \ref{qffCor}. By Lemmas \ref{lemmaP}, \ref{lemmaLoc} and \ref{isSpectral} we are in the setting of Theorem \ref{EquivalenceTheorem} which tells us that $\mathcal{RH}(M)^{\infty}[h]$ is a quasi-equivalence.
	\end{proof}
	
	\subsection{Homotopy invariance}
	So far, we have only considered a single manifold $M$. Given a map of manifolds $f : M \to N$, we would like to have an induced functor $\mathcal P(N)[h] \to \mathcal P(M)[f^* h]$. If we are given two such maps $M \to N$ which are homotopic, we want to compare the induced functors. In particular, we desire a dg-equivalence $\mathcal P(N)[h] \to \mathcal P(M)[f^* h]$ if $f$ is a homotopy equivalence. In the following, we will sometimes consider the category $\mathcal P(M)$ when $M$ is a manifold with boundary, of particular importance will be $M \times I$. We note that the definition of holonomy forms and the categories $\mathcal P(A)$ can be done for arbitrary diffeological spaces $A$ (but of course we then may not obtain the equivalences as above.) 
	
	\begin{def1}
		Let $M$ and $N$ be manifolds (possibly with boundary, or even diffeological spaces). Let $h$ be a closed 2-form on $N$ and let $f : M \to N$ be smooth. 
		We define dg-functor of cdg-categories
		\begin{align*} \mathcal P(f)^{pre} : \mathcal P(N)^{pre}[h] \to \mathcal P(M)^{pre}[f^*h].\end{align*}
		On objects, we set $\mathcal P^{pre}(f)(E, \nabla) := (f^*E, f^*\nabla)$ and for a morphism $g$, we set
		\begin{align*} \mathcal P^{pre}(f) = f^*(g).\end{align*}
		It is straightforward to see that this indeed defines a dg-functor.
	\end{def1}
	\begin{def1}
		Let $h$ be a closed 2-form on $M$ and $\omega$ a 1-form. We define a dg-functor of cdg-categories
		\begin{align*}\mathcal P^{pre}[\omega] : \mathcal P(M)^{pre}[h] \to \mathcal P(M)^{pre}[h + d\omega].\end{align*}
		On objects, we set $\mathcal P^{pre}[\omega](E, \nabla) := (E, \nabla + \omega)$ and it is the identity on morphisms. Note that 
		$\mathcal P^{pre}[-\omega]$ is clearly inverse to $\mathcal P^{pre}[\omega]$. 
	\end{def1}
	
	\begin{def1}
		Consider the inclusions $\iota_1 : M \to M \times I, x \mapsto (x,1)$ and $\iota_0 : M \to M \times I, x \mapsto (x,0)$ and let $h$ be a closed 2-form on $M \times I$. Furthermore, we define the map
		\begin{align*} \beta : M \to P(M \times I), \beta(x) = (t \mapsto (x,t))\end{align*}
		and note that $ev^1 \circ \beta = \iota_1$ and $ev^0 \circ \beta = \iota_0$. We can pullback the holonomy form $\text{hol}_h$ along $\beta$ to obtain the 1-form $\omega_h := \beta^* \text{hol}_h$ which satisfies $d\omega_h = \iota_1^* h - \iota_0^* h$ using that $h$ is closed and Lemma \ref{holLemma}. Then we want to compare the composition 
		\begin{align*} \mathcal P^{pre}[\omega_h] \circ \mathcal P(\iota_0)^{pre} : \mathcal P(M \times I)^{pre}[h] \to \mathcal P(M)^{pre}[\iota_0^* h] \to \mathcal P(M)^{pre}[\iota_1^* h]\end{align*}
		with 
		\begin{align*} \mathcal P(\iota_1)^{pre} : \mathcal P(M \times I)^{pre}[h] \to \mathcal P(M)^{pre}[\iota_1^* h].\end{align*}
		We define an $A_\infty$-natural transformation
		\begin{align*} \eta^{pre} : \mathcal P^{pre}[\omega_h] \circ \mathcal P(\iota_0)^{pre} \Rightarrow \mathcal P(\iota_1)^{pre}.\end{align*}
		Let $(E, \nabla)$ be an object of $\mathcal P(M \times I)^{pre}[h]$. We denote $(E_0, \nabla_0) := \mathcal (\iota_0)^{pre}(E, \nabla)$ and $(E_1, \nabla_1) := \mathcal P^{pre}(\iota_1)(E, \nabla)$. Then, $\mathcal P^{pre}[\omega_h] \circ \mathcal P(\iota_0)^{pre}(E, \nabla) = (E_0, \nabla_0 + \omega_h)$. Let $PT$ be the parallel transport of $(E, \nabla)$. On objects, we set 
		\begin{align*} \eta^{pre}_{(E, \nabla)} := \beta^* PT \in \text{Hom}(E_0, E_1).\end{align*}
		For composable morphisms $f_1,...,f_n$ in $\mathcal P(M \times I)^{pre}[h]$, we set 
		\begin{align*} \eta^{pre}(f_1,...,f_n) := \beta^* \text{hol}_{f_1,...,f_n}.\end{align*}
		The $A_{\infty}$-relations precisely follow by pulling back the equation of Lemma \ref{holLemma} along $\beta$ and using Lemma \ref{holHLemma}. \\ \\
		We define another $A_\infty$-natural transformation 
		\begin{align*} (\eta^{pre})^{-1} : \mathcal P(\iota_1)^{pre} \Rightarrow \mathcal P^{pre}[\omega_h] \circ \mathcal P(\iota_0)^{pre}\end{align*} 
		which we shall show to constitute an inverse to $\eta^{pre}$. We now consider 
		\begin{align*} \beta^{-1} : M \to P(M \times I), \beta(x) = (t \mapsto (x,1-t)).\end{align*} For objects, we set
		\begin{align*} (\eta^{pre})^{-1}_{(E, \nabla)} := (\beta^{-1})^* PT \in \text{Hom}(E_0, E_1).\end{align*}
		For composable morphisms $f_1,...,f_n$ in $\mathcal P(M \times I)^{pre}[h]$ we set 
		\begin{align*} (\eta^{pre})^{-1} (f_1,...,f_n) := (\beta^{-1})^* \text{hol}_{f_1,...,f_n}.\end{align*}
	\end{def1}
	\begin{lemma}
		$\eta^{pre}$ and $(\eta^{pre})^{-1}$ are inverse to one another.
		\begin{proof}
			We generalize the above maps $\beta$ and $\beta^{-1}$ to
			\begin{align*} \tilde \beta : M \times I \to P(M \times I), (x,s) \mapsto ( t \mapsto (x, st))\end{align*}
			and 
			\begin{align*} \tilde \beta^{-1} : M \times I \to P(M \times I), (x,s) \mapsto ( t \mapsto (x, (1-t)s )).\end{align*}
			Then, $\tilde \beta_1 := \tilde \beta((-),1) = \beta$ and $\tilde \beta_1^{-1} = \beta^{-1}$. We consider the expression  
			\begin{align*} \sum_{k = 0}^n (\tilde \beta^{-1})^* \text{hol}_{f_1,...,f_k} \tilde \beta^* \text{hol}_{f_{k+1},...,f_n}\end{align*}
			whose vanishing for $s = 1$ precisely implies the claim (for $n \geq 1$). By the definition of $\text{hol}$, all summands in this equation are $0$ for $s = 0$. We show that this expression becomes $0$ after differentiating using $\nabla_{s}$. By Definition \ref{holPropA1} in Proposition \ref{holProp} and property \ref{holPropP4}, we deduce the following cancellation for each $k$:
			\begin{align*} \nabla_{s} [(\tilde \beta^{-1})^* \text{hol}_{f_1,...,f_k}] \tilde \beta^* \text{hol}_{f_{k+1},...,f_n} + (\tilde \beta^{-1})^* \text{hol}_{f_1,...,f_{k-1}} \nabla_{s} \tilde \beta^* \text{hol}_{f_{k},...,f_n} = 0.\end{align*}
			Together with $\nabla_s (\tilde \beta^{-1})^* PT = 0 = \nabla_s (\tilde \beta)^* PT$ this proves the claim.
		\end{proof}
	\end{lemma}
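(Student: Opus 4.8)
The plan is to verify that the two composites $(\eta^{pre})^{-1}\circ\eta^{pre}$ and $\eta^{pre}\circ(\eta^{pre})^{-1}$ both coincide with the relevant identity $A_\infty$-natural transformation, whose object component is the identity and whose higher components all vanish. By the symmetry of the roles of $\beta$ and $\beta^{-1}$ it suffices to treat one composite, and I will describe $(\eta^{pre})^{-1}\circ\eta^{pre}$. Using the composition formula for $A_\infty$-natural transformations, its value on composable morphisms $f_1,\dots,f_n$ is
\[\sum_{k=0}^{n} (\eta^{pre})^{-1}(f_1,\dots,f_k)\,\eta^{pre}(f_{k+1},\dots,f_n) = \sum_{k=0}^{n} (\beta^{-1})^*\text{hol}_{f_1,\dots,f_k}\,\beta^*\text{hol}_{f_{k+1},\dots,f_n},\]
with the convention that the empty holonomy form is $PT$, so that the $k=0$ and $k=n$ terms carry the object components $\beta^*PT$ and $(\beta^{-1})^*PT$. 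I must show that this is $\mathbbm 1$ for $n=0$ and vanishes for $n\geq 1$.

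The key idea is to interpolate $\beta$ and $\beta^{-1}$ through the homotopies $\tilde\beta(x,s)=(t\mapsto(x,st))$ and $\tilde\beta^{-1}(x,s)=(t\mapsto(x,(1-t)s))$, which share the \emph{moving} endpoint $(x,s)$, degenerate to the constant path at $(x,0)$ when $s=0$, and recover $\beta$, $\beta^{-1}$ at $s=1$. I then set
\[G(s) := \sum_{k=0}^{n} (\tilde\beta^{-1})^*\text{hol}_{f_1,\dots,f_k}\,\tilde\beta^*\text{hol}_{f_{k+1},\dots,f_n},\]
a family (parametrized by $s$) of sections of the relevant $\text{Hom}$-bundle, and aim to prove $\nabla_s G = 0$, so that $G$ is the unique solution of a linear first-order ODE in $s$ and is therefore determined by $G(0)$. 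For $n\geq 1$ every summand contains at least one genuine holonomy form $\text{hol}_{g_1,\dots,g_m}$ with $m\geq 1$ (since $k$ and $n-k$ cannot both vanish), and at $s=0$ the underlying path is constant, so property \ref{holPropA0} of Proposition \ref{holProp} forces that factor to be zero; hence $G(0)=0$ and thus $G(1)=0$, which is exactly the vanishing of the higher components. For $n=0$ one instead has $G(0)=\mathbbm 1$ because constant paths give $PT=\mathbbm 1$, whence $G(1)=(\beta^{-1})^*PT\circ\beta^*PT=\mathbbm 1$, the desired object identity.

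To establish $\nabla_s G=0$ I would differentiate termwise and exploit the defining differential equations of the holonomy forms: property \ref{holPropA1} controls $\nabla_{\partial_t}\text{hol}$ at the $t$-endpoint and property \ref{holPropP2} controls $\nabla_{\partial_s}\text{hol}$ at the $s$-endpoint. Because the $s$-derivative acts on the shared endpoint $(x,s)$ --- the terminal point of $\tilde\beta$ and the initial point of $\tilde\beta^{-1}$ --- the term $\iota_t(f_k)$ produced by differentiating $\tilde\beta^*\text{hol}_{f_k,\dots,f_n}$ cancels the term produced by differentiating $(\tilde\beta^{-1})^*\text{hol}_{f_1,\dots,f_k}$, giving for each $k$ the pairwise cancellation
\[\nabla_s\big[(\tilde\beta^{-1})^*\text{hol}_{f_1,\dots,f_k}\big]\,\tilde\beta^*\text{hol}_{f_{k+1},\dots,f_n} + (\tilde\beta^{-1})^*\text{hol}_{f_1,\dots,f_{k-1}}\,\nabla_s\big[\tilde\beta^*\text{hol}_{f_k,\dots,f_n}\big]=0,\]
so that the sum telescopes away, while $\nabla_s\tilde\beta^*PT=\nabla_s(\tilde\beta^{-1})^*PT=0$ disposes of the two extreme terms. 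I expect the main obstacle to be precisely this cancellation: one must keep track of the signs $(-1)^{|f_k|}$ coming from the two ODEs and confirm that the boundary contribution from the moving endpoint of $\tilde\beta$ matches, up to sign, that from the moving endpoint of $\tilde\beta^{-1}$, so that consecutive summands annihilate. Everything else --- the reduction to the two composites, the $s=0$ evaluation, and the passage from $\nabla_s G=0$ together with the initial value to $G\equiv G(0)$ --- is routine.
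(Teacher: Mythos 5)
Your proposal is correct and follows essentially the same route as the paper: interpolating through $\tilde\beta$, $\tilde\beta^{-1}$, showing the composite expression is covariantly constant in $s$ via the pairwise cancellation coming from the defining ODEs of the holonomy forms, and evaluating at $s=0$ where all genuine holonomy factors vanish on constant paths. The only cosmetic difference is that you justify the $s=0$ vanishing by appeal to the initial condition $\text{hol}(t,t)=0$, whereas the precise reason is that on a constant path the right-hand side of the defining ODE vanishes identically, so the solution with zero initial condition is zero; this is what the paper means by ``by the definition of $\text{hol}$'' and is not a gap.
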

	\begin{corollary}
		Let $f,g$ be smooth homotopic maps of manifolds $f,g : M \to N$ via $H : M \times I \to N$. Then the induced dg-functors
		\begin{align*}\mathcal P(f)^{\infty}  : \mathcal P(N)^{\infty}[h] \to \mathcal P(M)^{\infty}[f^* h]\end{align*} 
		and
		\begin{align*}\mathcal P^{\infty}[\omega_{H^* h}] \circ \mathcal P(g)^{\infty}  : \mathcal P(N)^{\infty}[h] \to \mathcal P(M)^{\infty}[f^* h]\end{align*} 
		are $A_\infty$-naturally isomorphic dg-functors.
	\end{corollary}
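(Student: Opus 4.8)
The plan is to reduce this to the $A_\infty$-natural isomorphism $\eta^{pre}$ already constructed between $\mathcal P^{pre}[\omega_h] \circ \mathcal P(\iota_0)^{pre}$ and $\mathcal P(\iota_1)^{pre}$, by precomposing (whiskering) it with the pullback functor along $H$ and then passing to twisted complexes. First I would fix the convention that $H$ is chosen so that $H \circ \iota_1 = f$ and $H \circ \iota_0 = g$, which is harmless since being homotopic is symmetric. Because $h$ is closed on $N$ and pullback commutes with the exterior differential, $H^* h$ is a closed $2$-form on $M \times I$, so the entire construction of the previous subsection applies verbatim to it. In particular $\omega_{H^*h} = \beta^* \text{hol}_{H^*h}$ satisfies $d\omega_{H^*h} = \iota_1^* H^* h - \iota_0^* H^* h = f^* h - g^* h$, and the $A_\infty$-natural transformation
\[ \eta^{pre} : \mathcal P^{pre}[\omega_{H^*h}] \circ \mathcal P(\iota_0)^{pre} \Rightarrow \mathcal P(\iota_1)^{pre} \]
together with its inverse $(\eta^{pre})^{-1}$ are available as mutually inverse $A_\infty$-natural transformations of cdg-functors $\mathcal P(M\times I)^{pre}[H^*h] \to \mathcal P(M)^{pre}[f^* h]$.

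Next I would use that $\mathcal P(-)^{pre}$ is contravariantly functorial in the smooth map, which is immediate from $(H\circ\iota_i)^* = \iota_i^* \circ H^*$, to rewrite $\mathcal P(f)^{pre} = \mathcal P(\iota_1)^{pre}\circ \mathcal P(H)^{pre}$ and $\mathcal P(g)^{pre} = \mathcal P(\iota_0)^{pre}\circ \mathcal P(H)^{pre}$. Whiskering $\eta^{pre}$ and $(\eta^{pre})^{-1}$ on the right by the cdg-functor $\mathcal P(H)^{pre} : \mathcal P(N)^{pre}[h] \to \mathcal P(M\times I)^{pre}[H^*h]$ then yields mutually inverse $A_\infty$-natural transformations
\[ \eta^{pre} \ast \mathcal P(H)^{pre} : \mathcal P^{pre}[\omega_{H^*h}]\circ\mathcal P(g)^{pre} \Rightarrow \mathcal P(f)^{pre}, \]
exhibiting these two cdg-functors $\mathcal P(N)^{pre}[h]\to\mathcal P(M)^{pre}[f^*h]$ as $A_\infty$-naturally isomorphic. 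That the whiskered transformations remain inverse to one another follows from the Lemma asserting $\eta^{pre}$ and $(\eta^{pre})^{-1}$ are inverse, since whiskering by a fixed functor preserves vertical composition and hence sends an inverse pair to an inverse pair.

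Finally I would apply $\text{Tw}(-)$. By functoriality of $\text{Tw}$ on cdg-functors one has $\mathcal P(f)^{\infty} = \mathcal P(\iota_1)^{\infty} \circ \mathcal P(H)^{\infty}$ and $\mathcal P^{\infty}[\omega_{H^*h}] \circ \mathcal P(g)^{\infty} = \mathcal P^{\infty}[\omega_{H^*h}]\circ \mathcal P(\iota_0)^{\infty} \circ \mathcal P(H)^{\infty}$, and by the Remark that composition of $A_\infty$-functors and $A_\infty$-natural transformations is compatible with passing to the induced (twisted) functors and transformations, $\text{Tw}(\eta^{pre}\ast \mathcal P(H)^{pre})$ is an $A_\infty$-natural transformation between the induced dg-functors, with inverse $\text{Tw}((\eta^{pre})^{-1}\ast \mathcal P(H)^{pre})$. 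This produces the desired $A_\infty$-natural isomorphism between $\mathcal P(f)^{\infty}$ and $\mathcal P^{\infty}[\omega_{H^*h}]\circ\mathcal P(g)^{\infty}$.

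I expect essentially all the analytic content to have been discharged already in the preceding Lemma establishing that $\eta^{pre}$ is an invertible $A_\infty$-natural transformation; the only genuine care needed here is bookkeeping, namely matching the curvature twists $f^* h$, $g^* h$, $H^* h$, and $d\omega_{H^*h}$ along each composite so that the source and target cdg-categories of every arrow agree. The sole remaining point, which I regard as the main (though purely formal) obstacle, is confirming that $\text{Tw}$ is compatible not only with vertical composition but also with whiskering of $A_\infty$-natural transformations by cdg-functors, so that invertibility is preserved upon passing to twisted complexes; this reduces directly to the compatibility remark quoted above.
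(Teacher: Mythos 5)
Your proposal is correct and follows exactly the route the paper intends: the corollary is stated without proof precisely because it is the formal consequence of the preceding lemma (that $\eta^{pre}$ and $(\eta^{pre})^{-1}$ are mutually inverse), obtained by applying that construction to $H^*h$ on $M\times I$, whiskering on the right with the strict cdg-functor $\mathcal P(H)^{pre}$ via $\mathcal P(\iota_i)^{pre}\circ\mathcal P(H)^{pre}=\mathcal P(H\circ\iota_i)^{pre}$, and invoking the remark that composition is compatible with $\text{Tw}$. Your attention to the endpoint convention $H\circ\iota_1=f$, $H\circ\iota_0=g$ (needed so that $d\omega_{H^*h}=f^*h-g^*h$ matches the stated target category) is exactly the bookkeeping the statement silently assumes.
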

	\begin{corollary}
		Let $f$ be a smooth homotopy equivalence of manifolds $f : M \to N$. Then 
		\begin{align*}\mathcal P(f)^{\infty} : \mathcal P(N)^{\infty}[h] \to \mathcal P(M)^{\infty}[f^* h]\end{align*} 
		is a dg-quasi equivalence.
	\end{corollary}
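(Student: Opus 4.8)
The plan is to deduce this corollary formally from the preceding one, together with the (strictly contravariant) functoriality of $\mathcal P(-)^\infty$ and the fact that the twisting functors $\mathcal P^\infty[\omega]$ and $\mathcal P(\mathrm{id})^\infty$ are already quasi-equivalences. Fix a smooth homotopy inverse $g : N \to M$ of $f$, so that $g \circ f \simeq \mathrm{id}_M$ and $f \circ g \simeq \mathrm{id}_N$ through smooth homotopies. Abbreviate $F := \mathcal P(f)^\infty$ and $G := \mathcal P(g)^\infty$. Since pullback of bundles, connections and forms is strictly contravariantly functorial, and $\mathrm{Tw}$ sends a composite of cdg-functors to the composite of the induced dg-functors, we have $G \circ F = \mathcal P(f \circ g)^\infty$ and (at the appropriately shifted curvature) $F \circ G = \mathcal P(g \circ f)^\infty$.

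First I would check that the relevant composites are quasi-equivalences. Applying the previous corollary to the homotopy $f \circ g \simeq \mathrm{id}_N$ shows that $G \circ F = \mathcal P(f \circ g)^\infty$ is $A_\infty$-naturally isomorphic to $\mathcal P^\infty[\omega] \circ \mathcal P(\mathrm{id}_N)^\infty$ for a suitable $1$-form $\omega$. Here $\mathcal P(\mathrm{id}_N)^{pre}$ is literally the identity cdg-functor, so $\mathcal P(\mathrm{id}_N)^\infty = \mathrm{id}$, and $\mathcal P^\infty[\omega]$ is an isomorphism of dg-categories with inverse $\mathcal P^\infty[-\omega]$; hence the composite is a quasi-equivalence. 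A dg-functor that is $A_\infty$-naturally isomorphic to a quasi-equivalence is itself one, because the natural isomorphism has components that are homotopy equivalences, so it induces quasi-isomorphisms on $\mathrm{Hom}$-complexes after pre- and post-composition and preserves essential surjectivity of $H^0$. Thus $G \circ F$ is a quasi-equivalence, and the identical argument applied to $g \circ f \simeq \mathrm{id}_M$ shows that $\mathcal P(g\circ f)^\infty = \mathcal P(f)^\infty \circ \mathcal P(g)^\infty$ is a quasi-equivalence as well.

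It remains to pass from the two composites to $F$ itself, which is the only structural point. Writing $F'$ for the instance $\mathcal P(f)^\infty : \mathcal P(N)^\infty[(f\circ g)^*h] \to \mathcal P(M)^\infty[(g\circ f)^* f^*h]$ of the Riemann–Hilbert functor at the shifted curvature, I would consider the chain of dg-functors
\[ \mathcal P(N)^\infty[h] \xrightarrow{F} \mathcal P(M)^\infty[f^*h] \xrightarrow{G} \mathcal P(N)^\infty[(f\circ g)^*h] \xrightarrow{F'} \mathcal P(M)^\infty[(g\circ f)^* f^*h], \]
in which $G \circ F = \mathcal P(f\circ g)^\infty$ and $F' \circ G = \mathcal P(g\circ f)^\infty$ are both quasi-equivalences by the previous paragraph. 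The $2$-out-of-$6$ property for quasi-equivalences of dg-categories then forces each of $F, G, F'$ to be a quasi-equivalence, in particular $F = \mathcal P(f)^\infty$. Concretely this is seen on the two invariants defining a quasi-equivalence: on $\mathrm{Hom}$-complexes the chain $\mathrm{Hom}(X,Y)\to\mathrm{Hom}(FX,FY)\to\mathrm{Hom}(GFX,GFY)\to\mathrm{Hom}(F'GFX,F'GFY)$ has both consecutive composites quasi-isomorphisms, so the middle map $G$ is a quasi-isomorphism (its homology map is simultaneously a split mono and a split epi) and hence $F$ is too by $2$-out-of-$3$, giving quasi-full-faithfulness; and on homotopy categories the equivalences $H^0(G\circ F)$ and $H^0(F'\circ G)$ force $H^0(F)$ to be essentially surjective by the same $2$-out-of-$6$ argument. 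The work here is purely formal bookkeeping — in particular keeping track of how the curvature $2$-form changes under each iteration — the genuine analytic content having already been absorbed into the construction of the natural isomorphism $\eta^{pre}$ and the preceding corollary.
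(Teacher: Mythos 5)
Your argument is correct and is exactly the intended one: the paper states this corollary without proof as an immediate consequence of the preceding corollary, and the route you take (choose a homotopy inverse, identify $G\circ F$ and $F'\circ G$ with $\mathcal P(f\circ g)^{\infty}$ and $\mathcal P(g\circ f)^{\infty}$, show these are $A_\infty$-naturally isomorphic to invertible twisting functors, then apply two-out-of-six for quasi-equivalences) is the standard filling-in of that gap. The only point worth noting is that $G\circ F=\mathcal P(f\circ g)^{\infty}$ holds up to the usual canonical identification of iterated pullbacks, which is harmless here.
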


	\section{Projectively flat graded vector bundles and projective representations}
	
	\subsection{Projective representations of simplicial monoids}
	\label{sectionProjRepMonoids}
	In the following, we let $S$ be a simplicial monoid.
	Elements of $S_1$ will be referred to as homotopies. The product on each $S_i$ will be written as $*$.
	
	\begin{def1}
		\begin{enumerate}
			\label{barAugmentedDefinition}
			\item Let $C_*$ be some complex. We denote by $sC_*$ the shifted complex with entries \begin{align*} sC_n = C_{n-1} \end{align*} and whose differential is induced by the differential on $C_*$.
			\item  Let $C_*$ be a unital (homological) dg-algebra (over $\mathbb Z$) and let $\epsilon : C_* \to \mathbb Z$ be an augmentation, i.e., $\epsilon$ is a map of unital dg-algebras. Denote the kernel ideal of $\epsilon$ by $\bar C_*$. The bar-construction of $(C_*, \epsilon)$ is the dg-coalgebra $B_{\epsilon}(C_*)$ given by 
			\begin{align*} B_{\epsilon}(C_*) := \bigoplus_{n \geq 0} (s \bar C_*)^{\otimes_{\mathbb Z} n}\end{align*}
			endowed with the derivation
			\begin{nalign}
				b(a_1 \otimes ... \otimes a_n) &= \sum_{i = 1}^n (-1)^{|a_1|+...+|a_{i-1}|+1}a_1 \otimes ... \otimes d(a_i) \otimes ... \otimes a_n\\
				&+\sum_{i = 1}^{n-1} (-1)^{|a_1|+...+|a_{i}|+1}a_1 \otimes ... \otimes a_i a_{i+1} \otimes ... \otimes a_n
			\end{nalign}
			The coproduct is
			\begin{align*} B_{\epsilon}(C_*) \to B_{\epsilon}(C_*)  \otimes B_{\epsilon}(C_*), a_1 \otimes ... \otimes a_n \mapsto \sum_{i = 0}^{n} (a_1 \otimes ... \otimes a_i) \otimes (a_{i+1} \otimes ... \otimes a_n). \end{align*}
			The degree $|a|$ in the above formula refers to the degree of $a$ as an element in $s \bar C_*$ (in contrast to the conventions often used in the literature). 
			\item Here, we define the Eilenberg Zilber map for arbitrary simplicial sets $X$ and $Y$. Denote by $C_*(X)$ the associated chain complex of non-degenerate chains of $X$. Then we define the Eilenberg-Zilber map of chain complexes by the linear extension of
			\begin{align*} EZ : C_*(X) \otimes C_*(Y) \to C_*(X \times Y), x \otimes y \mapsto \sum_{(\mu, \nu) \in Sh(p,q)} \text{sgn}(\mu, \nu) s_\nu(x) \times s_\mu(y)\end{align*}
			where $x \in X_p$ and $y \in Y_q$.
			$Sh(p,q)$ is the set of $(p,q)$-shuffles and $s_\mu$ is the iterated face $s_{\mu_p -1} \circ ... \circ s_{\mu_1 -1}$. Details can be found in \cite[\href{https://kerodon.net/tag/00RF}{Tag 00RF}]{Kerodon} where $EZ$ is called the shuffle product. 
			\item We will consider the quotient chain-complex $C_*(S)$ of non-degenerate chains with values in $S$. This comes equipped with a multiplication
			\begin{align*} \cdot := C_*(*) \circ EZ :  C_*(S) \otimes C_*(S) \to C_*(S \times S) \to  C_*(S)\end{align*}
			and an augmentation $\epsilon : C_0(S) \to \mathbb Z$, $\epsilon(\sum_s \lambda_s s) = \sum_s \lambda_s$.
			We thus obtain a dg-coalgebra $B_{\mathbb Z}(S,1) := B_{\epsilon}(C_*(S))$. We will also use the notation $B^*(S, A) := \text{Hom}_{\mathbb Z}(B_{\mathbb Z}(S,1), A)$ for an abelian group $A$.
		\end{enumerate}
	\end{def1}
	A basis of $\text{ker}(\epsilon : C_0(S) \to \mathbb Z)$ is given by $a - e$ for $a \in S_0 - \{e\}$. To simplify notation, we will write $a$ instead of $a - e$ in $\text{ker}(\epsilon : C_0(S) \to \mathbb Z)$.
	
	\begin{def1}
		\begin{enumerate}
			\item A curvature element of $S$ is 2-cocycle 
			\begin{align*}H \in \text{Hom}_{\mathbb Z}^2(B_{\mathbb Z}(S,1), \mathbb K^*) = B^2(S, \mathbb K).\end{align*}
			\item A logarithmic curvature element of $S$ is 2-cocycle 
			\begin{align*}H \in \text{Hom}_{\mathbb Z}^2(B_{\mathbb Z}(S,1), \mathbb K) = B^2(S, \mathbb K).\end{align*}
			\item Let $H$ and $H'$ be curvature elements. A twist from $H$ to $H'$ is an element
			\begin{align*}a \in \text{Hom}_{\mathbb Z}^1(B_{\mathbb Z}(S,1), \mathbb K^*) = B^1(S, \mathbb K^*)\end{align*}
			satisfying $b(a) = H^{-1}H'$ (in particular, the existence of such a twist means that $H$ is cohomologous to $H'$).
		\end{enumerate}
	\end{def1}
	
	\begin{remark}
		For any logarithmic curvature element $H$, we have that $e^{-H}$ is a curvature element.
		For $\mathbb K = \mathbb R$, we see that a curvature element comes from a logarithmic curvature if and only if it has values in $\mathbb R_{> 0}$.
	\end{remark}
	
	\begin{def1}
		Let $H$ be a curvature element of $S$. We define a new simplicial monoid $S^H$. In degree $k$, we define $S^H$ to be 
		\begin{align*} (S^H)_k = \mathbb K^* \times S_k.\end{align*}
		We write its elements as $a.s := (a,s)$ for $a \in \mathbb K^*$ and $s \in S_k$. The multiplication is given by
		\begin{align*} (a.s) *^H (b.t) := (ab H(s_n \otimes t_n )).(s * t).\end{align*}
		We continue by defining the face maps:
		\begin{nalign}
			\partial_i^H (a.s) = a.\partial_i(s), i = 0,...,n-1\\
			\partial_n^H (a.s) = (H(s_{(n-1, n)})a).\partial_n (s)
		\end{nalign}
		for $a \in \mathbb K^*$ and $s \in S_n$. The degeneracy maps $s_i^H$ are simply $\mathbbm 1_{\mathbb K} \times s_i$. Using that $H$ is closed, it is easy to see that the simplicial identities are fulfilled. Furthermore, the degeneracy maps are compatible with the monoid structure. The only non-trivial identity to check is that $\partial_n^H$ is compatible with the monoid structure. Let $s,t \in S_n$, we calculate
		\begin{nalign}
			\partial_n^H( s *^H t) &= H(s_n\otimes t_n)H((s * t)_{(n-1, n)}). \partial_n(s * t)\\
			&= (H(s_{(n-1,n)})H(t_{(n-1,n)})H(s_{n-1}\otimes t_{n-1} )^{-1} \\
			&\quad \cdot H(s_n \otimes t_n )H((s * t)_{(n-1, n)}). \partial^H_n(s) *^H \partial^H_n(t).
		\end{nalign}
		Then we note that 
		\begin{nalign}
			&b( s_{(n-1,n-1,n)} * t_{(n-1,n,n)} - s_{(n-1, n)} \otimes t_n+ s_{(n-1)} \otimes t_{(n-1,n)}) \\
			&= -s_{(n-1,n)} - t_{(n-1,n)} -s_{n-1} \otimes t_{n-1}
			+ s_n \otimes t_n + (s * t)_{(n-1, n)}
		\end{nalign}	
		from which we deduce $\partial_n^H( s *^H t) = \partial^H_n(s) *^H \partial^H_n(t)$ as $H$ is closed.
		We define $\tilde C_*(S^H)$ as the associated complex of non-degenerate chains with values in $\mathbb K$. Using the Eilenberg Zilber map and the monoid structure $*$ on $S^H$ we obtain a multiplication 
		\begin{align*} \cdot := C_*(*) \circ EZ : \tilde C_*(S^H) \otimes \tilde C_*(S^H) \to \tilde C_*(S^H \times S^H) \to \tilde C_*(S^H).\end{align*}
		Next, we define the ideal-complex $D_*(S^H)$ in degree $i$ to be generated by expressions $(y.s) - y(1.s)$, $y \in \mathbb K^*, s \in S_i$. It is easy to see that the differential on $C_*(S^H)$ indeed restricts to a differential on $D_*(S^H)$. Then we define 
		\begin{align*} C_*^H(S) := \quot{\tilde C_*(S^H)}{D_*(S^H)}.\end{align*}
		With its induced multiplication, this turns $C_*^H(S)$ into a unital dg-algebra and we note that as a graded algebra, we can identify \begin{align*}C_*^H(S) = C_*(S)\end{align*}
		where $C_*(S)$ is the complex of non-degenerate chains on $S$ with values in $\mathbb K$.
		We simply write its elements in degree $k$ as $\sum_s \lambda_s s $ for $\lambda_s \in \mathbb K$ and $s \in S_k$. 
		Consider the unit map $\mathbb K \to  C^H_*(S), 1 \mapsto e$ and denote its quotient complex by $\bar C^H_*(S)$.
	\end{def1}
	\begin{def1}
		\label{twistMap}
		Let $H$ and $H'$ be curvature elements and $a$ a twist from $H$ to $H'$. We define a map of simplicial monoids
		\begin{align*} S[a] : S^{H'} \to S^{H}\end{align*} by the linear extension of
		\begin{align*} S[a](s) = a(s_{(n)}) s, s \in S_n.\end{align*}
		We check that it is compatible with the monoid multiplications:
		\begin{nalign}
			S[a](s *^{H'} t) &= a((s *^{H'} t)_{(n)}) s * t\\
			&= a(s_{(n)})a(t_{(n)}) s *^H t\\
			&= S[a](s) *^H S[a](t).
		\end{nalign}
		We need to check that $S[a]$ is compatible with the face and degeneracy maps. This is trivial for the degeneracy maps and for the face maps $\partial_i$ from $i = 0,...,n-1$. We verify for $i = n$:
		\begin{nalign}
			S[a](\partial_n^{H'} s) = S[a](H'(s_{(n-1, n)})\partial_n s) &= H'(s_{(n-1, n)}) a(s_{(n-1)}) \partial_n s\\
			&= (H(s_{(n-1, n)}))^{-1} H'(s_{(n-1, n)}) a(s_{(n-1)}) \partial_n^{H} s\\
			&=  a(s_{(n)}) \partial_n^{H} s = \partial_n^{H } S[a](s)
		\end{nalign}
		where we used that $b(a) = H^{-1}H'$. 
	\end{def1}
	\begin{def1}
		\begin{enumerate}
			\item  Let $H$ be a curvature element.
			The pre-bar construction of $(S,H)$ is the dg-coalgebra
			\begin{align*} B^{pre}(S,H) := \bigoplus_{n \geq 0} (sC^H_*(S))^{\otimes n}\end{align*}
			with the differential 
			\begin{nalign}
				b(a_1 \otimes ... \otimes a_n) &= \sum_{i = 1}^n (-1)^{|a_1|+...+|a_{i-1}|+1}a_1 \otimes ... \otimes d(a_i) \otimes ... \otimes a_n\\
				&\quad+\sum_{i = 1}^{n-1} (-1)^{|a_1|+...+|a_{i}|+1}a_1 \otimes ... \otimes a_i a_{i+1} \otimes ... \otimes a_n
			\end{nalign}
			where we set $d(a) = 0$ if $a \in S_0$.
			The coproduct is
			\begin{align*} &B^{pre}(S,H) \to B^{pre}(S,H)  \otimes B^{pre}(S,H)\\ &a_1 \otimes ... \otimes a_n \mapsto \sum_{i = 0}^{n} (a_1 \otimes ... \otimes a_i) \otimes (a_{i+1} \otimes ... \otimes a_n). \end{align*}
			The degree $|a|$ in the above formula refers to the degree of $a$ as an element in $sC^H_*(S)$ (in contrast to the conventions often used in the literature). The verification that this is a dg-coalgebra, amounts to the same calculation as in the classical case.
			\item The bar construction of $(S,H)$ is the graded space 
			\begin{align*} B(S,H) := \bigoplus_{n \geq 0} (s \bar C^H_*(S))^{\otimes n}.\end{align*}
			The above coproduct induces a coproduct on $B(S,H)$. At this point, we do not endow $B(S,H)$ with any derivation, in particular, note that $b$ as defined on $B^{pre}(S,H)$ is not well-defined on $B(S,H)$. For example: $a \otimes e$ is always zero in $B(S,H)$ but $b(a \otimes e) = -d(a) \otimes e - (-1)^{|a|}ae = -d(a) \otimes e - (-1)^{|a|}a$ is equal to $\pm a$ in $B(S,H)$ which may be non-zero. 
		\end{enumerate}
	\end{def1}
	\begin{remark}
		The bar construction is usually defined for an \emph{augmented} dg-algebra (as in Definition \ref{barAugmentedDefinition}). The problem in our case is that there is no natural choice of augmentation for $C_*^H(S)$. An augmentation of $C_*^H(S)$ is the same as a 1-dimensional object of $\text{Rep}(S)^0[H]$ (which will be defined in the following), which then corresponds to a line bundle with curvature $H$. If for a given $H$, there is a line bundle with curvature $H$, we already obtain $\text{Rep}(S)^{pre}[H] \cong \text{Rep}(S)^{pre}[0]$. It is therefore crucial to not assume the existence of an augmentation for $C_*^H(S)$.
	\end{remark}
	\begin{def1}
		\label{RepDef}
		We define the cdg-category $\text{Rep}(S)^{pre}[H]$ of $H$-projective representations of $S$.
		An object of $\text{Rep}(S)^{pre}[H]$ is a finite-dimensional $\mathbb K$-vector space $E$ together with a map
		\begin{align*} \mu_E : S_0 \to \text{Hom}_{\mathbb K}(E,E)\end{align*}
		satisfying $\mu_E(e) = 1$. 
		(This is not yet required to be multiplicative). For two objects $E$ and $F$, we define the morphism spaces to be
		\begin{align*} \text{Hom}^*(E,F) := \text{Hom}_{\mathbb K}(B(S,H), \text{Hom}_{\mathbb K}(E,F)).\end{align*}
		The connection is determined by the formula
		\begin{nalign}
			(\nabla f)(a_1 \otimes ... \otimes a_k) &= (-1)^{|f|+1} f(b(a_1 \otimes ... \otimes a_k))\\
			&\quad- f(a_1 \otimes ... \otimes a_{k-1}) a_{k}\\
			&\quad+  (-1)^{|f|}a_1 f(a_2 \otimes ... \otimes a_{k})
		\end{nalign}
		where we set
		\begin{align*}f(a_1 \otimes ... \otimes a_{k-1}) a_{k} := \begin{cases} f(a_1 \otimes ... \otimes a_{k-1}) \circ (\mu_E(a_k)) \text{ if } a_k \in \bar C^H_0(S), \\ 0 \text{ else } \end{cases}\end{align*}
		and we define $a_1 f(a_2 \otimes ... \otimes a_{k})$. $b$ is induced by the differential $b$ on $B^{pre}(S,H)$. Using the property $\mu_E(e) = 1$, we check that $\nabla$ is well-defined: We have to show that $\nabla(f)(a_1 \otimes ... \otimes e \otimes ... \otimes a_k)$ is zero. Let $i$ be the index where $e$ is located in $a_1 \otimes ... \otimes e \otimes ... \otimes a_k$. There are three cases:
		\begin{enumerate}
			\item $i = 1$: In $\nabla(f)(e \otimes ... \otimes a_k)$ the only terms not containing $e$ in a tensor product are
			\begin{align*}
				(-1)^{|f|+1} f(e * a_2 \otimes ... \otimes a_k)
				+  (-1)^{|f|}\mu_E(e) f(a_2 \otimes ... \otimes a_{k})
			\end{align*}
			which is $0$ since $\mu_E(e) = 1$ and $e*a_1 = a_1$.
			\item $1 < i < k$: In $\nabla(f)(a_1 \otimes ... \otimes e \otimes ... \otimes a_k)$ the only terms not containing $e$ in a tensor product are
			\begin{align*}
				(-1)^{|f|+1}(-1)^{|a_1|+...+|a_{i-1}| +1}  f(a_1 \otimes ... \otimes a_{i-1} *e \otimes ... \otimes a_k - a_1 \otimes ... \otimes e *a_{i }\otimes ... \otimes a_k)
			\end{align*}
			which is $0$.
			\item $i = k$: This is analogous to $i = 1$.
		\end{enumerate}
		The composition is induced by the coproduct on $B(S,H)$ and the composition of linear maps of vector spaces. The curvature of $E$ is given by 
		\begin{nalign} 
			\label{EqCurvature}&R_E(a)(v) = b(a).v \\
			&R(a_1 \otimes a_2) = \mu_E(a_1 a_2)v - \mu_E(a_1)\mu_E(a_2)v = (a_1 a_2).v - a_1.(a_2.v)\end{nalign} 
		for $a \in s \bar C^H_2(S) =  \bar C^H_1(S), v \in E$ and $a_1 \otimes a_2 \in s \bar C^H_1(S) \otimes s \bar C^H_1(S)$.
	\end{def1}
	
	\begin{remark}
		Let $H$ be a curvature element. If $H = 1$, we can define, for any finite dimensional vector space $E$, the flat object 
		$(E, \mu_E)$
		where $\mu_E(s) = 1$ for all $s \in S_0$. In the case $H \neq 1$, this object has non-vanishing curvature.
	\end{remark}
	
	\begin{lemma}
		\label{RepIsCdg}
		$\text{Rep}(S)^{pre}[H]$ is a Maurer-Cartan cdg-category.
		\begin{proof}
			We start by verifying the Leibniz-rule. Let $a_1,...,a_n \in sC^H_*(S)$, and let $f$,$g$ be composable morphisms in $\text{Rep}(S)^{pre}[H]$. We calculate
			\begin{align*}
				&\nabla(f \circ g)(a_1 \otimes ... \otimes a_n) = (-1)^{|f|+|g|+1}(f \circ g)(b(a_1 \otimes ... \otimes a_n))\\
				&\quad- (f \circ g)(a_1 \otimes ... \otimes a_{n-1})a_n\\
				&\quad+ (-1)^{|f|+|g|} a_1 (f \circ g)(a_2 \otimes ... \otimes a_n)\\
				&= (-1)^{|f|+|g|+1} \sum_{j = 0}^{n} [\sum_{i = 1}^j (-1)^{|a_1|+...+|a_{i-1}|+1 + |g|(|a_1|+...+|a_j|-1)}f(a_1 \otimes ... \otimes d(a_i) \otimes ... \otimes a_j)g(a_{j+1} \otimes ... \otimes a_n) \\
				&\quad+ \sum_{i =j+1}^n  (-1)^{|a_1|+...+|a_{i-1}|+1 + |g|(|a_1|+...+|a_j|)}f(a_1 \otimes ... \otimes a_j)g(a_{j+1} \otimes ... \otimes d(a_i) \otimes ... \otimes a_n) \\
				&\quad+ \sum_{i = 1}^{j-1}  (-1)^{|a_1|+...+|a_{i}|+1 + |g|(|a_1|+...+|a_j|-1)}f(a_1 \otimes ... \otimes a_i a_{i+1} \otimes ... \otimes a_j)g(a_{j+1} \otimes ... \otimes a_n) \\
				&\quad+ \sum_{i =j+1}^{n-1}  (-1)^{|a_1|+...+|a_{i}|+1 + |g|(|a_1|+...+|a_j|)}f(a_1 \otimes ... \otimes a_j)g(a_{j+1} \otimes ... \otimes a_i a_{i+1} \otimes ... \otimes a_n)]\\
				&\quad- \sum_{j = 0}^{n-1} (-1)^{|g|(|a_1|+...+|a_j|)} f(a_1 \otimes ... \otimes a_j)g(a_{j+1} \otimes ... \otimes a_{n-1})a_n\\
				&\quad+ (-1)^{|f|+|g|} \sum_{j = 0}^{n-1} (-1)^{|g|(|a_2|+...+|a_j|)} a_1 f(a_2 \otimes ... \otimes a_j)g(a_{j+1} \otimes ... \otimes a_{n-1})a_n\\
				&= (\nabla(f) \circ g)(a_1 \otimes ... \otimes a_n) + (-1)^{|f|} (f \circ \nabla(g))(a_1 \otimes ... \otimes a_n)
			\end{align*}
			where we collected terms in the last step, and added and subtracted the term 
			\begin{align*} \sum_{i = 1}^{n-1} (-1)^{(|a_1|+...+|a_j|)|g|+ |f|}f(a_1 \otimes ... \otimes a_{j-1})a_j g(a_{j+1} \otimes ... \otimes a_{n}).\end{align*}
			We continue by computing the curvature:
			\begin{nalign}
				&\nabla^2(f)(a_1 \otimes ... \otimes a_n) \\&= (-1)^{|f|} (\nabla f)(b(a_1 \otimes ... \otimes a_n))
				- (\nabla f)(a_1 \otimes ... \otimes a_{n-1}).a_n
				\\
				&\quad+ (-1)^{|f|+1} a_1. (\nabla f)(a_2 \otimes ... \otimes a_n)\\
				&= (-1)^{|f|}[(-1)^{|f|+1} f(\underbrace{b^2(a_1 \otimes ... \otimes a_n)}_{0}) 
				- f(b(a_1 \otimes ... \otimes a_{n-1})).a_n \\
				&\qquad+ (-1)^{|f|+1} f(a_1 \otimes ... \otimes a_{n-1}).(b a_n) \\
				&\qquad+ (-1)^{|f|+1} f(a_1 \otimes ... \otimes a_{n-2}).(a_{n-1} a_n) +(-1)^{|f|+1} a_1.f(b(a_2 \otimes ... \otimes a_n))\\
				&\qquad+ (-1)^{|f|} ba_1. f(a_2 \otimes ... \otimes a_n) + (-1)^{|f|} (a_1 a_2).f(a_3 \otimes ... \otimes a_n)]\\
				&\quad- [(-1)^{|f|+1}f(b(a_1 \otimes ... \otimes a_{n-1})) - f(a_1 \otimes ... \otimes a_{n-2}).a_{n-1} \\
				&\qquad+ (-1)^{|f|}a_1. f(a_2 \otimes ... \otimes a_{n-1})].a_n\\
				&\quad+ (-1)^{|f|+1}a_1.[(-1)^{|f|+1}f(b(a_2 \otimes ... \otimes a_{n})) \\
				&\qquad- f(a_2 \otimes ... \otimes a_{n-1}).a_{n} + (-1)^{|f|}a_2. f(a_3 \otimes ... \otimes a_{n})]\\
				&= ba_1.f(a_2 \otimes ... \otimes a_n) - f(a_1 \otimes ... \otimes a_{n-1}).(ba_n) \\
				&\quad+ (a_1 a_2).f(a_3 \otimes ... \otimes a_{n}) -f(a_1 \otimes ... \otimes a_{n-2}).(a_{n-1} a_{n})\\
				&\quad-a_1.( a_2.f(a_3 \otimes ... \otimes a_{n})) +(f(a_1 \otimes ... \otimes a_{n-2}).a_{n-1}). a_{n}\\
				&= (R_F \circ f)(a_1 \otimes ... \otimes a_n) - (f \circ R_E)(a_1 \otimes ... \otimes a_n).
			\end{nalign}
			It remains to check that it is Maurer-Cartan. An element $a \in \text{Hom}^1(E,E)$ is just a linear map $ \bar C^H_0(S) \to \text{Hom}_{\mathbb K}(E,E)$ or equivalently, a map $S_0 \to  \text{Hom}_{\mathbb K}(E,E)$ sending $e$ to $1$. The object $(E, \mu_E + a)$ satisfies the desired property.
		\end{proof}
	\end{lemma}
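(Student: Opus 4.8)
The plan is to verify the cdg-category axioms one at a time and then exhibit the twisting objects demanded by the Maurer--Cartan condition, treating $\text{Rep}(S)^{pre}[H]$ as a projectively twisted analogue of the category $\text{Rep}_\infty^{pre}(G)$ from the earlier example. Several structural points are immediate and I would dispatch them first: the grading is automatically non-negative because every bar factor lives in $s\bar C^H_*(S)$ and hence has degree $\geq 1$; the category is pointed by the zero vector space; composition respects degrees and is associative since it is induced by the coassociative coproduct on $B(S,H)$ together with composition of $\mathbb{K}$-linear maps; and the well-definedness of $\nabla$ on the non-augmented quotient $B(S,H)$ (rather than on $B^{pre}(S,H)$) is already settled in Definition~\ref{RepDef} by the three-case argument using $\mu_E(e)=1$, which I take as given.

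Next I would establish the graded Leibniz rule $\nabla(f\circ g)=\nabla(f)\circ g+(-1)^{|f|}f\circ\nabla(g)$. The strategy is to expand $\nabla(f\circ g)(a_1\otimes\cdots\otimes a_n)$ by substituting the definition of $\nabla$ (its three summands coming from the bar differential $b$ and the right/left module actions via $\mu_E$ and $\mu_G$) and of the composition (the coproduct $\sum_j(\cdots)\otimes(\cdots)$), and then to reorganize the resulting double sum. The terms partition into those where $b$ acts inside the $f$-block, those where it acts inside the $g$-block, and the extremal action terms; after adding and subtracting the cross term $\sum_j\pm\, f(a_1\otimes\cdots\otimes a_{j-1})\,a_j\,g(a_{j+1}\otimes\cdots\otimes a_n)$, which records the $\mu$-action at the splitting point, everything reassembles into $(\nabla f\circ g)+(-1)^{|f|}(f\circ\nabla g)$. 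The only genuinely delicate aspect is the Koszul sign bookkeeping induced by the shift $s$ together with the convention that $|a|$ denotes the degree of $a$ as an element of $s\bar C^H_*(S)$.

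I would then compute the curvature by applying $\nabla$ twice. The identity $b^2=0$ annihilates the interior bar contributions, while the terms where $b$ hits the extremal factors $a_1$ or $a_n$ combine with the iterated $\mu$-actions to leave exactly
\[ \nabla^2(f)=R_F\circ f-f\circ R_E, \]
with $R_E$ the operator $a\mapsto b(a).(-)$ of Equation~\eqref{EqCurvature}. Unwinding $b$ on a single bar factor gives $R_E(a_1\otimes a_2)=(a_1a_2).v-a_1.(a_2.v)$, so $R_E$ measures precisely the failure of $\mu_E$ to be multiplicative; here the hypothesis that $H$ is a $2$-cocycle, equivalently that $C^H_*(S)$ is an honest unital dg-algebra, is what forces $R_E$ to be $\nabla$-closed and of pure degree $2$. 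This is the more heavily indexed counterpart of the curvature computation carried out for $\text{Rep}_\infty^{pre}(G)$.

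Finally, for the Maurer--Cartan property I would observe that the degree-$1$ part of $B(S,H)$ is exactly $s\bar C^H_0(S)$, so an endomorphism $a\in\text{Hom}^1(E,E)$ is a linear map $\bar C^H_0(S)\to\text{Hom}_{\mathbb{K}}(E,E)$, i.e.\ an assignment $S_0\to\text{Hom}_{\mathbb{K}}(E,E)$ with $e\mapsto 0$. Hence $\mu_E+a$ again sends $e$ to $1$, so $E^a:=(E,\mu_E+a)$ is a legitimate object; taking $f=g=\text{id}_E$ as degree-$0$ morphisms $E\rightleftarrows E^a$ and evaluating on a single factor gives $\nabla(f)(a_1)=a(a_1)=(f\circ a)(a_1)$, whence $\nabla(f)=f\circ a$ and $f,g$ are mutually inverse (compare Remark~\ref{remarkIdentify}). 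I expect the principal obstacle to be organizational rather than conceptual: controlling every sign through the Leibniz and curvature expansions, and ensuring that each manipulation descends to the non-augmented quotient $B(S,H)$. It is crucial never to tacitly use an augmentation of $C^H_*(S)$, since, as noted in the remark preceding Definition~\ref{RepDef}, such an augmentation would already trivialize the curvature and collapse the construction to the uncurved case.
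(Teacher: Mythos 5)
Your proposal is correct and follows essentially the same route as the paper: expand the Leibniz rule and reassemble via the added-and-subtracted cross term at the splitting point, kill the interior contributions in $\nabla^2$ with $b^2=0$ to isolate $R_F\circ f - f\circ R_E$, and realize the Maurer--Cartan twist as $(E,\mu_E+a)$ with the identity maps in both directions. Your reading that a degree-$1$ endomorphism corresponds to a map $S_0\to\text{Hom}_{\mathbb K}(E,E)$ sending $e$ to $0$ (so that $\mu_E+a$ still sends $e$ to $1$) is the correct one and fixes a slip in the paper's phrasing at that point.
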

	\begin{remark}
		By Definition \ref{twistMap}, we see that a twist $a$ induces cdg-functor
		\begin{align*} \text{Rep}^{pre}(S[a]) : \text{Rep}^{pre}(S)[H] \to  \text{Rep}^{pre}(S)[H']\end{align*} and thus an equivalence 
		\begin{align*} \text{Rep}(S[a]) : \text{Rep}(S)[H] \to  \text{Rep}(S)[H']\end{align*}
		with inverse $\text{Rep}(S[a^{-1}])$.
	\end{remark}
	\begin{def1}
		\label{projectiveRepresentation}
		\begin{enumerate}
			\item Let $H$ be a curvature element. An $H$-projective representation of $S$ is an object of the associated dg-category $\text{Rep}(S)[H]$.
			\item A projective representation of $S$ is an $H$-projective representation for some curvature element $H$.
			\item A logarithmic projective representation of $S$ is an $e^{-H}$-projective representation for some logarithmic curvature element $H$.
		\end{enumerate}
	\end{def1}
	\begin{lemma}
		\label{repSpectral}
		$\text{Rep}^{pre}(S)[H]$ is split for every curvature element $H$.
		\begin{proof}
			First, note that a degree $0$ morphism $ f \in \text{Hom}^0(E,F)$ is just a linear map of vector spaces $E \to F$. $f$ being closed means that 
			\begin{align*} f(a.e) = a.f(e)\end{align*}
			for every $a \in S_0$ and $e \in E$. Let $k : K \to E$ be the kernel of $f$ on the level of vector spaces. Then for $a \in \bar S_0$ and $x \in K$, we have 
			\begin{align*} f(a.x) = a.f(x) = 0\end{align*}
			so $a.x \in K$ which means that the map $\mu_E : \bar S_0 \to \text{Hom}_{\mathbb K}(E,E)$ restricts to a map $\mu_K : \bar S_0 \to \text{Hom}_{\mathbb K}(K,K)$ making $(K, \mu_K)$ the kernel of $f$ and $k$ is closed. It is easy to see that this indeed defines a kernel and the proof of existence of cokernels is similar. It is also straightforward to see that kernel and cokernel are compatible. If we are given a short exact sequence
			\begin{align*} 0 \to E \to F \to G \to 0\end{align*} 
			then we obtain splittings on the level of vector spaces. As the splittings are not required to be closed, we are done.
		\end{proof}
	\end{lemma}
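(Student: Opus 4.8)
The plan is to unwind first what the category $Z^0(\text{Rep}(S)^{pre}[H])$ of closed degree-$0$ morphisms actually is, and then to verify the two ingredients in the definition of \emph{split}: that $Z^0$ is (quasi-)abelian, and that every short exact sequence admits a degree-$0$, not necessarily closed, splitting.

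First I would describe the closed degree-$0$ morphisms. A degree-$0$ element $f \in \text{Hom}^0(E,F)$ is determined by its value on the degree-$0$ part of $B(S,H)$, hence is simply a $\mathbb K$-linear map $\bar f : E \to F$. Evaluating the connection of Definition \ref{RepDef} on a single generator $a \in s\bar C^H_0(S)$, and using $b(a) = 0$ for $a \in S_0$, gives $(\nabla f)(a) = \mu_F(a)\circ \bar f - \bar f \circ \mu_E(a)$. Thus $f$ is closed precisely when $\bar f(a.e) = a.\bar f(e)$ for all $a \in S_0$ and $e \in E$, i.e. when $\bar f$ is equivariant for the maps $\mu$. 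This is exactly the description asserted in the lemma.

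Next I would build kernels and cokernels in $Z^0$ by working at the level of underlying vector spaces. For closed $f : E \to F$ set $K := \ker(\bar f)$; equivariance gives $\bar f(a.x) = a.\bar f(x) = 0$ for $x \in K$, so each $\mu_E(a)$ restricts to $K$, and together with $\mu_E(e) = 1$ this makes $(K,\mu_K)$ an object with closed inclusion, whose universal property is inherited from that of $\mathbb K$-vector spaces. Dually $\mu_F(a)$ preserves $\text{im}(\bar f)$, since $a.\bar f(e) = \bar f(a.e)$, so $\mu_F$ descends to $F/\text{im}(\bar f)$ and gives the cokernel. The direct sum $(E \oplus F, \mu_E \oplus \mu_F)$ serves as both product and coproduct, and the remaining axioms (every mono is the kernel of its cokernel, every epi the cokernel of its kernel) reduce to the corresponding statements for $\mathbb K$-vector spaces constrained by equivariance; this exhibits $Z^0(\text{Rep}(S)^{pre}[H])$ as abelian, in particular quasi-abelian, so short exact sequences are defined.

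Finally, given a short exact sequence $0 \to E \to F \to G \to 0$ of closed degree-$0$ morphisms, its underlying sequence of $\mathbb K$-vector spaces splits because we work over a field, and this vector-space splitting is a degree-$0$ morphism. Since the definition of split requires the splitting only to have degree $0$ and explicitly does \textbf{not} demand that it be closed, this already furnishes the required splitting. The hard part is conceptual rather than computational: an equivariant short exact sequence of unit-preserving but non-multiplicative pre-representations will in general fail to split equivariantly, so it is essential that closedness of the splitting is not imposed; once that is recognized, the conclusion is immediate.
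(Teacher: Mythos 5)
Your proposal is correct and follows essentially the same route as the paper: identify closed degree-$0$ morphisms with $\mu$-equivariant linear maps, construct kernels and cokernels at the level of underlying vector spaces, and observe that a vector-space splitting (which exists since $\mathbb K$ is a field) suffices because splittings are not required to be closed. The extra details you supply (the explicit computation of $\nabla f$ on a generator, direct sums, and the abelian-category axioms) are correct elaborations of steps the paper leaves implicit.
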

	We now want to define the category of all projective representations of $S$. In this category, the morphism sets between projective representations with different curvature elements can be non-empty.
	
	\begin{remark}
		A twist $a$ from $H$ to $H'$ is equivalently just a flat object of the form $(\mathbb K, a)$ of $\text{Rep}(S)^{pre}[H^{-1}H']$, i.e., an object of $\text{Rep}(S)^{0}[H^{-1}H']$ of the form $(\mathbb K, a)$.
	\end{remark}
	\begin{def1}
		\begin{enumerate}
			\item Let $H, H', H''$ be curvature elements, $a$ a twist from $H$ to $H'$, and $\tilde a$ a twist from $H'$ to $H''$. Then $\tilde a a$ is a twist from $H$ to $H''$.		
			\item Let $H, H'$ and $a$ be as above. Let $E$ be an object of $\text{Rep}(S)^{\infty}[H]$ and $F$ an object of $\text{Rep}(S)^{\infty}[H']$. We set
			\begin{align*} \text{PHom}^{H, H'}_a(E,F) := \quot{H^0(\text{Hom}_{ \text{Rep}(S)[H']}(\text{Rep}(S[a])(E),F))}{\mathbb K^*}.\end{align*}
			Furthermore, we can identify
			\begin{align*} \text{Hom}_{\text{Rep}(S)[H'']}(\text{Rep}(S[\tilde a a])(E),\text{Rep}(S[\tilde a])F) = \text{Hom}_{\text{Rep}(S)[H']}(\text{Rep}(S[a])E,F)\end{align*}
			which yields a composition
			\begin{align*} \text{PHom}^{H', H''}_{\tilde a}(F,G) \times \text{PHom}^{H, H'}_a(E,F) \to \text{PHom}^{H, H''}_{a \tilde a}(E,G).\end{align*}
			Using the above property, we see that this is associative.
		\end{enumerate}
	\end{def1}
	\begin{def1}
		\begin{enumerate}
			\item We define the category $\text{PRep}(S)$ of projective representations of $S$. Its objects are non-zero projective representations of $S$ (see Definition \ref{projectiveRepresentation}). For two objects $E \in  \text{Rep}(S)[H]$ and $F \in \text{Rep}(S)[H']$, the morphism set is defined to be
			\begin{align*} \text{Hom}_{\text{PRep}(S)}(E,F) := \bigsqcup_a \text{PHom}^{H, H'}_a(E,F) \end{align*}
			where the disjoint union is indexed by all twists $a$ from $H$ to $H'$. The composition is induced by the composition defined above.
			\item The category $\text{LPRep}(S)$ of logarithmic projective representations of $S$ is the full subcategory of $\text{PRep}(S)$ consisting of those objects which are logarithmic projective representations of $S$.
		\end{enumerate}
	\end{def1}
	
	\subsection{Curved representations of the loop space}
	\label{sectionCurvedRepLoop}
	We fix $x_0 \in M$.
	\begin{def1}
		\label{hochDefs}
		\begin{enumerate}
			\item Let $P^{[0,\infty)}(M)$ be the diffeological space of smooth paths $\gamma : [0, \infty) \to M$. We define the \emph{diffeological space of Moore loops} by
			\begin{nalign} \Omega_{x_0}^M(M) := &\{(\gamma, l) | \gamma([0,\epsilon)) = \{x_0\}, \gamma([l-{\epsilon}, \infty)) = \{x_0\}, \text{ for some } \epsilon > 0\} \\ &\subset P^{[0,\infty)}(M) \times [0,\infty).
			\end{nalign}
			We denote the components by $(\gamma, l)_1 := \gamma$ and $(\gamma, l)_2 := l$.
			The smooth structure is \textbf{not} inherited from $P^{[0,\infty)}(M) \times [0,\infty)$. The plots
			are maps $\phi : U \to \Omega_{x_0}^M(M)$ such that
			\begin{enumerate}
				\item the composition $U \to \Omega_{x_0}^M(M) \to P^{[0,\infty)}(M) \times [0,\infty)$ is smooth, and
				\item for every $x \in U$ there exists an open neighbourhood $U_x \subset U$ of $x$ and an $\epsilon > 0$ such that $\phi_1(y)|_{[0, \epsilon)} = x_0 = \phi_1(y)|_{(\phi_2(y) - \epsilon, \infty)}$ for all $y \in U_x$.
			\end{enumerate}
			With this smooth structure the concatenation product
			\begin{align*}  * : \Omega_{x_0}^M(M) \times  \Omega_{x_0}^M(M) \to  \Omega_{x_0}^M(M), (\gamma, l) \times (\gamma',l') \mapsto (t \mapsto \begin{cases} \gamma'(t) , t \leq l'\\ \gamma(t-l'), t > l' \end{cases}, l+l')\end{align*}
			is easily seen to be smooth. The main advantage of $\Omega_{x_0}^M(M)$ is that this product is associative.
			\item We consider the singular simplicial set of smooth simplices $\text{Sing}(\Omega_{x_0}^M(M))$ (the definition can be repeated verbatim as for smooth manifolds). The concatenation product on $\Omega_{x_0}^M(M)$ turns $\text{Sing}(\Omega_{x_0}^M(M))$ into a simplicial monoid via
			\begin{align*} \Delta^n \overset{\text{diag}}\to \Delta^n \times \Delta^n \overset{\alpha \times \beta} \to \Omega_{x_0}^M(M) \times \Omega_{x_0}^M(M) \overset{*}\to \Omega_{x_0}^M(M).\end{align*} The unit in degree $k$ is given by the simplex $\Delta^n \to \Omega_{x_0}^M(M)$ sending each point to the constant path $(c_{x_0}, 0) \in \Omega_{x_0}^M(M)$.
			\item We consider the smooth map 
			\begin{align*}\alpha : \Omega_{x_0}^M(M) \to PM, (\gamma, l) \mapsto \gamma \circ sc_l\end{align*}
			where $sc_l : [0,1] \to [0,l]$. In particular, when working over $\Omega_{x_0}^M(M)$, we will simply write $\text{hol}_{f_1,...,f_n}$ instead of $\alpha^* \text{hol}_{f_1,...,f_n}$. We also define a smooth map
			\begin{align*}\tilde \alpha : \Omega_{x_0}^M(M) \to \Omega_{x_0}^M(M), (\gamma, l) \mapsto (\gamma \circ sc_l,1).\end{align*}
		\end{enumerate}
	\end{def1}
	
	In the following, we will often simply write $\Omega_{x_0}^M(M)$ instead of $\text{Sing}(\Omega_{x_0}^M(M))$.
	In particular, we can now consider the cdg-category $\text{PRep}^{pre}(\Omega_{x_0}^M(M))[H]$ for any curvature $H$. Let $h \in Z\Omega^2_B(M)$ and define the logarithmic projective curvature element 
	\begin{align*} H(\tau) := - \int_I \tau^* \alpha^* \text{hol}_h \end{align*} for $\tau : \Delta^1 = I \to \Omega_{x_0}^M(M)$. We want to define a proper $A_\infty$-functor of cdg-categories
	\begin{align*} \mathcal P(M)^{pre}[h] \to \text{PRep}^{pre}(\Omega_{x_0}^M(M))[e^{-H}]\end{align*}
	which yields a quasi-equivalence of associated dg-categories. We will do this in a similar way as before and adopt the notation.
	\begin{def1}
		Let $\tau : \Delta^n \to \Omega_{x_0}^M(M)$ be smooth. We postcompose $\tau$ with $\alpha$ to obtain a map $\alpha \circ \tau : \Delta^n \to PM(x_0,x_0)$ and thus a map \begin{align*}\tilde \tau : I \times \Delta^n \to M\end{align*} with $\tilde \tau(1,-) = x_0 = \tilde \tau(0,-)$. We denote by $\widetilde{\text{hol}}'^{\tau}_{f_1,...,f_m}$ the pullback of $\text{hol}^{x_0, x_0}_{f_1,...,f_m}$ along $\alpha \circ \tau$. Then we set
		\begin{align*}\widetilde{\text{hol}}^{\tau}_{f_1,...,f_m} := (-1)^{\frac{n(n+1)}{2}} e^{s^{\tau}}  \widetilde{\text{hol}}'^{\tau}_{f_1,...,f_m}\end{align*}
		where $s^\tau$ is the unique smooth function on $\Delta^n$ with $d(s^\tau) = -\widetilde{\text{hol}}'^{\tau}_{h}$ and $s^{\tau}(1,...,1) = 0$.
	\end{def1}
	\begin{lemma}
		\label{tauLemma}
		The following identity holds
		\begin{nalign}
			\int_{\Delta^n} d(\widetilde{\text{hol}}^{\tau}_{f_1,...,f_m}) = \sum_{i = 0}^{n-1} (-1)^{i+n} \int_{\Delta^{n-1}} \widetilde{\text{hol}}^{\partial^i \tau}_{f_1,...,f_m} + e^{\int_I \tau_{(n-1, n)}^* \text{hol}_h} \int_{\Delta^{n-1}} \widetilde{\text{hol}}^{\partial^n \tau}_{f_1,...,f_m}
		\end{nalign}
	\end{lemma}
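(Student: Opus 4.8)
The plan is to mirror almost verbatim the proof of Lemma \ref{coherentHol}, since this statement is its exact analogue over the Moore loop space $\Omega_{x_0}^M(M)$, with simplices $\tau : \Delta^n \to \Omega_{x_0}^M(M)$ replacing simplices into $M$ and with the key simplification that both endpoints of every path $\alpha \circ \tau$ are the fixed basepoint $x_0$. First I would apply Stokes' Theorem to rewrite $\int_{\Delta^n} d(\widetilde{\text{hol}}^{\tau}_{f_1,\dots,f_m})$ as the sum $\sum_{i=0}^n \int_{\partial_i \Delta^n} \widetilde{\text{hol}}^{\tau}_{f_1,\dots,f_m}$ over the faces of the time-ordered simplex, tracking the orientation signs $(-1)^{i+n}$ coming from the face maps $q_i$ of Definition \ref{DefSimplices}.

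Next, for each boundary face I would identify the restriction of $\widetilde{\text{hol}}^{\tau}$ with $\widetilde{\text{hol}}^{\partial^i \tau}$. For the faces $\partial_i\tau$ with $i = 0,\dots,n-1$, the restriction is a direct pullback along the face inclusion, and the factor $s^\tau$ matches $s^{\partial_i\tau}$ because both are normalized by vanishing at $(1,\dots,1)$ and solve the same equation $d(s^\tau) = -\widetilde{\text{hol}}'^\tau_h$; this yields the plain terms $(-1)^{i+n}\int_{\Delta^{n-1}} \widetilde{\text{hol}}^{\partial^i \tau}_{f_1,\dots,f_m}$. The crucial difference from Lemma \ref{coherentHol} is that because $\alpha\circ\tau$ is a \emph{loop} at $x_0$, there are no nontrivial composition/splitting terms of the form appearing in the middle sums there: the endpoint evaluations all collapse to $x_0$, so the decomposition from Proposition \ref{holProp} part \ref{holPropP4} contributes only the single surviving boundary piece $\partial_n\tau$, weighted by the exponential curvature factor.

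The exponential factor $e^{\int_I \tau_{(n-1,n)}^* \text{hol}_h}$ on the last term is the analogue of $e^{\int_{\Delta^2}\sigma^*h}$ in Lemma \ref{coherentHol}, and establishing it is the step I expect to require the most care. The identity to prove is $s^{\tau}$ evaluated at the relevant corner equals $\int_I \tau_{(n-1,n)}^*\text{hol}_h$; following the end of the proof of Lemma \ref{coherentHol}, I would reduce via the recursive structure of $s^\tau$ to the one-dimensional computation, using $d(s^\tau) = -\widetilde{\text{hol}}'^\tau_h$, Stokes' Theorem on $I$, and the explicit solution $\text{hol}_h(0,t) = \int_0^t \iota_{\partial_{t'}}(h)\,dt'$ from Proposition \ref{holProp}. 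Here the definition $H(\tau) = -\int_I \tau^*\alpha^*\text{hol}_h$ of the logarithmic curvature element is exactly what makes the bookkeeping consistent, so the exponential weight is $e^{\int_I \tau_{(n-1,n)}^*\text{hol}_h}$ as claimed.

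The main obstacle, then, is purely the orientation and sign bookkeeping: verifying that after Stokes and the reparametrization-invariance properties (Proposition \ref{holProp} parts \ref{holPropP3} and \ref{holPropP5}) the signs assemble into precisely $(-1)^{i+n}$ for $i=0,\dots,n-1$ and $+1$ (times the exponential) for $i=n$. Since the Moore-loop setting removes the genuinely new analytic content of Lemma \ref{coherentHol} — there is no path-concatenation splitting to handle — I expect no new idea is needed beyond specializing that lemma to loops; the proof is essentially an application of the already-established holonomy form machinery together with Stokes' Theorem.
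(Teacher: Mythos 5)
Your overall architecture --- Stokes' theorem, identification of the face restrictions with $\widetilde{\text{hol}}^{\partial^i\tau}$, and the computation $s^{\tau}(1,\dots,1,0)=\int_I \tau_{(n-1,n)}^*\text{hol}_h$ producing the exponential weight on the last face --- is exactly the paper's proof, and your third paragraph matches the paper's treatment of the normalization constant. However, your second paragraph misidentifies the mechanism in two places. First, the absence of concatenation terms has nothing to do with the paths being loops at $x_0$; it is because the integration domain here is the simplex $\Delta^n$ itself (the form $\widetilde{\text{hol}}'^{\tau}$ is the pullback of $\text{hol}^{x_0,x_0}$ along $\alpha\circ\tau:\Delta^n\to PM(x_0,x_0)$), so $\partial\Delta^n$ consists only of the simplicial faces $\partial_i\Delta^n$. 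In Lemma \ref{coherentHol} the concatenation terms came from the \emph{front} faces $F_i(I^n)$ of the cube, which under $\smiley^n$ split the path parameter; no such faces exist here. The splitting identity of Proposition \ref{holProp} part \ref{holPropP4} is therefore not used in this lemma at all --- it reappears only in Corollary \ref{tauCor}, where the Eilenberg--Zilber products are handled. Second, the $\partial^n\tau$ term is obtained by the same direct restriction $q_n^*\widetilde{\text{hol}}'^{\tau}=\widetilde{\text{hol}}'^{\partial^n\tau}$ as the other faces; the only difference is that the face $\{t_n=0\}$ does not contain the normalization point $(1,\dots,1)$, so $s^{\partial^n\tau}=s^{\tau}\circ q_n - s^{\tau}(1,\dots,1,0)$, and exponentiating the constant gives the extra factor. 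Relatedly, no reparametrization invariance (parts \ref{holPropP3} and \ref{holPropP5}) is needed, since $\alpha\circ\partial^i\tau=(\alpha\circ\tau)\circ q_i$ on the nose, unlike in Lemma \ref{coherentHol} where Axioms \ref{AXA4} and \ref{AXA5} hold only up to reparametrization. If you execute your plan you will discover these simplifications and land on the correct (and shorter) proof, so there is no fatal gap, but as written the plan sends you looking for splitting terms that do not arise.
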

	\begin{proof}
		The proof is analogous to the proof of \ref{coherentHol} (but much simpler): Apply Stokes Theorem and use the definition of $\widetilde{\text{hol}}^{\tau}_{f_1,...,f_m}$. The last face carries the factor $e^{\int_I \tau_{(n-1, n)}^*\text{hol}_h}$ since (the right side fulfills the defining properties of $s^{\partial^n \tau}$)
		\begin{align*} s^{\partial^n \tau} = s^\tau \circ q^n - s^\tau(1,...,1,0)\end{align*}
		and 
		\begin{align*} s^\tau(1,...,1,0) = - \int_I d(\tau_{(n-1,n)}^* s^\tau) = \int_I \tau_{(n-1,n)}^*\text{hol}_h.\end{align*}
	\end{proof}
	Recall the definition of the maps $\alpha, \tilde \alpha$ from Definition \ref{hochDefs} for the following Lemma.
	\begin{lemma}
		\label{lemmaAlpha}
		Let $A$ be a diffeological space equipped with smooth maps \begin{align*}p,q : A \to \Omega_{x_0}^M(M).\end{align*}
		Consider the (non-commutative) diagram
		\[
		\begin{tikzcd}
			A \arrow[rr, "{(p,q)}"] &  & \Omega_{x_0}^M(M) \times \Omega_{x_0}^M(M) \arrow[rr, "*"] \arrow[rd, "\tilde \alpha \times \tilde \alpha"'] &                                                             & \Omega_{x_0}^M(M) \arrow[r, "\alpha"] & PM \\
			&  &                                                                                                              & \Omega_{x_0}^M(M) \times \Omega_{x_0}^M(M) \arrow[ru, "*"'] &                                       &   .
		\end{tikzcd}\]
		Then we have the identity
		\begin{align*}  (\alpha \circ * \circ (p,q))^{*}  \text{hol}_{f_1,...,f_n} = (\alpha \circ * \circ (\tilde \alpha \times \tilde \alpha)\circ (p,q))^{*}  \text{hol}_{f_1,...,f_n}\end{align*} on $A$.
	\end{lemma}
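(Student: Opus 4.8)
The plan is to recognize that for each $a\in A$ the two composites produce loops in $PM(x_0,x_0)$ which traverse the same underlying concatenation of $p(a)$ and $q(a)$ and differ only by a reparametrization of the domain interval $I$, and then to reduce the claim to the reparametrization invariance recorded in Proposition \ref{holProp}, part \ref{holPropP5}. Concretely, write $p(a)=(\gamma_p,l_p(a))$ and $q(a)=(\gamma_q,l_q(a))$, and set $\beta_1:=\alpha\circ *\circ(p,q)$ and $\beta_2:=\alpha\circ *\circ(\tilde\alpha\times\tilde\alpha)\circ(p,q)$, viewed as smooth maps $A\to PM$. Unwinding the concatenation product and the definitions of $\alpha$ and $\tilde\alpha$ from Definition \ref{hochDefs}, the loop $\beta_1(a)$ runs through the $q$-part on $[0,\tfrac{l_q(a)}{l_p(a)+l_q(a)}]$ and the $p$-part on the remainder, whereas $\beta_2(a)$ runs through the two parts on $[0,\tfrac12]$ and $[\tfrac12,1]$; on each part the two parametrizations are affine reparametrizations of one another. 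Hence $\beta_1(a)$ and $\beta_2(a)$ have the same image-path and differ only by a monotone, endpoint-fixing, piecewise-affine reparametrization of $I$.

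Next I would package this reparametrization into a single fiberwise family $\psi:A\times I\to I$, $\psi_a:=\psi(a,-)$, characterized by $\beta_1(a)\circ\psi_a=\beta_2(a)$. The crucial point is to orient it in this direction, with $\beta_1$ as the base path, so that the breakpoint of $\psi_a$ sits at the \emph{fixed} value $\tfrac12$ independent of $a$: explicitly $\psi_a$ is affine, sending $[0,\tfrac12]$ onto $[0,\tfrac{l_q(a)}{l_p(a)+l_q(a)}]$ and $[\tfrac12,1]$ onto $[\tfrac{l_q(a)}{l_p(a)+l_q(a)},1]$. This $\psi$ fixes the endpoints, is smooth on each of $A\times[0,\tfrac12]$ and $A\times[\tfrac12,1]$ for the fixed partition $\{0,\tfrac12,1\}$, and satisfies that $(a,t)\mapsto \beta_1(a)(\psi_a(t))=\beta_2(a)(t)$ is smooth, being the adjoint of the smooth map $\beta_2$. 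Thus the hypotheses of Proposition \ref{holProp}, part \ref{holPropP5}, are met with $q=\beta_1$ and $\phi=\psi$.

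Applying that part, the maps $f(a,s,t)=(\beta_1(a)\circ\psi_a,s,t)=(\beta_2(a),s,t)$ and $g(a,s,t)=(\beta_1(a),\psi_a(s),\psi_a(t))$ satisfy $f^*\text{hol}_{f_1,\dots,f_n}=g^*\text{hol}_{f_1,\dots,f_n}$ on $A\times I^2$. Restricting to $s=0$, $t=1$ and using $\psi_a(0)=0$, $\psi_a(1)=1$, the left-hand side becomes $\beta_2^*\text{hol}^{x_0,x_0}_{f_1,\dots,f_n}$ while the right-hand side becomes $\beta_1^*\text{hol}^{x_0,x_0}_{f_1,\dots,f_n}$, which is exactly the asserted identity, read with the based convention $\alpha^*\text{hol}$ used over $\Omega_{x_0}^M(M)$.

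The main obstacle, and essentially the only content beyond bookkeeping, is twofold. First, one must orient the reparametrization correctly: taking $\beta_2$ as the base path would place the breakpoint at the $a$-dependent value $\tfrac{l_q(a)}{l_p(a)+l_q(a)}$, violating the fixed-partition hypothesis of part \ref{holPropP5}, so choosing $\beta_1$ as base (hence breakpoint $\tfrac12$) is what makes the reduction legitimate. Second, the formula for $\psi_a$ degenerates on the locus $\{l_p+l_q=0\}$, where both input loops are constant; there $\beta_1(a)$ and $\beta_2(a)$ are both the constant loop at $x_0$, so both pullbacks vanish identically (the holonomy forms of a constant path are zero for $n\ge 1$ and $PT=\mathbbm 1$), and since $\beta_1^*\text{hol}$ and $\beta_2^*\text{hol}$ are smooth forms, the equality on the open set $\{l_p+l_q>0\}$ extends across this locus by continuity.
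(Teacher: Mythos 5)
Your proof is correct and follows essentially the same route as the paper: both exhibit the two composites as differing by a piecewise-affine, endpoint-fixing reparametrization with base path $\beta_1$ and breakpoint at the fixed value $\tfrac12$, and both then invoke Proposition \ref{holProp}, part \ref{holPropP5}, with a separate treatment of the locus $\{p_2+q_2=0\}$. The only divergence is minor: the paper handles that locus by using the uniform-$\epsilon$ condition built into the diffeology of $\Omega_{x_0}^M(M)$ to show both maps are locally equal to the constant loop near such points, whereas you argue via pointwise vanishing of $\text{hol}$ at the constant loop (plus continuity); both are valid.
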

	\begin{proof}
		We write out both compositions explicitly interpreted as maps $A \times I \to M$. We have
		\begin{nalign}
			(\alpha \circ * \circ (p,q))(t,a) = \begin{cases}
				q(t (p_2(a)+q_2(a)),a) \text{ for } t \leq \frac{q_2(a)}{p_2(a)+q_2(a)}\\
				p(t(p_2(a)+q_2(a))-q_2(a),a) \text{ else }\\
			\end{cases}
		\end{nalign}
		and 
		\begin{nalign}
			(\alpha \circ * \circ (\tilde \alpha \times \tilde \alpha) \circ (p,q))(t,a) = \begin{cases}
				q(q_2(a) 2t,a) \text{ for } t \leq \frac{1}{2}\\
				p(p_2(a) 2(t-\frac{1}{2}),a) \text{ else }.\\
			\end{cases}
		\end{nalign}
		It suffices to prove the claim in the case that $A \to \Omega_{x_0}^M(M)$ is a plot of $\Omega_{x_0}^M(M)$ and furthermore, we show the identity locally. Let $a \in A$. There are two cases.
		\begin{enumerate}
			\item Case $p_2(a)+q_2(a) \neq 0$: Then there exists an open neighbourhood $U_a \subset A$ of $a$ such that  $p_2(x)+q_2(x) \neq 0$ for all $x \in U_a$. Then we can define the piecewise smooth reparametrization
			\begin{align*} \phi : U_\alpha \times I \to I, \phi(x,t) := \begin{cases}
				2t \frac{q_2(x)}{p_2(x) + q_2(x)} \text{ if } t \leq \frac{1}{2}\\
				\frac{2p_2(x)(t - \frac{1}{2})+ q_2(x)}{p_2(x)+q_2(x)} \text{ else}.
			\end{cases}\end{align*}
			Then we have 
			\begin{align*}(\alpha \circ * \circ (p,q))(\phi(x,t),x)  = (\alpha \circ * \circ (\tilde \alpha \times \tilde \alpha) \circ (p,q))(x,t)\end{align*}
			for all $(x,t) \in U_a \times I$. Therefore, part 5 of proposition \ref{holProp} yields the desired identity on $U_a$.
			\item Case $p_2(a)+q_2(a) = 0$. By the definition of $\Omega_{x_0}^M(M)$, there is an open subset $a \in V_a \subset A$ and an $\epsilon > 0$ such that $p_1(x)|_{[0, \epsilon)} = q_1(x)|_{[0, \epsilon)} = x_0 = p_1(x)|_{(p_2(x) - \epsilon, \infty)} = q_1(x)|_{(q_2(x) - \epsilon, \infty)}$ for all $x \in V_a$. Now, choose an open subset $a \in U_a \subset V_a$ such that $p_2(x) + q_2(x) < \epsilon$ for all $x \in U_a$. This implies that $p_1(x) = q_1(x) = x_0$ for all $x \in U_a$. Thus, both compositions are given by $U_a \ni x \mapsto c_{x_0} \in PM$ so the identity holds on $U_a$. 
		\end{enumerate}
	\end{proof}
	\begin{lemma}
		\label{diffeoLemma}
		Let $p + q = n \in \mathbb N$. For each $(\mu, \nu) \in Sh(p,q)$, we consider the smooth map
		\begin{align*} \phi_{(\mu, \nu)} : \Delta^n \to \Delta^p \times \Delta^q, (t_1,...,t_n) \mapsto ((t_{\mu_1},...,t_{\mu_p}),(t_{\nu_1},...,t_{\nu_q})).\end{align*}
		Then the map 
		\begin{align*} \phi := \bigsqcup_{(\mu, \nu) \in Sh(p,q)}  \phi_{(\mu, \nu)} : \bigsqcup_{(\mu, \nu)} \Delta^n \to \Delta^p \times \Delta^q\end{align*}
		is a diffeomorphism restricted to open subsets of $\bigsqcup_{(\mu, \nu)} \Delta^n$ and $\Delta^p \times \Delta^q$ whose complements have measure zero. Furthermore each $\phi_{(\mu, \nu)}$ changes the orientation by the sign $sgn(\mu, \nu)$.
		\begin{proof}
			On the left side, we simply restrict to the interior $U := \bigsqcup_{(\mu, \nu)} \mathring \Delta^n$. On the right side, we restrict even further and set
			\begin{align*} V := \{((t_1,...,t_p),(s_1,...,s_q)) \in \mathring \Delta^p \times \mathring  \Delta^q | t_i \neq s_j \text{ for all } i,j)\}.\end{align*}
			It straightforward to see that the restriction of $\phi$ to $U$ factors through $V$ and we claim that this restriction is a diffeomorphism. To do this, we show that the map is injective, surjective and its differential is invertible.
			\begin{enumerate}
				\item The differential is invertible: Identifying the tangent spaces with $\mathbb R^n$, we see that the differential on each $\Delta^n$ is just a permutation of the standard basis vectors by $(\mu, \nu)$. This also shows the claim regarding the orientations.
				\item Surjectivity: Let $((t_1,...,t_p),(s_1,...,s_q)) \in V$. Then all the entries are distinct and between $0$ and $1$. Ordering these elements according to their value yields a preimage in $\mathring \Delta^n$.
				\item Injectivity: Let $(t_1,...,t_n) \neq (s_1,...,s_n) \in \Delta^n$. Since they are ordered, this extends to an inequality of sets $\{t_1,...,t_n\} \neq \{s_1,...,s_n\}$. Therefore, there is an $i$ such that $t_i \neq s_j$ for all $j$. Then for any $\mu, \nu, \mu', \nu'$, we see that the $n$-tuple
				$\phi_{\mu, \nu}(t_1,...,t_n)$ contains an entry equal to $t_i$ while $\phi_{\mu', \nu'}(s_1,...,s_n)$ does not.\\ 
				Furthermore, we have that $\phi_{(\mu, \nu)}(t_1,...,t_n) \neq \phi_{(\mu', \nu')}(t_1,...,t_n)$ for every $(\mu, \nu) \neq (\mu', \nu')$ again since the numbers $t_1,...,t_n$ are ordered. Combining these two observations, we deduce injectivity.
			\end{enumerate}
		\end{proof}
	\end{lemma}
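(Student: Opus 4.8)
The plan is to exhibit explicit open dense subsets on both sides and check that $\phi$ restricts to a bijection between them with invertible differential, reading off the orientation sign from the differential. On the source I would take $U := \bigsqcup_{(\mu,\nu)} \mathring\Delta^n$, the disjoint union of the open simplices; its complement in $\bigsqcup \Delta^n$ is a finite union of boundary faces $\{t_i = t_{i+1}\}$ and hence has measure zero. On the target I would take
\[
V := \{((t_1,\dots,t_p),(s_1,\dots,s_q)) \in \mathring\Delta^p \times \mathring\Delta^q : t_i \neq s_j \text{ for all } i,j\},
\]
whose complement inside $\Delta^p \times \Delta^q$ is the boundary together with the finitely many collision hyperplanes $\{t_i = s_j\}$, again of measure zero.

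First I would observe that each $\phi_{(\mu,\nu)}$ is the restriction of a linear map $\mathbb R^n \to \mathbb R^p \times \mathbb R^q \cong \mathbb R^n$, namely the coordinate permutation sending $e_{\mu_i}$ to the $i$-th basis vector of the first factor and $e_{\nu_j}$ to the $j$-th basis vector of the second. Consequently the differential of $\phi_{(\mu,\nu)}$ at every point is this fixed permutation matrix; it is invertible, and its determinant is exactly $\mathrm{sgn}(\mu,\nu)$. Since the orientation on $\Delta^n$ is induced from $\mathbb R^n$ and the product orientation on $\Delta^p \times \Delta^q$ from $\mathbb R^p \times \mathbb R^q$, this simultaneously proves the orientation claim and shows that $\phi|_U$ is a local diffeomorphism onto its image.

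Next I would check that $\phi(U) = V$ and that $\phi|_U$ is a bijection. A point of $\mathring\Delta^n$ has strictly decreasing coordinates $1 > t_1 > \cdots > t_n > 0$, so splitting them along any shuffle produces two strictly decreasing tuples with pairwise distinct entries; hence $\phi_{(\mu,\nu)}(\mathring\Delta^n) \subseteq V$. For surjectivity, given a point of $V$ the $n = p+q$ numbers $t_1,\dots,t_p,s_1,\dots,s_q$ are distinct and lie in $(0,1)$; sorting them in decreasing order yields a point of $\mathring\Delta^n$, and the record of which sorted positions came from the first versus the second factor determines a unique shuffle $(\mu,\nu)$ for which $\phi_{(\mu,\nu)}$ returns the original point. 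Injectivity within a single component is immediate, since a coordinate permutation of a strictly ordered tuple is recoverable; injectivity across components follows because the merged decreasing sequence encodes precisely the positions occupied by the first-factor entries, so distinct shuffles cannot produce the same element of $V$.

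I expect the only genuinely delicate point to be the bookkeeping around the smooth corner structure: one must be careful that the correct statement is "diffeomorphism onto open subsets with measure-zero complement" rather than a global diffeomorphism (the closed maps $\phi_{(\mu,\nu)}$ fail to be injective because faces get identified), and that the collision locus $\bigcup_{i,j}\{t_i = s_j\}$ together with the boundary really is a null set in $\Delta^p \times \Delta^q$. Everything else reduces to the standard combinatorics of merging two linearly ordered sets, recorded by a $(p,q)$-shuffle, together with the elementary identification of the sign of the associated permutation with $\mathrm{sgn}(\mu,\nu)$.
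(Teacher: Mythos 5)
Your proposal is correct and follows essentially the same route as the paper's own proof: the same open sets $U$ and $V$, the observation that each $\phi_{(\mu,\nu)}$ is a linear coordinate permutation (giving both invertibility of the differential and the sign $\mathrm{sgn}(\mu,\nu)$), surjectivity by sorting the merged tuple, and injectivity by recovering the shuffle from the positions of the first-factor entries. Your explicit remarks on the measure-zero complements and the corner structure are a slight elaboration of what the paper leaves implicit, but the argument is the same.
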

	\begin{corollary}
		\label{tauCor}
		Let $\tau : \Delta^p \to \Omega_{x_0}^M(M)$ and $\tau' : \Delta^q \to \Omega_{x_0}^M(M)$ be smooth and set $n = p+q$. Then the following identity holds
		\begin{align*} \sum_{(\mu, \nu) \in Sh(p,q)} \text{sgn}(\mu, \nu) \int_{\Delta^n} \widetilde{\text{hol}}_{f_1,...,f_n}^{s^\nu \tau *  s^\mu \tau'} = (-1)^{pq} \sum_{k = 0}^n \int_{\Delta^p} \widetilde{\text{hol}}_{f_1,...,f_k}^{\tau} \int_{\Delta^q} \widetilde{\text{hol}}_{f_{k+1},...,f_n}^{\tau'}.\end{align*}
		\begin{proof}
			We have
			\begin{nalign}
				\widetilde{\text{hol}}_{f_1,...,f_n}^{s^\nu \tau *   s^\mu \tau'} &= (-1)^{\frac{n(n+1)}{2}} e^{s^{s^\nu \tau *   s^\mu \tau'}} (s^\nu \tau * s^\mu \tau')^* \alpha^* \text{hol}_{f_1,...,f_n}\\
				&\stackrel{1.}{=} (-1)^{\frac{n(n+1)}{2}} e^{s^{s^\nu \tau *   s^\mu \tau'}} (\tilde \alpha \circ s^\nu \tau *   \tilde \alpha \circ s^\mu \tau')^* \alpha^* \text{hol}_{f_1,...,f_n}\\
				&\stackrel{2.}{=} (-1)^{\frac{n(n+1)}{2}} e^{s^{s^\nu \tau *   s^\mu \tau'}} (\tilde \alpha \circ s^\nu \tau *   \tilde \alpha \circ s^\mu \tau')^* \alpha^* (\sum_{k = 0}^{n} \text{hol}_{f_1,...,f_k}(\frac{1}{2},1)\text{hol}_{f_{k+1},...,f_n}(0,\frac{1}{2}))\\
				&\stackrel{3.}{=} (-1)^{\frac{n(n+1)}{2}} e^{s^{s^\nu \tau *   s^\mu \tau'}} \sum_{k = 0}^{n} (s^\nu \tau)^* \alpha^* \text{hol}_{f_1,...,f_k} (s^\mu \tau')^* \alpha^* \text{hol}_{f_{k+1},...,f_n}.
			\end{nalign}
			where we used  
			\begin{enumerate}
				\item Lemma \ref{lemmaAlpha},
				\item Proposition \ref{holProp} part 4 and
				\item Proposition \ref{holProp} part 3.
			\end{enumerate}
			By the same calculation, we obtain
			\begin{align*} (s^\nu \tau *   s^\mu \tau')^* \alpha^* \text{hol}_{h} = s^\nu \tau^* \alpha^* \text{hol}_h + s^\mu \tau'^* \alpha^* \text{hol}_h\end{align*}
			and since all degeneracy maps $\Delta^* \to \Delta^{* -1}$ send $(1,...,1)$ to $(1,...,1)$ this means that 
			\begin{align*} s^{s^\nu \tau *   s^\mu \tau'} = (p^\nu)^* s^{\tau} + (p^\mu)^* s^{\tau'}.\end{align*}
			Combining these two results, we obtain 
			\begin{nalign}
				\widetilde{\text{hol}}_{f_1,...,f_n}^{s^\nu \tau *   s^\mu \tau'} = (-1)^{pq} \sum_{k = 0}^n (p^{\nu})^*\widetilde{\text{hol}}^{\tau}_{f_1,...,f_k} (p^{\mu})^*\widetilde{\text{hol}}^{\tau'}_{f_{k+1},...,f_n}
			\end{nalign}
			using that 
			\begin{align*} n(n+1) = p(p+1) + q(q+1) + 2pq.\end{align*}
			By summing over all $(\mu, \nu)$ and integrating over $\Delta^n$, we obtain the claim by Lemma \ref{diffeoLemma}.
		\end{proof}
	\end{corollary}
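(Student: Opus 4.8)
The plan is to reduce the statement, via the concatenation and reparametrization properties of holonomy forms, to the change-of-variables formula encoded in Lemma \ref{diffeoLemma}. The left-hand side is built from the Eilenberg--Zilber shuffle product on $\Delta^n$, while the right-hand side is a genuine product of integrals over $\Delta^p$ and $\Delta^q$; the content of the corollary is precisely that integrating $\widetilde{\text{hol}}$ intertwines these two operations.

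First I would fix a shuffle $(\mu,\nu)\in Sh(p,q)$ and compute $\widetilde{\text{hol}}^{s^\nu\tau * s^\mu\tau'}_{f_1,\dots,f_n}$ pointwise on $\Delta^n$. Unwinding the definition $\widetilde{\text{hol}}^\sigma = (-1)^{n(n+1)/2} e^{s^\sigma}(\alpha\circ\sigma)^*\text{hol}$, the key step is to rewrite the path $\alpha\circ(s^\nu\tau * s^\mu\tau')$. Here the concatenation $*$ takes place in the Moore loop space, so the two loops may have different lengths; Lemma \ref{lemmaAlpha} lets me replace this by $\alpha\circ *\circ(\tilde\alpha\times\tilde\alpha)$, i.e. first normalise each loop to unit length and then concatenate at the midpoint $1/2$. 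With the splitting point fixed at $1/2$, Proposition \ref{holProp} part \ref{holPropP4} (the concatenation identity for $\text{hol}$) splits $\text{hol}_{f_1,\dots,f_n}(0,1)$ as $\sum_{k=0}^n \text{hol}_{f_1,\dots,f_k}(1/2,1)\,\text{hol}_{f_{k+1},\dots,f_n}(0,1/2)$, and Proposition \ref{holProp} part \ref{holPropP3} (reparametrization invariance) identifies the two halves with the pullbacks $\widetilde{\text{hol}}^{\tau'}$ along $p^\mu$ and $\widetilde{\text{hol}}^\tau$ along $p^\nu$. Collecting the prefactors through the identity $n(n+1)=p(p+1)+q(q+1)+2pq$ produces the sign $(-1)^{pq}$, yielding
\[
\widetilde{\text{hol}}^{s^\nu\tau * s^\mu\tau'}_{f_1,\dots,f_n} = (-1)^{pq}\sum_{k=0}^{n} (p^\nu)^*\widetilde{\text{hol}}^{\tau}_{f_1,\dots,f_k}\,(p^\mu)^*\widetilde{\text{hol}}^{\tau'}_{f_{k+1},\dots,f_n}.
\]

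Separately I must treat the scalar factor $e^{s^\sigma}$: I would show $s^{s^\nu\tau * s^\mu\tau'} = (p^\nu)^* s^\tau + (p^\mu)^* s^{\tau'}$ by verifying that both sides satisfy the same defining equation $d s = -\widetilde{\text{hol}}'_h$ — this again uses the additivity of $\text{hol}_h$ under concatenation from Proposition \ref{holProp} part \ref{holPropP4} — and that they agree at the top vertex $(1,\dots,1)$, which holds because every degeneracy preserves $(1,\dots,1)$ and $s^\sigma(1,\dots,1)=0$ by normalisation. This is what lets the exponential factor split as a product compatible with the two projections.

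Finally I would sum over all shuffles and integrate over $\Delta^n$. The factors $(p^\nu)^*\widetilde{\text{hol}}^\tau$ and $(p^\mu)^*\widetilde{\text{hol}}^{\tau'}$ depend only on the $\Delta^p$- and $\Delta^q$-coordinates through the maps $\phi_{(\mu,\nu)}$ of Lemma \ref{diffeoLemma}, so by that lemma $\bigsqcup_{(\mu,\nu)}\phi_{(\mu,\nu)}$ is an orientation-tracking diffeomorphism onto $\Delta^p\times\Delta^q$ away from a measure-zero set, each piece contributing $\text{sgn}(\mu,\nu)$. Applying the change-of-variables formula together with Fubini then converts $\sum_{(\mu,\nu)}\text{sgn}(\mu,\nu)\int_{\Delta^n}$ of the product into $(-1)^{pq}\sum_k \int_{\Delta^p}\widetilde{\text{hol}}^\tau_{f_1,\dots,f_k}\int_{\Delta^q}\widetilde{\text{hol}}^{\tau'}_{f_{k+1},\dots,f_n}$, which is the claimed identity. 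I expect the main obstacle to be the careful bookkeeping of signs and of the degeneracy/projection combinatorics — ensuring that the shuffle $(\mu,\nu)$ labelling the degeneracies $s^\nu,s^\mu$ on the left is exactly the one whose projection $\phi_{(\mu,\nu)}$ carries the orientation sign $\text{sgn}(\mu,\nu)$ on the right — rather than any analytic difficulty, since the measure-zero exceptional set is harmless for the smooth bounded forms involved.
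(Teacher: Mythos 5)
Your proposal is correct and follows essentially the same route as the paper: unwind the definition, use Lemma \ref{lemmaAlpha} to normalise and concatenate at the midpoint, split via Proposition \ref{holProp} parts \ref{holPropP4} and \ref{holPropP3}, handle the exponential factor $e^{s^\sigma}$ by the same additivity-plus-normalisation argument, collect signs via $n(n+1)=p(p+1)+q(q+1)+2pq$, and finish with the shuffle decomposition of Lemma \ref{diffeoLemma}. No gaps.
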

	
	\begin{def1}
		We now define the $A_\infty$-functor 
		\begin{align*} \text{Rep}(M)^{pre}[h] : \mathcal P(M)^{pre}[h] \to \text{Rep}(\Omega_{x_0}^M(M))^{pre}[e^{-H}]\end{align*}
		of cdg-categories. On objects, we set 
		\begin{align*} \text{Rep}(M)^{pre}[h](E, \nabla) := (E_{x_0}, PT)\end{align*}
		where $PT$ is the parallel transport of $\nabla$. For composable morphisms $f_1,...,f_n$ in $\mathcal P(M)^{pre}[h]$ and $\tau_i : \Delta^{n_i} \to \Omega_{x_0}^M(M)$, $i = 1,...,m$, we set
		\begin{nalign}
			\text{Rep}(M)^{pre}[h](f_1,...,f_n)(\tau_1 \otimes ... \otimes \tau_m) = \begin{cases}
				\int_{\Delta^{n_1}} \widetilde{\text{hol}}^{\tau_1}_{f_1,...,f_n} \text{ if } m = 1,\\
				f_{1}(x_0) \text{ if } m = 0, n = 1 \text{ and } |f_1| = 0,\\
				0  \text{ else}.
			\end{cases}
		\end{nalign}
	\end{def1}
	\begin{theorem}
		$\text{Rep}(M)^{pre}[h] : \mathcal P(M)^{pre}[h] \to \text{Rep}(\Omega_{x_0}^M(M))^{pre}[e^{-H}]$ is a proper $A_\infty$-functor of cdg-categories.
		\begin{proof}
			The proof is similar to that of \ref{Atheorem}. The $A_\infty$-relations follow by combining Corollary \ref{holCor}, Lemma \ref{holHLemma}, Lemma \ref{tauLemma}. and Corollary \ref{tauCor}.
		\end{proof}
	\end{theorem}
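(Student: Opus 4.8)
The plan is to follow the proof of Theorem \ref{Atheorem} almost verbatim, replacing the cube-to-simplex integration by the simplicial and monoidal structure of $\text{Sing}(\Omega_{x_0}^M(M))$. Four things must be verified against Definition \ref{Ainfty}: that $(E_{x_0}, PT)$ is a legitimate object, that $\text{Rep}(M)^{pre}[h]$ is proper, that it respects curvatures, and that it satisfies the $A_\infty$-relations. The first two are immediate. Parallel transport along the constant Moore loop is the identity, so $PT(e) = \mathbbm{1}$ and the object axiom of Definition \ref{RepDef} holds; compatibility with identities then follows from the degree-$0$ clause together with properness. Properness itself is inherited directly from the vanishing $\widetilde{\text{hol}}_{f_1,\ldots,f_n} = 0$ whenever some $|f_i| = 0$.

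For curvature compatibility I would first establish the pointwise identity $d(\widetilde{PT}^{\tau}) = \widetilde{\text{hol}}^{\tau}_{R_E}$, the exact analogue of the computation $d(PT^{\sigma}) = \text{hol}^{\sigma}_{R}$ carried out inside the proof of Theorem \ref{Atheorem}: Lemma \ref{holLemma} produces $\text{hol}_{R'_E}$, and Lemma \ref{holHLemma} absorbs the $\text{hol}_h$ contribution so as to pass from the bundle curvature $R'_E$ to the category curvature $R_E = R'_E - h$. Integrating over $\Delta^1$ and applying the Stokes identity of Lemma \ref{tauLemma} then rewrites $\mathcal{F}(R_E)(\tau) = \int_{\Delta^1}\widetilde{\text{hol}}^{\tau}_{R_E}$ as $PT(\partial_0\tau) - e^{-H}(\tau)\,PT(\partial_1\tau)$, which is exactly the representation-theoretic curvature $R_{(E_{x_0},PT)}(\tau) = PT(b(\tau))$ on an edge $\tau$; the $e^{-H}$-factor is precisely the twisting of the last face supplied by Lemma \ref{tauLemma}. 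On the pair-of-vertices component the functor vanishes by definition, and this matches because parallel transport is multiplicative along concatenations and the curvature element $H$ arising from $h \in Z\Omega^2_B(M)$ carries no component on products of two loops, so that $R_{(E_{x_0},PT)}$ is concentrated in the edge degree.

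The heart of the argument is the $A_\infty$-relation, which I would verify by evaluating both sides on an arbitrary tensor $\tau_1 \otimes \cdots \otimes \tau_m$ and matching terms according to which bar-construction structure produces them; because $\mathcal{F}(f_1,\ldots,f_n)$ is supported only on single tensors, relatively few terms survive. The term $\nabla(\mathcal{F}(f_1,\ldots,f_n))$ unfolds, through the three pieces of the connection in Definition \ref{RepDef}, into face contributions coming from the simplicial differential $b$, which are produced by Stokes' theorem in Lemma \ref{tauLemma}, together with the boundary actions of vertices through $\mu = PT$. The holonomy differential equation of Corollary \ref{holCor} supplies on the right-hand side exactly the terms $\mathcal{F}(\ldots,\nabla f_k,\ldots)$, $\mathcal{F}(\ldots, f_k \circ f_{k+1},\ldots)$ and $\mathcal{F}(\ldots, R'_{E_k},\ldots)$, after which Lemma \ref{holHLemma} again converts $R'_{E_k}$ into $R_{E_k}$ and generates the $e^{-H}$-twist of the target. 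Finally, the quadratic composition terms $\mathcal{F}(f_1,\ldots,f_k)\,\mathcal{F}(f_{k+1},\ldots,f_n)$, computed via the coproduct on $B(S,H)$ and the monoid product on $C_*^{e^{-H}}(S)$, are matched using Corollary \ref{tauCor}, whose proof rests on the shuffle decomposition of $\Delta^p \times \Delta^q$ furnished by Lemma \ref{diffeoLemma}.

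I expect the main obstacle to be the sign and twist bookkeeping in this term-matching rather than any conceptual difficulty. Concretely, three independent sources of $e^{-H}$-factors — the last-face twist from Lemma \ref{tauLemma}, the $\text{hol}_h$-conversion from Lemma \ref{holHLemma}, and the shuffle twist implicit in the product on $C_*^{e^{-H}}(S)$ in Corollary \ref{tauCor} — must combine into exactly the twisting built into the connection, composition and curvature of $\text{Rep}(\Omega_{x_0}^M(M))^{pre}[e^{-H}]$. Aligning the prefactors $(-1)^{\cdots}$ and the numeric identities such as $n(n+1) = p(p+1) + q(q+1) + 2pq$ used in Corollary \ref{tauCor} is routine but delicate, and is where the bulk of the work lies; once the dictionary between bar-construction terms and holonomy integrals is fixed, the $A_\infty$-relation reduces to the already-established identities \ref{holCor}, \ref{holHLemma}, \ref{tauLemma} and \ref{tauCor}, exactly as announced.
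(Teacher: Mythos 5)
Your proposal is correct and follows exactly the route of the paper's (very terse) proof: mimic Theorem \ref{Atheorem}, with properness and object-compatibility read off directly from the definitions, curvature compatibility obtained from the analogue of $d(PT^{\sigma})=\text{hol}^{\sigma}_{R}$ via Lemma \ref{holHLemma} and Lemma \ref{tauLemma}, and the $A_\infty$-relations assembled from Corollary \ref{holCor}, Lemma \ref{holHLemma}, Lemma \ref{tauLemma} and Corollary \ref{tauCor}. Your expansion is in fact more detailed than the paper's own argument, and your identification of where the quadratic composition terms (Corollary \ref{tauCor} via the shuffle decomposition) and the vertex-pair curvature component (vanishing by multiplicativity of parallel transport under concatenation) enter is accurate.
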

	\begin{lemma}
		\label{qEss}
		$\text{Rep}(M)^{0}[h] : \mathcal P(M)^{0}[h] \to \text{Rep}(\Omega_{x_0}^M(M))^{0}[e^{-H}]$ is essentially surjective.
		\begin{proof}
			This is analogous to the proof of Lemma \ref{lemmaQes}.
		\end{proof}
	\end{lemma}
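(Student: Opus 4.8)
The plan is to mirror the proof of Lemma \ref{lemmaQes}, the only genuine difference being that an object of $\text{Rep}(\Omega_{x_0}^M(M))^{0}[e^{-H}]$ records a single vector space (the intended fibre over $x_0$) together with the projective holonomy of based loops, rather than a fibre over every point. So I must first reconstruct the fibres at all other points by transporting along chosen paths, and only then glue by means of $\mu_E$. Let $(E,\mu_E)$ be such an object. Since $R_E = 0$, the map $\mu_E$ is a projective representation of the monoid of based loops with cocycle $e^{-H}$, i.e.\ $\mu_E(\gamma_1 * \gamma_2)$ equals $\mu_E(\gamma_1)\mu_E(\gamma_2)$ up to the scalar prescribed by the logarithmic curvature element $H$, itself defined from $h$ by $H(\tau) = -\int_I \tau^* \alpha^* \text{hol}_h$, and $\mu_E(c_{x_0}) = \mathbbm 1$.

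The construction: cover $M$ by charts $\varphi_\alpha : \mathbb R^n \to U_\alpha$, pick for each $\alpha$ a smooth path $p_\alpha$ from $x_0$ to the centre $\varphi_\alpha(0)$, and, exactly as in \ref{lemmaQes}, use the Poincaré primitive $\omega_\alpha$ with $d\omega_\alpha = h|_{U_\alpha}$ and $(\omega_\alpha)_x(x) = 0$ to equip the trivial bundle $U_\alpha \times E$ with the connection $d + \omega_\alpha$; its parallel transport along radial rays is then the identity. The transition function on $U_\alpha \cap U_\beta$ is defined by evaluating $\mu_E$ on the based loop obtained by concatenating $p_\alpha$, the radial ray to $x$, the reverse of the radial ray from the $\beta$-centre, and $\bar p_\beta$, multiplied by the scalar coming from $e^{-H}$ on the corresponding bar-element.

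The verifications then run parallel to \ref{lemmaQes}. First I would record the representation-side analogue of the smoothness lemma preceding \ref{lemmaQes}: viewed as a map out of the diffeological space of based loops, $\mu_E$ is smooth and satisfies $d\mu_E = \pm\,\text{hol}_h\,\mu_E$; this follows from the curvature condition together with equation \eqref{eqHDef} and Proposition \ref{holProp}, and it guarantees smoothness of the transition functions. The cocycle identity for the transitions follows from $R_E = 0$ (projective multiplicativity of $\mu_E$) together with the closedness of $h$, which makes the accumulated scalar factors combine to the trivial cocycle; since these scalars are built from $\text{hol}_h$, they are exactly the ones produced by the connections $d + \omega_\alpha$ on overlaps, so the local connections glue to a global projectively flat $\nabla$ of curvature precisely $h$. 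Finally, because the parallel transport of $d + \omega_\alpha$ along radial paths is trivial and $\mu_E$ was used for the gluing, the based-loop holonomy of $(V,\nabla)$ reproduces $\mu_E$, whence $\text{Rep}(M)^{0}[h](V,\nabla) = (V_{x_0}, PT) \cong (E,\mu_E)$.

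The main obstacle is the bookkeeping of the scalar curvature factors: I must check that the logarithmic curvature $H$, i.e.\ the scalars $e^{-H}$ twisting $\mu_E$, agrees on the nose with the scalars that the Poincaré forms $\omega_\alpha$ (equivalently $\text{hol}_h$) contribute to the parallel transport around the gluing loops. Matching these — so that the reconstructed bundle has curvature exactly $h$ and not merely projectively $h$, and so that $PT$ equals $\mu_E$ including the scalars — is the delicate point, and it is exactly the computation that the smoothness lemma and equation \eqref{eqHDef} isolate in the $\text{Loc}$ case.
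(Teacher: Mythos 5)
Your strategy is the one the paper's ``analogous'' is pointing at: the only genuinely new content relative to Lemma \ref{lemmaQes} is that an object of $\text{Rep}(\Omega_{x_0}^M(M))^{0}[e^{-H}]$ records a single vector space and the holonomy of based loops only, so fibres over other points must be manufactured from choices of base paths $p_\alpha$ and radial rays, with the same Poincar\'e primitives $\omega_\alpha$ and the same smoothness statement ($\mu_E$ is smooth on the diffeological loop space and satisfies $d\mu_E = \text{hol}_h\,\mu_E$, proved exactly as in the lemma preceding \ref{lemmaQes}) carrying the analytic weight. So the architecture is right.

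However, the justification you give at the very point you flag as delicate is wrong as stated. You claim the cocycle identity for the transition functions $g_{\beta\alpha}(x) = \mu_E(\ell_{\alpha\beta}(x))$ follows from ``$R_E=0$ together with the closedness of $h$, which makes the accumulated scalar factors combine to the trivial cocycle.'' Closedness of $h$ cannot do this: the scalars produced when one contracts loops are of the form $e^{\pm\int_\Sigma h}$ over the swept $2$-chains, and if these were controlled only by $dh=0$ they would assemble into the generally non-trivial class of $[h]$ --- precisely the gerbe obstruction --- and the bundle would fail to glue. The correct mechanism is degeneracy, not closedness: the loop $\ell_{\beta\gamma}(x) * \ell_{\alpha\beta}(x)$ differs from $\ell_{\alpha\gamma}(x)$ by the insertion of a retraced path $q^x_\beta * \overline{q^x_\beta}$ (out to $x$ via the $\beta$-chart and straight back), and the homotopy contracting a retraced path sweeps out a $2$-chain whose image lies in the image of the path itself, over which the $2$-form $h$ integrates to zero. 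This is the same observation the paper already uses in the proof of Lemma \ref{lemmaQes} (``$\int_{\Delta^2}\sigma^*h = 0$ if $\sigma$ factors through $I$''); combined with the strict multiplicativity of $\mu_E$ (the $(0)\otimes(0)$ component of $R_E=0$, since the curvature element $e^{-H}$ is trivial on tensors of $0$-chains), it yields the cocycle identity on the nose, with no scalar correction needed in the definition of $g_{\beta\alpha}$ at all. Once this is repaired, the rest goes through as you describe: $dg_{\beta\alpha} = (\ell_{\alpha\beta})^*(\text{hol}_h)\,g_{\beta\alpha} = (\omega_\alpha - \omega_\beta)\,g_{\beta\alpha}$ because $\omega_\alpha$ is exactly the pullback of $\text{hol}_h$ along the radial-ray family and the fixed arcs $p_\alpha$, $p_\beta$ do not vary with $x$, so the local connections $d+\omega_\alpha$ glue; and the identification of the resulting based holonomy with $\mu_E$ is the same degeneracy-plus-multiplicativity computation.
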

	To show quasi-fully faithfulness, we proceed as before, i.e., we reduce to the case of flat vector bundles. Let $(E, \nabla)$ and $(F, \nabla)$ be objects of $\mathcal P(M)^{0}[h]$. Then, $\text{Hom}(E,F)$ together with its induced connection is a flat vector bundle and we obtain an object $\text{Hom}(E,F)_{x_0}$ together with its induced parallel transport of $\text{PRep}^{0}(\Omega_{x_0}^M(M))[e^{0} = 1]$. Furthermore, we write $\mathbb K$ for the bundle $\mathbb K$ on $M$ with connection $\nabla = d$.
	\begin{lemma}
		\label{relateLemma2}
		There is an isomorphism of complexes
		\begin{nalign}
			\text{Hom}(B(S,e^{-H}), \text{Hom}_{\mathbb K}(E_x,F_y)) \cong \text{Hom}_{\mathbb K}(B(S,1), \text{Hom}(\mathbb K_x, [\text{Hom}(E,F)]_x))
		\end{nalign}
		making the diagram \\ \\
		\adjustbox{scale=0.9,left}{%
			\begin{tikzcd}[column sep=small]
				{\Omega^*(M, \text{Hom}(\mathbb K, \text{Hom}(E,F))) = \Omega^*(M, \text{Hom}(E,F))} \arrow[rrd, "{\text{Rep}^{0}(M)[0]}"] \arrow[dd, "{\text{Rep}^{0}(M)[h]}"'] &  &                                                                                                                 \\
				&  & \text{Hom}(B(S,1), \text{Hom}(\mathbb K_x, [\text{Hom}(E,F)]_x)) \arrow[lld, "\cong"] \\
				{\text{Hom}(B(S,e^{-H}), \text{Hom}_{\mathbb K}(E_x,F_x))}                                                   &  &                                                                                                                
			\end{tikzcd}
		}
		commute.
		\begin{proof}
			The proof is analogous to that of Lemma \ref{relateLemma}. The isomorphism is given by the linear extension of 
			\begin{align*} f \mapsto (\tau_1 \otimes ... \otimes \tau_n \mapsto PT^{-1}((\tau_k)_{(n_k)})...PT^{-1}((\tau_1)_{(n_1)}) f(\tau_1 \otimes ... \otimes \tau_k))\end{align*}
			where $\tau_i : \Delta^{n_i} \to \Omega^{M}_{x_0}(M)$ for $i = 1,...,k$ and $PT$ denotes the parallel transport of $F$.
		\end{proof}
	\end{lemma}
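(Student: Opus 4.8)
The plan is to mirror the proof of Lemma \ref{relateLemma} essentially verbatim, translating from simplices in $M$ to tensors of simplices in $\Omega_{x_0}^M(M)$ and from "parallel transport along a simplex" to "parallel transport of the target bundle $F$ along the last vertices". First I would record that the two morphism complexes share the same underlying graded vector space. Writing $S=\text{Sing}(\Omega_{x_0}^M(M))$, one has $C_*^{e^{-H}}(S)=C_*(S)$ as graded algebras, hence $B(S,e^{-H})=B(S,1)$ as graded coalgebras; moreover $\text{Hom}(E,F)_{x_0}=\text{Hom}_{\mathbb K}(E_{x_0},F_{x_0})=\text{Hom}(\mathbb K_{x_0},\text{Hom}(E,F)_{x_0})$. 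So in each degree both sides of the asserted isomorphism are the same space, and I define
\begin{align*}
P(f)(\tau_1\otimes\cdots\otimes\tau_k):=PT((\tau_k)_{(n_k)})^{-1}\cdots PT((\tau_1)_{(n_1)})^{-1}\circ f(\tau_1\otimes\cdots\otimes\tau_k),
\end{align*}
where $\tau_i:\Delta^{n_i}\to\Omega_{x_0}^M(M)$ and $PT$ is the parallel transport of $F$ along the loops given by the last vertices $(\tau_i)_{(n_i)}$. Since each $PT((\tau_i)_{(n_i)})$ is invertible, $P$ is an isomorphism of graded vector spaces, with inverse obtained by inserting $PT((\tau_i)_{(n_i)})$ in place of its inverse.

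The content of the lemma is then that $P$ is a chain map. The differential on the source is the connection of Definition \ref{RepDef} for the $e^{-H}$-curved representations $(E_{x_0},PT^E)$ and $(F_{x_0},PT^F)$; it is assembled from the $e^{-H}$-twisted bar differential $b$ on $B(S,e^{-H})$ together with the module actions $\mu_E=PT^E$ (acting on the right by precomposition) and $\mu_F=PT^F$ (acting on the left by postcomposition). The differential on the target is the same formula applied to the \emph{flat} bundle $\text{Hom}(E,F)$ and the source object $\mathbb K$; since $\mathbb K$ carries the trivial transport, its action is $\mu_{\mathbb K}=1$, so the target differential uses only the untwisted bar differential and the single left action $\mu_{\text{Hom}}=PT^{\text{Hom}}$. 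The two are intertwined by $P$ because the genuinely flat $\text{Hom}$-bundle satisfies, for every loop $\gamma$, the pointwise multiplicativity $PT^{\text{Hom}}(\gamma)(\psi)=PT^F(\gamma)\,\psi\,PT^E(\gamma)^{-1}$ (the scalar $h$-twists of $PT^E$ and $PT^F$ cancel), combined with the fact that the last-vertex twists built into the deformed face map $\partial_n^{e^{-H}}$ and the deformed product $*^{e^{-H}}$ are exactly the factors $e^{\int_I \tau_{(n-1,n)}^*\text{hol}_h}$. By Lemma \ref{tauLemma} and the definition $H(\tau)=-\int_I\tau^*\alpha^*\text{hol}_h$, these are precisely $e^{-H}$ evaluated on the relevant edges, so telescoping the conjugating factors $PT((\tau_i)_{(n_i)})^{-1}$ against the bar-twist turns the $e^{-H}$-twisted differential into the untwisted one.

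Finally I would check that the triangle commutes, i.e. $P\circ\text{Rep}^{0}(M)[h]=\text{Rep}^{0}(M)[0]$ on $\Omega^*(M,\text{Hom}(E,F))$. On a form $\phi\in\Omega^n(M,\text{Hom}(E,F))$ both functors are given by integrating holonomy forms over $\Delta^n$, so it suffices to prove, for each smooth $\tau:\Delta^n\to\Omega_{x_0}^M(M)$, the pointwise identity $PT((\tau)_{(n)})^{-1}\,\widetilde{\text{hol}}^{\tau}_{\phi}=\widetilde{\text{hol}}^{\tau,\text{Hom}}_{\phi}$, where the right-hand side is computed for the flat bundle $\text{Hom}(E,F)$ (so $h=0$ and $e^{s^\tau}=1$). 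This is the exact analogue of the "main observation" in Lemma \ref{relateLemma}; I would establish it by differentiating both sides with $\nabla_{\partial_t}$ and comparing at the terminal vertex, using Proposition \ref{holProp}, the defining equation of $s^\tau$, and the fact that the factor $e^{s^\tau}$ accounts precisely for the $h$-curvature discrepancy between $PT^{\text{Hom}}$ and $PT^F(-)PT^E(-)^{-1}$, together with equation \eqref{eqHDef} and Lemma \ref{holHLemma}.

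I expect the main obstacle to be the chain-map step: one must show that the purely combinatorial last-vertex twisting of $B(S,e^{-H})$ cancels exactly against conjugation by parallel transport, and this hinges on the identification of the abstract curvature element $e^{-H}$ with the geometric holonomy of $h$. Everything else—the bijectivity of $P$ and the triangle identity—is a routine transcription of Lemma \ref{relateLemma} once the loop-space bookkeeping (Lemma \ref{tauLemma} and Corollary \ref{tauCor}) is in place.
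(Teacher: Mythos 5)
Your proposal is correct and follows essentially the same route as the paper: the paper's own proof simply declares the argument analogous to Lemma \ref{relateLemma} and writes down exactly the conjugation-by-parallel-transport map $f \mapsto PT^{-1}((\tau_k)_{(n_k)})\cdots PT^{-1}((\tau_1)_{(n_1)})\, f(\tau_1\otimes\cdots\otimes\tau_k)$ that you define. Your additional detail on the chain-map verification (matching the combinatorial $e^{-H}$-twist against the geometric holonomy of $h$) and on the commuting triangle is a faithful expansion of what the paper leaves implicit.
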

	Let $\text{Rep}(\Omega_{x_0}^M(M))[e^{-H}] := \text{Tw}(\text{Rep}(\Omega_{x_0}^M(M))^{pre}[e^{-H}])$.
	Using Proposition 4.17 in \cite{AbadSchaetz}, we obtain that $\text{Rep}^{0}(M)[0]$ is a quasi-fully faithful. Combing this with Lemma \ref{relateLemma2}, we see that $\text{Rep}^{0}(M)[h]$ is quasi-fully faithful for every $h$. We thus obtain: 
	\begin{theorem}
		\label{hTheorem2}
		The induced $A_\infty$-functor
		\begin{align*}\text{Rep}(M)^{\infty}[h] := \text{Tw}(\text{Rep}(M)^{pre}[h]) : \mathcal P(M)^\infty[h] \to \text{Rep}(\Omega_{x_0}^M(M))^\infty[e^{-H}] \end{align*}
		of dg-categories is a dg-quasi equivalence.
		\begin{proof}
			We have shown that $\text{Rep}(M)^{0}[h]$ is quasi-fully faithful. By Lemma \ref{qEss} we have that $\text{Rep}(M)^{0}[h]$ is quasi-essentially surjective. By Lemma \ref{isSpectral} and Lemma \ref{repSpectral}, we may apply Theorem \ref{EquivalenceTheorem} to obtain that $\text{Rep}(M)^{\infty}[h]$ is an $A_\infty$-quasi-equivalence. 
		\end{proof}
	\end{theorem}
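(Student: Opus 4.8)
The plan is to deduce the statement from the general criterion of Theorem \ref{EquivalenceTheorem}, applied to the proper $A_\infty$-functor $\mathcal F := \text{Rep}(M)^{pre}[h] : \mathcal P(M)^{pre}[h] \to \text{Rep}(\Omega_{x_0}^M(M))^{pre}[e^{-H}]$ of cdg-categories, whose properness has just been established. To invoke that theorem I must verify that both source and target are sufficiently Maurer-Cartan and split, and that the induced functor on the zero-curvature subcategories $\mathcal F^0 = \text{Rep}(M)^0[h]$ is a quasi-equivalence of dg-categories; the conclusion that $\text{Tw}(\mathcal F) = \text{Rep}(M)^\infty[h]$ is an $A_\infty$-quasi equivalence then follows formally with no further work.

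The structural hypotheses are already in hand, so the first steps are purely a matter of bookkeeping. The source $\mathcal P(M)^{pre}[h]$ is Maurer-Cartan by Lemma \ref{lemmaP}, hence sufficiently Maurer-Cartan, and it is split by Lemma \ref{isSpectral}. The target $\text{Rep}(\Omega_{x_0}^M(M))^{pre}[e^{-H}]$ is Maurer-Cartan by Lemma \ref{RepIsCdg} and split by Lemma \ref{repSpectral}. Thus the only genuine task is to show that $\mathcal F^0$ is a quasi-equivalence, which I would split into essential surjectivity and quasi-full faithfulness.

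Essential surjectivity is supplied directly by Lemma \ref{qEss}. For quasi-full faithfulness the strategy is to reduce to the uncurved case by the same device used for the higher Riemann–Hilbert functor: given flat objects $(E,\nabla)$ and $(F,\nabla)$ of $\mathcal P(M)^0[h]$, the bundle $\text{Hom}(E,F)$ is again flat, and Lemma \ref{relateLemma2} exhibits an isomorphism of morphism complexes intertwining $\text{Rep}^0(M)[h]$ with the untwisted functor $\text{Rep}^0(M)[0]$ valued in representations of $\text{Sing}(\Omega_{x_0}^M(M))$ with trivial curvature. I would then appeal to the known fact (Proposition 4.17 in \cite{AbadSchaetz}) that $\text{Rep}^0(M)[0]$ is quasi-fully faithful; composing with the isomorphism of Lemma \ref{relateLemma2} transports this property to $\text{Rep}^0(M)[h]$ for arbitrary $h$.

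The hard part will be the quasi-full faithfulness for general $h$, which cannot be checked directly but must be bootstrapped from the $h = 0$ case through Lemma \ref{relateLemma2}. The delicate points are that the forms $\widetilde{\text{hol}}^\tau_{f_1,\dots,f_n}$ defining $\text{Rep}(M)^{pre}[h]$ carry the twisting factors $e^{s^\tau}$ arising from $h$, and that the isomorphism of Lemma \ref{relateLemma2} precisely absorbs these factors so that the relevant triangle commutes on the nose, allowing the quasi-isomorphism statement for the untwisted model to transfer. Once this is confirmed, all hypotheses of Theorem \ref{EquivalenceTheorem} are met and the proof concludes.
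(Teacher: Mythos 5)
Your proposal is correct and follows essentially the same route as the paper: verify the Maurer-Cartan and split hypotheses via Lemmas \ref{lemmaP}, \ref{RepIsCdg}, \ref{isSpectral}, and \ref{repSpectral}, obtain essential surjectivity from Lemma \ref{qEss} and quasi-full faithfulness by transporting Proposition 4.17 of \cite{AbadSchaetz} through the isomorphism of Lemma \ref{relateLemma2}, and then invoke Theorem \ref{EquivalenceTheorem}. No substantive difference from the paper's argument.
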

	\subsection{Remarks on extensions and curvature}
	\begin{remark}
		Given a simplicial monoid and curvature element $H$ we defined the cdg-category of $H$-curved representations $\text{Rep}(S)^{pre}[H]$. There is an alternative description of this category. Consider the cdg-category $\text{Rep}(S^H)^{pre}[0]$. We consider the $\mathbb K^*$-equivariant subcategory $\text{Rep}(S^H)^{pre}_{\mathbb K^*-\text{eq}}$ of $\text{Rep}(S^H)^{pre}[0]$. Its objects are those pairs $(E, \mu)$ for which we have $\mu(x,e) = x \mu(1,e)$ and similarly we may define $\mathbb K^*$-equivariant morphisms (they are $\mathbb K^*$-equivariant in each tensor factor). It is easy to verify that $\text{Rep}(S)^{pre}[H] \cong \text{Rep}(S^H)^{pre}_{\mathbb K^*-\text{eq}}$ as cdg-categories (and thus the associated categories of twisted complexes are dg-equivalent). A similar construction can be used to obtain an alternative description of curved $\infty$-local systems.\\ \\
		For projectively flat vector bundles the situation is slightly more subtle. Curvature elements are parametrised by $H^2(M; \mathbb K)$. As $\mathbb K$ is a field we have \[H^2(M; \mathbb K) = \mathbb K \otimes_{\mathbb Z} H^2(M; \mathbb Z).\]
		We now restrict to $\mathbb K = \mathbb C$. Given a closed 2-form $h$ we see that there are line bundles $(L_1,\nabla_1)$,...,$(L_n,\nabla_n)$ with curvatures $h_i = (\nabla_i)^2$ such that $h$ is cohomologous to $\sum_i \lambda_i h_i$ for some $\lambda_i \in \mathbb C$. As cohomologous curvatures give rise to equivalent cdg-categories, we assume $h = \sum_i \lambda_i h_i$. The line bundles give rise to a principle $(\mathbb C^*)^n$-bundle over $M$ which we denote by $\pi : M^h \to M$ and the corresponding right action is denoted by $\rho : M^h \times (\mathbb C^*)^n \to M^h$. Each $h_i$ becomes trivial on $M^h$, i.e., we fix $\omega_i$ satisfying $d\omega^i = \pi^* h_i$. We consider the cdg-category $\mathcal P(M^h)^{pre}[0]$. We may use the $\omega_i$ and $\rho$ to construct an equivariant subcategory of $\mathcal P(M^h)^{pre}[0]$ which is equivalent to $\mathcal P(M)^{pre}[h]$. 
		
	\end{remark}
	\subsection{Projective representations of the loop space}
	We already defined the category of projective representations $\text{Rep}(\Omega_{x_0}^M(M))$. We need to define an analogue for vector bundles.
	
	\begin{def1}
		\label{defLineDG}
		Let $h, h' \in Z\Omega^2(M)$ and $(L, \nabla_L)$ be a line bundle with curvature $h'-h$. We define a functor 
		\begin{align*} PS^{pre}[L] : \mathcal P(M)^{pre}[h] \to \mathcal P(M)^{pre}[h']\end{align*} of cdg-categories.
		On objects, we set
		\begin{align*}PS^{pre}[L](E, \nabla_E) := (E \otimes L, \nabla_E \otimes \mathbbm 1 + \mathbbm 1 \otimes \nabla_L)\end{align*} and on morphisms, we set 
		\begin{align*}\Omega^k(M, \text{Hom}(E,F)) \ni f \mapsto f \otimes \mathbbm 1_L \in \Omega^k(M, \text{Hom}(E \otimes L,F \otimes L)).\end{align*}
		It is straightforward to verify that this indeed defines a functor of cdg-categories. Let $(L', \nabla')$ be another such line bundle, and let $\phi : (L, \nabla) \to (L', \nabla')$ be an isomorphism of line bundles compatible with the connections  (i.e., a non-zero closed morphism $L \to L'$ in the category $\mathcal P(M)^{0}[h'-h]$). Then $\phi$ induces a natural transformation
		\begin{align*} PS^{pre}[\phi] : PS^{pre}[L] \Rightarrow PS^{pre}[L']\end{align*}
		by $PS^{pre}[\phi](E) := \mathbbm 1_{E} \otimes \phi : E \otimes L \to E \otimes L'$
		with inverse $PS^{pre}[\phi^{-1}]$.
		For $E^*$ an object of $\mathcal P(M)[h]$ and $F^*$ an object of $\mathcal P(M)^\infty[h']$, we set 
		\begin{align*} \text{Hom}^{h,h'}_L(E^*, F^*) := H^0(\text{Hom}_{\mathcal P(M)^\infty[h']}(PS[L](E^*), F^*)).\end{align*}
		We define the composition
		\begin{align*} \text{Hom}^{h,h''}_{L'}(F^*, G^*) \times \text{Hom}^{h,h'}_L(E^*, F^*) \to  \text{Hom}^{h,h''}_{L \otimes L'}(E^*, G^*)\end{align*}
		by identifying 
		\begin{align*}\text{Hom}^{h,h'}_L(E^*, F^*) = \text{Hom}^{h-h'+h'',h''}_L(PS[L']E^*, PS[L']F^*).\end{align*}
		Using that $PS[L] \circ PS[L'] = PS[L \otimes L']$ one sees that this composition is associative.
		Next, consider the disjoint union 
		\begin{align*}  \bigsqcup_L \text{Hom}^{h,h'}_L(E^*, F^*)\end{align*}
		indexed by all line bundles $(L, \nabla)$ with curvature $h'-h$. We define the following equivalence relation $\sim$ on this set. $f \in \text{Hom}^{h,h'}_L(E^*, F^*)$ and $g \in \text{Hom}^{h,h'}_{L'}(E^*, F^*)$ are identified if there exists an isomorphism of line bundles $\phi : (L, \nabla) \to (L', \nabla')$ 
		such that \begin{align*} g \circ H^0(PS[\phi](E^*)) = f.\end{align*}
		In particular, note that for every $f \in \text{Hom}^{h,h'}_L(E^*, F^*)$ we have $\lambda f \sim f$ for every $\lambda \in \mathbb K^*$.
	\end{def1}
	\begin{def1}
		We define the category $\text{PF}_{\infty}(M)$ of projectively flat graded bundles. The class of objects of $\text{PF}_{\infty}(M)$ is given by the disjoint union of the non-zero objects of $\mathcal P(M)[h]$ for all $h \in Z\Omega^2(M)$ . Let $E^* $ be an object of $\mathcal P(M)[h]$ and $F^*$ an object of $\mathcal P(M)[h']$. Then the morphism set between $E^*$ and $F^*$ is given by
		\begin{align*}  \text{Hom}_{\text{PF}_{\infty}(M)}(E^*, F^*) := \quot{\bigsqcup_L \text{Hom}^{h,h'}_L(E^*, F^*)}{\sim}.\end{align*}
		It is straightforward to verify that the above composition induces a composition in this category that is associative.
	\end{def1}
	\begin{lemma}
		Let $h, h' \in Z\Omega^2(M)$ and $(L, \nabla)$ a line bundle with curvature $h'-h$. Denote the parallel transport of $(L, \nabla)$ by $PT$. Note that we can canonically identify $\text{Aut}(L_{x_0}) = \mathbb K^*$. We define a twist $a$ from $e^{-H}$ to $e^{-H'}$ by setting
		\begin{align*}a(\gamma) := PT(\gamma) \in \mathbb K^*\end{align*}
		for $\gamma \in \Omega^M_{x_0}(M)$.  
		Then any isomorphism $\phi : L_{x_0} \to \mathbb K$ induces a natural isomorphism of functors
		\begin{align*} \eta_\phi : \text{PRep}(M)[h'] \circ PS^{pre}[L] \Rightarrow  \text{PRep}^{pre}(\Omega^M_{x_0}(M)[a])\circ \text{PRep}^{pre}(M)[h]\end{align*}
		between the categories $ \mathcal P^{pre}(M)[h]$ and $\text{PRep}^{pre}(\Omega^M_{x_0}(M))[e^{-H'}]$. 
		\begin{proof}
			As $L$ has curvature $h'-h$, we obtain that $a$ is indeed a twist from $e^{-H}$ to $e^{-H'}$. Let $(E, \nabla)$ be an object of $P^{pre}_\infty(M)[h]$. On the left side, we obtain the object $(E_{x_0} \otimes L_{x_0}, PT_E \cdot PT_L)$ and on the right side, we obtain $(E_{x_0}, PT_E \cdot a)$. $\mathbbm 1_{E} \otimes \phi$ now provides an isomorphism between them. It is clear that this assembles to give a natural isomorphism.
		\end{proof}
	\end{lemma}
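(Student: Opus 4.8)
The plan is to construct $\eta_\phi$ explicitly, with its only nontrivial component living in arity zero, and to reduce the $A_\infty$-naturality equations to a single factorization property of the holonomy forms. First I would check that $a$ really is a twist from $e^{-H}$ to $e^{-H'}$, i.e.\ that $b(a) = (e^{-H})^{-1} e^{-H'} = e^{H-H'}$ in $B^2(\Omega^M_{x_0}(M), \mathbb K^*)$. On the part dual to $s\bar C_0 \otimes s\bar C_0$ (pairs of loops) this is immediate from the multiplicativity of the parallel transport of a line bundle, $PT_L(\gamma * \gamma') = PT_L(\gamma)\, PT_L(\gamma')$, so that component of $b(a)$ is trivial, matching the agreement of the corresponding components of $H$ and $H'$. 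On the part dual to $s\bar C_1$ (a homotopy $\tau$ between two loops) I would use the relation between the $L$-holonomy and its curvature: the ratio $PT_L(\partial^0\tau)/PT_L(\partial^1\tau)$ equals $\exp(\int_I \tau^* \alpha^* \text{hol}_{h'-h})$, which is exactly $e^{H(\tau)-H'(\tau)}$ by the definition $H(\tau) = -\int_I \tau^* \alpha^* \text{hol}_h$ and the linearity of $\text{hol}$ in the scalar form. This is the scalar specialization of the computation in equation \eqref{eqHDef} together with Proposition \ref{holProp}, so $a$ is a twist.

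Next I would define $\eta_\phi$. On an object $(E,\nabla)$ the left composite produces $((E \otimes L)_{x_0}, PT_{E\otimes L})$, and since $L$ is a line bundle and $\nabla_{E\otimes L} = \nabla_E \otimes \mathbbm 1 + \mathbbm 1 \otimes \nabla_L$, the parallel transport factors as $PT_E \otimes PT_L$; the right composite produces $(E_{x_0}, PT_E \cdot a)$ with $a = PT_L$ by Definition \ref{twistMap}. Under the canonical identification $E_{x_0} \otimes \mathbb K \cong E_{x_0}$ the map $\mathbbm 1_E \otimes \phi : E_{x_0}\otimes L_{x_0} \to E_{x_0}$ carries $PT_E \otimes PT_L$ to $PT_E \cdot PT_L(\gamma)$, so it is an isomorphism of projective representations. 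I set $\eta_{\phi,(E,\nabla)} := \mathbbm 1_E \otimes \phi$ and declare the higher components $\eta_\phi(f_1,\dots,f_n) := 0$ for $n \geq 1$; this is proper, and the arity-zero equation $\nabla(\eta_E) = \eta(R_E)$ of Definition \ref{Ainftynatural} holds because $\mathbbm 1_E \otimes \phi$ is closed (it intertwines the two $\mu$'s) while $\eta(R_E) = 0$ by properness.

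With the higher components vanishing, the $A_\infty$-naturality identity collapses, for each $n$, to the strict intertwining relation $\eta_E \circ \mathcal F(f_1,\dots,f_n) = \pm\, \mathcal G(f_1,\dots,f_n)\circ \eta_{E'}$, where $\mathcal F = \text{Rep}(M)^{pre}[h'] \circ PS^{pre}[L]$ and $\mathcal G = \text{Rep}^{pre}(\Omega^M_{x_0}(M)[a]) \circ \text{Rep}(M)^{pre}[h]$. Here $\mathcal F(f_1,\dots,f_n)(\tau)$ integrates the $h'$-normalized form $\widetilde{\text{hol}}^{\tau}_{f_1\otimes \mathbbm 1_L,\dots,f_n\otimes \mathbbm 1_L}$ for the bundle $E\otimes L$, whereas $\mathcal G(f_1,\dots,f_n)(\tau)$ integrates the $h$-normalized $\widetilde{\text{hol}}^{\tau}_{f_1,\dots,f_n}$ for $E$ and then multiplies by the twist value $a(\tau_{(n)}) = PT_L(\tau_{(n)})$. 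Since the connection on $E\otimes L$ splits and each $f_i \otimes \mathbbm 1_L$ acts only on the first factor, the defining ODE of Proposition \ref{holProp} shows that the unnormalized form factors as $\widetilde{\text{hol}}'^{\tau}_{f_1,\dots,f_n}$ tensored with the $L$-parallel transport. The difference of normalizations satisfies $d(s^{\tau}_{h'} - s^{\tau}_{h}) = -\widetilde{\text{hol}}'^{\tau}_{h'-h}$, the same equation that governs $PT_L$ (the curvature of $L$ being $h'-h$); hence $e^{s^{\tau}_{h'}-s^{\tau}_{h}}$ and the $L$-transport combine into the constant scalar $a(\tau_{(n)})$ after applying $\phi$ and evaluating at the basepoint vertex, using Lemma \ref{holHLemma}. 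Thus the two sides agree after integration.

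The step I expect to be the main obstacle is precisely this holonomy factorization: one must keep careful track of the two distinct normalizations $s^{\tau}_h$ and $s^{\tau}_{h'}$, of the scalar $L$-holonomy contributions, and of the twist $a$, and verify that they conspire so that the single isomorphism $\mathbbm 1_E \otimes \phi$ intertwines $\mathcal F$ and $\mathcal G$ uniformly in $\tau$ and in the arity $n$. Everything else — well-definedness of the twist, the object-level isomorphism, and the collapse of the naturality equation once the higher components are set to zero — is routine bookkeeping once this factorization is established.
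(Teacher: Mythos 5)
Your proposal is correct and follows the same route as the paper, which only records the object-level comparison and dismisses the naturality check as clear; you supply exactly the missing content, namely that one may take all higher components of $\eta_\phi$ to vanish and verify the resulting strict intertwining via the factorization $\text{hol}^{E\otimes L}_{f_1\otimes\mathbbm 1,\dots,f_n\otimes\mathbbm 1}=\text{hol}^{E}_{f_1,\dots,f_n}\otimes PT_L$ together with the observation that $e^{s^\tau_{h'}-s^\tau_h}\cdot(\tau^*\alpha^*PT_L)$ is constant equal to $a(\tau_{(m)})$ since both factors are governed by $\text{hol}_{h'-h}$ and the normalizations vanish at the last vertex. The only blemish is the reference to Lemma \ref{holHLemma} for this last step; the relevant fact is rather the identity $d(PT_L)=\text{hol}_{h'-h}\,PT_L$ from the unnumbered lemma preceding Lemma \ref{lemmaQes} (or a direct ODE-uniqueness argument), but this does not affect the validity of the argument.
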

	\begin{def1}
		We define a functor 
		\begin{align*}\text{PRep}(M) : \text{PF}_{\infty}(M) \to \text{LPRep}(\Omega^M_{x_0}(M)).\end{align*}
		On objects, it is induced by the functors $\text{PRep}(M)[h]$ for $h \in Z\Omega^2(M)$. Let $E^* $ be an object of $\mathcal P(M)[h]$ and $F^*$ an object of $\mathcal P(M)[h']$. Furthermore, let 
		\begin{align*} [f] \in \text{Hom}_{\text{PF}_{\infty}(M)}(E^*, F^*) \end{align*}
		represented by some $f \in \text{PHom}^{h,h'}_L(E^*, F^*)$. Applying $H^0(\mathcal P(M)[h'])$ to \begin{align*}f \in H^0(\text{Hom}_{\mathcal P(M)[h']}(PS[L](E^*), F^*))\end{align*} yields an element of 
		\begin{align*} H^0(\text{Hom}_{\text{PRep}^{pre}(\Omega^M_{x_0}(M))[e^{-H'}]}(\text{PRep}(M)[h']PS[L](E^*), \text{PRep}(M)[h']F^*)) \end{align*}
		which can be identified with 
		\begin{align*}H^0(\text{Hom}_{\text{PRep}^{pre}(\Omega^M_{x_0}(M))[e^{-H'}]}(\text{PRep}^{pre}(\Omega^M_{x_0}(M)[a]) \text{PRep}^{pre}(M)[h]E^*,\text{PRep}(M)[h']F^*))\end{align*}
		by the previous Lemma via any isomorphism $\phi : L_{x_0} \to \mathbb K$. This provides an element of 
		\begin{align*} \text{PHom}^{H, H'}_a(\text{PRep}^{pre}(M)[h]E,\text{PRep}^{pre}(M)[h']F) \end{align*} which we now project to 
		\begin{align*} \text{Hom}_{\text{LPRep}(\Omega^M_{x_0}(M))}(\text{PRep}^{pre}(M)[h]E,\text{PRep}(M)[h']F).\end{align*} As we have identified $\mathbb K^*$ to $1$ this element does not depend on the choice of $\phi$. It is easy to verify that this construction also does not depend on the representative $f$, as isomorphic line bundles give rise to the same twist $a$. One can also check that it is compatible with composition. 
	\end{def1}
	We can now state and prove our main result.
	\begin{theorem}
		\label{PFPrep}
		\begin{align*}\text{PRep}(M) : \text{PF}_{\infty}(M) \to \text{LPRep}(\Omega^M_{x_0}(M))\end{align*}
		is an equivalence of categories.
		\begin{proof}
			\begin{enumerate}
				\item $\text{PRep}(M)$ is essentially surjective: Let $E^*$ be an object of $\text{Rep}(\Omega^M_{x_0}(M))[e^{-H}]$ for some logarithmic curvature element $H$. If $H$ is of the form \begin{align*}H(\tau) := H_h(\tau) := - \int_I \tau^* \alpha^* \text{hol}_h\end{align*} for some $h \in Z\Omega^2(M)$, the claim follows from Theorem \ref{hTheorem2}. Therefore, it suffices to show that each object $E^*$ with curvature $e^{-H}$ is isomorphic to an object with curvature $e^{-H_h}$ for some $h \in Z\Omega^2(M)$. By definition of the morphism sets, we have to show that $e^{-H}$ is cohomologous to some $e^{-H_h}$. The quasi-fully faithfulness of $\text{PRep}^0(M)[h = 0]$ implies that $H = H_h + \nabla a$ for some $h \in Z\Omega^2(M)$ and $a \in \text{Hom}^1(B(\Omega_{x_0}^M(M),1), \mathbb K)$. Thus, $H$ and $H_h$ are cohomologous and therefore so are $e^{-H}$ and $e^{-H_h}$.
				\item Fullness of $\text{PRep}(M)$ follows from the fullness $H^0(\text{PRep}(M)[h])$, combined with the following observation: Let $a$ be any twist from $e^{-H_{h}}$ to $e^{-H_{h'}}$. Then $(\mathbb K, a)$ defines an object of $\text{PRep}^0(\Omega^M_{x_0}(M))[e^{-H_{h'-h}}]$ and it is isomorphic to an object
				\begin{align*} \text{Rep}^0(M)[h'-h](L, \nabla) \end{align*}
				where $(L, \nabla)$ is some line bundle since $\text{Rep}^0(M)[h'-h]$ is essentially surjective.
				\item Faithfulness of $\text{Rep}(M)$ follows similarly: $H^0(\text{Rep}(M)[h'-h])$ being an equivalence implies that two line bundles with curvature $h'-h$ are isomorphic if and only if they give rise to the same twist. Furthermore, the automorphisms of any line bundle with connection are precisely the non-zero constant maps (as $M$ is connected).
			\end{enumerate}
		\end{proof}
	\end{theorem}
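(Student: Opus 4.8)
The plan is to verify that $\text{PRep}(M)$ is essentially surjective, full, and faithful, reducing each property to the fixed-curvature quasi-equivalence of Theorem \ref{hTheorem2} together with the twist bookkeeping built into the definitions of $\text{PF}_\infty(M)$ and $\text{LPRep}(\Omega^M_{x_0}(M))$. Throughout I would fix objects $E^*$ of $\mathcal P(M)[h]$ and $F^*$ of $\mathcal P(M)[h']$ and recall that by construction $\text{PRep}(M)$ acts on $E^*$ through the functor $\text{Rep}(M)[h]$, sending the curvature $h$ to the logarithmic curvature element $H_h(\tau) = -\int_I \tau^*\alpha^*\text{hol}_h$, and acts on morphisms through the line-bundle/twist dictionary of the preceding lemma.

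For essential surjectivity, the one genuinely new ingredient is a statement about the lattice of curvatures. Given an object of $\text{Rep}(\Omega^M_{x_0}(M))[e^{-H}]$ for an arbitrary logarithmic curvature element $H$, I would first show that $H$ is cohomologous to $H_h$ for some closed $h \in Z\Omega^2(M)$. This follows from the quasi-fully faithfulness of $\text{Rep}^0(M)[0]$: via Lemma \ref{relateLemma2} the first component of this functor is, on the trivial line bundle, the de Rham comparison $\Omega^*(M,\mathbb K) \to \text{Hom}(B(\Omega^M_{x_0}(M),1),\mathbb K)$, a quasi-isomorphism, so in degree two every class is represented by some $H_h$; concretely $H = H_h + \nabla a$ for some $h$ and some $a \in \text{Hom}^1(B(\Omega^M_{x_0}(M),1),\mathbb K)$, whence $e^{-H}$ and $e^{-H_h}$ are cohomologous and the categories $\text{Rep}(\Omega^M_{x_0}(M))[e^{-H}]$ and $\text{Rep}(\Omega^M_{x_0}(M))[e^{-H_h}]$ are equivalent via the associated twist. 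Once the curvature has been brought to the form $H_h$, Theorem \ref{hTheorem2} guarantees that $H^0(\text{Rep}(M)[h])$ is essentially surjective, so the object is isomorphic to the image of some object of $\mathcal P(M)[h]$ and hence lies in the essential image of $\text{PRep}(M)$.

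For fullness and faithfulness I would work one twist at a time. A morphism in $\text{LPRep}$ between the images of $E^*$ and $F^*$ is a class in some $\text{PHom}^{H_h,H_{h'}}_a$, for a twist $a$ from $e^{-H_h}$ to $e^{-H_{h'}}$. Such a twist is exactly a flat object $(\mathbb K, a)$ of $\text{Rep}^0(\Omega^M_{x_0}(M))[e^{-H_{h'-h}}]$, and by the essential surjectivity of $\text{Rep}^0(M)[h'-h]$ (Lemma \ref{qEss}) it is the image of a line bundle $(L,\nabla)$ of curvature $h'-h$ whose parallel transport reproduces $a$; this is precisely the content of the lemma producing $\eta_\phi$. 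Thus every twist arises from a line bundle, and fullness of $\text{PRep}(M)$ reduces to fullness of $H^0(\text{Rep}(M)[h'])$ on the underlying morphism, which holds by Theorem \ref{hTheorem2}. Faithfulness is the dual bookkeeping: faithfulness of $H^0(\text{Rep}(M)[h'])$ forces two representatives in the same $\text{Hom}^{h,h'}_L$ to coincide, while the fact that $H^0(\text{Rep}(M)[h'-h])$ is an equivalence shows that two line bundles of curvature $h'-h$ inducing the same twist are isomorphic; finally, since $M$ is connected the automorphisms of any line bundle with connection are the nonzero constants, which is exactly the $\mathbb K^*$ quotiented out in both $\text{PHom}$ and in the relation $\sim$, so no identification is lost or spuriously introduced.

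I expect the main obstacle to be the essential-surjectivity step, specifically the reduction of an arbitrary logarithmic curvature element to one of the form $H_h$. Everything else is a transcription of the fixed-curvature equivalence through the twist formalism, but this step genuinely requires that the de Rham comparison be surjective on $H^2$, i.e.\ that the group of logarithmic curvature elements is exhausted by the classes $[H_h]$; keeping this compatible with the $\mathbb K^*$-quotients, so that the induced identifications $\text{Rep}(\Omega^M_{x_0}(M))[e^{-H}] \simeq \text{Rep}(\Omega^M_{x_0}(M))[e^{-H_h}]$ behave naturally under composition, is where the care is needed.
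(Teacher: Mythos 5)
Your proposal is correct and follows essentially the same route as the paper: reduce an arbitrary logarithmic curvature element to one of the form $H_h$ via the quasi-fully faithfulness of the degree-zero Riemann--Hilbert functor, then handle fullness and faithfulness by realizing every twist as the parallel transport of a line bundle (using essential surjectivity of $\text{Rep}^0(M)[h'-h]$) and invoking Theorem \ref{hTheorem2} at fixed curvature. The extra care you flag about the $\mathbb K^*$-quotients and the reduction to $H_h$ is exactly where the paper's argument also concentrates.
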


	\section{1-categorical results}
	In this chapter we specialize the obtained results to ungraded objects. In particular, we want to understand the relation between projectively flat vector bundles and projective representations of the fundamental group of $M$. We start with a more general construction for a simplicial monoid $S$ as before. 
	\subsection{Representations of simplicial monoids and of monoids}
	\begin{def1}
		Let $\quot{S_0}{\sim}$ be the set of path components of $S$ which is again a monoid. In the case of interest, this recovers the fundamental group of $M$. Essentially, we want to relate representations of $\quot{S_0}{\sim}$ with representations of $S$. We could in principle now define categories of curved representations of arbitrary monoids from scratch, but it in our framework it is formally more straightforward to consider $\quot{S_0}{\sim}$ itself as a simplicial monoid (set $(\quot{S_0}{\sim})_n := \quot{S_0}{\sim}$ for every $n$ and all face and degeneracy maps are the identity). Then there is a map of simplicial monoids
	\begin{nalign}
	    \pi_0 : S_* \to (\quot{S_0}{\sim})_*, \sigma \mapsto 
		\begin{cases}
			[\sigma] \text{ if } \sigma \in S_0\\
			[\sigma_{(0)}] \text{ else}
		\end{cases}
	\end{nalign}
	This works because $\sigma_{(0)}$ is homotopic to $\sigma_{(i)}$ for all $i$, witnessed by $\sigma_{(0,i)}$.
	All the constructions in the previous chapter are functorial. Given a curvature element $H$ of $\quot{S_0}{\sim}$ (which is just a 2-cocycle in the group complex of  $\quot{S_0}{\sim}$) we obtain a cdg-functor
	\[ \pi_0(S)^{pre}[H] : \text{Rep}^{pre}(\quot{S_0}{\sim})[H] \to \text{Rep}^{pre}(S)[\pi_0^*H].\]
	\end{def1}
	\begin{proposition}
		\label{propRep0}
		The functor
		\[ H^0(\pi_0(S)^0[H]) :H^0(\text{Rep}^{0}(\quot{S_0}{\sim})[H]) \to H^0(\text{Rep}^{0}(S)[\pi_0^*H])\]
		is an equivalence.
		\begin{proof}
			Let $(E, \mu_E)$ be an object of $\text{Rep}^{0}(S)[\pi_0^*H]$. We define an object $(E, \mu_E')$ of  $\text{Rep}^{0}(\quot{S_0}{\sim})[H]$. For $\gamma \in S$ we set
			\begin{nalign}
				\mu_E'([\gamma]) = \mu_E(\gamma).
			\end{nalign}
			This is well-defined because the curvature element $\pi_0^*H$ is trivial on $S_1$. It is straightforward to see that this indeed defines an object of $\text{Rep}^{0}(\quot{S_0}{\sim})[H]$ which maps to $(E, \mu_E)$.
			Moreover one easily checks that the functor is fully faithful.
		\end{proof}
	\end{proposition}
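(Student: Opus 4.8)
The plan is to prove the two halves of an equivalence separately: that $H^0(\pi_0(S)^0[H])$ is essentially surjective (in fact bijective on objects) and that it is fully faithful. I would begin with the objects. Given $(E,\mu_E)$ in $\text{Rep}^{0}(S)[\pi_0^*H]$, the vanishing of the curvature $R_E$ from \eqref{EqCurvature} splits, as always, into the two components of Definition \ref{RepDef}: the part $R_E(a)(v)=b(a).v$ on $s\bar C^H_2(S)=\bar C^H_1(S)$, and the part $R_E(a_1\otimes a_2)=(a_1a_2).v-a_1.(a_2.v)$ on $s\bar C^H_1(S)^{\otimes 2}$. Since $\pi_0^*H$ is trivial on $S_1$, the first condition reads $\mu_E(\partial_0 a)=\mu_E(\partial_1 a)$ for every edge $a\in S_1$, i.e. $\mu_E$ is constant along edges and hence on each path component; the second says $\mu_E$ is projectively multiplicative with the cocycle coming from $H$. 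Therefore $\mu_E'([\gamma]):=\mu_E(\gamma)$ is well defined, and I would check that $(E,\mu_E')$ is an object of $\text{Rep}^{0}(\quot{S_0}{\sim})[H]$: its only curvature contribution of the first type is automatically zero because $\bar C_1(\quot{S_0}{\sim})=0$ for the constant simplicial monoid, and the multiplicative part is exactly the surviving half of $R_E=0$. By construction $\pi_0(S)^0[H]$ sends $(E,\mu_E')$ back to $(E,\mu_E)$, so the functor is onto objects.

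For fully faithfulness I would first reduce the Hom-complexes. Because $B(\quot{S_0}{\sim},H)$ and $B(S,\pi_0^*H)$ are non-negatively graded with degree-$0$ part equal to the unit $\mathbb K$, one has $\text{Hom}^{-1}=0$ and $\text{Hom}^0(E,F)=\text{Hom}_{\mathbb K}(E,F)$ on both sides, so $H^0$ of each morphism complex is just $\ker(\nabla^0)$. Evaluating the connection of Definition \ref{RepDef} on a constant $\phi\in\text{Hom}_{\mathbb K}(E,F)$ gives $(\nabla\phi)(a)=\mu_F(a)\circ\phi-\phi\circ\mu_E(a)$ for $a\in S_0$ (the $b$-term drops out since $d$ vanishes on vertices), so $\ker(\nabla^0)$ is precisely the space of intertwiners. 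On the source side the same computation identifies $H^0(\text{Hom})$ with the $\mu'$-intertwiners. Since $\mu_E=\mu_E'\circ\pi_0$, $\mu_F=\mu_F'\circ\pi_0$ and $\pi_0\colon S_0\to\quot{S_0}{\sim}$ is surjective, the two intertwining conditions coincide, and the functor — being induced by precomposition with $B(\pi_0)$, which is the identity on the degree-$0$ unit — acts as the identity on $\text{Hom}_{\mathbb K}(E,F)$. Hence it restricts to the identity between the two copies of the intertwiner space, giving an isomorphism on $H^0(\text{Hom})$.

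I expect the only genuinely delicate point to be the \emph{well-definedness} of $\mu_E'$ together with the bookkeeping of the twisted multiplication. One must verify that the product $g\cdot g'$ in $C^H_*(S)$ carries the factor $H(g\otimes g')$ coming from the twisted face map, so that the ``projectively multiplicative'' reading of $R_E=0$ matches the cocycle $H$ on $\quot{S_0}{\sim}$ under $\pi_0$, and that invariance of $\mu_E$ along a single edge propagates to the whole path component (the equivalence relation defining $\quot{S_0}{\sim}$ is generated by edges, so this is immediate from the first curvature condition applied to each edge of a zig-zag). Everything else is formal: the reduction of $H^0(\text{Hom})$ to intertwiners and the observation that the functor is the identity on $\text{Hom}^0$ make fully faithfulness essentially automatic, which is why the statement can be dispatched quickly once these compatibilities are nailed down.
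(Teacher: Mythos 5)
Your argument is correct and follows the same route as the paper's (much terser) proof: define $\mu_E'([\gamma]):=\mu_E(\gamma)$, use triviality of $\pi_0^*H$ on $S_1$ for well-definedness, and check fully faithfulness directly. You have simply spelled out the details the paper leaves as ``straightforward'' — the decomposition of $R_E=0$ into the edge-invariance and projective-multiplicativity conditions, the vanishing of $\bar C_1(\quot{S_0}{\sim})$ for the constant simplicial monoid, and the identification of $H^0(\mathrm{Hom})$ with intertwiners on both sides.
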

	We have shown that $H^0(\pi_0(S)^0[H])$ is an equivalence. Note that we do not claimJ. Block and C. Daenzer, Mukai duality for gerbes with connection
	 that $\pi_0(S)^0[H]$ is a dg-quasi equivalence. It is essentially surjective but it does not induce isomorphisms on higher cohomology groups.
	\begin{corollary}
		Let $H$ be a 2-cochain in the group complex of $\pi_1(M)$. Then there is an equivalence of categories
		\[ H^0(\text{Rep}^{0}(\pi_1(M))[H]) \to H^0(\text{Rep}^{0}(\Omega_{x_0}^M M)[\pi_0^*H])\]
		where the first category can be identified with the category of $H$-projective representations of $\pi_1(M)$.
	\end{corollary}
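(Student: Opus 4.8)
The corollary is an immediate specialization of Proposition \ref{propRep0} to the case $S = \text{Sing}(\Omega_{x_0}^M(M))$, so the plan is to verify that the hypotheses of that proposition apply and to identify the two categories appearing there with the ones in the corollary statement. First I would recall that $\text{Sing}(\Omega_{x_0}^M(M))$ is a simplicial monoid (established in Definition \ref{hochDefs}), so that $\quot{S_0}{\sim}$ makes sense as a monoid and, viewed as a constant simplicial monoid, fits into the framework of the previous subsection. The central identification to make is that the monoid of path components $\quot{S_0}{\sim}$ of $\text{Sing}(\Omega_{x_0}^M(M))$ is precisely the fundamental group $\pi_1(M; x_0)$. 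This is the standard fact that homotopy classes of loops based at $x_0$ form the fundamental group, where the monoid operation induced by concatenation $*$ becomes the usual group multiplication; since every loop has a homotopy inverse, the monoid $\quot{S_0}{\sim}$ is in fact a group, namely $\pi_1(M)$.

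Once this identification is in place, I would apply Proposition \ref{propRep0} directly with this $S$ and with the given $2$-cochain $H$ on $\pi_1(M)$ (interpreted as a $2$-cocycle in the group complex, i.e.\ a curvature element of the constant simplicial monoid $\pi_1(M)$). The proposition yields that
\begin{align*} H^0(\pi_0(S)^0[H]) : H^0(\text{Rep}^{0}(\pi_1(M))[H]) \to H^0(\text{Rep}^{0}(\text{Sing}(\Omega_{x_0}^M(M)))[\pi_0^* H]) \end{align*}
is an equivalence, which is exactly the claimed equivalence after unwinding the notation $\Omega_{x_0}^M M = \text{Sing}(\Omega_{x_0}^M(M))$. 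It then remains to justify the final clause: that $H^0(\text{Rep}^{0}(\pi_1(M))[H])$ can be identified with the category of $H$-projective representations of $\pi_1(M)$. Here I would observe that for a constant simplicial monoid $G = \pi_1(M)$, the complex of non-degenerate chains $C_*(G)$ is concentrated in degree $0$ (all positive simplices are degenerate), so the bar construction $B(G, H)$ reduces to the ordinary (reduced) bar complex of the group, and an object of $\text{Rep}^{0}(G)[H]$ with zero curvature is exactly a vector space $E$ with a map $\mu_E : G \to \text{Hom}(E,E)$ satisfying $\mu_E(g)\mu_E(g') = H(g,g')\,\mu_E(gg')$ (by Equation \eqref{EqCurvature}), which is the defining relation of an $H$-projective representation. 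Passing to $H^0$ identifies morphisms with intertwiners up to the appropriate coboundary, giving the usual category of projective representations.

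**The main obstacle.** The genuinely nontrivial input is already contained in Proposition \ref{propRep0}, so no further hard work is needed for the equivalence itself; the corollary is essentially a dictionary translation. The one point requiring slight care is the identification of $\quot{S_0}{\sim}$ with $\pi_1(M; x_0)$ together with the compatibility of the twisted-coefficient data: I would need to confirm that the pullback $\pi_0^* H$ of a group $2$-cocycle along the map $\pi_0 : S \to \pi_1(M)$ of Definition preceding the proposition agrees with the curvature element on $\text{Sing}(\Omega_{x_0}^M(M))$ under which the representations are taken, and that the well-definedness argument in the proof of Proposition \ref{propRep0} (that $\pi_0^* H$ is trivial on $S_1$, so $\mu_E'$ is independent of the chosen representative loop) indeed goes through verbatim. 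Since $\pi_0^* H$ is pulled back from a constant simplicial set it is automatically degenerate on all positive-dimensional simplices, so this triviality on $S_1$ holds, and the argument closes with no essential difficulty.
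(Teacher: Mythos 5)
Your proposal is correct and matches the paper's intent exactly: the corollary is stated without proof precisely because it is the specialization of Proposition \ref{propRep0} to $S = \text{Sing}(\Omega_{x_0}^M(M))$, using that $\quot{S_0}{\sim} \cong \pi_1(M;x_0)$ and that for the constant simplicial monoid the bar construction collapses to the ordinary group bar complex, so that zero-curvature objects are exactly $H$-projective representations. Your additional checks (triviality of $\pi_0^* H$ on $S_1$, the identification via Equation \eqref{EqCurvature}) are the right points of care and go through as you describe.
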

	We would further like to relate the category $H^0(\text{Rep}^{0}(\pi_1(M))[H])$ with the category of projectively flat vector bundles for some fixed curvature $h$. For this to work, we need to find a curvature element $H$ of $\pi_1(M)$ such that $\pi_0^*H$ and $e^{-H_h}$ are cohomologous as curvature elements of $\text{Sing}(\Omega_{x_0}^M(M))$. Moreover, we need to understand which kind of curvature elements of $\pi_1(M)$ correspond to logarithmic curvature elements of $\text{Sing}(\Omega_{x_0}^M(M))$. From now on let $\mathbb K = \mathbb C$ and we again consider an arbitrary simplicial monoid $S$.
	\begin{proposition}
		\label{propLongExact}
		Let $B_B^*(\quot{S_0}{\sim}, \mathbb C)$ be the sub-cochain complex of $B^*(\quot{S_0}{\sim}, C)$ consisting of those cochains that map into the image of $B^*(\quot{S_0}{\sim}, \mathbb C^*) \to B^*(S, \mathbb C^*)$ under $B^*(\exp) : B^*(\quot{S_0}{\sim}, \mathbb C) \to B^*(\quot{S_0}{\sim}, \mathbb C^*)$. Then there is a short exact sequence of cochain-complexes
		\[ 0 \to B^*(S, \mathbb Z) \to B_B^*(S, \mathbb C) \to B^*(\quot{S_0}{\sim}, \mathbb C^*) \to 0.\]
		Consequently, there is a long exact sequence
		\[ ... \to H^i(S, \mathbb Z) \to H_B^i(S, \mathbb C) \to H^i(\quot{S_0}{\sim}, \mathbb C^*) \to H^{i+1}(S, \mathbb Z) \to ... \]
		\begin{proof}
			The map of simplicial monoids
			\[ S \to \quot{S_0}{\sim}\]
			is levelwise surjective. Consider the commutative diagram
			\[
			\begin{tikzcd}
				& 0 \arrow[d]                                           & 0 \arrow[d]                                           & 0 \arrow[d]                                           &   \\
				0 \arrow[r] & {B^*(\quot{S_0}{\sim}, \mathbb Z)} \arrow[d] \arrow[r] & {B^*(\quot{S_0}{\sim}, \mathbb C)} \arrow[d] \arrow[r] & {B^*(\quot{S_0}{\sim}, \mathbb C^*)} \arrow[d] \arrow[r] & 0 \\
				0 \arrow[r] & {B^*(S, \mathbb Z)} \arrow[r]                        & {B^*(S, \mathbb C)} \arrow[r]                        & {B^*(S, \mathbb C^*)} \arrow[r]                        & 0
			\end{tikzcd}.\]
			All rows and columns are exact. A simple diagram chase shows the existence of a short exact sequence
			\[ 0 \to B^*(S, \mathbb Z) \to B_B^*(S, \mathbb C) \to B^*(\quot{S_0}{\sim}, \mathbb C^*) \to 0.\]
		\end{proof}
	\end{proposition}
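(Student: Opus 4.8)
The plan is to realize the asserted long exact sequence as the one induced in cohomology by a single short exact sequence of cochain complexes, which I would in turn extract by a diagram chase from the commutative square of bar complexes attached to the levelwise surjection $\pi_0 : S \to \quot{S_0}{\sim}$ together with the coefficient sequence $0 \to \mathbb Z \to \mathbb C \xrightarrow{\exp} \mathbb C^* \to 0$. The first structural input I would record is that $B_{\mathbb Z}(S,1) = \bigoplus_n (s\bar C_*(S))^{\otimes_{\mathbb Z} n}$ (and likewise for $\quot{S_0}{\sim}$) is a free abelian group: the non-degenerate chains $C_*(S)$ are free on non-degenerate simplices, $\bar C_*(S)$ is free as a subgroup of a free $\mathbb Z$-module, and freeness is preserved under $\otimes_{\mathbb Z}$ and $\bigoplus$. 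Consequently $\text{Hom}_{\mathbb Z}(B_{\mathbb Z}(-,1), -)$ is exact, so applying it to the coefficient sequence produces two short exact sequences of cochain complexes, namely the two rows $0 \to B^*(-,\mathbb Z) \to B^*(-,\mathbb C) \xrightarrow{B^*(\exp)} B^*(-,\mathbb C^*) \to 0$. The surjectivity on the right is exactly where freeness (projectivity) is indispensable: it lets one lift cochains along $\mathbb C \to \mathbb C^*$. Moreover, since $\exp$ is a group homomorphism $(\mathbb C,+) \to (\mathbb C^*,\times)$, the map $B^*(\exp)$ commutes with the dualized bar differential, so these are genuine maps of complexes.

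Next I would verify that the vertical (column) maps are injective. Because $\pi_0$ is levelwise surjective and simplicial maps carry degenerate simplices to degenerate ones, the induced map $\bar C_*(S) \to \bar C_*(\quot{S_0}{\sim})$ on non-degenerate chains is surjective; this survives passage to tensor powers and direct sums, and dualizing turns it into the injections $B^*(\quot{S_0}{\sim}, A) \hookrightarrow B^*(S, A)$ serving as the columns. I would then define $B_B^*(S,\mathbb C)$ precisely as the fibre product of $B^*(\exp) : B^*(S,\mathbb C) \to B^*(S,\mathbb C^*)$ along the column inclusion $B^*(\quot{S_0}{\sim}, \mathbb C^*) \hookrightarrow B^*(S,\mathbb C^*)$, i.e.\ the subspace of those $g$ whose exponential $B^*(\exp)(g)$ lies in the image of $B^*(\quot{S_0}{\sim},\mathbb C^*)$. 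That this is a genuine sub-cochain complex follows because both $B^*(\exp)$ and the column map are chain maps, so the defining condition is preserved by the differential.

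Finally, the short exact sequence is a direct chase on this pullback. Its projection onto $B^*(\quot{S_0}{\sim},\mathbb C^*)$ is surjective, since $B^*(\exp)$ is surjective and the column inclusion is injective, while its kernel is $\{g : B^*(\exp)(g)=1\} = \ker B^*(\exp) = \text{im}(B^*(S,\mathbb Z) \to B^*(S,\mathbb C)) \cong B^*(S,\mathbb Z)$ by exactness of the bottom row. This yields
\[ 0 \to B^*(S,\mathbb Z) \to B_B^*(S,\mathbb C) \to B^*(\quot{S_0}{\sim},\mathbb C^*) \to 0, \]
and the claimed long exact sequence is then its associated cohomology long exact sequence, under the notation $H_B^i(S,\mathbb C) := H^i(B_B^*(S,\mathbb C))$. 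I do not expect a deep obstacle here; the only genuinely delicate point is bookkeeping across the additive and multiplicative coefficient groups, namely checking consistently that every map remains a chain map once $\exp$ is inserted, and isolating projectivity of $B_{\mathbb Z}(S,1)$ (not mere flatness) as the single fact responsible for the surjectivity of $B^*(\exp)$.
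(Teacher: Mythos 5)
Your proposal is correct and follows the same route as the paper: the commutative diagram built from the coefficient sequence $0 \to \mathbb Z \to \mathbb C \to \mathbb C^* \to 0$ and the levelwise surjection $S \to \quot{S_0}{\sim}$, followed by a diagram chase identifying $B_B^*(S,\mathbb C)$ as the pullback, with kernel $B^*(S,\mathbb Z)$ and surjection onto $B^*(\quot{S_0}{\sim},\mathbb C^*)$. You merely supply the justifications the paper leaves implicit (freeness of $B_{\mathbb Z}(S,1)$ for exactness of the rows, surjectivity on non-degenerate chains dualizing to injectivity of the columns), and your reading of the definition of $B_B^*(S,\mathbb C)$ is the one that makes the stated sequence work.
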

	\begin{remark}
		The method of proof in the above theorem can be applied to any short exact sequence of abelian groups $0 \to M \to N \to L \to 0$ and a surjective map $P \to Q$ of simplicial monoids. 
	\end{remark}
	\begin{remark}
		The previous Proposition also shows the following: Let $H$ be a logarithmic curvature element such that $[H] \notin H_B^2(S, \mathbb C)$, then the category $\text{Rep}(S)^{\infty}[e^{-H}]$ is dg-quasi equivalent to $0$ by Lemma \ref{trivialDiff} and by inspection of the proof of Proposition \ref{PropPiSame}.
	\end{remark}
	\begin{def1}
		Denote by $H^2_B(\quot{S_0}{\sim}, \mathbb C^*)$ the image of the above defined map $H_B^2(S, \mathbb C) \to H^2(\quot{S_0}{\sim}, \mathbb C^*)$ (note that this does depend on $S$, not just on $\quot{S_0}{\sim}$). Let $\text{LPRep}^0(S)$ be the full subcategory of $\text{LPRep}(S)$ consisting of objects concentrated in degree $0$ and let $PRep^0_{H^2_B(\quot{S_0}{\sim}, \mathbb C^*)}(\quot{S_0}{\sim})$ be the full subcategory of $PRep(\quot{S_0}{\sim})$ consisting of objects concentrated in degree $0$ and whose curvatures belong to $H^2_B(\quot{S_0}{\sim}, \mathbb C^*)$ (this is equivalently just the category of projective representations of the monoid $\quot{S_0}{\sim}$ whose curvatures belong to $H^2_B(\quot{S_0}{\sim}, \mathbb C^*)$).
	\end{def1}
	We obtain:
	\begin{proposition}
		\label{PropPiSame}
		There is an equivalence of categories
		\[PRep^0_{H^2_B(\quot{S_0}{\sim}, \mathbb C^*)}(\quot{S_0}{\sim}) \to \text{LPRep}^0(S) \]
		\begin{proof}
			The statement mainly follows from Proposition \ref{propRep0} which implies that the notion of twists is equivalent on both sides (the arguments are similar to those used in \ref{PFPrep}), we thus deduce fully faithfulness. To show essential surjectivity we make the following, slightly non-trivial observation: Let $H$ be the curvature of a non-zero object $E$ in $\text{LPRep}^0(S)$.  Inspecting equation  \ref{EqCurvature}, one can construct a curvature element $H'$ of $\quot{S_0}{\sim}$ such that it is mapped to a curvature element of $S$ that is cohomologous to $H$ (this is not true for arbitrary curvature elements). By the long exact sequence of the previous proposition and since $H$ is logarithmic, this implies that $[H] \in H^2_B(S, \mathbb C)$. This means that $E$ is isomorphic to an object with curvature contained in $H^2_B(S, \mathbb C)$ which proves the claim using Proposition \ref{propRep0}. 
		\end{proof}
	\end{proposition}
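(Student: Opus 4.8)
The plan is to build the functor directly from the cdg-functors $\pi_0(S)^{pre}[H]$ already introduced and then verify the two halves of an equivalence, reducing everything to Proposition \ref{propRep0} and Proposition \ref{propLongExact}. First I would assemble the functor: for each curvature element $H$ of $\quot{S_0}{\sim}$ the map $\pi_0$ gives $H^0(\text{Rep}^{0}(\quot{S_0}{\sim})[H]) \to H^0(\text{Rep}^{0}(S)[\pi_0^*H])$, and since $\pi_0^*$ is a chain map on the bar complexes it sends a twist $a$ from $H$ to $H'$ to a twist $\pi_0^*a$ from $\pi_0^*H$ to $\pi_0^*H'$. Assembling over all curvatures and twists and projecting modulo $\mathbb C^*$ yields the candidate functor $PRep^0_{H^2_B(\quot{S_0}{\sim}, \mathbb C^*)}(\quot{S_0}{\sim}) \to \text{LPRep}^0(S)$, whose values on objects are those $e^{-\pi_0^*H_0}$-curved representations pulled back from the quotient monoid.

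For fully faithfulness I would unwind the definition of the morphism sets. A morphism is a disjoint union over twists $a$ of the projectivized spaces $\text{PHom}_a = H^0(\text{Hom}(\text{Rep}(S[a])(E),F))/\mathbb C^*$. By Proposition \ref{propRep0} each underlying $H^0(\text{Hom})$ between degree-$0$ objects is preserved, so it remains to match the indexing sets of twists. Here I would use that a twist from $H$ to $H'$ is the same datum as a one-dimensional flat object of $\text{Rep}^{0}(\quot{S_0}{\sim})[H^{-1}H']$ (as recorded in the remark preceding Definition \ref{twistMap}); Proposition \ref{propRep0} identifies these one-dimensional objects on $\quot{S_0}{\sim}$ and on $S$, so the two disjoint unions are in bijection and the induced maps on $\text{PHom}$ are isomorphisms. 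This step is essentially formal once the twist-versus-flat-object dictionary is in place.

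Essential surjectivity is where the real work lies, and I expect it to be the main obstacle. Given a non-zero object $E$ of $\text{LPRep}^0(S)$ with logarithmic curvature $H$, I must show $E$ is isomorphic to something pulled back from $\quot{S_0}{\sim}$ with curvature in $H^2_B(\quot{S_0}{\sim},\mathbb C^*)$. The crucial observation is that, by equation \eqref{EqCurvature}, the curvature of a degree-$0$ representation is $R(a_1\otimes a_2)=\mu_E(a_1a_2)-\mu_E(a_1)\mu_E(a_2)$, which only involves $S_0$ and its (non-)multiplicativity. From this degree-$0$ data I would construct a curvature element $H'$ of $\quot{S_0}{\sim}$ whose pullback $\pi_0^*H'$ is cohomologous to $H$; the delicate point is that this construction genuinely uses the concentration in degree $0$ and \emph{fails} for arbitrary curvature elements of $S$, so one must argue carefully that the relevant coboundary only moves degree-$0$ chains. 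Once $\pi_0^*H'\sim H$ is established, I would feed it into the long exact sequence of Proposition \ref{propLongExact}: because $H$ is logarithmic and lies in the image of $B^*(\quot{S_0}{\sim},\mathbb C^*)\to B^*(S,\mathbb C^*)$, it follows that $[H]\in H^2_B(S,\mathbb C)$, so $E$ is isomorphic to an object whose curvature lies in $H^2_B$, which by Proposition \ref{propRep0} is the image of an object of $PRep^0_{H^2_B(\quot{S_0}{\sim}, \mathbb C^*)}(\quot{S_0}{\sim})$. The technical heart, and the step I would spend the most care on, is the explicit passage from the degree-$0$ curvature formula to a well-defined cohomologous curvature element on $\quot{S_0}{\sim}$, since the long exact sequence and the logarithmic hypothesis only become usable after that cohomology class has been correctly located.
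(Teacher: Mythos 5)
Your proposal follows the paper's own argument essentially step for step: fully faithfulness by identifying twists on both sides via Proposition \ref{propRep0} (with the twist-as-one-dimensional-flat-object dictionary), and essential surjectivity by extracting from equation \eqref{EqCurvature} a curvature element $H'$ on $\quot{S_0}{\sim}$ whose pullback is cohomologous to $H$, then invoking the long exact sequence of Proposition \ref{propLongExact} together with the logarithmic hypothesis to place $[H]$ in $H^2_B(S,\mathbb C)$. You correctly identify the same "slightly non-trivial observation" the paper flags as the crux, and your level of detail there matches the paper's own sketch.
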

	We apply this result again to $S = \text{Sing}(\Omega_{x_0}^M(M))$ and obtain:
	\begin{theorem}
		\label{TheoremUngraded}
		The category of projectively flat vector bundles $\text{PF}_{0}(M)$ is equivalent to the category $PRep^0_{H^2_B(\pi_1(M), \mathbb C^*)}(\pi_1(M))$ of projective representations of $\pi_1(M)$ whose curvatures belong to $H^2_B(\pi_1(M), \mathbb C^*)$. These are precisely the classes in $H^2(\pi_1(M), \mathbb C^*)$ that get mapped to $0$ under the composition $H^2(\pi_1(M), \mathbb C^*) \to H^2(M, \mathbb C^*) \to H^3(M, \mathbb Z)$.
		\begin{proof}
			By the previous proposition, we have that $PRep^0_{H^2_B(\pi_1(M), \mathbb C^*)}(\pi_1(M))$ is equivalent to $\text{LPRep}^0(\Omega^M_{x_0}(M))$. Specializing theorem \ref{PFPrep} to objects concentrated in degree $0$ we have that $\text{PF}_{0}(M)$ is equivalent to $\text{LPRep}^0(\Omega^M_{x_0}(M))$.
		\end{proof}
	\end{theorem}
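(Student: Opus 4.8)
The plan is to deduce the equivalence by combining two results that have already been established, treating the theorem as a bridge between the ungraded vector-bundle side and the ungraded group-representation side, with the loop-space representations as the intermediary. First I would observe that $\text{PF}_0(M)$ is by definition the full subcategory of $\text{PF}_\infty(M)$ consisting of objects concentrated in degree $0$; applying Theorem \ref{PFPrep} and restricting to degree $0$ on both sides yields an equivalence $\text{PF}_0(M) \simeq \text{LPRep}^0(\Omega^M_{x_0}(M))$, where the right-hand side is the degree-$0$ part of the logarithmic projective representations of the singular simplicial set of the Moore loop space. This is the analytic/geometric half of the argument and requires only that the functor $\text{PRep}(M)$ preserves the degree-$0$ subcategories, which is immediate from its construction. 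Second, I would invoke Proposition \ref{PropPiSame} with $S = \text{Sing}(\Omega^M_{x_0}(M))$, noting that $\quot{S_0}{\sim} = \pi_0(\Omega^M_{x_0}(M)) = \pi_1(M; x_0)$ as monoids (in fact groups); this gives an equivalence $PRep^0_{H^2_B(\pi_1(M), \mathbb C^*)}(\pi_1(M)) \simeq \text{LPRep}^0(\Omega^M_{x_0}(M))$. Composing the two equivalences produces the desired equivalence $\text{PF}_0(M) \simeq PRep^0_{H^2_B(\pi_1(M), \mathbb C^*)}(\pi_1(M))$, which is exactly the content of the first sentence of the theorem.

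The remaining task is the identification of the admissible curvature classes, i.e.\ the claim that $H^2_B(\pi_1(M), \mathbb C^*)$ consists precisely of those classes in $H^2(\pi_1(M), \mathbb C^*)$ mapped to $0$ under the composition $H^2(\pi_1(M), \mathbb C^*) \to H^2(M, \mathbb C^*) \to H^3(M, \mathbb Z)$. For this I would use the long exact sequence from Proposition \ref{propLongExact}, specialized to $S = \text{Sing}(\Omega^M_{x_0}(M))$, together with the defining description of $H^2_B(\pi_1(M), \mathbb C^*)$ as the image of $H_B^2(\text{Sing}(\Omega^M_{x_0}(M)), \mathbb C) \to H^2(\pi_1(M), \mathbb C^*)$. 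The connecting homomorphism $H^2(\pi_1(M), \mathbb C^*) \to H^3(\text{Sing}(\Omega^M_{x_0}(M)), \mathbb Z)$ in that sequence has kernel exactly the image of $H^2_B(\cdots, \mathbb C)$; so a class lies in $H^2_B(\pi_1(M), \mathbb C^*)$ if and only if it dies under the connecting map. I would then translate this connecting map into the topological obstruction on $M$ itself: via the de Rham/bar identifications summarized in the table and the exact sequence $H^2(M, \mathbb Z) \to H^2_B(M, \mathbb C) \to H^2(\pi_1(M), \mathbb C^*) \to H^3(M, \mathbb Z)$ appearing in the introduction, the connecting map agrees with the composite $H^2(\pi_1(M), \mathbb C^*) \to H^2(M, \mathbb C^*) \to H^3(M, \mathbb Z)$, where the latter arrow is the Bockstein associated to $0 \to \mathbb Z \to \mathbb C \to \mathbb C^* \to 0$.

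The main obstacle I anticipate is precisely this last identification of the two connecting/obstruction maps. On one side we have a purely algebraic connecting homomorphism built from the bar complex of the chain algebra of the loop space; on the other side we have the geometric Bockstein/gerbe obstruction $H^2(M, \mathbb C^*) \to H^3(M, \mathbb Z)$ living on $M$. Matching them requires carefully tracking the canonical comparison isomorphisms between $H^*_B(\text{Sing}(\Omega^M_{x_0}(M)), -)$, group cohomology of $\pi_1(M)$, and the singular cohomology of $M$ — in particular checking that the map $H^2(\pi_1(M), \mathbb C^*) \to H^2(M, \mathbb C^*)$ induced by $\pi_1(M) = \pi_0$ of the loop space is compatible with both Bockstein sequences. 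The naturality of the long exact sequence (which Proposition \ref{propLongExact} and the following remark assert holds for any short exact sequence of coefficient groups and any surjection of simplicial monoids) is the key tool here: applying naturality to the coefficient sequence $0 \to \mathbb Z \to \mathbb C \to \mathbb C^* \to 0$ and to the map $S \to \quot{S_0}{\sim}$ should force the two connecting maps to coincide up to the standard comparison isomorphisms, after which the characterization of the admissible classes follows formally. Finally I would remark that Example \ref{counterexampleProj}, referenced in the introduction, shows this restriction on curvatures is genuinely necessary, so the image cannot be all of $H^2(\pi_1(M), \mathbb C^*)$.
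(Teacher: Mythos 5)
Your proposal is correct and follows essentially the same route as the paper: the paper's proof is exactly the composition of Proposition \ref{PropPiSame} applied to $S = \text{Sing}(\Omega^M_{x_0}(M))$ with Theorem \ref{PFPrep} restricted to objects concentrated in degree $0$. Your additional discussion of the characterization of $H^2_B(\pi_1(M), \mathbb C^*)$ via the long exact sequence of Proposition \ref{propLongExact} correctly fills in a point the paper leaves implicit.
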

	\begin{remark}
		Some of the constructions to obtain the categories $\text{PF}_{0}(M)$ and\\ $PRep^0_{H^2_B(\pi_1(M), \mathbb C^*)}(\pi_1(M))$ were rather technical. Let us spell out equivalent definitions of these categories. 
		\begin{enumerate}
			\item The objects of the category $\text{PF}_{0}(M)$ are pairs $(E, \nabla)$ consisting of a finite dimensional vector bundle $E$ over $M$ together with a $\mathbb C$-linear connection $\nabla$ which is projectively flat. A morphism $f : (E, \nabla) \to (F, \nabla')$ is represented by a non-zero map $f : E \otimes L \to F$ of vector bundles where $(L, \nabla'')$ is a line bundle together with a connection and $f$ is compatible with the connections. A representative $f : E \otimes L \to F$ is identified with a representative $f ': E \otimes L' \to F$ if there exists an isomorphism of line bundles $l : L \to L'$, such that $f \circ (\mathbbm 1 \otimes l) = f$.
			\item An object of $PRep^0_{H^2_B(\pi_1(M), \mathbb C^*)}(\pi_1(M))$ is a map $\mu : \pi_1(M) \to GL_n$ such that $\pi \circ \mu:  \pi_1(M) \to GL_n \to PGL_n$ is a group homomorphism and the corresponding element of $H^2(\pi_1(M), \mathbb C^*)$ is contained in the subgroup $H_B^2(\pi_1(M), \mathbb C^*)$. A morphism $f : \mu \to \mu'$ is represented by a non-zero linear map $f : \mathbb C^n \to \mathbb C^m$ such that $[f \circ \mu(\gamma)] =  [\mu'(\gamma) \circ f]$ (in $\quot{\text{Hom}(\mathbb C^n , \mathbb C^m)}{\mathbb C^*}$) for all $\gamma \in \pi_1(M)$. Two such representatives $f$ and $g$ are identified if there exists a $\lambda \in \mathbb C^*$ such that $f = \lambda g$.
		\end{enumerate}
	\end{remark}
	\subsection{Examples}
		\begin{example}
		\label{TorusExample}
		We consider the torus $\mathbb T^2$ and want to construct some projectively flat vector bundles associated to projective representations of the fundamental group. We view the torus as the quotient of $\mathbb R^2$ by identifying $(x+p,y+q)$ with $(x,y)$ for all $p,q \in \mathbb Z^2$. The associated quotient map $\mathbb R^2 \to \mathbb T^2$ is the universal cover of $\mathbb T^2$. We denote by $[x,y]$ points of $\mathbb T^2$ under this covering map. The fundamental group of $\mathbb T^2$ is $\pi_1(\mathbb T^2, [0,0]) \cong \mathbb Z \times \mathbb Z$ where the generators are represented by the paths $a(t) = [t,0]$ and $b(t) = [0,t]$. We fix two elements $A,B \in GL_n(\mathbb C)$ such that $AB = -BA$ and consider the projective representation 
		\begin{align*} \mathbb Z \times \mathbb Z \ni (p,q) \mapsto [A^p B^q] \in PGL_n(\mathbb C).\end{align*} In particular, this does not lift to a linear representation. We now construct a vector bundle associated to this representation as follows. First, we consider the trivial bundle $\mathbb R^2 \times \mathbb C^n$ on the universal covering $\mathbb R^2$ of $\mathbb T^2$. We will define the desired bundle as a quotient of this trivial bundle. The group $\mathbb Z \times \mathbb Z$ acts on $\mathbb R^2 \times \mathbb C^n$ by 
		\begin{align*}(p,q).(x,y,v) = (x+p, y+q, e^{i \pi qx} A^pB^q v).\end{align*} One verifies that this indeed defines a group action using the commutation relations of $A$ and $B$.
		Let us call the quotient space under this group action $E$. There is a natural covering map $E \to \mathbb T^2$ which exhibits $E$ as a complex $n$-dimensional vector bundle over $\mathbb T^2$.. Next, we define a connection on the trivial bundle $\mathbb R^2 \times \mathbb C^n$ by setting $\nabla = d - i \pi y dx$. We want to argue that this connection descends to a connection on $E$. We therefore have to show that for each $(p,q) \in \mathbb Z^2$ and section $s$ of $\mathbb R^2 \times \mathbb C^n$ we have the equality $(p,q). \nabla|_{(x,y)}  e = \nabla|_{(x+p,y+q)} (p,q).e$. So we check: 
		\begin{nalign}
			\nabla|_{(x+p,y+q)} (p,q).e &= \nabla|_{(x+p,y+q)} e^{i\pi qx} A^p B^q e\\
			&= i\pi q e^{i\pi q x} A^pB^q dx \otimes e + e^{i\pi q x} A^pB^q (de - i\pi (y+q) dx \otimes e)\\
			&= e^{i\pi q x} A^pB^q (de - i\pi y dx \otimes e)\\
			&= (p,q). \nabla|_{(x,y)}  e. 
		\end{nalign}
		The connection is projectively flat with curvature $h = i \pi dx \wedge dy$ which is a generator of $H^2_{\text{dR}}(\mathbb T^2, \mathbb C) \cong \mathbb C$ (as a $\mathbb C$-vector space). The parallel transports along the paths $a$ and $b$ are precisely equal to $A$ and $B$ so this parallel transport indeed induces the projective representation from above.	
	\end{example}
	
	\begin{example}
		\label{counterexampleProj}
		Here, we will provide a counterexample to the statement that every projective representation of the fundamental group of a manifold arises from the parallel transport of a projectively flat vector bundle. We consider the manifold $M =   \mathbb RP^2 \times \mathbb S^1$. Here is a list of some relevant homology and cohomology groups:
		
		\begin{center}
			\begin{tabular}{ |c|c|c|c|c| } 
				\hline
				i & $H_i(M; \mathbb Z)$ & $H^i(M; \mathbb Z)$ & $H^i(M; \mathbb C)$ & $H^i(M; \mathbb C^*)$ \\
				\hline
				0 & $\mathbb Z$ & $\mathbb Z$ & $\mathbb C$ & $\mathbb C^*$\\ 
				1 & $\quot{\mathbb Z}{2\mathbb Z} \times \mathbb Z$  & $\mathbb Z$ & $\mathbb C$ & $\quot{\mathbb Z}{2\mathbb Z} \times \mathbb C^*$\\ 
				2 & $\quot{\mathbb Z}{2\mathbb Z} \times \mathbb Z$ & $\quot{\mathbb Z}{2\mathbb Z} \times \mathbb Z$ &  $\mathbb C$ & $\quot{\mathbb Z}{2\mathbb Z} \times \mathbb C^*$\\
				3 & $\mathbb Z$ & $\quot{\mathbb Z}{2\mathbb Z} \times \mathbb Z$ & $\mathbb C$ & $\mathbb C^*$\\ 
				\hline
			\end{tabular}
		\end{center}
		The fundamental group is $\pi_1(M; x_0) = \quot{\mathbb Z}{2\mathbb Z} \times \mathbb Z$. We also want to determine $H^2(\pi_1(M); \mathbb C^*)$. The group homology of $\quot{\mathbb Z}{2\mathbb Z}$ is
		\begin{nalign}
			H_i(\quot{\mathbb Z}{2\mathbb Z}; \mathbb Z) = \begin{cases}
				\mathbb Z, i = 0\\
				\quot{\mathbb Z}{2\mathbb Z}, i = 1,3,...\\
				0, i = 2,4,...
			\end{cases}
		\end{nalign}
		whose calculation can be found in \cite{Weibel}.  The group homology of $\mathbb Z$ is 
		\begin{nalign}
			H_i(\mathbb Z; \mathbb Z) = \begin{cases}
				\mathbb Z, i = 0,1\\
				0, \text{ else }
			\end{cases}
		\end{nalign}
		which can also be found in \cite{Weibel}. There is a Künneth theorem for group homology from which we deduce
		\begin{align*} H_1( \quot{\mathbb Z}{2\mathbb Z} \times \mathbb Z; \mathbb Z) =  \quot{\mathbb Z}{2\mathbb Z}, H_2( \quot{\mathbb Z}{2\mathbb Z} \times \mathbb Z; \mathbb Z) =  \mathbb Z \oplus \quot{\mathbb Z}{2\mathbb Z}.\end{align*}
		By the Universal Coefficient Theorem, we then obtain
		\[ H^2(\quot{\mathbb Z}{2\mathbb Z} \times \mathbb Z; \mathbb C^*) = \quot{\mathbb Z}{2\mathbb Z}. \]
		\begin{align*} . \end{align*}
		We define the matrices 
		$A = \begin{pmatrix}
			1 & 0\\
			0 & -1
		\end{pmatrix}$
		and $B = \begin{pmatrix}
			0 & 1\\
			1 & 0
		\end{pmatrix}$.
		The relevant properties of these matrices are $A^2 = \mathbbm 1$ and $AB = -BA$. We define the projective representation $f : \pi_1(M; x_0) \to GL_2$ by $f(\bar a,b) = A^a B^b$. By the above relations, this indeed defines a projective representation. We determine the corresponding element $[df]$ in $H^2(\pi_1(M; x_0); \mathbb C^*)$:
		\begin{nalign} 
			df((\bar a, b), (\bar a', b')) &= f(\bar a, b)f(\bar a', b')(f(\bar {a}+\bar{a'}, b+b'))^{-1}\\
			&= A^aB^bA^{a'}B^{b'} B^{-b-b'} A^{-a-a'} = (-1)^{a' b},
		\end{nalign}
		which represents the generator of $H^2(\quot{\mathbb Z}{2\mathbb Z} \times \mathbb Z; \mathbb C^*) = \quot{\mathbb Z}{2\mathbb Z}$. The map $H^2(M; \mathbb C) \to H^2(M; \mathbb C^*)$ is given by $(0, \exp) : \mathbb C \to \quot{\mathbb Z}{2\mathbb Z} \times \mathbb C^*$ and $H^2(\pi_1(M), \mathbb C^*) \to H^2(M; \mathbb C)$ is $(\text{id},1) :  \quot{\mathbb Z}{2\mathbb Z} \to  \quot{\mathbb Z}{2\mathbb Z} \times \mathbb C^*$. Combining these observations together with Proposition \ref{propLongExact}, we see that $[df]$ is not in the image of $H^2_B(M, \mathbb C) \to H^2(\pi_1(M), \mathbb C^*)$, so $f$ does not correspond to a projectively flat vector bundle.
	
	\end{example}
	
	\begin{example}
		We will consider the 2-sphere $\mathbb S^2$. $\mathbb S^2$ is simply connected and by the exact sequence in Proposition \ref{propLongExact} we obtain $\mathbb Z = H^2(\mathbb S^2; \mathbb Z) = H_B^2(\mathbb S^2; \mathbb C)$. Since line bundles induce dg-equivalences (see Definition \ref{defLineDG}), we have for any $h \in Z\Omega_B^2(\mathbb S^2; \mathbb C)$
		\begin{align*} \mathcal P^{\infty}(M)[h] \cong \mathcal P^{\infty}(M)[0].\end{align*}
		For $h \notin Z\Omega_B^2(\mathbb S^2; \mathbb C)$, Lemma \ref{trivialDiff} implies that $\mathcal P^{\infty}(\mathbb S^2)[h]$ is dg-quasi-equivalent to the trivial dg-category $0$. Therefore, it remains to compute $\mathcal P^{\infty}(\mathbb S^2)[0]$.\\ \\ $\mathbb S^2$ is formal in the sense that there is a quasi-isomorphism of dg-coalgebras $H_* \to C_*(\mathbb S^2)$ where $H_*$ is the cohomology of $\mathbb S^2$. We dualize this map and thus obtain a quasi-isomorphism $Q: C^*(\mathbb S^2; \mathbb C) \to H^*$ of dg-algebras.\\ \\
		We define the cdg-category $S$. We shall show that its associated dg-category $\text{Tw}(S)$ is dg-quasi-equivalent to $\text{Loc}(M)^{\infty}[0]$. The objects of $S$ are finite dimensional vector spaces over $\mathbb C$. For two such objects $E$ and $F$, we set $\text{Hom}_{S}^*(E,F) := H^* \otimes_{\mathbb C} \text{Hom}_{\mathbb C}(E,F)$. The differentials are induced by the differential on $H^*$, i.e., they are zero (the category is therefore even a dg-category; nevertheless, we will regard it as a cdg-category). The composition is induced by the product on $H^*$ and composition of linear maps. We define a dg-functor $T : \text{Loc}(\mathbb S^2)^{0}[0] \to S$ (we cannot define $T$ on $\text{Loc}(\mathbb S^2)^{pre}[0]$ because of the curvatures). We set $T(\{E_x\}_{x \in M}, P) := E_{x_0}$. As all flat vector bundles over $\mathbb S^2$ are trivial, we can naturally identify $\text{Hom}_{\text{Loc}(\mathbb S^2)^{0}[0]}(E,F) = C^*(\mathbb S^2; \mathbb C) \otimes \text{Hom}_{\mathbb K}(E_{x_0}, F_{x_0})$ for any two objects $E$ and $F$ of $\text{Loc}(\mathbb S^2)^{0}[0]$. Then we can simply set $T(f) = Q \otimes \mathbbm 1$ on morphisms and we see that this indeed defines a	
		dg-functor. Using the methods of section \ref{quasiEqSection}, we see that $T$ induces an equivalence 
		\begin{align*} \text{Tw}(T) : \text{Tw}(\text{Loc}(\mathbb S^2)^{0}[0]) \to \text{Tw}(S).\end{align*}
		Furthermore, the embedding $\text{Tw}(\text{Loc}(\mathbb S^2)^{0}[0]) \to \text{Loc}(\mathbb S^2)^{\infty}[0]$ is a dg-quasi equivalence (this is true for any sufficiently Maurer Cartan, split cdg-category).
		Therefore we obtain:
		\begin{nalign}
			\mathcal P(\mathbb S^2)[h] \simeq
			\begin{cases}
				\text{Tw}(S) \text{ if } h \in Z\Omega^2_B(\mathbb S^2)\\
				0 \text{ else. }
			\end{cases}
		\end{nalign}
	\end{example}

	\subsection{Real projectively flat vector bundles}
	We conclude by a proposition showing that projectively flat vector bundles over the real numbers can always be made into flat vector bundles. 
	\begin{proposition}
		\label{realBoring}
		Let $\mathbb K = \mathbb R$. Let $(E, \nabla)$ be a non-zero projectively flat vector bundle. Then there is a 1-form $\omega \in \Omega^1(M)$ such that
		\begin{align*} \nabla' := \nabla - \omega\end{align*}
		is a flat connection on $E$. In particular, the curvature $h$ of $\nabla$ is equal to $d \omega$, i.e., $h$ is an exact 2-form.
		\begin{proof}
			We choose a good open cover $(U_\alpha)$ of $M$ and trivializations $\phi_\alpha$ such that $\nabla$ locally takes the form
			\begin{align*} \nabla = d_{\alpha} + \omega_{\alpha}\end{align*}
			where $\omega_{\alpha} \in \Omega^1(U_{\alpha})$ and $d_\alpha = \phi_\alpha \circ d \circ (\phi_{\alpha})^{-1}$ (this can always be done, see for example the proof of \ref{lemmaQes}). Then on an intersection $U_{\alpha} \cap U_{\beta}$, we have $d_\alpha + \omega_\alpha = d_\beta + \omega_\beta$. Writing $T_{\alpha \beta} = \phi_{\alpha}^{-1} \circ \phi_{\beta}$, we obtain
			\begin{nalign}
				\label{TdiffEq}
				&d_\alpha + \omega_\alpha = d_\beta + \omega_\beta \\
				&\Rightarrow d - T_{\alpha \beta} \circ d \circ T_{\alpha \beta}^{-1} = \omega_{\beta} - \omega_{\alpha} \\
				&\Rightarrow d - (T_{\alpha \beta} \circ d(T_{\alpha \beta}^{-1}) + T_{\alpha \beta}T_{\alpha \beta}^{-1} \circ d) = \omega_{\beta} - \omega_{\alpha}\\
				&\Rightarrow -T_{\alpha \beta} \circ d(T_{\alpha \beta}^{-1}) = \omega_\beta - \omega_{\alpha}.
			\end{nalign}
			We note that this implies that the derivative of $T_{\alpha \beta}$ in any direction is a multiple of itself. Therefore, $T_{\alpha \beta}$ is a a constant linear matrix times a smooth function.  Thus, write $T_{\alpha \beta} = f_{\alpha \beta} T_{\alpha \beta}^0$ where $T_{\alpha \beta}^0$ is a constant matrix. As $T_{\alpha \beta}$ is invertible, since $U_\alpha \cap U_\beta$ is connected by assumption and because we work over the real numbers, we can explicitly choose $f_{\alpha \beta}$ to be $f_{\alpha \beta} := | \text{det} T_{\alpha \beta}|^{1/n} > 0$. Since $f_{\alpha \beta}$ is always positive, we can consider the logarithm $\log f_{\alpha \beta}$ with values in the real numbers. From \ref{TdiffEq}, we can then deduce
			\begin{nalign}
				\label{EqdlogDiff}
				d(\log f_{\alpha \beta}) = -f_{\alpha \beta} d(f_{\alpha \beta}^{-1}) = \omega_\beta - \omega_{\alpha}.
			\end{nalign}
			As $T_{\alpha \beta}T_{\beta \gamma} = T_{\alpha \gamma}$, we obtain 
			\begin{align*} \log f_{\alpha \beta} + \log f_{\beta \gamma} = \log f_{\alpha \gamma}.\end{align*}
			This means that the assignment $(\alpha, \beta) \mapsto \log f_{\alpha \beta}$ defines a 1-cycle in the \u Cech complex with values in the sheaf $\underline \Omega^0$. However $\underline \Omega^0$ is a fine sheaf and therefore the cohomology group $\hat H^1((U_\alpha); \underline \Omega^0)$ is $0$. Hence there exists a map $\alpha \mapsto q_{\alpha} \in \Omega^0(U_{\alpha})$ such that 
			\begin{nalign}
				\label{QLogEq}
				q_\beta - q_\alpha = (\check d q)(\alpha, \beta) = \log f_{\alpha \beta}.
			\end{nalign} 
			Locally, we now define the form $\omega$ by
			\begin{align*} \omega := \omega_{\alpha} - dq_{\alpha}, \end{align*}
			which does not depend on the choice of $\alpha$ since
			\begin{align*} d(q_\beta - q_\alpha) = d(\log f_{\alpha \beta}) = \omega_{\beta} - \omega_\alpha \end{align*}
			by \ref{EqdlogDiff} and \ref{QLogEq}.
			Then the connection $\nabla' := \nabla - \omega$ is locally of the form
			\begin{align*} \nabla' = d_{\alpha} + dq_{\alpha}\end{align*}
			which has $0$ curvature.	
		\end{proof}
	\end{proposition}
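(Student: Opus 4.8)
The statement has two parts, and the second ($h=d\omega$) will follow from the first by a one-line curvature computation, so the real content is the construction of a flat $\nabla'=\nabla-\omega$. First I would record how subtracting a \emph{scalar} $1$-form affects the curvature: interpreting $\omega$ as $\omega\otimes\mathbbm 1_E$, and using that $\omega\wedge\omega=0$ together with the $\nabla$-parallelism of $\mathbbm 1_E$, one gets $(\nabla-\omega)^2=\nabla^2-d\omega\otimes\mathbbm 1_E=(h-d\omega)\otimes\mathbbm 1_E$. Hence $\nabla-\omega$ is flat exactly when $d\omega=h$; and conversely, once such a flat $\nabla'$ is produced, the relation $0=(h-d\omega)\otimes\mathbbm 1_E$ forces $h=d\omega$ because $E\neq0$ gives $\mathbbm 1_E\neq0$. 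The entire proposition thus reduces to showing that the curvature $2$-form $h$ is \emph{exact}.

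To prove exactness I would pass to the determinant line bundle $\det E=\bigwedge^{n}E$, where $n=\operatorname{rk}E$. The connection $\nabla$ induces a connection on $\det E$ whose curvature is $\operatorname{tr}(\nabla^2)=\operatorname{tr}(h\otimes\mathbbm 1_E)=n\,h$, so that $n[h]$ is the de Rham class of the curvature of $\det E$. Now $\det E$ is a \emph{real} line bundle, and every real line bundle admits a flat connection: choosing a bundle metric reduces the structure group to $O(1)=\{\pm1\}$, a $0$-dimensional Lie group, so the resulting metric connection is automatically flat. Since the de Rham class of the curvature of a line bundle does not depend on the chosen connection, it must vanish; therefore $n[h]=0$ and hence $[h]=0$ in $H^2_{dR}(M,\mathbb R)$. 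Taking any global primitive $\omega$ with $d\omega=h$ and invoking the first paragraph completes the argument.

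The place where $\mathbb K=\mathbb R$ is genuinely used — and which I expect to be the conceptual heart of the matter — is the flatness of $\det E$. Over $\mathbb C$ this fails: a complex line bundle generally carries no flat connection, its curvature class being a nonzero multiple of $c_1$, which is exactly the obstruction encoded elsewhere in the paper by the map $H^2(\pi_1(M),\mathbb C^*)\to H^3(M,\mathbb Z)$. If one wishes to avoid citing the classification of real line bundles, I would instead argue locally, as in Lemma \ref{lemmaQes}: choose a good cover with trivializations in which $\nabla=d+\omega_\alpha$ for scalar $1$-forms $\omega_\alpha$. Projective flatness forces each transition matrix to be a positive scalar times a constant matrix, $T_{\alpha\beta}=f_{\alpha\beta}T^0_{\alpha\beta}$ with $f_{\alpha\beta}=|\det T_{\alpha\beta}|^{1/n}>0$, whence $\log f_{\alpha\beta}$ is a \v{C}ech $1$-cocycle in the fine sheaf $\underline\Omega^0$ with $d(\log f_{\alpha\beta})=\omega_\beta-\omega_\alpha$. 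Vanishing of $\check H^1$ provides $q_\alpha$ with $q_\beta-q_\alpha=\log f_{\alpha\beta}$, and then $\omega:=\omega_\alpha-dq_\alpha$ patches to a global $1$-form with $d\omega=h$. Here it is exactly the positivity of $f_{\alpha\beta}$, i.e. the existence of a real logarithm, that uses realness.
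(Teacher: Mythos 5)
Your proposal is correct, and your primary argument takes a genuinely different and more conceptual route than the paper. The paper works locally: it chooses trivializations in which the connection form is a scalar $1$-form $\omega_\alpha$, deduces from projective flatness that the transition functions satisfy $-T_{\alpha\beta}\,d(T_{\alpha\beta}^{-1})=\omega_\beta-\omega_\alpha$, hence are positive scalars times constant matrices, and then kills the \v{C}ech $1$-cocycle $\log f_{\alpha\beta}$ in the fine sheaf $\underline\Omega^0$ to assemble a global primitive $\omega$ of $h$ by hand. Your main argument instead reduces everything to the exactness of $h$ (your first paragraph's curvature computation is exactly right, including the converse direction) and proves exactness globally via the determinant line bundle: $\operatorname{tr}(\nabla^2)=n\,h$ is the curvature of the induced connection on $\det E$, real line bundles admit flat connections since a metric reduces the structure group to $O(1)$, and the curvature class of a line bundle is connection-independent because two connections differ by a global scalar $1$-form; hence $n[h]=0$ and so $[h]=0$. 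This is shorter, avoids the \v{C}ech bookkeeping, and isolates very cleanly where $\mathbb K=\mathbb R$ enters (over $\mathbb C$ the class $[h]$ of $\det E$ is an integral Chern class, generally nonzero, which is consistent with the paper's Example \ref{TorusExample}). The paper's approach buys an explicit construction of $\omega$ without invoking the classification of real line bundles; your third paragraph in fact reproduces that argument almost verbatim as a fallback, so you have effectively covered both routes. The only point worth flagging is that the existence of local trivializations with \emph{scalar} connection forms (used in your fallback and in the paper) is not automatic and is justified in the paper by the Poincar\'e-lemma construction in the proof of Lemma \ref{lemmaQes}; your determinant-bundle argument has the added virtue of not needing this at all.
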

	
	\begin{theorem}
		\label{ThmRealBoring}
		Let $\mathbb K = \mathbb R$. Let $h$ be a closed real 2-form on $M$, then we have:
		\begin{align*}
			\mathcal P(M)^{\infty}[h] \simeq \begin{cases}
				\mathcal P(M)^{\infty}[0] \text{ if } [h] = 0 \text{ in } H^2_{DR}(M, \mathbb R)\\
				0 \text{ else }
			\end{cases}
		\end{align*}
		\begin{proof}
			Let $(E^*, \mathbb E)$ be an object of $\mathcal P(M)^{\infty}[h]$ with $[h] \neq 0$.
			By Lemma  \ref{trivialDiff}, we see that $(E^*, \mathbb E)$ is homotopy equivalent to an object $(H^*, \mathbb H)$ such that $\mathbb H^0 = 0$. Thus, each $H^i$ is a flat object in $\mathcal P(M)^{0}[h]$. By the previous proposition, this implies that each $H^i$ is equal to $0$ since the curvature of any non-zero projectively flat vector bundle is a coboundary. Thus, $(E^*, \mathbb E)$ is homotopy equivalent to $0$.
		\end{proof}
	\end{theorem}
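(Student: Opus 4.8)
The plan is to split the argument according to whether the de Rham class $[h] \in H^2_{DR}(M, \mathbb R)$ vanishes, and to reduce the whole statement to results already available: the twisting equivalence $\mathcal P^{pre}[\omega]$ and the real rigidity Proposition \ref{realBoring}. Before starting either case I would record that the cdg-category $\mathcal P(M)^{pre}[h]$ satisfies the two hypotheses needed later: it is (sufficiently) Maurer-Cartan by Lemma \ref{lemmaP} and split by Lemma \ref{isSpectral}, so Proposition \ref{trivialDiff} applies to $\text{Tw}(\mathcal P(M)^{pre}[h]) = \mathcal P(M)^{\infty}[h]$.

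In the exact case $[h] = 0$, I would write $h = d\omega$ for some $\omega \in \Omega^1(M)$. Then $\mathcal P^{pre}[-\omega] : \mathcal P(M)^{pre}[h] \to \mathcal P(M)^{pre}[h - d\omega] = \mathcal P(M)^{pre}[0]$ is an isomorphism of cdg-categories with inverse $\mathcal P^{pre}[\omega]$. Passing to twisted complexes via $\text{Tw}(-)$ yields at once the dg-equivalence $\mathcal P(M)^{\infty}[h] \simeq \mathcal P(M)^{\infty}[0]$. This case requires essentially no work beyond invoking the already-constructed twisting functor.

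In the non-exact case $[h] \neq 0$, the goal is to show that every object of $\mathcal P(M)^{\infty}[h]$ is homotopy equivalent to the zero object, so that the dg-category is quasi-equivalent to $0$. I would take an arbitrary object $(E^*, \mathbb E)$ and apply Proposition \ref{trivialDiff} to replace it, up to homotopy equivalence, by $(H^*, \mathbb H)$ with $\mathbb H^0 = 0$; the conclusion of that proposition then gives $R_{H^i} = 0$ for every $i$, i.e. each $H^i$ lies in $\mathcal P(M)^0[h]$ and is therefore a genuine projectively flat bundle whose ordinary curvature equals $h$, that is $(\nabla^{H^i})^2 = h \otimes \mathbbm 1$. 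Now Proposition \ref{realBoring} says that over $\mathbb R$ a \emph{non-zero} projectively flat bundle has exact curvature. Since $[h] \neq 0$, each $H^i$ must be the zero bundle, whence $(H^*, \mathbb H) = 0$ and $(E^*, \mathbb E) \simeq 0$. As this holds for every object, $\mathcal P(M)^{\infty}[h] \simeq 0$.

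Because the two substantive inputs are black-boxed, the argument is short and I do not expect a genuine obstacle; the only delicate point is the bookkeeping around the zero-bundle convention of Remark \ref{ZeroConvention}. One must use precisely that Proposition \ref{realBoring} is stated for non-zero bundles, so that the equation $(\nabla^{H^i})^2 = h \otimes \mathbbm 1$ with $[h] \neq 0$ is consistent only for $H^i = 0$ and does not run afoul of allowing arbitrary curvature on the zero bundle. The remaining care is simply to confirm that the hypotheses of Proposition \ref{trivialDiff} hold (via Lemmas \ref{lemmaP} and \ref{isSpectral}) and that homotopy equivalence to $0$ in $\mathcal P(M)^{\infty}[h]$ is the correct formulation of the claimed vanishing.
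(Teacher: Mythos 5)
Your proposal is correct and follows essentially the same route as the paper: reduce via Proposition \ref{trivialDiff} to an object with $\mathbb H^0 = 0$, observe that each $H^i$ is then a projectively flat bundle with curvature $h$, and invoke Proposition \ref{realBoring} to conclude each $H^i = 0$ when $[h] \neq 0$. The only difference is that you also spell out the exact case $[h]=0$ via the twisting functor $\mathcal P^{pre}[\omega]$, which the paper's proof leaves implicit; your attention to the zero-bundle convention of Remark \ref{ZeroConvention} is well placed.
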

	
	\section{Riemann-Hilbert correspondence for twisted sheaves}
	In the flat case, the category $\mathcal P(M)^{\infty}$ of graded vector bundles together with a superconnection over $M$ (or equivalently the category of cohesive modules over the de Rham algebra of $M$) has been shown to be equivalent to the derived category $D_{fclc}(\underline{\mathbb K})$ consisting of complexes whose cohomology sheaves are bounded and locally constant of finite rank over the constant sheaf of rings $\underline{\mathbb K}$, see \cite{RiemannHilbert}. In the following, the goal is to show that there is a variation of this statement for a fixed curvature $h \in \Omega^2(M; \mathbb K)$. Sheaves will be replaced by sheaves that are twisted by a fixed gerbe. We will show a more general equivalence which also recovers the result of \cite{Block}, concerning an equivalence between cohesive modules over the curved Dolbeault algebra of a complex manifold $X$ and complexes of twisted sheaves whose cohomology sheaves are coherent and bounded.
	
	\subsection{Twisted sheaves}
	We will begin by defining the notion of a twisted sheaf. This basic definitions are largely analogous to \cite{Toda} with minor modifications allowing for sheaves of cdg-algebras. Let $k$ be a commutative, unital ring and let $X$ be a topological space. Let $(\mathscr A^*, h)$ be a sheaf of cdg-algebras on $X$.  Furthermore let $[\check h] \in \check H^2(M, Z(\mathscr A_{cl}^0)^\times)$ where $Z(\mathscr A_{cl}^0)^\times$ is the sheaf of closed units in the center of $\mathscr A^0$ (to cause no confusion, we intend this to be $Z(\mathscr A_{cl}^0)^\times = Z(\mathscr A^*) \cap (A^0)^\times \cap A^0_{cl}$). This class is represented by some cocycle $\check h \in \check C^2(\mathscr U,Z(\mathscr A_{cl}^0)^\times)$ for some cover $\mathscr U$, both of which will be fixed from now on. The elements of this cover will be written as $U_\alpha$ for $\alpha$ in some indexing set $I$.
	
	\begin{def1}
		A $\check h$-twisted sheaf of left $\mathscr A^*$ cdg-modules $M^*$ is given by the following data:
		\begin{enumerate}
			\item For each $\alpha \in I$ a sheaf $M^*_{\alpha}$ of left $\mathscr A^*|_{U_\alpha}$ cdg-modules on $U_\alpha$.
			\item For each $\alpha, \beta \in I$ an isomorphism $\phi_{\alpha \beta} : M^*_{\alpha}|_{U_\alpha \cap U_\beta} \to M^*_{\beta}|_{U_\alpha \cap U_\beta}$ of left $\mathscr A^*|_{U_\alpha \cap U_\beta}$ cdg-modules on $U_\alpha \cap U_\beta$.
		\end{enumerate}
		The isomorphisms have to fulfill the following property: For each $\alpha, \beta, \gamma \in I$ we have the identity
		\begin{align*} \phi_{\alpha \beta} \circ \phi_{\beta \gamma}  = \check h_{\alpha \beta \gamma}  \phi_{\alpha \gamma}\end{align*}
		on $U_\alpha \cap U_\beta \cap U_\gamma$, i.e., $\check d \phi = \check h$.
	\end{def1}
	
	\begin{def1}
		We define the dg-category $(\mathscr A^*, h)$-Mod$^{\check h}$. Its objects are $\check h$-twisted sheaves of left $\mathscr A^*$ cdg-modules. Given two such objects $M^*$ and $N^*$, a homogeneous morphism $f \in \text{Hom}_{(\mathscr A^*, h)\text{-Mod}^{\check h}}^n(M^*, N^*)$ is given by a collection $(f_{\alpha})_{\alpha \in I}$ such that
		\begin{enumerate}
			\item each $f_{\alpha}$ is a degree $n$ map of graded sheaves $f_{\alpha} : M^*_{\alpha} \to N^{*+n}_\alpha$, compatible with the left action of $\mathscr A^*$,
			\item for each $\alpha$ and $\beta$ we have the identity $\phi^N_{\alpha \beta} \circ f_\alpha|_{U_\alpha \cap U_\beta}  = f_\beta|_{U_\alpha \cap U_\beta} \circ \phi^M_{\alpha \beta}$
		\end{enumerate}
		The differential is given componentwise by $df = d((f_{\alpha})_{\alpha \in I}) = (d_\alpha f_\alpha)_{\alpha \in I}$ where 
		\begin{align*}d_\alpha f_\alpha = d_{N_\alpha^*} \circ f_\alpha - (-1)^{n} f_\alpha \circ d_{M_\alpha^*}.\end{align*} As each $d_\alpha$ squares to $0$, we obtain that $d$ also squares to $0$. Furthermore one easily verifies that $df$ indeed defines a morphism of degree $n+1$ as defined above.
	\end{def1}
	\begin{remark}
		\label{remarkRelate}
		If we have another representative $\check \eta$ with $[\check \eta] = [\check h]$, where $\check \eta$ possibly belong to a different cover, then the dg-categories $(\mathscr A^*, h)$-Mod$^{\check h}$ and $(\mathscr A^*, h)$-Mod$^{\check \eta}$ are dg-equivalent. The proof of this involves a few steps but is straightforward. First, consider some $\check \eta$ belonging to some cover and take a cover which is refining it. The sheaf condition ensures that both dg-categories are dg-equivalent (this would not be true if were dealing with presheaves). If may now assume that $\check \eta$ and $\check h$ belong to the same cover and are cohomologous with respect to this cover, i.e., $\check h_{\alpha \beta \gamma} = \check d(p)_{\alpha \beta \gamma} \check \eta_{\alpha \beta \gamma}$ for some 2-cochain $p$. Then we can use $p$ to twist the gluing isomorphisms $\phi_{\alpha \beta}$ which works because each $p_{\alpha \beta}$ is central and closed. The differentials and morphisms remain unchanged. Thus, up to non-unique dg-equivalence, $(\mathscr A^*, h)$-Mod$^{\check h}$ only depends on the class in  $\check H^2(M, Z(\mathscr A_{cl}^0)^*)$. In particular, for $\check h = 1$ we may choose the cover only consisting of $X$, thus recovering the usual dg-category of sheaves of left $\mathscr A^*$ cdg-modules.
	\end{remark}
	\begin{remark}
	Given an object $M^*$ and an $x \in X$ we can consider the stalks $(M_\alpha)_x$ for each $\alpha$ with $x \in U_\alpha$. These are all isomorphic for different $\alpha \in I$, but we cannot glue them together (in a useful manner).\\ \\
	If $\mathscr A^*$ is a sheaf of dg-algebras, we can view it as a sheaf of cdg-algebras $(\mathscr A^*, 0)$. In this case, we can say an object $M^*$ is acyclic if all $M_{\alpha}^*$ are acyclic complexes of sheaves (meaning that the complexes of stalks are acyclic). Similarly, we can define a notion of quasi-isomorphism. Localising the homotopy category $H^0((\mathscr A^*, 0)\text{-Mod}^{\check h})$ along quasi-isomorphisms, we obtain the derived category $D(\mathscr A^*)^{\check h}$. 
	\\ \\
	Furthermore, if $\mathscr R$ is a sheaf of $k$-algebras, we can consider the full subcategory of $H^0((\mathscr R, 0)\text{-Mod}^{\check h})$ consisting of objects concentrated in degree $0$ and denote it by $\mathscr R$-Mod$^{\check h}_0$. This category is abelian, its category of chain complexes up to homotopy $K(\mathscr R$-Mod$^{\check h}_0)$ recovers $H^0((\mathscr R, 0)\text{-Mod}^{\check h})$ and its derived category is $D(\mathscr R)^{\check h}$.
	\end{remark}
	
	\begin{def1}
		Let $X$ and $Y$ be topological spaces. Let $\mathcal O_Y$ be a sheaf of $k$-algebras on $Y$ and $\mathcal O_X$ a sheaf of $k$-algebras on $X$. Let $[\check h] \in \check H^2(\mathscr U, Z(\mathcal O_Y)^\times)$ for some cover $\mathscr U$ of $Y$. Furthermore let $f : (X, \mathcal O_X) \to (Y, \mathcal O_Y)$ be a morphism of ringed spaces. Pulling back $\check h$ along $f$ we obtain an element $[\check h_X] \in \check H^2(f^*(\mathscr U), \mathcal O_X^\times)$ where $f^*(\mathscr U)$ is the pullback of the cover $\mathscr U$ to $X$. We assume that $\check h_X$ has values in the center of $\mathcal O_X$. We define the direct image and inverse image functors $f_*$ and $f^*$.
		Let $M$ be an object of $\mathcal O_X$-Mod$^{\check h_X}_0$. We define $f_*(M) \in \mathcal O_Y$-Mod$^{\check h}_0$ by 
		\begin{align*} f_*(M)_{\alpha} = (f|_{U_\alpha})_*(M_\alpha), \phi_{\alpha \beta}^Y = (f|_{U_\alpha \cap U_\beta})_*(\phi_{\alpha \beta}^X)\end{align*}
		where $f_*|_U$ denotes the usual direct image functor $f_*|_U : (f^{-1}(U), O_{f^{-1}(U)}) \to (U, O_U)$ It is straightforward to verify that this indeed defines an object of $\mathcal O_Y$-Mod$^{\check h}_0$ and it naturally extends to morphisms. Similarly we can define the inverse image functor $f^* : \mathcal O_Y$-Mod$^{\check h}_0 \to \mathcal O_X$-Mod$^{\check h_X}_0$.  We know that for the untwisted versions of $f^*$ and $f_*$ we have that $f^*$ is left adjoint to $f_*$. By spelling out definitions, it follows that this remains true in the twisted case (by reducing to the untwisted case). Furthermore, by comparing stalks, it is clear that $f^*$ is exact if $f$ is a flat morphism (i.e., if $f^{-1}(\mathcal O_Y)$ is flat over $\mathcal O_X$). In particular, this is the case when $X$ is a subspace of $Y$ and $\mathcal O_X = \mathcal O_Y|_X$.
	\end{def1}
	
	\begin{lemma}
		Let $\mathscr R$ be a sheaf of $k$-algebras. The abelian category $\mathscr R$-Mod$^{\check h}_0$ of twisted $\mathscr R$-modules has enough injectives.
		\begin{proof}
			This is  \cite[Lemma 4.3]{Toda}.
		\end{proof}
	\end{lemma}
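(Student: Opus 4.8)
The plan is to adapt the Godement construction to the twisted setting, realizing injectives as products of objects \emph{coinduced} from ordinary (untwisted) injective module sheaves on the individual members of the cover. The whole argument is formal once a suitable right adjoint to the component functors is in hand.

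First, for each $\alpha \in I$ I would introduce the component functor $Q_\alpha : \mathscr R\text{-Mod}^{\check h}_0 \to \mathscr R|_{U_\alpha}\text{-Mod}$ sending a twisted sheaf $(M_\beta, \phi)$ to $M_\alpha$ and a morphism $(f_\beta)_\beta$ to $f_\alpha$. Since kernels and cokernels in $\mathscr R\text{-Mod}^{\check h}_0$ are computed componentwise (the gluing data $\phi_{\alpha\beta}$ being isomorphisms), each $Q_\alpha$ is exact, and a morphism of twisted sheaves is a monomorphism precisely when $Q_\alpha$ of it is a monomorphism for every $\alpha$.

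The technical heart is to produce a right adjoint $Q_\alpha^!$. Given a sheaf $N$ of $\mathscr R|_{U_\alpha}$-modules, I would set $(Q_\alpha^! N)_\beta := (j^\beta_\alpha)_*\bigl(N|_{U_\alpha \cap U_\beta}\bigr)$, where $j^\beta_\alpha : U_\alpha \cap U_\beta \hookrightarrow U_\beta$, and define the gluing isomorphisms $\psi_{\beta\gamma}$ by combining the canonical restriction/pushforward comparison maps with multiplication by the cocycle components $\check h_{\alpha\beta\gamma}$. The hypothesis that $\check h$ takes values in closed central units of $\mathscr R$ is exactly what forces these gluing maps to satisfy the twisted cocycle identity $\psi_{\beta\gamma}\circ\psi_{\gamma\delta} = \check h_{\beta\gamma\delta}\,\psi_{\beta\delta}$, so that $Q_\alpha^! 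N$ is a genuine $\check h$-twisted sheaf; and the adjunction $\mathrm{Hom}(M, Q_\alpha^! N) \cong \mathrm{Hom}(M_\alpha, N)$ holds because compatibility with gluing forces any morphism into $Q_\alpha^! N$ to be determined by its $\alpha$-component. Checking this adjunction together with the cocycle identity in full detail is the step I expect to be the main obstacle; the closedness and centrality of $\check h$ must be used at precisely the right places, and everything else reduces to the untwisted case.

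Finally I would assemble the embedding. Because $Q_\alpha$ is exact, its right adjoint $Q_\alpha^!$ preserves injectives. Sheaves of $\mathscr R|_{U_\alpha}$-modules on $U_\alpha$ have enough injectives, so for a given twisted sheaf $M$ I can choose monomorphisms $M_\alpha \hookrightarrow I_\alpha$ into injectives; by adjunction these correspond to morphisms $M \to Q_\alpha^! I_\alpha$. Products in $\mathscr R\text{-Mod}^{\check h}_0$ exist and are formed componentwise, so $\prod_\alpha Q_\alpha^! I_\alpha$ is injective, being a product of injectives. The resulting map $M \to \prod_\alpha Q_\alpha^! I_\alpha$ is a monomorphism, since applying $Q_\alpha$ recovers the injection $M_\alpha \hookrightarrow I_\alpha$ as (a summand of) its $\alpha$-component and monomorphisms are detected componentwise. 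This exhibits enough injectives in $\mathscr R\text{-Mod}^{\check h}_0$.
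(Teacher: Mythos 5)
Your argument is correct, but note that the paper does not actually prove this lemma: it simply cites Toda (Lemma 4.3), so you have supplied a self-contained proof where the text outsources one. Your route --- componentwise-exact evaluation functors $Q_\alpha$ admitting right adjoints $Q_\alpha^!$ built by pushing forward from the intersections $U_\alpha\cap U_\beta$, then embedding $M$ into $\prod_\alpha Q_\alpha^! I_\alpha$ --- is the standard Godement/coinduction mechanism and all the formal steps check out: kernels, cokernels and hence monomorphisms are detected componentwise because the $\phi_{\alpha\beta}$ are isomorphisms; an exact left adjoint forces $Q_\alpha^!$ to preserve injectives; products of injectives are injective; and the $\alpha$-component of $M\to\prod_{\alpha'}Q_{\alpha'}^!I_{\alpha'}$ has the chosen monomorphism $M_\alpha\hookrightarrow I_\alpha$ as one factor, so the total map is monic. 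The one place where your wording is imprecise is the source of the twisted cocycle identity for the gluing data of $Q_\alpha^! N$: it is not really the closedness or centrality of $\check h$ that does the work (centrality only guarantees that multiplication by $\check h_{\alpha\beta\gamma}$ is an $\mathscr R$-module automorphism), but the \v{C}ech cocycle identity $\check d\check h=1$. Concretely, taking $\psi_{\beta\gamma}$ to be the canonical identification of the two pushforwards from $U_\alpha\cap U_\beta\cap U_\gamma$ multiplied by $\check h_{\alpha\beta\gamma}$, the relation $\check h_{\alpha\beta\gamma}\check h_{\alpha\gamma\delta}=\check h_{\beta\gamma\delta}\check h_{\alpha\beta\delta}$ on fourfold intersections (which suffices, since all sheaves in sight vanish off $U_\alpha$) is exactly the required twisted cocycle condition, and the same identity is what makes the adjunction $\mathrm{Hom}(M,Q_\alpha^! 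N)\cong\mathrm{Hom}(M_\alpha,N)$ well defined in both directions. With that spelled out, your proof is complete; an alternative abstract route would be to observe that $\mathscr R$-Mod$^{\check h}_0$ is a Grothendieck category (the objects $\bigoplus_\alpha Q_\alpha^! G_\alpha$ for generators $G_\alpha$ of the component categories give a generator), but your explicit embedding is more informative.
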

	In particular this implies that for every complex $E^*$ in $(\mathscr R, 0)$-Mod$^{\check h}$, there is a $K$-injective complex $I^*$ and a quasi-isomorphism $I^* \to E^*$. 
	
	\begin{def1}
		Let $\mathscr R$ be a sheaf of commutative $k$-algebras. Let $\omega_1$ and $\omega_2$ be twists and assume they are taken to be relative to the same cover (otherwise one can choose a common refinement). Let $M$ be a $\omega_1$-twisted sheaf of $\mathscr R$-modules and $N$ a $\omega_2$-twisted sheaf of $\mathscr R$-modules. We define their tensorproduct, which will be a $\omega_1 \cdot \omega_2$ twisted sheaf $N \otimes_{\mathscr R} M$. For each $\alpha$ we set
		\begin{align*} (N \otimes_{\mathscr R} M)_\alpha := N_\alpha \otimes_{\mathscr R|_{U_\alpha}} M_\alpha,\end{align*}
		being the ordinary tensor-product of sheaves. For each $\alpha, \beta \in I$ the corresponding isomorphism is $\phi^M_{\alpha \beta} \otimes \phi^N_{\alpha \beta}$. Thus it is clear that $N \otimes_{\mathscr R} M$ defines a $\omega_1 \cdot \omega_2$-twisted sheaf.
	\end{def1} 
	The only case of interest for us will be when $M$ is $\omega$-twisted and $N$ untwisted, so that $N \otimes_{\mathscr R} M$ is $\omega$-twisted.
	
	\begin{def1}
		Let $\mathscr A^*$ be a sheaf of cdg-algebras. We say that $\mathscr A^*$ exponentiates if there is a morphism of sheaves of groups $\exp : (\mathscr A^0, +) \to (Z(\mathscr A^0)^\times, \cdot)$ such that for each $U \subset X$ and section $s \in \mathscr A^0(U)$ we have 
		\begin{align*}d\exp(s) = d(s) \exp(s). \end{align*}
	\end{def1}
	\subsection{Twisted Riemann-Hilbert correspondence}
	
	From now on we will fix the following data and assumptions:
	\begin{enumerate}
		\label{ListAssumptions}
		\item A connected, Hausdorff and paracompact topological space $X$ of finite covering dimension.
		\item A sheaf of graded commutative dg-algebras $\mathscr A^*$ satisfying the following properties:
		\begin{itemize}
			\item $\mathscr A^i = 0$ for all $i < 0$ and $\mathscr A^n = 0$ for $n \gg 0$.
			\item $\mathscr A^0$ is a soft sheaf (and thus each $\mathscr A^i$ is soft).
			\item $\mathscr A^*$ exponentiates.
		\end{itemize}
		\item A closed global section $h \in \mathscr A^2(X)$.
		\item A sheaf of commutative $k$-algebras $\mathscr R$ together with a quasi-isomorphism
		\begin{align*} \rho : \mathscr R \to \mathscr A^*\end{align*} of sheaves of dg-algebras. 
		
		\item $\mathscr A^*$ is $K$-flat over $\mathscr R$ and each $\mathscr A^i$ is flat over $\mathscr A^0$.
		\item For every open $U \subset X$, $x \in U$ and a left $\mathscr A^*|_U$ dg-module $M^*$ whose underlying graded module is of the form $\mathscr A^*|_U \otimes_{\mathscr A^0} V^*$ for a finitely generated projective, graded $\mathscr A^0$-module $V^*$, there exists an open subset $x \in U' \subset U$, a finitely generated, graded $\mathscr R|_{U'}$-module $W^*$, consisting of direct summands of free $\mathscr R|_{U'}$-modules in each degree so that $\mathscr A^0  \otimes_{\mathscr R|_{U'}} W^* \cong V^*$ (as graded objects), and a homotopy equivalence \begin{align*}M^* \to (\mathscr A^*|_{U'} \otimes_{\mathscr R|_{U'} } (\mathscr R|_{U'}  \otimes_k V), d_{\mathscr A^*|_{U'}}  \otimes \mathbbm 1 + \mathbbm 1 \otimes d),\end{align*} where $d$ is some $\mathscr R|_{U'}$-linear differential on $W^*$.
	\end{enumerate}
	\begin{example}
		There are two main examples satisfying the above assumptions. The first one is given by a real smooth manifold $M$ with $k = \mathbb K$, $\mathcal R = \underline{\mathbb K}$ and $\mathscr A^* = \underline{\Omega}^*(M, \mathbb K)$. The other one is given by a complex manifold $X$ with $k = \mathbb C$, $\mathcal R = \mathcal O_X$ and $\mathscr A^*$ the Dolbeault algebra sheaf on $X$.
	\end{example}
	\begin{def1}
		Note that $d \circ \exp \circ \rho = 0$. As $0 \to \mathscr R \to \mathscr A^0 \to \mathscr A^1$ is exact, we obtain that $\exp \circ \rho$ restricts to an exponential map $\exp_{\mathscr R} : \mathscr R \to \mathscr R$. \\ \\
		Reconstructing the isomorphisms
		\begin{align*}H^2(\mathscr A^*(X)) \cong \check H^1(X, \mathscr A^1_{cl}) \cong \check H^2(X, \mathscr A^0_{cl}) = \check H^2(X, \mathscr R)\end{align*}
		we furthermore obtain and fix the following data:
		\begin{enumerate}
			\item An open cover $\mathscr U$ of $X$, indexed by $I$.
			\item For each $\alpha \in I$, sections $h_\alpha^0 \in \mathscr A^1(U_\alpha)$ such that $d(h_\alpha^0) = h|_{U_\alpha}$.
			\item For each $\alpha, \beta \in I$, sections $h^1_{\alpha \beta} \in \mathscr A^0(U_\alpha \cap U_\beta)$, such that \[d(h^1_{\alpha \beta} ) = h_\beta^0  - h_\alpha^0 = \check d(h^0)_{\alpha \beta} \text{ on }U_\alpha \cap U_\beta.\]
			\item For each $\alpha, \beta, \gamma \in I$ sections $h^2_{\alpha \beta \gamma} \in \mathscr R(U_\alpha \cap U_\beta \cap U_\gamma)$ such that \[\rho(h^2_{\alpha \beta \gamma})  = h^1_{\beta \gamma} - h^1_{\alpha \gamma} + h^1_{\alpha \beta} = \check d(h^1)_{\alpha \beta \gamma} \text{ on }U_\alpha \cap U_\beta \cap U_\gamma.\]
			\item Moreover, we define the  \v Cech 2-cocycle $\check h_{\alpha \beta \gamma} := \exp_{\mathscr R}(h^2_{\alpha \beta \gamma}) \in \mathscr R(U_\alpha \cap U_\beta \cap U_\gamma)^\times$, defining a twist.
		\end{enumerate}
		Since $h$ is a closed and central, $(\mathscr A^*, h)$ defines a sheaf of cdg-algebras. We will consider the dg-category $(\mathscr A^*, h)\text{-Mod} := (\mathscr A^*, h)$-Mod$^{1}$. Its homotopy category will be denoted by $K_h( \mathscr A^*)$. In particular the twist $\check h$ appears only as the curvature $h$, but not itself as a twist of sheaves. However, note that in virtue of remark \ref{remarkRelate}  we have that $(\mathscr A^*, h)$-Mod$^{1}$ is dg-equivalent to $(\mathscr A^*, h)\text{-Mod}^{\rho(\check h)}$, since $H^2(X, \mathscr A^0) = 0$ and $\rho(\check h)$ is in the image of $\exp : \check H^2(X, \mathscr A^0) \to \check H^2(X, (\mathscr A^0)^\times)$. Furthermore, we consider the abelian category $\mathscr R\text{-Mod}^{\check h}_0$, its associated dg-category $\mathscr R\text{-Mod}^{\check h}$, its homotopy category $K(\mathscr R)^{\check h}$ and its derived category $D(\mathscr R)^{\check h}$. In the following, we will define a dg-functor $(\mathscr A^*, h)\text{-Mod}  \to \mathscr R\text{-Mod}^{\check h}$.
	\end{def1}
	\begin{def1}
		\label{DefJFunctor}
		The dg-functor $J : (\mathscr A^*, h)\text{-Mod}  \to \mathscr R\text{-Mod}^{\check h}$ is defined as follows. Let $M^*$ be an object of $(\mathscr A^*, h)\text{-Mod}$. We set
		$J(M^*) = (M^*_\alpha, \exp(h^1_{\alpha \beta}))$, where, as a graded $\mathscr R|_{U_\alpha}$-module, $M^*_\alpha = M^*|_{U_\alpha}$. Denote by $d|_{U_\alpha}$ the restriction of the differential of $M^*$ to $U_\alpha$. The differential $d_\alpha$ on $M^*_\alpha$ is given by $d_\alpha := d|_{U_\alpha} - h^0_{\alpha}$. As $\mathscr A^*$ is graded commutative, we see that $d_\alpha$ squares to zero, using that $d(h_\alpha^0) = h|_{U_\alpha} = (d|_{U_\alpha})^2$. Since $\rho(h^2_{\alpha \beta \gamma})  = h^1_{\beta \gamma} - h^1_{\alpha \gamma} + h^1_{\alpha \beta}$, this sheaf indeed defines an object of $\mathscr R\text{-Mod}^{\check h}$. $J$ is defined on morphisms by restriction. This is indeed compatible with the differentials because the $h_\alpha^0$ terms cancel. It is also compatible with the isomorphisms $\exp(h^1_{\alpha \beta})$ because they are central.
	\end{def1}
	
	\begin{remark}
		\label{remarkUntwist}
		Similarly, we can define a functor in the other direction $\mathscr R\text{-Mod}^{\check h} \to (\mathscr A^*, h)\text{-Mod}$. Let $M^*$ be an object of $\mathscr R\text{-Mod}^{\check h}$. Then we can consider the $\check h$-twisted sheaf $\mathscr A^* \otimes_{\mathscr R} M^*$ over $\mathscr A^*$, whose differential squares to $0$. We obtain a $1$-twisted sheaf by letting $(\mathscr A^* \otimes_{\mathscr R} M^*)^{untw}_\alpha := (\mathscr A^* \otimes_{\mathscr R} M^*)_\alpha$ with the differential $d^{untw}_\alpha := d_\alpha + h_\alpha^0$ and $\phi_{\alpha \beta}^{untw} = \exp(-h^1_{\alpha \beta}) \phi_{\alpha \beta}$. The differential then precisely squares to $h$. By Remark \ref{remarkRelate}, this corresponds to an untwisted sheaf over $(\mathscr A^*,h)$.
	\end{remark}

	We would like $J$ to induce some kind of equivalence. However, since $\rho : \mathscr R \to \mathscr A^*$ is only a quasi-isomorphism, we cannot expect this to be a dg-quasi equivalence. 
	\begin{def1}We define the full dg-subcategory  $ (\mathscr A^*, h)\text{-Mod}^{coh}$ of $(\mathscr A^*, h)\text{-Mod}$ consisting of objects whose underyling graded $\mathscr A^*$-module is of the form $\mathscr A^* \otimes_{\mathscr A^0} E^*$ where $E^*$ is a graded, $\mathscr A^0$-module which is a direct summand of a finitely generated free graded $\mathscr A^0$-module. 
	\end{def1}
	Note that $\mathscr A^0$ is projective as an object in the abelian category $\mathscr A^0$-Mod$_0$ since the global sections functor is exact (because $\mathscr A^0$ is soft). Moreover any $\mathscr A^0$-module is generated by its global sections because $\mathscr A^0$ is soft. Thus, $E^*$ is equivalently just a finitely graded object consisting of finitely generated projective $\mathscr A^0$-modules in each degree. We also define the cdg-algebra $(A^*, h) := (\mathscr A^*(X), h)$. 
	\begin{def1}
		Similarly we define the full dg-subcategory $(A^*, h)$-Mod$^{coh}$ of $(A^*, h)$-Mod  consisting of objects whose underlying whose underyling graded $A^*$-module is of the form $A^* \otimes_{A^0} E^*$ where $E^*$ is a finitely generated, graded projective $A^0$-module.
	\end{def1}
	Note that both dg-categories can be obtained as dg-categories of twisted complexes of a cdg-category (as in Remark \ref{remarkCohesive}). In particular, this allows us to use the results of section \ref{quasiEqSection}.
	
	\begin{lemma}
		\label{simpleDGEquivalence}
		There is a dg-equivalence $(A^*, h)\text{-Mod}^{coh} \to (\mathscr A^*, h)\text{-Mod}^{coh}$.
		\begin{proof}
			The functor is the restriction of the pullback functor along the map $X \to pt$ to cohesive modules. Note that we do not have to sheafify because of finitely generated projectivity. It is then straightforward to see that the functor is essentially surjective. To show that it induces bijections on the morphism sets, we may forget about all differentials and gradings. Furthermore it suffices to show the claim in the case that $E^*$ is free, for in the general case one finds a finitely generated projective module $E'$ such that $E \oplus E'$ is free. Let $n$ be the number of generators of $E$. Then we have 
			\begin{nalign} \text{Hom}_{A^*}(A^* \otimes_{A^0} E^*, A^* \otimes_{A^0} F^*) &= \bigoplus_{i = 1}^n \text{Hom}_{A^*}(A^*, A^* \otimes_{A^0} F^*) \\ &= \bigoplus_{i = 1}^n A^* \otimes_{A^0} F^*\\&=  \bigoplus_{i = 1}^n \text{Hom}_{\mathscr A^*}(\mathscr A^*, \mathscr A^* \otimes_{A^0} F^*) \\&= \text{Hom}_{\mathscr A^*}(\mathscr A^* \otimes_{A^0} E^*, \mathscr A^* \otimes_{A^0} F^*)  
			\end{nalign}
		\end{proof}
	\end{lemma}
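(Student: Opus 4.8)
The plan is to realize the functor $\Phi : (A^*, h)\text{-Mod}^{coh} \to (\mathscr A^*, h)\text{-Mod}^{coh}$ as the pullback along the projection $\pi : X \to \mathrm{pt}$, restricted to cohesive modules, and then to verify that it is a dg-functor which is both essentially surjective and an isomorphism on every $\mathrm{Hom}$-complex. On objects, $\Phi$ sends a cohesive module with underlying graded module $A^* \otimes_{A^0} E^*$ (for $E^*$ a finitely generated projective graded $A^0$-module) to the sheaf whose underlying graded module is $\mathscr A^* \otimes_{A^0} E^*$, equipped with the connection obtained by extending the one on $E^*$ $\mathscr A^0$-linearly; no sheafification is required precisely because $E^*$ is finitely generated projective. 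Since cohesive modules are twisted complexes over a cdg-category (Remark \ref{remarkCohesive}), this construction is automatically compatible with the twisted differentials and with composition, so $\Phi$ is a dg-functor and it suffices to check the two equivalence conditions.

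For essential surjectivity I would begin with an object of $(\mathscr A^*, h)\text{-Mod}^{coh}$ whose underlying graded module is $\mathscr A^* \otimes_{\mathscr A^0} F^*$, where $F^*$ is a direct summand of a finitely generated free graded $\mathscr A^0$-module, and pass to global sections $E^* := \Gamma(X, F^*)$. Because $\mathscr A^0$ is soft, the global-sections functor is exact and every $\mathscr A^0$-module is generated by its global sections; applying $\Gamma$ to a splitting $F^* \oplus G^* \cong (\mathscr A^0)^{\oplus n}$ exhibits $E^*$ as a direct summand of $(A^0)^{\oplus n}$, hence as a finitely generated projective $A^0$-module. The adjunction counit $\mathscr A^0 \otimes_{A^0} E^* \to F^*$ is an isomorphism on the free module $(\mathscr A^0)^{\oplus n}$ and respects the splitting, so it is itself an isomorphism, giving $\Phi(E^*) \cong F^*$ as graded $\mathscr A^*$-modules (and then as objects, since $\Phi$ transports the differential correctly).

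For fully faithfulness I would forget differentials and gradings, which is legitimate since $\Phi$ is already a dg-functor, so that it remains only to prove bijectivity on the underlying graded morphism spaces; by adjoining a complementary projective one may further reduce to the case where the source module $E^*$ is free. For a free source the Hom-complex collapses to $A^* \otimes_{A^0} \Gamma(X, F^*)$ on the algebra side and to $\Gamma(X, \mathscr A^* \otimes_{\mathscr A^0} F^*)$ on the sheaf side, so the whole comparison reduces to the single identity $\Gamma(X, \mathscr A^* \otimes_{\mathscr A^0} F^*) \cong A^* \otimes_{A^0} \Gamma(X, F^*)$.

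The main obstacle is exactly this last identity, namely commuting the global-sections functor past the tensor product $\mathscr A^* \otimes_{\mathscr A^0} (-)$. I expect to establish it by again using the direct-summand decomposition of $F^*$ to reduce to a free $\mathscr A^0$-module, where both sides become $(A^*)^{\oplus n}$ compatibly, and invoking the exactness of $\Gamma$ (softness of $\mathscr A^0$, hence of each $\mathscr A^i$) together with the flatness of each $\mathscr A^i$ over $\mathscr A^0$ from the standing assumptions \ref{ListAssumptions}. Once this Serre–Swan-type dictionary between finitely generated projective $A^0$-modules and their associated soft sheaves of $\mathscr A^0$-modules is in place, the identification of the Hom-complexes is immediate and the dg-equivalence follows.
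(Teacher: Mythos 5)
Your proposal is correct and follows essentially the same route as the paper: the same pullback-along-$X\to\mathrm{pt}$ functor, the same reduction of fully faithfulness to free modules by adjoining a projective complement, and the same key identification $\Gamma(X, \mathscr A^* \otimes_{A^0} F^*) = \text{Hom}_{\mathscr A^*}(\mathscr A^*, \mathscr A^* \otimes_{A^0} F^*) \cong A^* \otimes_{A^0} F^*$ obtained by passing through finite free modules and their retracts. Your treatment of essential surjectivity via softness of $\mathscr A^0$ and the adjunction counit fills in a step the paper declares "straightforward," and is consistent with the paper's surrounding remarks.
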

	\begin{lemma}
		\label{LemmaAcyclic}
		Let $M^*$ be an acyclic, bounded below complex of sheaves of abelian groups and assume that each $M^i$ is acyclic (with respect to $X$). Then the complex $M^*(X)$ is acyclic.
		\begin{proof}
			This follows from general category theory. By definition, the $n$-th sheaf cohomology of $M^i$ is the $n$-th right derived functor of the global sections functor. Then we can apply \cite[\href{https://stacks.math.columbia.edu/tag/015E}{Tag 015E}]{stacks-project}.
		\end{proof}
	\end{lemma}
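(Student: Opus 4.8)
The plan is to run the standard dimension-shifting (décalage) argument, reducing exactness of the complex of global sections to the vanishing of the higher sheaf cohomology of the cocycle subsheaves. Write $Z^i := \ker(d^i \colon M^i \to M^{i+1})$. Since $M^*$ is acyclic \emph{as a complex of sheaves}, exactness gives $Z^i = \operatorname{im}(d^{i-1})$, so the differential of $M^*$ breaks into short exact sequences of sheaves
\begin{align*} 0 \to Z^i \to M^i \to Z^{i+1} \to 0. \end{align*}
First I would pass to the associated long exact sequences in sheaf cohomology. The crucial input is the \emph{second} sense of ``acyclic'' in the hypothesis: each $M^i$ is $\Gamma(X,-)$-acyclic, i.e.\ $H^n(X,M^i)=0$ for all $n\ge 1$. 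This kills the middle terms of every long exact sequence, leaving the isomorphisms
\begin{align*} H^n(X, Z^{i+1}) \cong H^{n+1}(X, Z^i) \qquad (n \ge 1) \end{align*}
together with the four-term exact pieces $0 \to \Gamma(X,Z^i) \to \Gamma(X,M^i) \to \Gamma(X,Z^{i+1}) \to H^1(X,Z^i) \to 0$ (here $\Gamma(X,M^i) = M^i(X)$).

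Next I would exploit boundedness below. If $M^i = 0$ for $i < i_0$, then exactness at $M^{i_0}$ forces $Z^{i_0} = 0$, and iterating the isomorphism above downward yields, for every $i \ge i_0$,
\begin{align*} H^1(X, Z^{i+1}) \cong H^2(X, Z^i) \cong \cdots \cong H^{\,i+2-i_0}(X, Z^{i_0}) = 0. \end{align*}
Hence $H^1(X,Z^i)=0$ for all $i$, so the four-term pieces collapse to genuine short exact sequences of abelian groups $0 \to \Gamma(X,Z^i) \to \Gamma(X,M^i) \to \Gamma(X,Z^{i+1}) \to 0$. Reassembling these, one observes that $\Gamma(X,d^i)$ factors as the surjection $\Gamma(X,M^i) \twoheadrightarrow \Gamma(X,Z^{i+1})$ followed by the injection $\Gamma(X,Z^{i+1}) \hookrightarrow \Gamma(X,M^{i+1})$, whence $\ker\Gamma(X,d^i) = \operatorname{im}\bigl(\Gamma(X,Z^i)\to\Gamma(X,M^i)\bigr) = \operatorname{im}\Gamma(X,d^{i-1})$, which is exactly the acyclicity of $M^*(X)$.

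I expect no serious obstacle: the argument is purely formal homological algebra for the left exact functor $\Gamma(X,-)$ and its derived functors $H^n(X,-)$, and is the concrete instance of the cited general fact \cite[\href{https://stacks.math.columbia.edu/tag/015E}{Tag 015E}]{stacks-project}. The only points genuinely requiring care are the bookkeeping that makes the downward induction terminate---this is precisely where the bounded-below hypothesis is used, guaranteeing $Z^{i_0}=0$ after finitely many dimension shifts---and keeping the two meanings of ``acyclic'' apart, since it is the $\Gamma$-acyclicity of the individual $M^i$ (not the exactness of the complex) that produces the cohomology isomorphisms above.
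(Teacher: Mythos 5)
Your argument is correct: it is exactly the standard dimension-shifting proof of the general fact the paper cites (Stacks Project Tag 015E, acyclicity of $F(K^\bullet)$ for a bounded-below exact complex of $F$-acyclic objects), specialized to $F = \Gamma(X,-)$. The paper simply outsources this to the reference, so your write-up is the same approach made explicit, with the bounded-below hypothesis and the two senses of ``acyclic'' used in precisely the places where they are needed.
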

	
	\begin{lemma}
		\label{HighProjective}
		Let $\mathscr C$ be an abelian category. Let $B^*$ be a bounded above complex consisting of projectives in each degree. For any $n \in \mathbb N$ such that $H^i(B^*) = 0$ for $i \geq n$ we have that $\text{Im}(B^{n-1} \to B^n)$ is projective.
		
		\begin{proof}
			We truncate our complex iteratively. Suppose $B^i = 0$ for $i > m > n$, then $B^{m-2} \to B^{m-1} \to B^m \to 0$ is exact. Since $B^m$ is projective, in the truncation $B^{m-2} \to \text{Im}(B^{m-2} \to B^{m-1}) \to 0$ the middle term is a direct summand of $B^m$ and hence projective.
		\end{proof}
	\end{lemma}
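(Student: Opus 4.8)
The plan is to peel projective summands off the top of the complex by a descending induction, exploiting the fact that any epimorphism onto a projective object splits. Since $B^*$ is bounded above, I fix $m$ with $B^i = 0$ for $i > m$; I may assume $m \geq n$, since otherwise $B^n = 0$ and $\text{Im}(B^{n-1} \to B^n) = 0$ is (trivially) projective. Writing $d^i : B^i \to B^{i+1}$ for the differentials and setting $Z^i := \ker(d^i)$, the hypothesis $H^i(B^*) = 0$ for $i \geq n$ says precisely that $\text{Im}(d^{i-1}) = Z^i$ for all $i \geq n$. In particular $\text{Im}(B^{n-1} \to B^n) = Z^n$, so it suffices to prove that $Z^n$ is projective.

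First I would record, for every index $i \geq n-1$, the short exact sequence
\begin{align*}
0 \to Z^i \to B^i \xrightarrow{d^i} Z^{i+1} \to 0,
\end{align*}
whose right-exactness is exactly the vanishing of $H^{i+1}(B^*)$ (valid because $i+1 \geq n$) and whose exactness elsewhere is formal. The key observation is that whenever $Z^{i+1}$ is projective this sequence splits, exhibiting $Z^i$ as a direct summand of the projective object $B^i$, hence projective.

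The induction then runs downward from the top. At $i = m$ we have $B^{m+1} = 0$, so $Z^{m+1} = 0$ is projective; feeding this into the displayed sequence yields $Z^m$ projective (indeed $Z^m = B^m$). Applying the splitting argument successively at $i = m-1, m-2, \dots, n$ propagates projectivity of $Z^{i+1}$ to projectivity of $Z^i$ at each stage, terminating with $Z^n$ projective, which is the claim.

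This is essentially routine homological algebra, so I do not expect a genuine obstacle. The only points needing care are the index bookkeeping — checking that every short exact sequence invoked lies in the range $i \geq n-1$, where right-exactness is guaranteed by the cohomology hypothesis — and the boundedness of $B^*$, which is what makes the descending induction well-founded and supplies the projective base case $Z^{m+1} = 0$. The mechanism is exactly the ``splitting off from the top'' device underlying the contractibility of bounded-above acyclic complexes of projectives.
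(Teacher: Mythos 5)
Your proof is correct and is essentially the paper's argument: both peel off the top of the bounded-above complex by using acyclicity in degrees $\geq n$ to get surjections onto projectives, split them, and descend until reaching $\text{Im}(B^{n-1}\to B^n)=Z^n$. Your write-up via the short exact sequences $0 \to Z^i \to B^i \to Z^{i+1} \to 0$ is in fact cleaner and more explicit about the index range than the paper's (rather telegraphic) truncation argument.
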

	\begin{lemma}
		\label{l11}
		
		Let $\mathscr C$ be an abelian category. Consider a quasi-isomorphism of complexes $A^*$ and $B^*$ of the following form:
		\[\begin{tikzcd}
			... \arrow[r] & 0 \arrow[r] \arrow[d] & A^1 \arrow[r] \arrow[d] & A^2 \arrow[r] \arrow[d] & ... \arrow[r] & A^{n-2} \arrow[r] \arrow[d] & A^{n-1} \arrow[r] \arrow[d] & 0 \arrow[r] \arrow[d] & ... \\
			... \arrow[r] & B^0 \arrow[r]         & B^1 \arrow[r]           & B^2 \arrow[r]           & ... \arrow[r] & B^{n-2} \arrow[r]           & B^{n-1} \arrow[r]           & B^{n} \arrow[r]       & ...
		\end{tikzcd}\]
		such that $A^*$ is K-projective and $B^*$ consists of projective objects and $B^*$ is bounded above. Then $\text{Im}(B^0 \to B^1)$ and $\text{Im}(B^{n-1} \to B^n)$ are projective.
		\begin{proof}
			$\text{Im}(B^{n-1} \to B^n)$ is projective by the previous Lemma.\\ \\
			$\text{Im}(B^0 \to B^1)$  is projective: Let $I : = \text{Im}(B^0 \to B^1)$. We will show that $\text{Ext}^1(I,M) = 0$ for all $M$. First recall that $\text{Ext}^1(I,M) = \text{Hom}_{D(\mathscr C)}(I[0], M[-1])$. Consider the complex $\tilde B^*$ which is given by $\tilde B^i = B^i$ for $1 \leq i \leq n-1$, 0 for $i > n$ or $i \leq 0$ and equal to $\text{Im}(B^{n-1} \to B^n)$ for $i = n$. Next let $C^*$ be the mapping cone of $A^* \to \tilde B^*$. Then $C^*$ is still $K$-projective and we can calculate
			\begin{align*}\text{Hom}_{D(\mathscr C)}(I[0], M[-1]) = \text{Hom}_{D(\mathscr C)}(C^*, M[-1]) = \text{Hom}_{K(\mathscr C)}(C^*, M[-1]) = 0\end{align*}
			since $C^*$ is concentrated in non-negative degrees.
		\end{proof}
	\end{lemma}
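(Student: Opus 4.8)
The plan is to treat the two ends of $B^*$ separately: the top image $\mathrm{Im}(B^{n-1}\to B^n)$ will follow directly from the preceding lemma, while the bottom image $\mathrm{Im}(B^0\to B^1)$ requires an $\mathrm{Ext}$-vanishing argument carried out in the derived category.

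First I would dispose of the top end. Since $A^*\to B^*$ is a quasi-isomorphism and $A^i=0$ for $i\ge n$, we get $H^i(B^*)=H^i(A^*)=0$ for all $i\ge n$. As $B^*$ is a bounded-above complex of projectives, Lemma \ref{HighProjective} applies with this value of $n$ and immediately gives that $\mathrm{Im}(B^{n-1}\to B^n)$ is projective.

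For the bottom end, write $I:=\mathrm{Im}(B^0\to B^1)$. The key point is that an object of an abelian category is projective precisely when $\mathrm{Ext}^1(I,M)=0$ for every $M$, and that $\mathrm{Ext}^1(I,M)=\mathrm{Hom}_{D(\mathscr C)}(I[0],M[-1])$. I would therefore produce a conveniently shaped $K$-projective complex representing $I$ in $D(\mathscr C)$. Set $\tilde B^*$ to be the bounded complex obtained from $B^*$ by discarding everything in degrees $\le 0$, keeping $B^i$ in degrees $1,\dots,n-1$, and placing $\mathrm{Im}(B^{n-1}\to B^n)$ in degree $n$; by the first step this is a bounded complex of projectives, hence $K$-projective. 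The restriction of the quasi-isomorphism yields a map $A^*\to\tilde B^*$, and I would set $C^*:=\mathrm{Cone}(A^*\to\tilde B^*)$. A long-exact-sequence computation, using that $A^*\to\tilde B^*$ induces an isomorphism on $H^i$ for $2\le i\le n-1$ and that its only failure to be a quasi-isomorphism occurs at the bottom (where the induced map $H^1(A^*)\to\ker(B^1\to B^2)$ is split injective with cokernel exactly $I$), shows that $C^*$ has cohomology concentrated in a single degree and equal to $I$; thus $C^*$ represents (a shift of) $I[0]$. Moreover $C^*$ is a cone of a map of $K$-projective complexes and so is $K$-projective, and by construction it is concentrated in non-negative degrees.

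Finally I would conclude by the chain $\mathrm{Hom}_{D(\mathscr C)}(I[0],M[-1])=\mathrm{Hom}_{D(\mathscr C)}(C^*,M[-1])=\mathrm{Hom}_{K(\mathscr C)}(C^*,M[-1])=0$, where the middle equality uses $K$-projectivity of $C^*$ and the vanishing is forced by degree reasons, since $C^*$ lives in non-negative degrees while $M[-1]$ is concentrated in a negative degree. This gives $\mathrm{Ext}^1(I,M)=0$ for all $M$ and hence the projectivity of $I$. The step I expect to be the main obstacle is the bookkeeping in the cone computation: one must pin down exactly in which degree the cohomology of $C^*$ sits and match it against the shift in $\mathrm{Ext}^1(I,M)=\mathrm{Hom}_{D(\mathscr C)}(I[0],M[-1])$, so that the resulting homotopy classes of maps genuinely vanish. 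Everything else is formal once the preceding lemma supplies projectivity of the top image.
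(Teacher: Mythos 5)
Your handling of $\mathrm{Im}(B^{n-1}\to B^n)$ is correct and is exactly the paper's argument. For $\mathrm{Im}(B^0\to B^1)$ you take the same route as the paper (same $\tilde B^*$, same cone), and you correctly flag the degree bookkeeping in the cone as the main obstacle --- but you never carry it out, and it does not come out the way your final chain of equalities assumes. The long exact sequence of $C^*=\mathrm{Cone}(A^*\to\tilde B^*)$ gives $H^0(C^*)=0$ (because $H^0(\tilde B^*)=0$ and $H^1(A^*)=\ker(A^1\to A^2)\to H^1(\tilde B^*)=\ker(B^1\to B^2)$ is injective) and $H^1(C^*)\cong I$, with all other cohomology vanishing. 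So $C^*$ is quasi-isomorphic to $I$ placed in degree $1$, not to $I[0]$, and consequently $\mathrm{Hom}_{D(\mathscr C)}(C^*,M[-1])=\mathrm{Ext}^2(I,M)$ rather than $\mathrm{Ext}^1(I,M)$: the first equality in your display is the one that fails. The degree argument therefore only proves $\mathrm{Ext}^2(I,M)=0$ for all $M$, which does not give projectivity (over $\mathbb Z$ every module has vanishing $\mathrm{Ext}^2$, yet $\mathbb Z/2$ is not projective). Shifting does not help either: $C^*[1]$ has $A^1$ sitting in degree $-1$, so the ``concentrated in non-negative degrees'' vanishing is lost. The same slip occurs in the paper's own write-up, so the obstacle you identified is genuine and is precisely where the argument breaks.

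A short correct argument for this half: $B^*$ is a bounded-above complex of projectives, hence $K$-projective, so the quasi-isomorphism $f:A^*\to B^*$ of $K$-projective complexes is a homotopy equivalence. Choose a homotopy inverse $g$ and a homotopy $h$ with $\mathrm{id}_{B}-f\circ g=d_B\circ h+h\circ d_B$. In degree $0$ we have $f^0=0$ because $A^0=0$, so $\mathrm{id}_{B^0}=d_B^{-1}\circ h^0+h^1\circ d_B^0$. Factor $d_B^0=\iota\circ\pi$ with $\pi:B^0\to I$ the canonical epimorphism and $\iota:I\to B^1$ the inclusion. Composing the identity above with $\pi$ on the left and using $\pi\circ d_B^{-1}=0$ (since $\mathrm{Im}\,d_B^{-1}\subseteq\ker d_B^0=\ker\pi$) yields $\pi=\pi\circ h^1\circ\iota\circ\pi$; as $\pi$ is an epimorphism, $\pi\circ(h^1\circ\iota)=\mathrm{id}_I$. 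Hence $I$ is a direct summand of the projective object $B^0$ and is therefore projective.
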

	
	\begin{def1}
		\label{PerfectDefinition}
		Let $\mathscr Q$ be a sheaf of rings.
		\begin{enumerate}
			\item We say that a complex of $\mathscr Q$-modules $M^*$ is strictly perfect if it is bounded and consists of direct summands of finitely generated free modules in each degree.
			\item
			We say that a complex $M^*$ of $\mathscr Q$-modules is perfect if for every $x \in X$, there exists an open neighbourhood $x \in U \subset X$ such that $M^*_{|U}$ is quasi-isomorphic to a strictly perfect complex. We denote the corresponding full subcategory of $D(\mathscr R)$ by $D_{\text{perf}}(\mathscr R)$.
			\item
			We say that a complex $M^*$ of $\mathscr Q$-modules is globally bounded perfect with bounding constants $a < b \in \mathbb Z$ and $N \in \mathbb N$ if there exists an open neighbourhood  $U$ of $x$ such that $M^*_{|U}$ is quasi-isomorphic to a strictly perfect complex which is concentrated in degrees $[a, b]$ and has at most $N$ generators.  
			\item We say that a complex $M^*$ of $\mathscr Q$-modules is globally bounded perfect if $M^*$ is globally bounded perfect for some bounding constants.
				We denote the corresponding full subcategory of $D(\mathscr Q)$ by $D^B_{\text{perf}}(\mathscr Q)$.
			\item Furthermore, if $\omega$ is a twist, then we say that a complex $M^*$ of twisted $\mathscr Q$-modules is perfect if each $M^*_{\alpha}$ is perfect. We say that $M^*$ is globally bounded perfect if there are numbers $a < b$ and $N \in \mathbb N$ such that each $M^*_{\alpha}$ is globally bounded perfect with bounding constants $a < b$ and $N$. The corresponding derived categories will be denoted by $D_{\text{perf}}(\mathscr Q)^{\omega}$ and $D^B_{\text{perf}}(\mathscr Q)^{\omega}$.
		\end{enumerate}

	\end{def1}
	\begin{lemma}
		\label{p8}
		Let $Y$ be Hausdorff paracompact topological space of finite covering dimension. Consider the category $C$ generated by 2 objects $t$ and $f$ generated by a morphism $f \to t$. Let $PSh(Y,C)$ be the category of $C$-valued presheaves on $Y$. Let $s \in PSh(Y,C)$ such that for every $x \in Y$ there exists an open neighbourhood $U$ of $x$ with $s(U) = t$. Additionally, assume that for every collection of pairwise disjoint opens $(U_i)$ with $s(U_i) = t$ for all $i$ it follows that $s(\amalg U_i) = t$.  Then there is a finite cover $(U_i)$ of $Y$ such that $s(U_i) = t$ for all elements of the cover $U_i$.
		\begin{proof}
			The proof closely follows \cite[Proposition III.4.1]{Wells}. By assumption we can find a cover $(U_\alpha)$ such that $s(U_\alpha) = t$ for all $\alpha$. Since there is no morphism $t \to f$ we can assume that $(U_\alpha)$ has finite dimension because $Y$ is required to have finite covering dimension. As $Y$ is Hausdorff and paracompact we can find a partition of unity $\varepsilon_\alpha$ with values in $[0,1]$ subordinate to $(U_\alpha)$. We define $A_i := \{ \{\alpha_0,...,\alpha_i\} | \alpha_i \neq \alpha_j \text{ for } i \neq j\}$. For $a = \{\alpha_0,...,\alpha_i\} \in A_i$ we let 
			\begin{align*}
			W_{ia} = \{x \in Y | \varepsilon_\alpha(x) < \text{min}(\varepsilon_{\alpha_0}(x),...,\varepsilon_{\alpha_i}(x)) \text{ for every } \alpha \notin a\}.
			\end{align*}
			Each $W_{ia}$ is open and one can see that the union $X_i := \cup_a W_{ia}$ is disjoint. We also have the finite union $\cup X_i = Y$ and each $W_{ia}$ is contained, for example in $U_{\alpha_0}$. By our assumption this means that $s(X_i) = t$ for all $i$.
		\end{proof}
	\end{lemma}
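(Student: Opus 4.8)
The plan is to exploit the single structural feature of the category $C$: since its only non-identity morphism points $f \to t$ and $s$ is a presheaf (hence contravariant), the restriction morphism $s(V) \to s(U)$ for $U \subseteq V$ can exist only when $s(V) = t$ implies $s(U) = t$. Thus the value $t$ is \emph{monotone}: it is inherited by every open subset. This monotonicity is the feature that makes refinements harmless, and I would use it repeatedly.

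First I would produce a starting cover. By the local hypothesis every point has a neighbourhood on which $s = t$, so such neighbourhoods form an open cover $(U_\alpha)$ of $Y$. Since $Y$ has finite covering dimension, say $n$, and is paracompact, I may refine $(U_\alpha)$ to a locally finite open cover of order at most $n+1$ (no point lies in more than $n+1$ members); monotonicity guarantees that $s$ still takes the value $t$ on every member of the refined cover, so nothing is lost. Then, using paracompactness together with the Hausdorff property, I fix a partition of unity $(\varepsilon_\alpha)$ subordinate to this cover.

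The core of the argument is the Wells construction of $n+1$ ``disjointly fibred'' opens. For each $i = 0, \dots, n$ and each $(i+1)$-element index set $a = \{\alpha_0, \dots, \alpha_i\} \in A_i$, define
\[ W_{ia} = \{x \in Y : \varepsilon_\alpha(x) < \min(\varepsilon_{\alpha_0}(x), \dots, \varepsilon_{\alpha_i}(x)) \text{ for all } \alpha \notin a \} \]
and set $X_i := \bigcup_{a \in A_i} W_{ia}$. Three facts must be checked. First, each $W_{ia}$ is open: although the defining condition quantifies over all indices outside $a$, local finiteness of $(\varepsilon_\alpha)$ means that near any given point only finitely many $\varepsilon_\alpha$ are nonzero, while the remaining indices impose no genuine constraint where $\min(\dots) > 0$; so the condition is locally a finite system of strict inequalities between continuous functions. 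Second, for fixed $i$ the sets $\{W_{ia}\}_{a \in A_i}$ are pairwise disjoint: if $x \in W_{ia} \cap W_{ia'}$ with $a \neq a'$ of equal size, choose $\beta \in a \setminus a'$ and $\gamma \in a' \setminus a$; the two membership conditions give both $\varepsilon_\gamma(x) < \varepsilon_\beta(x)$ and $\varepsilon_\beta(x) < \varepsilon_\gamma(x)$, a contradiction. Hence $X_i$ is a disjoint union of opens, and each $W_{ia} \subseteq U_{\alpha_0}$ since $\varepsilon_{\alpha_0} > 0$ there forces $x$ into the support of $\varepsilon_{\alpha_0}$. Third, the $X_i$ cover $Y$: at any $x$, let $a$ be the set of indices attaining the maximal value of $\varepsilon_\alpha(x)$; then $x \in W_{ia}$ with $i+1 = |a| \le n+1$, so $x$ lies in $X_i$ for some $i \le n$.

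Finally I would assemble the conclusion. Since each $W_{ia}$ lies inside some $U_{\alpha_0}$ on which $s = t$, monotonicity yields $s(W_{ia}) = t$; the disjoint-union hypothesis then gives $s(X_i) = t$. Therefore $(X_0, \dots, X_n)$ is the required finite cover with $s(X_i) = t$ for all $i$. I expect the main obstacle to be the openness of the $W_{ia}$ in the first point: treating the ``for all $\alpha \notin a$'' clause rigorously forces one to invoke local finiteness to reduce, on a neighbourhood of each point, to finitely many inequalities, and this is the one step of the argument that is genuinely topological rather than purely combinatorial.
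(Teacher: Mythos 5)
Your proof is correct and follows essentially the same route as the paper's, namely the Wells partition-of-unity construction with the sets $W_{ia}$ and $X_i = \bigcup_a W_{ia}$; you additionally spell out the openness, pairwise-disjointness, and covering verifications that the paper leaves implicit. Your opening observation that the value $t$ is inherited by open subsets (because the presheaf restriction $s(V) \to s(U)$ leaves no room for a morphism $t \to f$) is exactly the role the paper assigns to the phrase ``since there is no morphism $t \to f$,'' and it is what licenses the refinement to a cover of order at most $n+1$.
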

	\begin{corollary}
		\label{c6}
		
		Let $Y$ be as above and let $\mathscr S$ be a sheaf of rings on $Y$. Let $M$ be a $\mathscr S$-module such that there exists $N \in \mathbb N$ so that for every $x \in Y$ there exists an open $x \in U$ and a surjection $\mathscr S^{\oplus N}|_U \to M|_U$. Then there exist $N' \in \mathbb N$ and a surjection $\mathscr S^{\oplus N'} \to M$.
		\begin{proof}
			Apply the Lemma above to the presheaf \begin{align*}U \mapsto  \left\{
			\begin{array}{ll}
				t & \text{if there exists a surjection } \mathscr S^{\oplus N}|_U \to M|_U,  \\
				f & \, \textrm{else} .\\
			\end{array}
			\right. \end{align*}
		\end{proof}
	\end{corollary}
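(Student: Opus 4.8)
The plan is to reduce the statement to the combinatorial Lemma \ref{p8} by encoding the property ``admits a surjection from $\mathscr{S}^{\oplus N}$'' as a $C$-valued presheaf, and then to assemble the finitely many local surjections produced by the lemma into a single global one. Concretely, I would define $s \in PSh(Y,C)$ by setting $s(U) = t$ exactly when there is a surjection $\mathscr{S}^{\oplus N}|_U \to M|_U$ and $s(U) = f$ otherwise. The first point to check is that $s$ is genuinely a $C$-valued presheaf, i.e. that the restriction maps land in $C$: for $U \subseteq V$ one needs a morphism $s(V) \to s(U)$, and since the only non-identity arrow of $C$ is $f \to t$, this is the implication $s(V) = t \Rightarrow s(U) = t$. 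It holds because surjectivity of a morphism of sheaves is stalk-local and the stalks at points of $U$ are unchanged under restriction from $V$, so a surjection over $V$ restricts to a surjection over $U$.

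Next I would verify the two hypotheses of Lemma \ref{p8}. The local hypothesis---that every point has a neighbourhood with value $t$---is precisely the assumption of the corollary. For the disjoint-union hypothesis, given pairwise disjoint opens $(U_i)$ with $s(U_i) = t$, witnessed by surjections $\psi_i : \mathscr{S}^{\oplus N}|_{U_i} \to M|_{U_i}$, both $\mathscr{S}^{\oplus N}$ and $M$ restrict to (co)products over $\coprod_i U_i$, so the $\psi_i$ glue uniquely to a morphism $\mathscr{S}^{\oplus N}|_{\coprod U_i} \to M|_{\coprod U_i}$ that is still surjective on every stalk; hence $s(\coprod U_i) = t$. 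Lemma \ref{p8} then produces a finite open cover $V_1, \dots, V_m$ of $Y$ with $s(V_j) = t$ for all $j$, i.e. surjections $\psi_j : \mathscr{S}^{\oplus N}|_{V_j} \to M|_{V_j}$.

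The remaining---and decisive---step is to merge these finitely many local surjections into one global surjection $\mathscr{S}^{\oplus N'} \to M$ with $N' = mN$, and this is where the main obstacle lies. A global morphism $\mathscr{S}^{\oplus N'} \to M$ is the same as $N'$ global sections of $M$ whose germs generate every stalk, whereas the generators coming from $\psi_j$ live only over $V_j$, and for an arbitrary sheaf of rings local sections need not extend (a rank-one local system on $S^1$ with monodromy $-1$ is locally free yet has no nonzero global section, so no global surjection from a constant sheaf can exist). I would resolve this by invoking softness, which is available in the intended applications: when $\mathscr{S}$ is soft and $Y$ is paracompact, every $\mathscr{S}$-module, in particular $M$, is soft. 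One then shrinks the finite cover to a closed cover $K_j \subseteq V_j$ (possible since $Y$ is normal and the cover is finite) and uses softness of $M$ to extend each generating section $\psi_j(e_k) \in M(V_j)$, regarded as a section over $K_j$, to a global section $\tilde m^{j}_{k} \in M(Y)$ agreeing with $\psi_j(e_k)$ near $K_j$. The resulting $mN$ global sections $\tilde m^{j}_{k}$ generate $M_x$ at every $x$, because $x$ lies in some $K_j$ and near $x$ they coincide with the generators of the surjection $\psi_j$, giving the required global surjection $\mathscr{S}^{\oplus mN} \to M$. Thus the genuinely new content is not the bookkeeping of Lemma \ref{p8}, which merely extracts a finite cover, but this final extension step, which is exactly where the softness of $\mathscr{S}$ must enter.
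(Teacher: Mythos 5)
Your first half is exactly the paper's proof: the paper's entire argument for Corollary \ref{c6} is ``apply Lemma \ref{p8} to the presheaf sending $U$ to $t$ iff a surjection $\mathscr S^{\oplus N}|_U \to M|_U$ exists,'' and your checks that this is a $C$-valued presheaf (restrictions of surjections are surjections) and satisfies the disjoint-union hypothesis (glue the surjections over disjoint opens, with the same $N$) are what the paper leaves implicit. Where you diverge is the final step, and you are right to insist on it: Lemma \ref{p8} only yields a finite open cover $V_1,\dots,V_m$ with local surjections $\mathscr S^{\oplus N}|_{V_j}\to M|_{V_j}$, which is not yet a global surjection, since the generating sections over $V_j$ need not extend to $Y$. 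The paper's proof simply stops there, and your counterexample (the rank-one $\underline{\mathbb Z}$-local system on $S^1$ with monodromy $-1$, locally free but with no nonzero global sections) shows the corollary is actually false for an arbitrary sheaf of rings on such a $Y$; some extension property must be added to the hypotheses. Softness of $\mathscr S$ suffices and is exactly what holds in the one place the corollary is invoked (Lemma \ref{SoftStrict}, where $\mathscr S$ is soft, hence every $\mathscr S$-module is soft on a paracompact space), and your repair --- shrink the finite cover to a closed cover $K_j\subseteq V_j$ by normality, extend each generator over $K_j$ to a global section by softness of $M$, and verify surjectivity on stalks at points of $K_j$ where the extensions agree with the original generators --- is correct. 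So your proof is valid once the softness hypothesis you flag is added to the statement; as written, you have located a genuine gap in the paper's Corollary \ref{c6} rather than in your own argument.
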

	
	\begin{corollary}
		\label{c7}
		Let $Y$ be as before and let $\mathscr S$ be a sheaf of rings on $Y$. Let $M$ be a $\mathscr S$-module such that there exists $N \in \mathbb N$ so that for every $x \in Y$ there exists an open $x \in U$ and an isomorphism $\mathscr S^{\oplus N}|_U \to M|_U$. Then there is a finite open cover $(U_i)$ such that $M|_{U_i}$ is isomorphic to $\mathscr S^{\oplus N}|_{U_i}$.
		\begin{proof}
			Apply the Lemma above to the presheaf \begin{align*}U \mapsto  \left\{
			\begin{array}{ll}
				t & \text{if there exists an isomorphism } \mathscr S^{\oplus N}|_U \to M|_U,  \\
				f & \, \textrm{else.} \\
			\end{array}
			\right. \end{align*}
		\end{proof}
	\end{corollary}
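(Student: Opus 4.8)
The plan is to apply Lemma \ref{p8} directly, exactly as in the proof of Corollary \ref{c6}, but with ``surjection'' replaced by ``isomorphism''. Concretely, I would introduce the $C$-valued presheaf $s$ on $Y$ defined by
\begin{align*}
s(U) := \begin{cases} t & \text{if there is an isomorphism } \mathscr S^{\oplus N}|_U \to M|_U,\\ f & \text{otherwise,}\end{cases}
\end{align*}
and then feed it into the Lemma.

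The first thing to check is that $s$ is genuinely a presheaf valued in $C$, i.e.\ that the assignment is functorial. Since $C$ has only the morphisms $\mathrm{id}_t$, $\mathrm{id}_f$ and the single arrow $f \to t$ (in particular \emph{no} arrow $t \to f$), this amounts to verifying that $s(U) = t$ forces $s(V) = t$ for every open $V \subseteq U$. This is immediate: restricting an isomorphism $\mathscr S^{\oplus N}|_U \to M|_U$ along $V \hookrightarrow U$ yields an isomorphism $\mathscr S^{\oplus N}|_V \to M|_V$, so that the required restriction morphism $s(U) \to s(V)$ always exists in $C$.

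Next I would verify the two hypotheses of Lemma \ref{p8}. The local condition---that every $x \in Y$ admits a neighborhood $U$ with $s(U) = t$---is precisely the standing assumption of the corollary. For the disjointness condition, given pairwise disjoint opens $(U_i)$ with $s(U_i) = t$ for all $i$, I would note that a module on the disjoint open $\amalg_i U_i$ is the same datum as the family of its restrictions to the $U_i$; hence the chosen isomorphisms $\mathscr S^{\oplus N}|_{U_i} \to M|_{U_i}$ assemble into a single isomorphism $\mathscr S^{\oplus N}|_{\amalg_i U_i} \to M|_{\amalg_i U_i}$, giving $s(\amalg_i U_i) = t$.

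With both hypotheses in place, Lemma \ref{p8} produces a \emph{finite} cover $(U_i)$ of $Y$ with $s(U_i) = t$ for every $i$, which is exactly the statement that $M|_{U_i} \cong \mathscr S^{\oplus N}|_{U_i}$. There is no substantial obstacle here, since finite covering dimension and paracompactness have already been absorbed into Lemma \ref{p8}; the only point requiring a moment's care is the disjoint-union condition, and it reduces to the elementary fact that sheaves over a disjoint family of opens glue freely.
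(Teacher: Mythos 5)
Your proposal is correct and takes exactly the same route as the paper: both apply Lemma \ref{p8} to the presheaf that records whether $\mathscr S^{\oplus N}|_U \to M|_U$ admits an isomorphism. The paper leaves the verification of the hypotheses implicit, whereas you spell out the functoriality and the disjoint-union condition; these checks are correct and are the ones the paper intends.
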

	\begin{lemma}
		Let $\mathscr S$ be a soft sheaf of commutative rings on any topological space $Y$. Let $E$ be a $\mathscr S$-module, such that for every $x \in Y$ there exists an open neighbourhood $U$ of $x$ such that $E|_U$ is projective over $\mathscr S|_U$. Then $E$ is projective.
		\begin{proof}
			We show that $\text{Ext}^1(E, M) = 0$ for all $\mathscr S$-modules $M$. Let $r : M \to I^*$ be an injective resolution of $M$. Then $\text{Ext}^1(E,M)=  \text{Hom}_{K(\mathscr S)}(E[1],I^*)$. Denote by $ \text{\underline {Hom}}_{\mathscr S}(E[1], I^*)$ the sheaf of complexes $U \mapsto \text{Hom}_{\mathscr S}(E[1]|_U, I^*|_U)$. The map
			\begin{align*} r_* : \text{\underline {Hom}}_{\mathscr S}(E[1], M[0]) \to \text{\underline {Hom}}_{\mathscr S}(E[1], I^*)\end{align*}
			is a quasi-isomorphism of complexes of sheaves since $E$ is locally projective. Both complexes are bounded below and consist of acyclic objects in each degree, thus we obtain that 
			\begin{align*} H^0(r_*(X)) : \text{{Hom}}_{K(\mathscr S)}(E[1], M[0]) \to \text{{Hom}}_{K(\mathscr S)}(E[1], I^*)\end{align*}
			is a quasi-isomorphism by Lemma \ref{LemmaAcyclic}. As the left side is $0$, we are done.
			
		\end{proof}
	\end{lemma}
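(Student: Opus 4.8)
The plan is to characterise projectivity of $E$ through the vanishing of $\mathrm{Ext}$ and then to deduce this \emph{global} vanishing from the \emph{local} projectivity hypothesis by passing through the sheafified $\mathrm{Hom}$-complex. Since the category of $\mathscr S$-modules has enough injectives, it suffices to show that $\mathrm{Ext}^1_{\mathscr S}(E, M) = 0$ for every $\mathscr S$-module $M$. Fix such an $M$ and choose an injective resolution $r : M \to I^*$; it is bounded below, hence $K$-injective, so that $\mathrm{Ext}^1_{\mathscr S}(E, M)$ is computed as $\mathrm{Hom}_{K(\mathscr S)}(E[1], I^*)$, i.e. as a suitable cohomology group of the global $\mathrm{Hom}$-complex.

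First I would sheafify: consider the complexes of sheaves $\underline{\mathrm{Hom}}_{\mathscr S}(E[1], M[0])$ and $\underline{\mathrm{Hom}}_{\mathscr S}(E[1], I^*)$ together with the induced map $r_*$. The point of local projectivity is that on each open $U$ over which $E|_U$ is projective, the functor $\underline{\mathrm{Hom}}_{\mathscr S}(E,-)|_U$ is exact, so the local $\mathrm{Ext}$-sheaves $\underline{\mathrm{Ext}}^{i}_{\mathscr S}(E, M)$ vanish for $i>0$. Consequently the mapping cone of $r_*$ is an acyclic complex of sheaves; equivalently, $r_*$ is a quasi-isomorphism of complexes of sheaves.

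The decisive use of softness enters next. Because $\mathscr S$ is commutative, every term $\underline{\mathrm{Hom}}_{\mathscr S}(E, I^i)$ and $\underline{\mathrm{Hom}}_{\mathscr S}(E, M)$ is again a sheaf of $\mathscr S$-modules, and softness of $\mathscr S$ forces each $\mathscr S$-module to be soft, hence $\Gamma$-acyclic. Thus both complexes are bounded below and consist of acyclic sheaves in each degree. Since $\Gamma(Y,-)$ commutes with the formation of mapping cones, applying Lemma \ref{LemmaAcyclic} to the (acyclic, bounded-below, degreewise acyclic) cone of $r_*$ shows that its global sections form an acyclic complex, i.e. $\Gamma(Y, r_*)$ is a quasi-isomorphism. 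Taking $H^0$ of the total $\mathrm{Hom}$-complexes, the induced map $\mathrm{Hom}_{K(\mathscr S)}(E[1], M[0]) \to \mathrm{Hom}_{K(\mathscr S)}(E[1], I^*)$ is an isomorphism. But the source is the degree-$0$ cohomology of a $\mathrm{Hom}$-complex concentrated in a single nonzero degree, hence it vanishes, and therefore $\mathrm{Ext}^1_{\mathscr S}(E,M) = \mathrm{Hom}_{K(\mathscr S)}(E[1], I^*) = 0$.

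I expect the main obstacle to be the step just described: verifying that the sheaf-$\mathrm{Hom}$ complexes genuinely consist of $\Gamma$-acyclic sheaves, so that Lemma \ref{LemmaAcyclic} is applicable. This is exactly where softness of $\mathscr S$ is indispensable, since without it a quasi-isomorphism of complexes of sheaves need not survive the global sections functor. One must also keep everything bounded below (automatic from using an injective resolution) and recognise $\underline{\mathrm{Hom}}_{\mathscr S}(E,-)$ as valued in $\mathscr S$-modules, which relies on the commutativity of $\mathscr S$.
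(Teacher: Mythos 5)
Your proposal is correct and follows essentially the same route as the paper's proof: reduce to $\mathrm{Ext}^1(E,M)=0$, sheafify the Hom-complex, use local projectivity to get a quasi-isomorphism of complexes of sheaves, and use softness together with Lemma \ref{LemmaAcyclic} to pass to global sections. Your write-up is in fact somewhat more explicit than the paper's about why softness and local projectivity each enter.
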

	\begin{lemma}
		\label{SoftStrict}
		Let $\mathscr S$ be a soft sheaf of commutative rings on a paracompact Hausdorff topological space of finite covering dimension. Any globally bounded perfect complex of sheaves $M^*$ over $\mathscr S$ is quasi-isomorphic to a strictly perfect complex.
		\begin{proof}
			Let $M^*$ be a globally bounded perfect complex with bounding constants $a < b$ and $N$. We first construct a bounded above complex. By (\cite{Thomason}, Lemma 1.9.5) it is enough to show that if $H^i(M^*) = 0$ for $i > n$ then for any surjection $A \to H^n(M^*)$ there exists a finite free $\mathscr S$-module $F$ and a map $F \to A$ such that the composite $F \to H^n(M^*)$ is surjective. Furthermore, since free modules are projective, as $\mathscr S$ is soft, it suffices to find a surjection $F \to H^n(M^*)$, i.e. $H^n(M^*)$ is finitely generated by global sections. Locally $H^n(M^*)$ can be computed as the highest non-vanishing cohomology group of a strictly perfect complex  $S^*$  which has no more than $N$ generators. The proof of Lemma \ref{HighProjective} shows that there is a surjection $F \to H^n(S^*)|_U = H^n(M^*)|_U$ where $F$ is a free $\mathscr S|_U$-module of rank $\leq N$. By Corollary \ref{c6} we obtain the claim.\\ \\
			We truncate our found complex $P^*$ below in degrees of vanishing cohomology. We apply Lemma \ref{l11} locally and obtain that the lowest non-vanishing object is projective and finitely generated by at most $N$ generators. Again using corollary \ref{c6}, there exists a finite free $\mathscr S$-module which surjects onto it globally. By the previous Lemma on the other hand, we have that the considered object is (globally) projective.
		\end{proof}
	\end{lemma}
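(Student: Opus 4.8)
The plan is to upgrade the local strictly perfect presentations of $M^*$ to a single global strictly perfect complex in two stages: first resolve $M^*$ from above by a bounded-above complex of finite free $\mathscr S$-modules, and then truncate this resolution from below, checking that the bottom syzygy is a direct summand of a finite free module. Throughout, the uniform bounding constants $a < b$ and $N$ from Definition \ref{PerfectDefinition} are what let me globalize the local data, while softness of $\mathscr S$ together with finite covering dimension are the key inputs feeding Corollary \ref{c6} and the preceding lemma on local-to-global projectivity.

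For the first stage I would invoke Thomason's criterion (\cite{Thomason}, Lemma 1.9.5): to build a bounded-above complex of finite free modules quasi-isomorphic to $M^*$ it suffices to show that whenever $H^i(M^*) = 0$ for $i > n$, the top cohomology $H^n(M^*)$ admits a surjection from a finite free $\mathscr S$-module. Since $\mathscr S$ is soft, free modules are projective, so this reduces to showing that $H^n(M^*)$ is globally generated by finitely many global sections. Locally $M^*|_U$ is quasi-isomorphic to a strictly perfect complex $S^*$ with at most $N$ generators, so $H^n(M^*)|_U = H^n(S^*)|_U$; the argument of Lemma \ref{HighProjective} exhibits this as a quotient of a free module of rank $\le N$, giving a local surjection $\mathscr S^{\oplus N}|_U \to H^n(M^*)|_U$. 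Because the rank bound $N$ is uniform, Corollary \ref{c6} then produces a global surjection from a finite free module, which is exactly Thomason's hypothesis. This yields a bounded-above complex $P^*$ of finite free modules with a quasi-isomorphism to $M^*$.

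For the second stage I would truncate $P^*$ below, at the lowest degree where the cohomology of $M^*$ still vanishes (possible since $M^*$ has cohomology concentrated in $[a,b]$). The only term that is not already finite free is the bottom syzygy $T = \mathrm{Im}(P^{a-1} \to P^a)$. Working locally, where $P^*|_U$ is a bounded-above complex of projectives quasi-isomorphic to the strictly perfect, hence K-projective, complex $S^*$, Lemma \ref{l11} shows that $T|_U$ is projective and generated by at most $N$ sections. The preceding lemma (local projectivity over a soft sheaf implies global projectivity) then makes $T$ globally projective, and Corollary \ref{c6} gives a global surjection $\mathscr S^{\oplus N'} \to T$; since $T$ is projective this surjection splits, so $T$ is a direct summand of a finite free module. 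The truncated complex, with $T$ in its bottom degree and finite free modules above, is therefore strictly perfect and remains quasi-isomorphic to $M^*$.

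The main obstacle is precisely the passage from local to global in both stages: the local presentations come with only local finite free data, whereas the statement demands one global strictly perfect complex. This is where the hypotheses are indispensable — softness of $\mathscr S$ guarantees both that locally projective modules are projective and that finite local generation can be glued, while paracompactness and finite covering dimension drive the partition-of-unity argument of Lemma \ref{p8} underlying Corollary \ref{c6}. The uniform bound $N$ is what allows these globalization lemmas to apply with a fixed rank, and carrying it consistently through both the top resolution and the bottom syzygy is the delicate bookkeeping on which the argument turns.
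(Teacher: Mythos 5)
Your proposal is correct and follows essentially the same two-stage argument as the paper: Thomason's Lemma 1.9.5 plus Corollary \ref{c6} to build the bounded-above finite free resolution, then truncation below with Lemma \ref{l11}, the local-to-global projectivity lemma, and Corollary \ref{c6} again for the bottom syzygy. Your explicit final remark that the surjection onto the projective bottom term splits, exhibiting it as a direct summand of a finite free module, is a welcome clarification of a step the paper leaves implicit.
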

	
	Recall that we had constructed a functor $J : (\mathscr A^*, h)\text{-Mod}  \to \mathscr R\text{-Mod}^{\check h}$. We obtain an induced functor $H^0(J)$ of the corresponding homotopy categories. We further compose this functor with the localization functor $K(\mathscr R)^{\check h} \to D(\mathscr R)^{\check h}$. We further restrict it to cohesive modules and thus obtain a functor
	\begin{align*} \mathscr J : H^0((\mathscr A^*, h)\text{-Mod}^{coh}) \to D(\mathscr R)^{\check h}.\end{align*}
	\begin{lemma}
		$\mathscr J$ factors through $D^B_{perf}(\mathscr R)^{\check h}$.
		\begin{proof}
			Let $\mathscr A^* \otimes_{\mathscr A^0} E^*$ be an object of $(\mathscr A^*, h)\text{-Mod}^{coh}$ where $E^*$ is a finitely generated projective, graded $\mathscr A^0$-module. Suppose that $E^*$ is concentrated in degrees $[a,b]$ and has no more than $N$ generators. For every $\alpha \in I$ and $x \in U_\alpha$ we can find by assumption an open neighbourhood $U \subset U_\alpha$ of $x$ and a finitely generated, graded $\mathscr R|_{U}$-module $V^*$, concentrated in degrees $[a,b]$ which consists of direct summands of free modules in each degree and has no more than $N$ generators such that there is a homotopy equivalence 
			\begin{align*}J(\mathscr A^* \otimes_{\mathscr A^0} E^*)_{\alpha}|_U \to (\mathscr A^*|_{U} \otimes_{\mathscr R|_{U} } V^* , d \otimes \mathbbm 1 + \mathbbm 1 \otimes d).\end{align*}
			$V^*$ is $K$-flat so $\mathscr A^*|_{U} \otimes_{\mathscr R|_{U} } V^*$ is quasi-isomorphic to $V^*$. Thus $J(\mathscr A^* \otimes_{\mathscr A^0} E^*)_{\alpha}$ is perfect globally bounded with bounding constants $a,b$ and $N$ for every $\alpha$. By definition, this means that $\mathscr J(\mathscr A^* \otimes_{\mathscr A^0} E^*)$ is perfect globally bounded.
		\end{proof}
	\end{lemma}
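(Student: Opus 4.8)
The plan is to unwind the construction of $\mathscr J$ on a cohesive module and then feed each local piece into Assumption 6 of the list \ref{ListAssumptions}. Let $M^* = \mathscr A^* \otimes_{\mathscr A^0} E^*$ be an object of $(\mathscr A^*, h)\text{-Mod}^{coh}$, with $E^*$ a finitely generated projective graded $\mathscr A^0$-module concentrated in degrees $[a,b]$ and admitting at most $N$ generators. By Definition \ref{DefJFunctor}, the twisted sheaf $J(M^*)$ has components $J(M^*)_\alpha = M^*|_{U_\alpha}$ equipped with the differential $d_\alpha = d|_{U_\alpha} - h^0_\alpha$ and gluing data $\exp(h^1_{\alpha\beta})$. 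The first observation I would record is that, because $d(h^0_\alpha) = h|_{U_\alpha}$ and $\mathscr A^*$ is graded commutative, the twist by $-h^0_\alpha$ cancels the curvature $h$: thus $(J(M^*)_\alpha, d_\alpha)$ is an honest (uncurved) dg-module over $\mathscr A^*|_{U_\alpha}$ whose underlying graded module is still $\mathscr A^*|_{U_\alpha} \otimes_{\mathscr A^0} E^*$. This is precisely the shape of module to which Assumption 6 applies.

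Next, fix $\alpha$ and a point $x \in U_\alpha$. Applying Assumption 6 to $(J(M^*)_\alpha, d_\alpha)$ produces an open $x \in U' \subset U_\alpha$, a graded $\mathscr R|_{U'}$-module $W^*$ which is a direct summand of a finite free module in each degree with $\mathscr A^0 \otimes_{\mathscr R|_{U'}} W^* \cong E^*$ as graded objects, and a homotopy equivalence $J(M^*)_\alpha|_{U'} \simeq (\mathscr A^*|_{U'} \otimes_{\mathscr R|_{U'}} W^*, d_{\mathscr A^*} \otimes \mathbbm 1 + \mathbbm 1 \otimes d)$ for some $\mathscr R|_{U'}$-linear differential $d$ on $W^*$. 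The graded isomorphism $\mathscr A^0 \otimes_{\mathscr R|_{U'}} W^* \cong E^*$ forces $W^*$ to be concentrated in degrees $[a,b]$ and to have at most $N$ generators, so $(W^*, d)$ is a strictly perfect complex with bounding constants $a, b, N$. I would then invoke Assumption 5: since $W^*$ is $K$-flat and $\rho : \mathscr R \to \mathscr A^*$ is a quasi-isomorphism, tensoring $\rho|_{U'}$ with $W^*$ yields a quasi-isomorphism $W^* \to \mathscr A^*|_{U'} \otimes_{\mathscr R|_{U'}} W^*$. Combined with the homotopy equivalence above, this shows $J(M^*)_\alpha|_{U'}$ is quasi-isomorphic to the strictly perfect complex $W^*$.

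Since $\alpha$ and $x$ were arbitrary, and the bounding constants $a, b, N$ were read off from $E^*$ and hence are uniform in $\alpha$ and $x$, each component $J(M^*)_\alpha$ is globally bounded perfect with these fixed constants. By Definition \ref{PerfectDefinition} this means the $\check h$-twisted complex $\mathscr J(M^*)$ is globally bounded perfect, so it lies in $D^B_{perf}(\mathscr R)^{\check h}$ and $\mathscr J$ factors through this subcategory. The main obstacle I expect is the bookkeeping in the first paragraph — verifying cleanly that the $h^0_\alpha$-twisted differential really does land one in the uncurved situation covered by Assumption 6 — together with ensuring that the perfection data is controlled uniformly, i.e.\ that the degree range and generator count of $W^*$ are genuinely inherited from $E^*$ rather than depending on the chosen point $x$; everything else is a direct application of $K$-flatness and the local structure hypothesis.
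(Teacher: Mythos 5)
Your proposal is correct and follows essentially the same route as the paper: apply the local structure hypothesis (Assumption 6 of \ref{ListAssumptions}) to each component $J(M^*)_\alpha$, use that the resulting strictly perfect $\mathscr R$-complex is $K$-flat to compare it with its $\mathscr A^*$-extension via the quasi-isomorphism $\mathscr R \to \mathscr A^*$, and observe that the bounding constants $a,b,N$ are inherited from $E^*$ uniformly in $\alpha$ and $x$. Your explicit remark that the twist by $-h^0_\alpha$ kills the curvature so that Assumption 6 genuinely applies is a useful clarification the paper leaves implicit; the only quibble is that the $K$-flatness used is that of $W^*$ itself (a bounded complex of summands of free modules), not Assumption 5, which you in fact also state correctly.
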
		
	\begin{proposition}
		$\mathscr J : H^0((\mathscr A^*, h)\text{-Mod}^{coh}) \to D^B_{perf}(\mathscr R)^{\check h}$ is essentially surjective.
		\begin{proof}
			Let $M^*$ be an object of $D^B_{perf}(\mathscr R)^{\check h}$ with bounding constants $a < b$ and $N \in \mathbb N$. Then $\mathscr A^* \otimes_{\mathscr R} M^*$ is a $\check h$-twisted sheaf over $\mathscr A^*$.  Furthermore, $\mathscr A^0 \otimes_{\mathscr R} M^*$ is a globally bounded perfect complex over $\mathscr A^0$. As $\mathscr A^0$ is soft, it is quasi-isomorphic to a strictly perfect complex $S^*$ by Lemma \ref{SoftStrict} (the twist is irrelevant in this case because $\mathscr A^0$ is soft and the twist has logarithmic type). Again using that $\mathscr A^0$ is soft, it follows that $S^*$ is K-projective, which means that the quasi-isomorphism can be represented by a map of complexes $f^0 : S^* \to \mathscr A^0 \otimes_{\mathscr R} M^*$. By Proposition \ref{ExtendLemma} (again using that $S^*$ is $K$-projective), there exists a differential on $\mathscr A^* \otimes_{\mathscr A^0} S^*$ and an extension of $f^0$ to a closed morphism $f : \mathscr A^* \otimes_{\mathscr A^0} S^* \to \mathscr A^* \otimes_{\mathscr R} M^*$. A straightforward spectral sequence argument shows that $f$ is a quasi-isomorphism, using that $f^0$ is a quasi-isomorphism and that each $\mathscr A^i$ is flat over $\mathscr A^0$. But $\mathscr A^* \otimes_{\mathscr A^0} S^*$ is clearly in the image of $\mathscr J$ after untwisting the sheaves by the sections $\exp(-h^0_{\alpha \beta}) \in \mathscr A^0(U_\alpha \cap U_\beta)^*$ and twisting the differentials by $h^0_{\alpha} \in \mathscr A^1(U_\alpha)$ (compare Remark \ref{remarkUntwist}). As $\mathscr A^*$ is $K$-flat over $\mathscr R$ we have that $\mathscr A^* \otimes_{\mathscr A^0} S^*$ and $M^*$ are quasi-isomorphic.
		\end{proof}
	\end{proposition}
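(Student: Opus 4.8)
The plan is to take an object $M^*$ of $D^B_{perf}(\mathscr R)^{\check h}$ and manufacture a cohesive module over $(\mathscr A^*, h)$ that maps to it under $\mathscr J$. The naive candidate is $\mathscr A^* \otimes_{\mathscr R} M^*$, a $\check h$-twisted sheaf over $\mathscr A^*$ which, since $\mathscr A^*$ is $K$-flat over $\mathscr R$ by assumption \ref{ListAssumptions}, is already quasi-isomorphic to $M^*$. The trouble is that its underlying graded module is not of the cohesive shape $\mathscr A^* \otimes_{\mathscr A^0} E^*$ with $E^*$ finitely generated projective, so the entire task is to replace it by a genuinely cohesive object without disturbing its quasi-isomorphism type.

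First I would descend to the degree-zero level: the complex $\mathscr A^0 \otimes_{\mathscr R} M^*$ is globally bounded perfect over $\mathscr A^0$, and since $\mathscr A^0$ is soft, Lemma \ref{SoftStrict} replaces it up to quasi-isomorphism by a strictly perfect complex $S^*$, i.e. a bounded complex of finitely generated projective $\mathscr A^0$-modules. Softness of $\mathscr A^0$ also renders $S^*$ $K$-projective, so the quasi-isomorphism is realized by an honest chain map $f^0 : S^* \to \mathscr A^0 \otimes_{\mathscr R} M^*$ of bidegree $(0,0)$, rather than merely a roof in the derived category.

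Next I would upgrade $S^*$ from the $\mathscr A^0$-level to the full $\mathscr A^*$-level using the homotopy transfer result Proposition \ref{ExtendLemma}. Viewing $\mathscr A^* \otimes_{\mathscr R} M^*$ as a twisted complex with its cohesive differential and regarding $f^0$ as a map of complexes into its degree-zero part, the $K$-projectivity of $S^*$ lets me produce a cohesive differential on $\mathscr A^* \otimes_{\mathscr A^0} S^*$ together with a closed degree-$0$ morphism $f : \mathscr A^* \otimes_{\mathscr A^0} S^* \to \mathscr A^* \otimes_{\mathscr R} M^*$ extending $f^0$. That $f$ is a quasi-isomorphism I would verify by the form-degree filtration spectral sequence of Definition \ref{filtration}: on the first page the induced map is built from $f^0$, which is a quasi-isomorphism, while flatness of each $\mathscr A^i$ over $\mathscr A^0$ (again from \ref{ListAssumptions}) ensures that tensoring with $\mathscr A^*$ computes the correct thing degreewise.

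Finally I would reconcile the twisting bookkeeping. The object $\mathscr A^* \otimes_{\mathscr A^0} S^*$ has cohesive shape but lives as a $\check h$-twisted sheaf; applying the untwisting of Remark \ref{remarkUntwist}—dividing the gluing isomorphisms by $\exp(h^1_{\alpha\beta})$ and absorbing the $h^0_\alpha$ into the differentials—presents it as an honest untwisted cohesive module over $(\mathscr A^*, h)$, exactly the kind of input $\mathscr J$ consumes, and $\mathscr J$ of this object recovers $\mathscr A^* \otimes_{\mathscr A^0} S^*$. Composing $M^* \simeq \mathscr A^* \otimes_{\mathscr R} M^* \simeq \mathscr A^* \otimes_{\mathscr A^0} S^*$ then yields essential surjectivity. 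I expect the main obstacle to be the middle step: correctly invoking Proposition \ref{ExtendLemma} to transport the differential and lift $f^0$ to a \emph{closed} cohesive morphism, and then confirming through the spectral sequence that this lift is a quasi-isomorphism—everything rests on the $K$-projectivity of $S^*$ coming from softness and on the flatness hypotheses in \ref{ListAssumptions}.
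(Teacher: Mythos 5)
Your proposal is correct and follows essentially the same route as the paper: pass to $\mathscr A^0\otimes_{\mathscr R}M^*$, replace it by a strictly perfect $S^*$ via Lemma \ref{SoftStrict}, use softness to get $K$-projectivity and an honest chain map $f^0$, extend via Proposition \ref{ExtendLemma}, check quasi-isomorphism by the filtration spectral sequence using flatness of each $\mathscr A^i$ over $\mathscr A^0$, untwist via Remark \ref{remarkUntwist}, and conclude by $K$-flatness of $\mathscr A^*$ over $\mathscr R$. No substantive differences to report.
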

	
	\begin{proposition}
		$\mathscr J : H^0((\mathscr A^*, h)\text{-Mod}^{coh}) \to D^B_{perf}(\mathscr R)^{\check h}$ is fully faithful.
		\begin{proof}
			Let $\mathscr A^* \otimes_{\mathscr A^0} E^*$ and $\mathscr A^* \otimes_{\mathscr A^0} F^*$ be objects of $(\mathscr A^*, h)\text{-Mod}^{coh}$. Let $\mathscr J(\mathscr A^* \otimes_{\mathscr A^0} F^*) \to I^*$ be a quasi-isomorphism of twisted $\mathscr R$-modules such that $I^*$ is bounded below and $K$-injective. We consider the complex of sheaves $U \mapsto \text{Hom}_{\mathscr R|_U}(J(\mathscr A^* \otimes_{\mathscr A^0} E^*), I^*)$ which we will denote by $\text{\underline{Hom}}_{\mathscr R}(J(\mathscr A^* \otimes_{\mathscr A^0} E^*), I^*)$. Notice that this indeed defines a (untwisted) sheaf as the twists cancel each other. Furthermore note that this sheaf is endowed with an action of $\mathscr A^0$, thus it is soft. On the other hand we obtain a complex of sheaves $\text{\underline{Hom}}_{\mathscr A^*}(\mathscr A^* \otimes_{\mathscr A^0} E^*, \mathscr A^* \otimes_{\mathscr A^0} F^*)$ which is also soft. Note that \begin{align*}H^0(\text{\underline{Hom}}_{\mathscr R}(J(\mathscr A^* \otimes_{\mathscr A^0} E^*), I^*)(X)) = \text{{Hom}}_{D(\mathscr R)^{\check h}}(J(\mathscr A^* \otimes_{\mathscr A^0} E^*), J(\mathscr A^* \otimes_{\mathscr A^0} F^*))\end{align*}
			and 
			\begin{align*}H^0(\text{\underline{Hom}}_{\mathscr A^*}(\mathscr A^* \otimes_{\mathscr A^0} E^*, \mathscr A^* \otimes_{\mathscr A^0} F^*)(X)) = \text{Hom}_{H^0((\mathscr A^*, h)\text{-Mod}^{coh})}(\mathscr A^* \otimes_{\mathscr A^0} E^*, \mathscr A^* \otimes_{\mathscr A^0} F^*).\end{align*}
			Furthermore, both complexes are bounded below. By \ref{LemmaAcyclic} it suffices to show that the map 
			\begin{align*}\text{\underline{Hom}}_{\mathscr A^*}(\mathscr A^* \otimes_{\mathscr A^0} E^*, \mathscr A^* \otimes_{\mathscr A^0} F^*) \to \text{\underline{Hom}}_{\mathscr R}(J(\mathscr A^* \otimes_{\mathscr A^0} E^*), I^*)\end{align*} is a quasi-isomorphism of complexes of sheaves. Let $\alpha \in I$ and $x \in U_\alpha$. By our assumptions, we can find an open neighbourhood $U \subset U_\alpha$ of $x$ such that there is a strictly perfect complex $V^*$ of $\mathscr R|_U$-modules and a homotopy equivalence 
			\begin{align*} \mathscr A^* \otimes_{\mathscr R|_U} V^* \to J(\mathscr A^* \otimes_{\mathscr A^0}  E^*.)\end{align*}
			We obtain:
			\begin{align*}
				&\text{\underline{Hom}}_{\mathscr R|_U}(J(\mathscr A^* \otimes_{\mathscr A^0} E^*), I^*)\\
				&\simeq \text{\underline{Hom}}_{\mathscr R|_U}(\mathscr A^* \otimes_{\mathscr R|_U} V^*, I^*) &&\\
				&\simeq \text{\underline{Hom}}_{\mathscr R|_U}(V^*, I^*) && R \simeq \mathscr A^*, V^* \text{ is K-flat, and } I^* \text{ is K-injective} \\
				&\cong \text{Hom}_{\mathscr R(U)}(V^*(U), I^*(U)) && V^* \text{ is strictly perfect}\\
				&\simeq \text{Hom}_{\mathscr R(U)}(V^*(U), \mathscr A^* \otimes_{\mathscr A^0} F^*(U)) && V^*(U) \text{ is K-projective and Lemma \ref{LemmaAcyclic}}\\
				&\simeq \text{Hom}_{\mathscr A^*(U)}(\mathscr A^* \otimes_{\mathscr R|_U} V^*(U), \mathscr A^* \otimes_{\mathscr A^0} F^*(U)) &&\text{no sheafification as } V^* \text{ is strictly perfect}\\
				& && \text{and tensor-hom adjunction}\\
				&\cong \text{\underline{Hom}}_{\mathscr A^*|_U}(\mathscr A^* \otimes_{\mathscr R|_U} V^*, \mathscr A^* \otimes_{\mathscr A^0} F^*)\\
				&\simeq \text{\underline{Hom}}_{\mathscr A^*|_U}(\mathscr A^* \otimes_{\mathscr A^0} E^*, \mathscr A^* \otimes_{\mathscr A^0} F^*)
			\end{align*}
		\end{proof}
	\end{proposition}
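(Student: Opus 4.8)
The plan is to reduce the statement about Hom-sets in $H^0((\mathscr A^*,h)\text{-Mod}^{coh})$ and in $D^B_{perf}(\mathscr R)^{\check h}$ to a single quasi-isomorphism of two complexes of sheaves, and then to verify that quasi-isomorphism locally, where Assumption 6 of \ref{ListAssumptions} makes every cohesive module look like a strictly perfect complex tensored up from $\mathscr R$.

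First I would fix cohesive modules $\mathscr A^* \otimes_{\mathscr A^0} E^*$ and $\mathscr A^* \otimes_{\mathscr A^0} F^*$ and choose a bounded-below $K$-injective resolution $\mathscr J(\mathscr A^* \otimes_{\mathscr A^0} F^*) \to I^*$ of twisted $\mathscr R$-modules. Then I form the two internal-Hom complexes of sheaves $\text{\underline{Hom}}_{\mathscr A^*}(\mathscr A^* \otimes_{\mathscr A^0} E^*, \mathscr A^* \otimes_{\mathscr A^0} F^*)$ and $\text{\underline{Hom}}_{\mathscr R}(J(\mathscr A^* \otimes_{\mathscr A^0} E^*), I^*)$. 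The $\check h$-twists cancel in the second, so it is an honest untwisted sheaf, and both complexes carry an $\mathscr A^0$-action, hence are soft and bounded below. Taking $H^0$ of global sections computes, respectively, $\text{Hom}_{H^0((\mathscr A^*,h)\text{-Mod}^{coh})}$ and $\text{Hom}_{D(\mathscr R)^{\check h}}$. By Lemma \ref{LemmaAcyclic}, a quasi-isomorphism between bounded-below complexes of acyclic (soft) sheaves induces an isomorphism on $H^0$ of global sections, so it suffices to produce a natural quasi-isomorphism between these two sheaf complexes.

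Being a quasi-isomorphism is local, so I would restrict to a small open $U$ on which Assumption 6 supplies a strictly perfect complex $V^*$ of $\mathscr R|_U$-modules together with a homotopy equivalence $\mathscr A^*|_U \otimes_{\mathscr R|_U} V^* \to J(\mathscr A^* \otimes_{\mathscr A^0} E^*)|_U$. Then I would run the displayed zig-zag: replace $J(\mathscr A^* \otimes_{\mathscr A^0} E^*)$ by $\mathscr A^* \otimes_{\mathscr R|_U} V^*$ via this homotopy equivalence; use $K$-flatness of $V^*$ and the quasi-isomorphism $\mathscr R \to \mathscr A^*$ together with $K$-injectivity of $I^*$ to drop the $\mathscr A^*$-factor; use strict perfectness to pass from sheaf-Hom to $\text{Hom}$ of global sections; invoke $K$-projectivity of $V^*(U)$ and Lemma \ref{LemmaAcyclic} once more to replace $I^*(U)$ by $(\mathscr A^* \otimes_{\mathscr A^0} F^*)(U)$; and finally apply the tensor--hom adjunction (no sheafification, since $V^*$ is strictly perfect) and the homotopy equivalence again to land back at $\text{\underline{Hom}}_{\mathscr A^*|_U}(\mathscr A^* \otimes_{\mathscr A^0} E^*, \mathscr A^* \otimes_{\mathscr A^0} F^*)$.

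The main obstacle is bookkeeping: each arrow of the zig-zag must be a genuine natural quasi-isomorphism, and the four resolution hypotheses ($K$-flat, $K$-injective, $K$-projective, strictly perfect) must be invoked at exactly the right places. The two most delicate points are (i) that softness of all the internal-Hom sheaves is precisely what legitimizes passing between the sheaf level and global sections through Lemma \ref{LemmaAcyclic}, relying on paracompactness and finite covering dimension to keep everything bounded below and acyclic; and (ii) checking that the comparison map is natural in $E^*$ and $F^*$, so that the local quasi-isomorphisms on the various $U$ glue into a global one. Once these are in place, full faithfulness follows immediately by taking $H^0$ of global sections on both sides.
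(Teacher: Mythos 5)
Your proposal follows essentially the same route as the paper's proof: reduce full faithfulness to a quasi-isomorphism between the two soft, bounded-below internal-Hom sheaf complexes via Lemma \ref{LemmaAcyclic}, then verify it locally using Assumption 6 and the same zig-zag of quasi-isomorphisms through a strictly perfect replacement $V^*$. Your added attention to the naturality of the comparison map (so the local quasi-isomorphisms are restrictions of a single global morphism) is a worthwhile point of care, but it does not change the argument.
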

	\begin{theorem}
		\label{MainEq}
		Let $\mathscr R$ be a sheaf of $k$-algebras and let $\mathscr R \to \mathscr A^*$ be a quasi-isomorphism of sheaves of dg-algebras on a topological space $X$. Set $A^* := \mathscr A^*(X)$ and let $h \in A^2(X)$ be closed. Under the assumptions \ref{ListAssumptions} we obtain an equivalence of categories
		\begin{align*}H^0((A^*, h)\text{-Mod}^{coh}) \to D^B_{perf}(\mathscr R)^{\check h}\end{align*}
		induced by the functor $J$ as in Definition \ref{DefJFunctor}.
		\begin{proof}
			By the previous two propositions, we have that $\mathscr J : H^0((\mathscr A^*, h)\text{-Mod}^{coh}) \to D^B_{perf}(\mathscr R)^{\check h}$ is an equivalence.
			Then we can apply Lemma \ref{simpleDGEquivalence}.
		\end{proof}
	\end{theorem}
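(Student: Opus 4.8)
The plan is to deduce the theorem from a sheaf-theoretic equivalence together with the dg-equivalence of Lemma \ref{simpleDGEquivalence}. The functor $J$ of Definition \ref{DefJFunctor} is a dg-functor $(\mathscr A^*,h)\text{-Mod} \to \mathscr R\text{-Mod}^{\check h}$; passing to homotopy categories, composing with the localization $K(\mathscr R)^{\check h} \to D(\mathscr R)^{\check h}$, and restricting to cohesive modules produces a functor $\mathscr J : H^0((\mathscr A^*,h)\text{-Mod}^{coh}) \to D(\mathscr R)^{\check h}$. First I would check that $\mathscr J$ actually lands in the globally bounded perfect subcategory $D^B_{perf}(\mathscr R)^{\check h}$: for a cohesive module $\mathscr A^* \otimes_{\mathscr A^0} E^*$ with $E^*$ concentrated in degrees $[a,b]$ and generated by at most $N$ elements, assumption 6 of \ref{ListAssumptions} supplies, locally, a homotopy equivalence to $\mathscr A^*|_U \otimes_{\mathscr R|_U} V^*$ with $V^*$ strictly perfect of the same bounding data, and $K$-flatness of $V^*$ identifies this with $V^*$ in $D(\mathscr R)^{\check h}$; hence the image is globally bounded perfect.

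The core of the argument is to show $\mathscr J$ is an equivalence, which splits into \emph{essential surjectivity} and \emph{full faithfulness}. For essential surjectivity I would start from a globally bounded perfect $M^*$, form the $\check h$-twisted $\mathscr A^*$-module $\mathscr A^* \otimes_{\mathscr R} M^*$, and use softness of $\mathscr A^0$ together with Lemma \ref{SoftStrict} to replace $\mathscr A^0 \otimes_{\mathscr R} M^*$ by a strictly perfect, hence $K$-projective, complex $S^*$ equipped with a quasi-isomorphism $f^0 : S^* \to \mathscr A^0 \otimes_{\mathscr R} M^*$. The homotopy-transfer Proposition \ref{ExtendLemma} then extends $f^0$ to a closed quasi-isomorphism $\mathscr A^* \otimes_{\mathscr A^0} S^* \to \mathscr A^* \otimes_{\mathscr R} M^*$ together with a compatible $\mathscr A^*$-differential; untwisting as in Remark \ref{remarkUntwist} exhibits $\mathscr A^* \otimes_{\mathscr A^0} S^*$ as an object of $(\mathscr A^*,h)\text{-Mod}^{coh}$ mapping to $M^*$, where $K$-flatness of $\mathscr A^*$ over $\mathscr R$ and flatness of each $\mathscr A^i$ over $\mathscr A^0$ guarantee the map is a quasi-isomorphism. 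For full faithfulness I would compare the two complexes of sheaves $\text{\underline{Hom}}_{\mathscr A^*}(\mathscr A^* \otimes_{\mathscr A^0} E^*, \mathscr A^* \otimes_{\mathscr A^0} F^*)$ and $\text{\underline{Hom}}_{\mathscr R}(J(\mathscr A^* \otimes_{\mathscr A^0} E^*), I^*)$, where $I^*$ is a bounded-below $K$-injective resolution of $\mathscr J(\mathscr A^* \otimes_{\mathscr A^0} F^*)$. Both complexes carry an $\mathscr A^0$-action and are therefore soft, so by Lemma \ref{LemmaAcyclic} it suffices to check that the natural comparison map is a local quasi-isomorphism; locally this follows from the chain of equivalences coming from the homotopy equivalence to $\mathscr A^* \otimes_{\mathscr R|_U} V^*$, $K$-injectivity of $I^*$, strict perfectness of $V^*$ (which lets global sections compute $\text{Hom}$ with no sheafification), and tensor-hom adjunction.

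Finally I would assemble the statement: the two preceding propositions give that $\mathscr J$ is an equivalence $H^0((\mathscr A^*,h)\text{-Mod}^{coh}) \to D^B_{perf}(\mathscr R)^{\check h}$, while Lemma \ref{simpleDGEquivalence} supplies a dg-equivalence $(A^*,h)\text{-Mod}^{coh} \simeq (\mathscr A^*,h)\text{-Mod}^{coh}$ and hence an equivalence of homotopy categories; composing the two yields the desired equivalence induced by $J$. I expect the genuine difficulty to lie in the two sheaf-level propositions rather than in this final bookkeeping: specifically, in controlling the bounding constants $a<b$ and $N$ uniformly across the cover so that $\mathscr J$ really targets $D^B_{perf}$ (this is exactly where Corollary \ref{c6} and Corollary \ref{c7}, via Lemma \ref{p8}, are used to globalize local surjections and isomorphisms), and in verifying that the comparison map of sheaf-$\text{Hom}$ complexes is a quasi-isomorphism while both complexes are only bounded below, so that stalkwise acyclicity can be transferred to acyclicity of the complexes of global sections through Lemma \ref{LemmaAcyclic}.
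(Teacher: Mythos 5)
Your proposal is correct and follows essentially the same route as the paper: the paper's proof of Theorem \ref{MainEq} is exactly the combination of the lemma showing $\mathscr J$ factors through $D^B_{perf}(\mathscr R)^{\check h}$, the essential surjectivity proposition (via Lemma \ref{SoftStrict}, Proposition \ref{ExtendLemma}, and Remark \ref{remarkUntwist}), the full faithfulness proposition (via the soft sheaf-$\mathrm{Hom}$ comparison, $K$-injective resolutions, and Lemma \ref{LemmaAcyclic}), and finally Lemma \ref{simpleDGEquivalence}. Your identification of where the real work lies --- the uniform bounding constants via Corollary \ref{c6} and the local-to-global passage for the $\mathrm{Hom}$-complex comparison --- matches the paper's actual argument.
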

	\subsection{de Rham algebra}
	Let $X = M$ be a real smooth manifold. Set $k = \mathbb R$ or $k = \mathbb C$ and let $\mathscr R$ be the constant sheaf $\underline k$ associated to $k = \mathbb R, \mathbb C$. Let $\mathscr A^* := \underline \Omega^*(M, k)$ be the sheaf of $k$-valued forms on $M$. Let $h$ be any closed 2-form on $M$ with values in $k$. Further note that a finitely generated projective module over $\mathscr A^0$ is locally free by the Serre-Swan correspondence. Assumption 7 follows from homotopy invariance as discussed before. The other assumptions are immediate. Thus, Theorem \ref{MainEq} applies. Therefore, we obtain an equivalence between the homotopy category of super-vector bundles and $\check h$-twisted globally bounded perfect complexes over $k$. We give further characterizations of the latter category.
	\begin{def1}
		Let $R$ be a commutative ring and $Y$ a topological space. Let $M^*$ be a complex of $\underline R$-modules. We say that $M^*$ is finite cohomologically locally constant (fclc) if the cohomology sheaves $H^i(M^*)$ are zero for almost all $i$ and each $H^i(M^*)$ is a finitely generated locally constant sheaf over $R$. In the twisted case, we mean that each $H^i(M^*)_\alpha$ is locally constant.
	\end{def1}
	\begin{lemma}
		Let $R$ be a PID and $Y$ a connected, locally path connected topological space. Let $M^*$ be a complex of sheaves of $R$-modules. The following are equivalent.
		\begin{enumerate}
			\item $M^*$ is fclc.
			\item $M^*$ is perfect.
			\item $M^*$ is globally bounded perfect.
		\end{enumerate}
		\begin{proof}
			First note the following. For every $x \in Y$ we can choose a path-connected open neighbourhood $U$ of $x$. In particular, we have $\underline R(U) = R$ and for all $y \in Y$ we have $\underline R_y = R$. Thus, $p : (U, R|_U) \mapsto (*, R)$ is a flat map of ringed spaces and the pullback functor $p^*$ is exact. \\ \\
			Suppose that $M^*$ is fclc. We show that $M^*$ is perfect globally bounded. We will do an induction by the number of non-zero cohomology groups of $M^*$. Starting with $n = 1$, $M^*$ is quasi-isomorphic (using truncations) to its only non-vanishing cohomology group. Thus, we can assume $M^i = 0$ for all $i \neq 0$ and $M^0$ is locally constant and locally finitely generated by at most $b$ generators. For any $x \in M$, choose an open neighbourhood $U$ of $x$ such that $U$ is path-connected and $M^0|_U$ is constant so that we have $p_*(N) = M^0|_U$ for some finitely generated $R$-module $N$. As $R$ is a PID, there is an exact sequence
			$0 \to R^a \to R^b \to N \to 0$, where $b$ is as above. Using that $p^*$ is exact, we obtain a strictly perfect complex quasi-isomorphic to $M^*|_U$. By construction $M^*$ is perfect globally bounded with bounding constants $-1 < 0$ and $N = 2b$. Now suppose that $M^*$ has $(n+1)$ non-vanishing cohomology groups. Choose some $a < b \in \mathbb Z$ such that $H^a(M^*), H^b(M^*) \neq 0 $. By \cite[\href{https://stacks.math.columbia.edu/tag/08J5}{Tag 08J5}]{stacks-project} we obtain a distinguished triangle in the derived category
			\begin{align*} \tau_{\leq a} M^* \to M^* \to \tau_{\geq a+1}M^*.\end{align*}
			The first truncation kills the cohomology group in degree $b$ and the second one kills the cohomology group in degree $a$. The induction hypothesis thus applies and we obtain that both outer terms are perfect globally bounded. Since perfect complexes are closed under extensions, we obtain that $M^*$ is perfect. To see that it is globally bounded, note that locally we have that $M^*$ is quasi-isomorphic to the cone of a map between strictly perfect complexes (because $\tau_{\geq a+1} M^*$ is perfect) which are bounded by the bounding constants of $\tau_{\leq a} M^*$ and $\tau_{\geq a+1}M^*$. Thus, $M^*$ is globally bounded by bounding constants only depending on those of $\tau_{\leq a} M^*$ and $\tau_{\geq a+1}M^*$.\\ \\
			Now let $M^*$ be a perfect complex. We have to show that $M^*$ is fclc. By definition, and since $Y$ is connected, it suffices to show that this is the case when $M^*$ is strictly perfect. Since $M^*$ is bounded, its cohomology is also bounded. Thus, we only have to show that each $H^i(M^*)$ is locally constant and finitely generated. Furthermore it is easy to see that $p^* \circ p_*$ is the identity when restricted to finitely generated $R|_U$-modules that are direct summands of free modules. Therefore, we have $H^i(M^*)|_U = p^*(H^i(M^*(U)))$ (using exactness of $p^*$ as discussed above) and so $H^i(M^*)|_U$ is constant. As $R$ is a PID, it is also finitely generated.\\ \\
		\end{proof}
	\end{lemma}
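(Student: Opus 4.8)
The plan is to establish the equivalence by proving the cycle of implications (1) $\Rightarrow$ (3) $\Rightarrow$ (2) $\Rightarrow$ (1). The implication (3) $\Rightarrow$ (2) is immediate: a globally bounded perfect complex is in particular perfect by Definition \ref{PerfectDefinition}. The geometric input I would isolate first, and which powers both nontrivial directions, is the following. For each $x \in Y$, local path-connectedness lets me choose a path-connected open $U \ni x$; then $\underline R(U) = R$ and every stalk of $\underline R$ equals $R$, so the projection $p : (U, \underline R|_U) \to (\{\ast\}, R)$ is a flat morphism of ringed spaces. Consequently $p^*$ is exact, $p_*$ sends a module $N$ to the constant sheaf with value $N$, and $p^* \circ p_* = \mathrm{id}$ on finitely generated $\underline R|_U$-modules that are direct summands of free modules.

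For (1) $\Rightarrow$ (3) I would induct on the number of non-vanishing cohomology sheaves. In the base case, $M^*$ is quasi-isomorphic via its canonical truncations to a single locally constant, locally finitely generated sheaf placed in one degree; shrinking $U$ so that this sheaf is constant, it equals $p^* N$ for a finitely generated $R$-module $N$. Because $R$ is a PID, $N$ admits a two-term free resolution $0 \to R^a \to R^b \to N \to 0$ with $b$ controlled by the number of local generators, and applying the exact functor $p^*$ produces a strictly perfect complex quasi-isomorphic to $M^*|_U$ with explicit bounding constants. For the inductive step I would choose $a < b$ with $H^a(M^*), H^b(M^*) \neq 0$ and invoke the truncation triangle $\tau_{\le a} M^* \to M^* \to \tau_{\ge a+1} M^*$. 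Both outer terms have strictly fewer non-vanishing cohomology sheaves, so they are globally bounded perfect by the induction hypothesis; perfectness of $M^*$ then follows from closure of perfect complexes under extensions, and global boundedness follows because locally $M^*$ is the cone of a map between strictly perfect complexes whose bounding constants are determined by those of the two outer terms.

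For (2) $\Rightarrow$ (1) I would reduce, using connectedness of $Y$ together with the local nature of perfectness, to the case where $M^*$ is strictly perfect. Such a complex is bounded, hence its cohomology vanishes in all but finitely many degrees. For each $i$ and each path-connected $U$ as above, the identity $p^* \circ p_* = \mathrm{id}$ on the relevant (summand-of-free) modules gives $H^i(M^*)|_U = p^*\bigl(H^i(M^*(U))\bigr)$, which is constant, so $H^i(M^*)$ is locally constant. Finite generation of each $H^i(M^*(U))$ is automatic over the PID $R$, since it is a subquotient of the finitely generated module $M^i(U)$ and $R$ is Noetherian. This exhibits $M^*$ as fclc and closes the cycle.

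The step I expect to be the genuine obstacle is the global uniformity in (1) $\Rightarrow$ (3): perfectness is a purely local condition, but global boundedness demands a single triple of bounding constants $a < b$, $N$ valid across all of $Y$ at once. The base case supplies constants depending only on the cohomological degree and the globally bounded number of generators of the locally constant sheaf, and the inductive step must be arranged so that the new bounding constants depend only on those of $\tau_{\le a} M^*$ and $\tau_{\ge a+1} M^*$, never on the chosen point or neighbourhood. Verifying this book-keeping, rather than any individual homological computation, is where the real care lies.
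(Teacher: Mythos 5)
Your proposal is correct and follows essentially the same route as the paper: the same flat projection $p$ to a point with $p^*\circ p_* = \mathrm{id}$ on summands of free modules, the same induction on the number of non-vanishing cohomology sheaves using the two-term free resolution over the PID and the truncation triangle with extension-closure of perfect complexes, and the same reduction to strictly perfect complexes for the converse. The only additions are cosmetic (making the trivial implication $(3)\Rightarrow(2)$ explicit and spelling out why the cohomology modules are finitely generated), so there is nothing to flag.
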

	
	\begin{theorem}
		\label{MainEqM}
		Let $M$ be a real, connected smooth manifold. Let $\mathbb K = \mathbb R$ or $\mathbb K = \mathbb C$ and let $h \in \Omega_{cl}^2(M, \mathbb K)$ be a closed 2-form. Then there are equivalences
		\begin{align*}H^0(\mathcal P(M)^{\infty}[h])) \to H^0((\underline \Omega^*(M, \mathbb K), h)\text{-Mod}^{\text{coh}}) \to D^B_{\text{perf}}(\underline{\mathbb K})^{\check h} = D_{\text{perf}}(\underline{\mathbb K})^{\check h} = D_{\text{fclc}}(\underline{\mathbb K})^{\check h} \end{align*}
		\begin{proof}
			Combining the previous Lemma with Theorem \ref{MainEq}.
		\end{proof}
	\end{theorem}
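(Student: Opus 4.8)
The plan is to deduce the statement as an instance of the general equivalence of Theorem \ref{MainEq}, after first identifying the source $H^0(\mathcal P(M)^{\infty}[h])$ with the homotopy category of sheaf-theoretic cohesive modules, and then reading off the final string of equalities from the preceding Lemma. I would take $X = M$, $k = \mathbb K$, $\mathscr R = \underline{\mathbb K}$, and $\mathscr A^* = \underline\Omega^*(M, \mathbb K)$. The first arrow is the composition of two dg-equivalences: by Remark \ref{remarkCohesive} the Serre--Swan correspondence yields a cdg-equivalence $\mathcal P(M)^{pre}[h] \cong (\Omega^*(M, \mathbb K), h)\text{-Mod}^{coh}_{pre}$, hence a dg-equivalence of associated twisted complexes $\mathcal P(M)^{\infty}[h] \simeq (A^*, h)\text{-Mod}^{coh}$ with $A^* = \Omega^*(M, \mathbb K)$; and Lemma \ref{simpleDGEquivalence} provides a dg-equivalence $(A^*, h)\text{-Mod}^{coh} \to (\underline\Omega^*(M, \mathbb K), h)\text{-Mod}^{coh}$ induced by pullback along $M \to \mathrm{pt}$. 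Passing to $H^0$ gives the first equivalence.

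For the second arrow I would verify the hypotheses \ref{ListAssumptions}. A connected smooth manifold is Hausdorff, paracompact, and of finite covering dimension equal to $\dim M$. The sheaf $\underline\Omega^*(M,\mathbb K)$ is concentrated in degrees $0,\dots,\dim M$; its degree-zero part $C^\infty_M$ is soft by partitions of unity; and it exponentiates via $s \mapsto e^s$, which satisfies $d\,e^s = ds\cdot e^s$. The Poincaré lemma furnishes the required quasi-isomorphism $\underline{\mathbb K} \to \underline\Omega^*(M,\mathbb K)$ of sheaves of dg-algebras. Since $\mathbb K$ is a field every module is flat, so $K$-flatness over $\mathscr R$ is automatic, and each $\Omega^i$ is locally free, hence flat, over $C^\infty_M$. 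The only hypothesis carrying genuine geometric content is the local condition $(7)$: near each point a cohesive module over $\underline\Omega^*$ must be homotopy equivalent to one induced from a finitely generated projective $\underline{\mathbb K}$-module. This follows from the homotopy-invariance results of Section \ref{sectionInfinityLocalSystems}, since contractibility of a chart trivializes the superconnection up to homotopy. With all assumptions in place, Theorem \ref{MainEq} gives the equivalence $H^0((\underline\Omega^*(M,\mathbb K), h)\text{-Mod}^{coh}) \to D^B_{\text{perf}}(\underline{\mathbb K})^{\check h}$.

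Finally, the chain $D^B_{\text{perf}}(\underline{\mathbb K})^{\check h} = D_{\text{perf}}(\underline{\mathbb K})^{\check h} = D_{\text{fclc}}(\underline{\mathbb K})^{\check h}$ follows from the Lemma immediately preceding the theorem: $\mathbb K$ is a field and therefore a PID, while $M$ is connected and locally path connected, so the notions fclc, perfect, and globally bounded perfect coincide for complexes of $\underline{\mathbb K}$-modules. The twisted versions reduce to the untwisted statement by applying the Lemma on each chart $U_\alpha$, where a twisted complex restricts to an ordinary complex $M^*_\alpha$ of $\underline{\mathbb K}|_{U_\alpha}$-modules and the three local conditions are defined chartwise. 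I expect the verification of assumption $(7)$ to be the main obstacle, as it is the only step where differential-geometric input (local trivialization of the superconnection via homotopy invariance) is needed; the remainder is a formal assembly of the cited equivalences.
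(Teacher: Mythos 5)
Your proposal is correct and follows essentially the same route as the paper: identify $H^0(\mathcal P(M)^{\infty}[h])$ with $H^0((\underline\Omega^*(M,\mathbb K),h)\text{-Mod}^{\text{coh}})$ via Remark \ref{remarkCohesive} and Lemma \ref{simpleDGEquivalence}, verify the assumptions \ref{ListAssumptions} for $\mathscr R = \underline{\mathbb K}$ and $\mathscr A^* = \underline\Omega^*(M,\mathbb K)$ (with the local condition on cohesive modules coming from homotopy invariance, exactly as the paper indicates in the preamble to this subsection), apply Theorem \ref{MainEq}, and conclude the final equalities from the preceding lemma since $\mathbb K$ is a PID and $M$ is connected and locally path connected. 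Your write-up is in fact more explicit than the paper's one-line proof about where each ingredient enters.
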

	
	\subsection{Dolbeault algebra}
	Let $X$ be a smooth, connected complex manifold. Let $k = \mathbb C$ and $\mathscr A^* := \underline{\Omega}^{0,*}(M)$ be the Dolbeault-algebra sheaf on $M$ and let $h \in \Omega^{0,*}(M)$ be a closed $(0,2)$-form. Furthermore let $\mathscr R := \mathcal O_X$ be the seaf of holomorphic functions on $M$. Assumption 7 follows by \cite[Lemma 4.1.5]{Block} and the others are immediate. Once again we obtain that Theorem \ref{MainEq} applies. In the case that $X$ is compact we can give another description of globally bounded perfect modules over $\mathcal O_X$.
	
	\begin{lemma}
		Let $M^*$ be a globally bounded perfect complex over $\mathcal O_X$. Then the cohomology of $M^*$ is concentrated in finitely many degrees and each $H^i(M^*)$ is a coherent sheaf. Conversely, let $M^*$ be a complex such that the cohomology of $M^*$ is concentrated in finitely many degrees and each $H^i(M^*)$ is a coherent sheaf, then $M^*$ is a perfect complex.
		\begin{proof}
			Let $M^*$ be a globally bounded perfect complex. By definition this means that the cohomology of $M^*$ is concentrated in finitely many degrees. Furthermore, since coherency is a local property we may assume that $M^*$ is strictly perfect. It is well known that $\mathcal O_X$ itself is a coherent sheaf for a complex manifold. Furthermore, the full subcategory of coherent sheaves is abelian. Thus, it suffices to observe that direct summands of coherent sheaves are coherent which is straightforward.\\ \\
			Now let $M^*$ be a complex whose cohomology is concentrated in finitely many degrees and consists of coherent sheaves. In \cite[p.696]{Griffiths} it is shown that every coherent sheaf is perfect when seen as a complex concentrated in degree $0$. We proceed by an induction over the number of non-zero cohomology groups of $M^*$. For $n = 1$ we can truncate our complex so that we can assume $M^i = 0$ for all $i \neq 0$ and $M^0$ is coherent. By the above mentioned fact we obtain that $M^*$ is perfect. Now suppose that $M^*$ has $(n+1)$ non-vanishing cohomology groups. Choose some $a < b \in \mathbb Z$ such that $H^a(M^*), H^b(M^*) \neq 0 $. By \cite[\href{https://stacks.math.columbia.edu/tag/08J5}{Tag 08J5}]{stacks-project} we obtain a distinguished triangle in the derived category
			\begin{align*} \tau_{\leq a} M^* \to M^* \to \tau_{\geq a+1}M^*.\end{align*}
			The first truncation kills the cohomology group in degree $b$ and the second one kills the cohomology group in degree $a$. The induction hypothesis thus applies and we obtain that both outer terms are perfect. Since perfect complexes are closed under extensions, we obtain that $M^*$ is perfect.
		\end{proof}
	\end{lemma}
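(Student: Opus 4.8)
The plan is to prove the two implications separately: the forward direction reduces to a locality-plus-abelianness argument, while the converse is an induction on cohomological amplitude built on the canonical truncation triangle, resting on one piece of local structure theory for $\mathcal O_X$.

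First I would treat the forward direction. Assume $M^*$ is globally bounded perfect with bounding constants $a < b$ and $N$ (Definition \ref{PerfectDefinition}). By definition every point $x \in X$ has a neighbourhood $U$ on which $M^*|_U$ is quasi-isomorphic to a strictly perfect complex $S^*$ concentrated in degrees $[a,b]$. Since $H^i(M^*)|_U \cong H^i(S^*)$ and $S^*$ lives in $[a,b]$, the sheaves $H^i(M^*)$ vanish for $i \notin [a,b]$; vanishing is checkable on the cover, and the bounds are uniform, so the cohomology is concentrated in finitely many degrees. For coherence I would use that coherence is a local property, so it suffices to show $H^i(S^*)$ is coherent. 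Here I invoke Oka's theorem that $\mathcal O_X$ is itself a coherent sheaf, that the full subcategory of coherent $\mathcal O_X$-modules is abelian (closed under kernels and cokernels), and that direct summands of coherent sheaves are coherent. Each term of $S^*$ is a direct summand of a finitely generated free $\mathcal O_X$-module, hence coherent, and the cohomology of a bounded complex of coherent sheaves is coherent, giving the forward claim.

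For the converse I would induct on the number $n$ of non-vanishing cohomology groups of $M^*$, each of which is coherent and only finitely many of which are non-zero. The essential external input is that every coherent sheaf, viewed as a complex concentrated in a single degree, is perfect, as cited to \cite[p.696]{Griffiths}; this ultimately rests on the stalks $\mathcal O_{X,x}$ being regular local rings, so finitely generated modules admit finite local free resolutions. In the base case $n = 1$ the canonical truncations collapse $M^*$ up to quasi-isomorphism to its single non-vanishing cohomology sheaf, which is perfect by the cited fact. For the inductive step, pick $a < b$ with $H^a(M^*), H^b(M^*) \neq 0$ and use the distinguished triangle
\begin{align*} \tau_{\leq a} M^* \to M^* \to \tau_{\geq a+1} M^* \end{align*}
from \cite[\href{https://stacks.math.columbia.edu/tag/08J5}{Tag 08J5}]{stacks-project}. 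The left term kills the cohomology in degree $b$ and the right kills that in degree $a$, so each outer term has strictly fewer non-vanishing cohomology groups and still coherent cohomology; by induction both are perfect. Since perfectness is stable under extensions — locally one takes strictly perfect representatives of the outer terms and forms their cone — it follows that $M^*$ is perfect.

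I expect the main obstacle to lie not in the homological bookkeeping, which is routine once the truncation triangle is available, but in the correct invocation of local structure theory: one must know that coherent sheaves on a complex manifold are locally perfect, i.e. the syzygy/regularity input at the stalk level, and that both coherence and perfectness are genuinely local so the local statements globalize. I would also flag the deliberate asymmetry in the statement — the converse concludes only \emph{perfect} and not \emph{globally bounded perfect} — since coherent sheaves need not admit locally free resolutions of uniformly bounded length, so no global bounding interval $[a,b]$ or rank bound $N$ can be guaranteed; this is exactly why the equivalence with $D^B_{\text{coh}}(X)^{\check h}$ in Theorem \ref{MainEqX} is only asserted to be an equivalence in the compact case.
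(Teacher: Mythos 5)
Your proposal is correct and follows essentially the same route as the paper: the forward direction via locality of coherence, Oka's theorem, abelianness of coherent sheaves, and closure under direct summands; the converse by induction on the number of non-vanishing cohomology groups using the truncation triangle $\tau_{\leq a} M^* \to M^* \to \tau_{\geq a+1}M^*$, the fact that coherent sheaves are perfect, and closure of perfect complexes under extensions. Your added discussion of why the converse cannot yield \emph{globally bounded} perfectness is a reasonable observation beyond the paper's proof (though the obstruction is really the uniform generator bound $N$ rather than resolution length, which is controlled by the dimension).
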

	\begin{corollary}
		Let $X$ be a compact, complex manifold and $M^*$ a complex of $\mathcal O_X$-modules. Then the following are equivalent:
		\begin{itemize}
			\item The cohomology of $M^*$ is concentrated in finitely many degrees and each $H^i(M^*)$ is coherent.
			\item $M^*$ is globally bounded perfect.
			\item $M^*$ is perfect.
		\end{itemize}
	\end{corollary}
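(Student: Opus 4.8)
The plan is to deduce the corollary from the preceding Lemma together with a compactness argument, by arranging the three conditions into a cycle of implications. Write (A) for the condition that the cohomology of $M^*$ is concentrated in finitely many degrees with each $H^i(M^*)$ coherent, (B) for globally bounded perfect, and (C) for perfect. I would establish the chain (B) $\Rightarrow$ (A) $\Rightarrow$ (C) $\Rightarrow$ (B); since each condition implies the next around the cycle, all three become equivalent.

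The two implications (B) $\Rightarrow$ (A) and (A) $\Rightarrow$ (C) are precisely the two halves of the preceding Lemma and require no further work beyond invoking it: the first half asserts that a globally bounded perfect complex has bounded, coherent cohomology, while the converse half asserts that a complex with bounded coherent cohomology is perfect. Note that neither half uses compactness of $X$, so these two implications already hold over an arbitrary complex manifold; the compactness hypothesis of the corollary plays no role here.

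The only place where compactness of $X$ enters is the closing implication (C) $\Rightarrow$ (B), and the argument is as follows. By definition of perfectness, each point $x \in X$ admits an open neighbourhood on which $M^*$ is quasi-isomorphic to a strictly perfect complex; these neighbourhoods cover $X$, and by compactness I extract a finite subcover $U_1, \dots, U_r$ with associated strictly perfect complexes $S_1^*, \dots, S_r^*$, where $S_j^*$ is concentrated in degrees $[a_j, b_j]$ and has at most $N_j$ generators. Putting $a := \min_j a_j$, $b := \max_j b_j$ and $N := \max_j N_j$ yields uniform bounding constants: any $x \in X$ lies in some $U_j$, and restricting the quasi-isomorphism to a neighbourhood of $x$ inside $U_j$ exhibits $M^*$ locally as quasi-isomorphic to a strictly perfect complex concentrated in $[a,b]$ with at most $N$ generators. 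Hence $M^*$ is globally bounded perfect with bounding constants $a < b$ and $N$. I do not expect a genuine obstacle: all the mathematical content sits in the preceding Lemma (which itself rests on coherence of $\mathcal O_X$, the fact that coherent sheaves form an abelian category, and the perfectness of coherent sheaves established in \cite{Griffiths}), and the compactness step is merely the routine upgrade of the pointwise bounds in the definition of perfectness to uniform ones, so that on a compact base perfect and globally bounded perfect coincide. The one point demanding mild care is that a local quasi-isomorphism restricts along open inclusions, which is immediate since being a quasi-isomorphism is a stalkwise condition.
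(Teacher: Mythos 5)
Your proposal is correct and fills in exactly the argument the paper leaves implicit: the two implications (B) $\Rightarrow$ (A) and (A) $\Rightarrow$ (C) are the two halves of the preceding Lemma, and the remaining implication (C) $\Rightarrow$ (B) is the routine compactness upgrade from pointwise to uniform bounding constants, which is the only place the hypothesis that $X$ is compact is used. The paper states the corollary without proof, treating it as immediate in just this way, so your route coincides with the intended one.
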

	Denote by $D_{\text{coh}}^B(X)^{\check h}$ the full subcategory of $D(X)^{\check h}$ given by those complexes $M^*$ whose cohomologies are concentrated in finitely many degrees and such that each $H^i(M^*)$ is coherent. A version of the following theorem appeared in \cite[Theorem 4.2.1]{Block} in the case that $X$ is compact.
	\begin{theorem}
		\label{MainEqX}
		Let $X$ be a complex, connected smooth manifold. Let $h \in \Omega_{cl}^{0,2}(M)$ be a $\bar \partial$-closed $(0,2)$-form. Then there are equivalences
		\begin{align*}H^0((\Omega^{0,*}(X), h)\text{-Mod}^{\text{coh}}) \to H^0((\underline \Omega^{0,*}(X), h)\text{-Mod}^{\text{coh}}) \to D^B_{\text{perf}}(X)^{\check h}.\end{align*}
		The latter category embeds fully faithfully into the bounded derived category $D^B_{\text{coh}}(X)^{\check h}$ of objects whose cohomology sheaves are coherent. This embedding is an equivalence if $X$ is compact. 
	\end{theorem}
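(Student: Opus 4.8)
Let $X$ be a complex, connected smooth manifold. Let $h \in \Omega_{cl}^{0,2}(M)$ be a $\bar\partial$-closed $(0,2)$-form. Then there are equivalences
\begin{align*}H^0((\Omega^{0,*}(X), h)\text{-Mod}^{\text{coh}}) \to H^0((\underline \Omega^{0,*}(X), h)\text{-Mod}^{\text{coh}}) \to D^B_{\text{perf}}(X)^{\check h}.\end{align*}
The latter category embeds fully faithfully into the bounded derived category $D^B_{\text{coh}}(X)^{\check h}$ of objects whose cohomology sheaves are coherent. This embedding is an equivalence if $X$ is compact.

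Let me sketch how to prove this.

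The plan is to specialize Theorem \ref{MainEq} to the Dolbeault setting and then read off the two additional claims from the Lemma and Corollary immediately preceding the statement. First I would verify that the data $k = \mathbb C$, $\mathscr R := \mathcal O_X$, $\mathscr A^* := \underline{\Omega}^{0,*}(X)$, and the closed section $h \in \Omega^{0,2}(X)$ satisfy Assumptions \ref{ListAssumptions}. As already noted in the excerpt, Assumption 7 follows from \cite[Lemma 4.1.5]{Block} and the remaining assumptions are immediate (the Dolbeault sheaf $\underline{\Omega}^{0,0}$ is soft, $\underline{\Omega}^{0,*}$ is $K$-flat over $\mathcal O_X$ via $\rho$, it exponentiates, and $X$ is paracompact Hausdorff of finite covering dimension). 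With $A^* = \mathscr A^*(X) = \Omega^{0,*}(X)$, Theorem \ref{MainEq} then yields directly both displayed equivalences: the first arrow $H^0((\Omega^{0,*}(X), h)\text{-Mod}^{\text{coh}}) \to H^0((\underline \Omega^{0,*}(X), h)\text{-Mod}^{\text{coh}})$ is $H^0$ applied to the dg-equivalence of Lemma \ref{simpleDGEquivalence}, and the second arrow $H^0((\underline \Omega^{0,*}(X), h)\text{-Mod}^{\text{coh}}) \to D^B_{\text{perf}}(\mathcal O_X)^{\check h}$ is the functor $\mathscr J$, shown to be an equivalence by the two propositions preceding Theorem \ref{MainEq}.

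Next I would establish the fully faithful embedding $D^B_{\text{perf}}(X)^{\check h} \hookrightarrow D^B_{\text{coh}}(X)^{\check h}$. Both are defined as full subcategories of the twisted derived category $D(\mathcal O_X)^{\check h}$, so full faithfulness is automatic once the inclusion on objects is established; the only thing to check is that a globally bounded perfect twisted complex has coherent, boundedly supported cohomology. This is exactly the first half of the Lemma preceding the theorem, applied componentwise: if $M^*$ is globally bounded perfect then each $M^*_\alpha$ is globally bounded perfect over $\mathcal O_X|_{U_\alpha}$, hence has cohomology concentrated in finitely many degrees with each $H^i(M^*)_\alpha$ coherent, which is precisely the defining condition of $D^B_{\text{coh}}(X)^{\check h}$. (The gluing isomorphisms $\phi_{\alpha\beta}$ preserve cohomological degree and coherence, so these local facts patch without difficulty.)

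Finally, for the compact case I would prove essential surjectivity of this embedding using the Corollary stated just before the theorem, which asserts that on a compact complex manifold the three conditions — coherent boundedly-supported cohomology, globally bounded perfect, and perfect — coincide for complexes of $\mathcal O_X$-modules. Given an object $M^*$ of $D^B_{\text{coh}}(X)^{\check h}$, each local piece $M^*_\alpha$ then has coherent bounded cohomology, hence is perfect by the converse half of the Lemma, and by the Corollary is globally bounded perfect. The step I expect to be the main obstacle is upgrading this to a \emph{uniform} bounding constant: the definition of $D^B_{\text{perf}}(X)^{\check h}$ in Definition \ref{PerfectDefinition} requires fixed integers $a < b$ and $N \in \mathbb N$ valid simultaneously for every $\alpha$. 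The degrees of nonvanishing cohomology are common to all $\alpha$ (again because the $\phi_{\alpha\beta}$ are degreewise isomorphisms), fixing $a$ and $b$ globally; to fix $N$ I would invoke compactness of $X$ together with coherence, extracting via quasi-compactness a uniform bound on the number of local generators of each cohomology sheaf, in the spirit of Corollary \ref{c6}. This yields $M^* \in D^B_{\text{perf}}(X)^{\check h}$ and completes the equivalence.
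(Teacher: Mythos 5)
Your proposal is correct and follows essentially the same route as the paper, which simply cites Theorem \ref{MainEq} for the two displayed equivalences and the preceding Lemma and Corollary for the embedding and the compact case; you merely spell out the details (verification of Assumptions \ref{ListAssumptions}, componentwise application of the Lemma, and the uniformity of bounding constants via compactness) that the paper leaves implicit.
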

	\begin{proof}
		The first part follows from Theorem \ref{MainEq}. The second statement follows from the previous Lemma and Corollary.
	\end{proof}
	 \nocite{BlockMukai}

	\newpage
	\newcommand{\link}[2]{{\color{blue}\href{#1}{#2}}}
	
	\bibliographystyle{alphaurl}
	\bibliography{quellen}

\end{document}